\newcommand{\fP}{\mathfrak{P}}
\DeclareMathOperator{\Sing}{Sing}
\newcommand{\type}[1]{$\mathsf{X}_{(#1)}$}
\newcommand{\mtype}[1]{X_{(#1)}}
\newcommand{\ppl}{+\!+}
\newcommand{\fS}{\mathfrak{S}}
\newcommand{\fA}{\mathfrak{A}}
\newcommand{\rC}{\mathrm{C}}
\newcommand{\rD}{\mathrm{D}}
\newcommand{\rF}{\mathrm{F}}
\newcommand{\rR}{\mathrm{R}}
\newcommand{\rV}{\mathrm{V}}
\newcommand{\rh}{\mathrm{h}}
\newcommand{\Db}{\mathbf{D}}
\newcommand{\hY}{{\hat{Y}}}
\newcommand{\barV}{{\bar{V}}}
\newcommand{\ZZ}{\mathbb{Z}}
\newcommand{\PP}{\mathbb{P}}
\newcommand{\QQ}{\mathbb{Q}}
\newcommand{\cO}{{\mathscr{O}}}
\newcommand{\cX}{{\mathscr{X}}}
\newcommand{\MMM}{{\mathscr{M}}}
\newcommand{\xref}[1]{\textup{\ref{#1}}}
\newcommand{\g}{{\mathrm{g}}}
\newcommand{\Pic }{\operatorname{Pic}}
\newcommand{\kk}{{\mathsf{k}}}
\newcommand{\bkk}{{\bar{\mathsf{k}}}}
\newcommand{\Gal}{{\mathrm{G}}}
\newcommand{\rG}{{\mathrm{G}}}
\newcommand{\rT}{{\mathrm{T}}}
\newcommand{\rX}{{\mathrm{X}}}
\newcommand{\rY}{{\mathrm{Y}}}
\newcommand{\rx}{{\mathrm{x}}}
\newcommand{\ry}{{\mathrm{y}}}
\DeclareMathOperator{\Gr}{\mathrm{Gr}}
\DeclareMathOperator{\Aut}{\mathrm{Aut}}
\DeclareMathOperator{\PGL}{\mathrm{PGL}}
\newcommand{\Gm}{\mathbb{G}_{\mathrm{m}}}
\newcommand{\tC}{{\tilde{C}}}
\newcommand{\tX}{{\tilde{X}}}
\newcommand{\tY}{{\tilde{Y}}}
\newcommand{\tD}{{\tilde{D}}}
\DeclareMathOperator{\Sch}{\mathrm{Sch}}
\DeclareMathOperator{\Hilb}{\mathrm{Hilb}}
\DeclareMathOperator{\Bl}{\mathrm{Bl}}
\DeclareMathOperator{\Res}{\mathrm{Res}}
\newcommand{\cK}{{\mathscr{K}}}
\DeclareMathOperator{\Sym}{\mathrm{Sym}}
\DeclareMathOperator{\Ker}{\mathrm{Ker}}
\DeclareMathOperator{\Ima}{\mathrm{Im}}
\DeclareMathOperator{\Cl}{\mathrm{Cl}}
\DeclareMathOperator{\Spec}{\mathrm{Spec}}
\newcommand{\cN}{{\mathscr{N}}}
\newcommand{\cV}{{\mathscr{V}}}
\newcommand{\bL}{\mathbf{L}}
\newcommand{\bR}{\mathbf{R}}
\newcommand{\rc}{{\mathrm{c}}}
\newcommand{\cE}{{\mathscr{E}}}
\newcommand{\cF}{{\mathscr{F}}}
\DeclareMathOperator{\rk}{\mathrm{rk}}
\newcommand{\whi}{{\widehat{\imath}}}
\theoremstyle{plain}
\newtheorem{theorem}{Theorem}[section]
\newtheorem{lemma}[theorem]{Lemma}
\newtheorem{proposition}[theorem]{Proposition}
\newtheorem{corollary}[theorem]{Corollary}
\newtheorem{conjecture}[theorem]{Conjecture}
\theoremstyle{definition}
\newtheorem*{definition*}{Definition}
\newtheorem{example-remark}{Remark-Example}
\newtheorem*{notation*}{Notation}
\newtheorem{remark}[theorem]{Remark}
\newcounter{NN}
\title{Rationality over non-closed fields of Fano threefolds\\[1ex] with higher geometric Picard rank}
\author{Alexander Kuznetsov}
\thanks{The paper was partially supported by the HSE University Basic Research Program.
}
\address{\parbox{0.9\textwidth}{Steklov Mathematical Institute of Russian Academy of Sciences, Moscow, Russia 
\\[1pt]
Laboratory of Algebraic Geometry, NRU HSE, Moscow, Russia\\}}
\email{akuznet@mi-ras.ru} 
\author{Yuri Prokhorov}
\address{\parbox{0.9\textwidth}{Steklov Mathematical Institute of Russian Academy of Sciences, Moscow, Russia 
\\[1pt]
Laboratory of Algebraic Geometry, NRU HSE, Moscow, Russia
\\[1pt]
Department of Algebra, Moscow State University, Moscow, Russia
\\}}
\email{prokhoro@mi-ras.ru}
\date{}
\begin{document} 

\begin{abstract}
We prove rationality criteria over non-closed fields of characteristic~$0$
for five out of six types of geometrically rational Fano threefolds of Picard number~$1$ and geometric Picard number bigger than~$1$.
For the last type of such threefolds we provide a unirationality criterion 
and construct examples of unirational but not stably rational varieties of this type.
\end{abstract}
\maketitle

\section{Introduction}

\subsection{The results}
\label{ov}

The goal of this paper is to discuss rationality of smooth Fano threefolds over algebraically non-closed fields of characteristic~$0$.
In~\cite{KP19} we considered the case of geometrically rational Fano threefolds with geometric Picard number~$\uprho(X_\bkk) = 1$ 
and here we switch the focus to the case of geometrically rational Fano threefolds~$X$ with Picard numbers 
\begin{equation}
\label{eq:rho-assumptions}
\uprho(X) = 1
\qquad\text{and}\qquad
\uprho(X_\bkk) > 1.
\end{equation}
In fact, Fano threefolds satisfying~\eqref{eq:rho-assumptions} have been classified in~\cite{Prokhorov-GFano-2}, 
and~\cite{Alzati-Bertolini-1992a} explains which of these are geometrically rational.
A combination of these results gives the following 

\begin{theorem}[{\cite[Theorem~1.2]{Prokhorov-GFano-2}}, {{\cite{Alzati-Bertolini-1992a}}}]
\label{thm:classif}
There are exactly six families of geometrically rational Fano threefolds satisfying~\eqref{eq:rho-assumptions}
as listed in Table~\xref{table:fanos}.
\begin{table}[h]
\begin{tabular}
{lcccccp{0.5\textwidth}}
\label{ta:res1}
\\\hline
\\[-2ex]
&$\upiota(X_{\bkk})$&$\uprho(X_{\bkk})$
&$-K_{X}^3$
&$\g(X)$
&$\rh^{1,2}(X_{\bkk})$\raisebox{-1.5ex}{\ }
&\multicolumn{1}{c}{$X_{\bkk}$}\\
\hline
\\[-2ex]
\type{3,3}
& $1$&$2$& $20$ & $11$ & $3$ &
an intersection of three divisors of bidegree $(1,1)$ in~$\PP_{\bkk}^3\times \PP_{\bkk}^3$;
\\ [5pt]
\type{1,1,1,1}
& $1$
& $4$& $24$& $13$ & $1$ &
a divisor of multidegree $(1,1,1,1)$ in~$(\PP_{\bkk}^1)^4$;
\\ [5pt]
\type{4,4}
& $1$& $2$& $28$& $15$ & $0$ &
the blow-up of a
smooth quadric $Q_{\bkk} \subset \PP_{\bkk}^4$ along a linearly normal smooth rational quartic curve;
\\ [5pt]
\type{2,2,2}
& $1$
& $3$& $30$& $16$ & $0$ &	
an intersection of three divisors 
of multidegrees~$(0,1,1)$, $(1,0,1)$, $(1,1,0)$ in~$\PP_{\bkk}^2\times \PP_{\bkk}^2\times \PP_{\bkk}^2$;
\\
\type{2,2}
& $2$
&$2$& $48$& $25$ & $0$ &
a divisor of bidegree $(1,1)$ in~$\PP_{\bkk}^2\times \PP_{\bkk}^2$;
\\ [5pt]
\type{1,1,1}
& $2$
& $3$& $48$ & $25$ & $0$ &
$\PP_{\bkk}^1\times \PP_{\bkk}^1\times \PP_{\bkk}^1$.\raisebox{-1.5ex}{\ } 
\\\hline
\end{tabular}
\smallskip
\caption{Geometrically rational Fano threefolds~$X$ 
satisfying~\eqref{eq:rho-assumptions}}
\label{table:fanos}
\end{table}
\end{theorem}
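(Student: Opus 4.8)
The plan is to assemble the statement from the two cited inputs. First I set up the descent picture. Let $\kk$ be a field of characteristic zero and $X$ a smooth Fano threefold over $\kk$ with $\uprho(X)=1$; then $X_\bkk$ is one of the deformation types in the Mori--Mukai classification, and for each type the N\'eron--Severi lattice $N^1(X_\bkk)=\operatorname{Pic}(X_\bkk)$, the intersection pairing on it, the cone of curves $\overline{\mathrm{NE}}(X_\bkk)$ and the anticanonical class $-K_{X_\bkk}$ are all known explicitly. The absolute Galois group $\Gal$ of $\kk$ acts on $N^1(X_\bkk)$ preserving this entire structure, and $N^1(X)=N^1(X_\bkk)^{\Gal}$; hence the assumptions~\eqref{eq:rho-assumptions} are equivalent to requiring that the invariant sublattice $N^1(X_\bkk)^{\Gal}$ have rank $1$ while $\uprho(X_\bkk)>1$. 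Since $-K_{X_\bkk}$ is automatically $\Gal$-invariant, the invariant sublattice is then exactly the line it spans.

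Next I run through the Mori--Mukai families with $\uprho(X_\bkk)\geq 2$. For each one I ask whether the finite group of isometries of $N^1(X_\bkk)$ preserving $\overline{\mathrm{NE}}(X_\bkk)$ and $-K_{X_\bkk}$ --- which by the structure of extremal contractions is induced by permutations of the extremal rays and is in practice computable from the Mori--Mukai data --- contains a subgroup whose fixed sublattice has rank $1$, and, when it does, whether that lattice action is actually realised by a genuine $\kk$-form (a descent question settled family by family: twisted products, forms of blow-ups governed by the Hilbert scheme of the blown-up subvariety, and so on). Carrying this out is exactly the content of \cite[Theorem~1.2]{Prokhorov-GFano-2}, and it reduces the Mori--Mukai list to a short list of admissible geometric types $X_\bkk$. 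For each survivor the entries of Table~\ref{table:fanos} --- the Fano index $\upiota(X_\bkk)$, the geometric Picard number $\uprho(X_\bkk)$, the degree $-K_X^3=-K_{X_\bkk}^3$, the genus $\g(X)=\g(X_\bkk)=\tfrac{1}{2}(-K_{X_\bkk}^3)+1$ and the Hodge number $\rh^{1,2}(X_\bkk)$ --- are Galois-invariant and read off directly from the geometric description in the last column.

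Finally I keep only the geometrically rational survivors. By \cite{Alzati-Bertolini-1992a}, which determines the rationality of smooth Fano threefolds of geometric Picard number $\geq 2$, precisely the six types displayed in the last column of Table~\ref{table:fanos} are rational over $\bkk$ --- each of them admits an evident rational parametrisation (the product structure, a $\PP^1$- or $\PP^2$-bundle projection, or a projection from a line or a point on a quadric) --- while every other type that survives the previous step is not geometrically rational. Combining \cite[Theorem~1.2]{Prokhorov-GFano-2} with \cite{Alzati-Bertolini-1992a} therefore gives precisely Table~\ref{table:fanos}.

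The main obstacle is the middle step: the systematic pass over the entire Mori--Mukai classification to determine which N\'eron--Severi lattices carry a finite-order isometry with one-dimensional fixed sublattice that also preserves the cone of curves, together with the verification that each such purely lattice-theoretic possibility is unobstructed, i.e. realised by an honest Fano threefold over a field of characteristic zero rather than ruled out by a descent obstruction. This is where the real work of \cite{Prokhorov-GFano-2} lies; here we only quote its conclusion and intersect it with the rationality classification of \cite{Alzati-Bertolini-1992a}.
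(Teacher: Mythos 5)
Your proposal is correct and follows essentially the same route as the paper: the theorem is stated there as a direct combination of the classification in \cite[Theorem~1.2]{Prokhorov-GFano-2} with the geometric rationality determination of \cite{Alzati-Bertolini-1992a}, with no independent proof given. The additional descent/lattice-theoretic scaffolding you sketch is a reasonable gloss on what those references do, but the logical content of your argument coincides with the paper's citation of the two results.
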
 

The first column of Table~\ref{table:fanos} contains the name for the family we use in this paper, 
the next columns contain the index~$\upiota(X_\bkk)$, defined as
\begin{equation*}
\upiota(X_\bkk) = \max\left\{ i \ \left|\ \tfrac1i K_{X_\bkk} \in \Pic(X_\bkk) \right.\right\},
\end{equation*}
the geometric Picard number~$\uprho(X_\bkk)$, the anticanonical 
degree~$(-K_X)^3$, the genus~$\g(X)$, defined by
\begin{equation*}
(-K_X)^3= 2\g(X) - 2
\end{equation*}
and the Hodge number $\rh^{1,2}(X_\bkk)$ of the threefold,
while the last column provides a geometric description of these varieties
over an algebraic closure~$\bkk$ of the base field.

We discuss some geometric properties of threefolds from Table~\ref{table:fanos} in~\S\ref{sec:contractions}.
In particular, we describe their extremal contractions over~$\bkk$ and identify their Hilbert schemes of lines and conics,
as well as the subschemes of the Hilbert schemes of twisted cubic curves passing through a general point.

However, our main interest is in rationality criteria, and the next theorem is our main result.

\begin{theorem}
\label{thm:unirat}
Let $X$ be a Fano threefold from Table~\xref{table:fanos};
in particular we assume~$\uprho(X) = 1$.
\begin{enumerate}
\item 
\label{thm:unirat:unirat}
$X$ is unirational if and only if~$X(\kk) \ne \varnothing$.
\item 
\label{thm:unirat:rat}
If $X$ has type~\type{4,4}, \type{2,2,2}, \type{2,2}, or~\type{1,1,1} 
then~$X$ is $\kk$-rational if and only if $X(\kk) \ne \varnothing$.
\item 
\label{thm:unirat:nonrat}
If $X$ has type~\type{3,3} then~$X$ is never 
$\kk$-rational.
\end{enumerate}
\end{theorem}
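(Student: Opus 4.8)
The plan is to prove the three parts in sequence, with part~\eqref{thm:unirat:unirat} serving as the common starting point. For the ``only if'' direction in~\eqref{thm:unirat:unirat} there is nothing to do: a unirational variety over an infinite field automatically has a rational point. For the ``if'' direction, I would go through the six types one at a time using the geometric descriptions in Table~\xref{table:fanos} and the extremal-contraction analysis promised in~\S\ref{sec:contractions}. The uniform strategy is: given~$x \in X(\kk)$, produce a dominant rational map from a rational variety. For the types that admit a conic bundle or del Pezzo fibration structure over~$\PP^1_\kk$ (or a $\kk$-form thereof) after passing through~$x$, one shows the generic fiber acquires a rational point — e.g.\ by intersecting with the pencil of curves through~$x$ living in an appropriate linear system — and is therefore a rational surface or a conic with a point, hence rational over~$\kk(\PP^1)$; combined with rationality of the base this gives unirationality. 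For type~\type{3,3}, a point on~$X$ gives (via projection from one~$\PP^3$-factor, using the three $(1,1)$-divisors) a rational point on a conic bundle over a rational surface, and the standard argument that a conic bundle with a section over a rational base is unirational applies. I expect the bookkeeping for types~\type{1,1,1,1} and~\type{2,2,2} to be the most delicate, since there one must argue that a single $\kk$-point suffices to trivialize enough of the multi-projective incidence structure.

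For part~\eqref{thm:unirat:rat}, again ``only if'' is free (rational~$\Rightarrow$ a point). For ``if'', I would treat the four listed types separately, each time upgrading the unirational parametrization to a birational one. Type~\type{1,1,1} is~$\PP^1\times\PP^1\times\PP^1$ geometrically; with a $\kk$-point one analyzes the Galois action on the three rulings and shows~$X$ is a product of Severi--Brauer curves, each of which is~$\PP^1_\kk$ once it has a rational point — so~$X\cong(\PP^1_\kk)^3$ is rational. Type~\type{2,2} is a $(1,1)$-divisor in~$\PP^2\times\PP^2$: projection to either factor is a conic bundle (a $\PP^1$-bundle over the complement of a cubic, plus degenerate fibers), and a rational point lets one find a section, after which the conic bundle over~$\PP^2_\kk$ with a section and discriminant a plane cubic is $\kk$-rational by the classical theory of conic bundle surfaces/threefolds (Enriques--Manin--Iskovskikh-type criteria; here the relevant invariant — e.g.\ $K^2$ of the associated del Pezzo, or the absence of obstruction classes — is favorable). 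Type~\type{4,4} is the blow-up of a quadric~$Q$ along a rational quartic curve~$\Gamma$; a $\kk$-point on~$X$ gives a $\kk$-point on~$Q$, hence~$Q\cong$ a quadric with a point is $\kk$-rational, and the blow-up along a $\kk$-rational curve (which~$\Gamma$ is, being a twisted quartic, once it too is shown to be defined over~$\kk$ with a point) stays rational. Type~\type{2,2,2} is handled by exhibiting, after passing through the $\kk$-point, a birational model that is a conic bundle over~$\PP^2_\kk$ or a product structure that is manifestly rational.

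For part~\eqref{thm:unirat:nonrat}, the claim is that type~\type{3,3} threefolds are \emph{never} $\kk$-rational, even when~$X(\kk)\neq\varnothing$. Here the obstruction must be birational and survive base field extension-insensitive arguments. The natural tool is the intermediate Jacobian / Clemens--Griffiths-type obstruction in its arithmetic refinement (the intermediate Jacobian torsor obstruction of Benoist--Wittenberg, or the nontriviality of~$\mathrm{H}^3$ as a Galois module / the Artin--Mumford-style Brauer obstruction): a smooth $\kk$-rational threefold has~$\mathrm{Br}(X)[\text{non-geometric part}]$ and the intermediate Jacobian (as a principally polarized abelian variety over~$\kk$, with its canonically defined torsor) controlled by those of~$\PP^3_\kk$ blown up along curves. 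For type~\type{3,3} we have~$\rh^{1,2}(X_\bkk)=3$, so the geometric intermediate Jacobian is a $3$-dimensional ppav; the point is that the Galois action on it (equivalently the structure of~$X$ as a form) forces this ppav, together with its torsor class, to be \emph{not} of the form arising from a blow-up of~$\PP^3_\kk$ — for instance it is (geometrically) the Jacobian of a genus-$3$ curve that carries a nontrivial Brauer/Galois twist, which cannot be killed. Concretely I would: (i) identify~$J(X_\bkk)$ with the Jacobian of an explicit genus-$3$ curve~$C$ attached to the $(1,1)^{\times 3}$ data (the discriminant curve of the associated net of quadrics / the conic bundle discriminant); (ii) show the relevant torsor or Galois-module class is nontrivial using~$\uprho(X)=1$ together with the specific structure over~$\bkk$; (iii) invoke the Benoist--Wittenberg obstruction to conclude non-rationality. \textbf{The main obstacle} is step~(ii): producing a clean, checkable nonvanishing of the arithmetic obstruction that holds for \emph{every} $\kk$-form of this type with a rational point — this is where I would expect the real work, likely via an explicit description of~$X$ as (a twist of) a conic bundle over a del Pezzo surface and a computation of the residual Brauer class, or via an intermediate-Jacobian-torsor computation showing it is never neutral.
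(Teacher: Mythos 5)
There is a genuine gap, and it runs through almost every case of parts~\ref{thm:unirat:unirat} and~\ref{thm:unirat:rat}: you repeatedly use the projections onto individual factors as if they were defined over~$\kk$. Under the standing hypothesis~$\uprho(X)=1$ with~$\uprho(X_\bkk)>1$, the Galois group permutes the classes~$H_i$ \emph{transitively} (Lemma~\ref{lemma:picard}), so none of these contractions is a $\kk$-morphism. Concretely: a type~\type{1,1,1} threefold with~$\uprho(X)=1$ is \emph{never} a product of three Severi--Brauer curves over~$\kk$ (think of~$\Res_{\kk'/\kk}\PP^1$ for a cubic extension~$\kk'$), so your argument for that case does not start; for type~\type{2,2} neither projection to a~$\PP^2$-factor is defined over~$\kk$; for type~\type{4,4} neither the quadric~$Q_1$ nor the quartic curve~$\Gamma_1$ is a $\kk$-variety, so ``$X$ is a blow-up of a rational quadric along a rational curve'' is not available; for type~\type{3,3} the projection to one~$\PP^3$-factor is likewise only defined over the quadratic extension~$\kk'$. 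Overcoming exactly this obstruction is the content of the paper: all constructions are centered at Galois-invariant data, namely the $\kk$-point~$x$ (the toric transformation of Theorem~\ref{proposition:toric-link}, applied in Corollary~\ref{corollary:product-rational} and Propositions~\ref{prop:x22}, \ref{prop:x222}, \ref{prop:x1111}, \ref{prop:x33-plus}) or the $\kk$-irreducible singular conic~$\Theta(x)$ (the Sarkisov links of Theorem~\ref{theorem:x44-links} for type~\type{4,4}, which land on a quintic del Pezzo threefold or on a type~\type{2,2} threefold rather than on~$Q_1$).

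For part~\ref{thm:unirat:nonrat} you name the right family of tools (Benoist--Wittenberg), but the step you yourself flag as the ``main obstacle'' is precisely what has to be supplied, and the paper supplies it by a concrete construction rather than an intermediate Jacobian computation: starting from a $\kk$-point~$x_0$ with~$\rF_1(X,x_0)=\varnothing$ (which exists once~$X$ is unirational; if~$X$ is not unirational there is nothing to prove), the double projection and the second Springer resolution produce a flat conic bundle~$\tX^{\ppl}\to\PP(A)$ defined over~$\kk$ whose discriminant is the smooth non-hyperelliptic plane quartic~$\Gamma$ (Proposition~\ref{prop:conic-bundle-x33}, Lemma~\ref{lemma:gamma-x33}), whose discriminant double covering is~$\Gamma\times_\kk\kk'$ (Lemma~\ref{lemma:tilde-gamma}), and which acquires a rational section over~$\kk'$; Theorem~\ref{prop:non-rationality-conic-bundle} then identifies any such conic bundle birationally with the Benoist--Wittenberg examples~$x^2-\alpha y^2-Fz^2=0$ and invokes their Proposition~3.4. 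Without producing this conic bundle model over~$\kk$ (again impossible via the projections to the factors, which are only defined over~$\kk'$), the nonvanishing of the obstruction is not established, so the proposal does not close part~\ref{thm:unirat:nonrat} either.
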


Note that over an algebraically closed field threefolds of types~\type{4,4}, \type{2,2,2}, \type{2,2}, 
and~\type{1,1,1} have~$\rh^{1,2} = 0$, hence trivial intermediate Jacobians, while the intermediate Jacobians of threefolds 
of types~\type{3,3} and~\type{1,1,1,1} over~$\bkk$ are Jacobians of curves of genus~$3$ and~$1$, respectively
(and $\kk$-forms of these over~$\kk$);
this explains the difference in the behavior.

It is a classical fact that the existence of a $\kk$-point is necessary for rationality or unirationality,
so the major part of the proof of the theorem consists of proving rationality or unirationality under this assumption.
We use for this a case-by-case analysis 
(see~\S\ref{subsec:sketch} for a description of our approach).
The theorem is thus a combination of the following results (we assume everywhere~$X(\kk) \ne \varnothing$):

\begin{itemize}
\item 
rationality for threefolds of type~\type{1,1,1} is proved in Corollary~\ref{cor:x111};
\item 
rationality for threefolds of type~\type{2,2} is proved in Proposition~\ref{prop:x22};
\item 
rationality for threefolds of type~\type{2,2,2} is proved in Proposition~\ref{prop:x222};
\item 
rationality for threefolds of type~\type{4,4} is proved in Proposition~\ref{prop:x44};
\item 
unirationality for threefolds of type~\type{1,1,1,1} is proved in Proposition~\ref{prop:x1111};
\item 
unirationality for threefolds of type~\type{3,3} is proved in Proposition~\ref{prop:x33};
\item 
non-rationality for threefolds of type~\type{3,3} is proved in Corollary~\ref{cor:x33}.
\end{itemize}

Theorem~\ref{thm:unirat} provides nice criteria for rationality of the five out of six types of Fano threefolds listed in Table~\ref{table:fanos}.
For the remaining type~\type{1,1,1,1} we have a conjecture and a partial result.

\begin{conjecture}
\label{conj:x1111}
If $X$ has type~\type{1,1,1,1} and~$\uprho(X) = 1$ then~$X$ is never $\kk$-rational.
\end{conjecture}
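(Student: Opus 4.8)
We expect this conjecture to follow from the intermediate Jacobian torsor obstruction, in the form developed by Benoist--Wittenberg and Hassett--Tschinkel, and the outline below explains why this is the relevant tool and where the difficulty lies. If $X(\kk) = \varnothing$ then $X$ is certainly not $\kk$-rational, and if $X(\kk) \ne \varnothing$ then $X$ is unirational by part~\ref{thm:unirat:unirat} of Theorem~\ref{thm:unirat}, so the content of the conjecture is to exhibit an obstruction to rationality in the presence of a rational point. The coarser invariants do not suffice. Since $X$ is geometrically rational we have $\Br(X_\bkk) = 0$, and $\Pic(X_\bkk) \cong \ZZ^4$ is the permutation module attached to the action of the absolute Galois group on the four $\PP^1$-factors of $X_\bkk \subset (\PP_\bkk^1)^4$; this action is transitive precisely because $\uprho(X) = 1$, so $H^1(\kk, \Pic(X_\bkk)) = 0$ by Shapiro's lemma, the Brauer group of $X$ is reduced to that of $\kk$, and there is no Brauer--Manin type obstruction. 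Likewise, as noted after Theorem~\ref{thm:unirat}, the intermediate Jacobian of $X_\bkk$ is one-dimensional, so the intermediate Jacobian $\mathrm{J}(X)$ of $X$ is an elliptic curve over~$\kk$; but every $\kk$-form of an elliptic curve is, with its principal polarization, the Jacobian of a smooth projective genus-one curve over~$\kk$, so the obstruction on the polarized abelian variety $\mathrm{J}(X)$ --- the one ruling out $\kk$-rationality for type~\type{3,3} in Corollary~\ref{cor:x33}, where $\mathrm{J}(X)$ is a three-dimensional form of a Jacobian that need not itself be a Jacobian --- is here automatically inconclusive. This is the structural reason for the different behaviour of types~\type{3,3} and~\type{1,1,1,1}, and it forces the use of the finer invariant: the class $\zeta_X \in H^1(\kk, \mathrm{J}(X))$ of the intermediate Jacobian torsor, which vanishes whenever $X$ is stably or retract $\kk$-rational.

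To compute $\zeta_X$ one should exploit the geometry of $X_\bkk$. Over $\bkk$ each of the four projections $X_\bkk \to (\PP_\bkk^1)^3$ forgetting a factor is the blow-up of $(\PP_\bkk^1)^3$ along a smooth genus-one curve $C_i$ (a complete intersection of two divisors of type $(1,1,1)$), each of the six projections $X_\bkk \to (\PP_\bkk^1)^2$ forgetting two factors is a conic bundle whose discriminant is a genus-one curve of type $(2,2)$ carrying a split double cover, and for all these curves the Jacobian is canonically $\mathrm{J}(X_\bkk)$. When $\uprho(X) = 1$, Galois permutes the four blow-down structures in a single orbit and the six conic bundle structures in one, two or three orbits, according to the transitive subgroup $G \subseteq \fS_4$ through which it acts on the factors; in particular none of these structures is defined over~$\kk$, but the $i$-th blow-down is defined over the degree-four subfield $L_i \subseteq \bkk$ fixed by the stabilizer of the $i$-th factor. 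If $X(\kk) \ne \varnothing$, a $\kk$-point of $X$ yields an $L_i$-point of the $L_i$-form $Y_i$ of $(\PP_\bkk^1)^3$ that $X_{L_i}$ blows down to; as $Y_i$ is a product of Weil restrictions of conics, such a point splits all of them, so $Y_i$ is $L_i$-rational, hence $X_{L_i} = \Bl_{C_i}(Y_i)$ is $L_i$-rational and $\zeta_X$ restricts to zero over each $L_i$. Corestriction then gives $4\zeta_X = 0$, so $\zeta_X \in H^1(\kk, \mathrm{J}(X))[4]$; the plan is to show $\zeta_X$ is nonetheless nonzero, by computing its restrictions to the fields of definition of the various conic bundles --- over each of which $\zeta_X$ becomes an explicit class built from the discriminant curve and its étale double cover --- and reconstructing $\zeta_X$ over~$\kk$ from these data using the combinatorics of $G$.

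The main obstacle is exactly this reconstruction, and the picture splits according to $G$. If $G \cong \ZZ/4$, $G \cong (\ZZ/2)^2$, or $G$ is the dihedral group of order $8$, then some pair of factors lies in a Galois orbit of size two, so a conic bundle structure is defined over a quadratic extension $L/\kk$; over $L$ the variety $X$ becomes a conic bundle over a del Pezzo surface of degree eight, and $\zeta_X|_L$ can be computed and shown to be nonzero directly. This is the situation in which we obtain the examples of unirational but not stably $\kk$-rational threefolds of type~\type{1,1,1,1} announced in the abstract. The genuinely open case is $G \cong \fS_4$ or $G \cong \fA_4$: there every blow-down structure is defined only over a quartic field and every conic bundle structure only over a sextic field, so $\zeta_X$ is invisible to any single geometric structure on $X$, and the vanishing of all its restrictions to the quartic fields $L_i$ leaves open whether $\zeta_X$ itself vanishes. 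Resolving this would seem to require an intrinsic formula for $\zeta_X$ in terms of the Galois module $\Pic(X_\bkk)$ and the periods of the curves $C_i$ --- a formula which would in turn imply the conjecture in full --- and the absence of such a formula is precisely why we state the result as a conjecture rather than a theorem.
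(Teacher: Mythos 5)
You have written a discussion of why the statement ought to be true rather than a proof, and indeed no comparison with ``the paper's proof'' is possible: the statement is posed as a conjecture and the paper gives no argument for it. What the paper does prove is the partial result, Theorem~\ref{thm:x1111-non-st-rat}, and your account of that result gets the cases backwards. The paper produces unirational, non-stably-rational examples (over $K=\kk(t)$) precisely for the transitive subgroups $\rG$ \emph{containing} the Klein group $\rV_4$ --- that is, for $\rV_4$, $\rD_4$, $\fA_4$ and $\fS_4$, and \emph{not} for $\rC_4$. The mechanism is not an intermediate Jacobian torsor computation but a degeneration: one takes a pencil of $(1,1,1,1)$-divisors in $\Res_{\kk'/\kk}(\PP^1_{\kk'})$ through the toric hypersurface $u_1u_2u_3u_4=v_1v_2v_3v_4$, which has six ordinary double points and compactifies the norm-one torus $T=\Ker(\Res_{\kk'/\kk}\Gm\to\Gm)$; since $H^1(\rG,\Pic(V_\bkk))\ne 0$ for a smooth compactification $V\supset T$ exactly when $\rG\supset\rV_4$, the special fiber is not stably rational, and the Nicaise--Shinder specialization theorem transfers this to the generic fiber. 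Your proposal asserts instead that the examples arise when some pair of factors forms a Galois orbit of size two (the cases $\rC_4$, $\rV_4$, $\rD_4$) and that $\fS_4$ and $\fA_4$ are the cases without examples; this is the opposite of what is proved, and for $\rC_4$ the toric degeneration yields nothing, the relevant cohomological invariant of the norm-one torus of a cyclic quartic extension being zero.

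Independently of the case mismatch, the load-bearing claim in your outline --- that over the quadratic extension $L$ the torsor class $\zeta_X|_L$ ``can be computed and shown to be nonzero directly'' --- is unsubstantiated, and a statement of that strength for \emph{all} such $X$ would already prove the conjecture (indeed non-stable-rationality) for three of the five Galois types, far beyond what the paper establishes even for special members of the family. Your surrounding observations are sound and consistent with the paper: the Brauer obstruction dies because $\Pic(X_\bkk)$ is a transitive permutation module, and the polarized intermediate Jacobian is inconclusive because a one-dimensional principally polarized abelian variety is automatically a Jacobian, which is exactly why the type \type{1,1,1,1} case resists the method that settles type \type{3,3}. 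But as a record of the state of the art your text should be corrected: the known examples live precisely in the non-cyclic cases, are obtained by degeneration rather than by torsors, and concern particular members over $\kk(t)$ rather than all members of the family.
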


To explain the partial result we need to introduce some notation.
Let~$X$ be a Fano threefold of type~\type{1,1,1,1}.
As we show in Lemma~\ref{lemma:picard}, the action of the Galois group~$\Gal(\bkk/\kk)$ on~$\Pic(X_\bkk)$ 
factors through the group~$\fS_4$ that acts by permutations of the pullbacks of the point classes of the factors of the ambient~$(\PP_{\bkk}^1)^4$,
and the assumption~$\uprho(X) = 1$ means that the subgroup
\begin{equation*}
\rG_X := \Ima(\Gal(\bkk/\kk) \longrightarrow \fS_4) \subset \fS_4
\end{equation*}
is \emph{transitive},
hence belongs to the following list of (conjugacy classes) of transitive subgroups of~$\fS_4$:
\begin{equation*}
\label{eq:transitive-subgroups}
\rG_X \in \{ \fS_4, \fA_4, \rD_4, \rV_4, \rC_4 \},
\end{equation*} 
where~$\fA_4$ is the alternating subgroup, 
$\rD_4$ is the dihedral group of order~8 (a Sylow 2-subgroup in~$\fS_4$),
$\rV_4$ is the Klein group of order~$4$,
and~$\rC_4$ is the cyclic group of order~$4$.
Note that all of these groups contain~$\rV_4$ except for~$\rC_4$.

\begin{theorem}
\label{thm:x1111-non-st-rat}
Let~$\rG \subset \fS_4$ be a subgroup containing the Klein group~$\rV_4 \subset \fS_4$.
Let~$\kk$ be a field such that there is an epimorphism~$\Gal(\bkk/\kk) \twoheadrightarrow \rG$.
Then for the field of rational functions~$K = \kk(t)$ there exists a variety~$X$ over~$K$ of type~\type{1,1,1,1} 
such that~$\rG_X = \rG$, $\uprho(X) = 1$, and~$X(K) \ne \varnothing$, 
but~$X$ is not stably rational over~$K$.
\end{theorem}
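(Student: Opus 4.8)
The plan is to build the required threefold over $K=\kk(t)$ by a twisting construction and to detect its non-stable-rationality through its intermediate Jacobian torsor. The key general fact is that universal $\mathrm{CH}_0$-triviality is a stable birational invariant, so it suffices to produce an $X$ whose intermediate Jacobian torsor is nontrivial.

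\emph{The obstruction.} For a threefold $X$ of type~\type{1,1,1,1} over a field, each of the four projections $X_\bkk\to(\PP^1_\bkk)^3$ realizes $X_\bkk$ as a blow-up $\Bl_{C_i}(\PP^1_\bkk)^3$ ($1\le i\le4$) along a smooth curve $C_i$ of genus~$1$ --- the complete intersection of the two $(1,1,1)$-divisors of the associated pencil --- with exceptional divisor a $\PP^1$-bundle $\cE_i\to C_i$; the Jacobians $\mathrm{Jac}(C_i)$ are all canonically identified with $E:=\mathrm{Ab}^2$, a form of an elliptic curve, and the Galois group permutes these data through $\rG_X\subseteq\fS_4$ (Lemma~\ref{lemma:picard}; see also~\S\ref{sec:contractions}). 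Letting $L$ be the degree-$4$ \'etale algebra of the transitive $\rG_X$-set $\{1,2,3,4\}$ --- a field, as $\rG_X$ is transitive --- the Hilbert scheme of lines of $X$, read over $L$, is a genus-$1$ curve $C$ with $\mathrm{Jac}(C)=E_L$, and the restriction to $L$ of the class of the intermediate Jacobian torsor $\mathrm{CH}^2_{X/\kk}\in H^1(\kk,E)$ is $[C]\in H^1(L,E_L)$. Now if $X$ is stably rational it has universally trivial $\mathrm{CH}_0$, whence $\mathrm{CH}^2(X)\to\mathrm{CH}^2(X_\bkk)^{\Gal}$ is surjective; by the Benoist--Wittenberg analysis of the intermediate Jacobian torsor (transferring classes along the correspondences $\cE_i\subset X$) this forces $[C]=0$. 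Thus it is enough to construct, over $K=\kk(t)$, a threefold $X$ of type~\type{1,1,1,1} with $\rG_X=\rG$, $\uprho(X)=1$, $X(K)\ne\varnothing$ and $[C]\ne0$.

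\emph{The construction.} Fix an epimorphism $\Gal(\bkk/\kk)\twoheadrightarrow\rG$ and a transitive action of $\rG$ on $\{1,2,3,4\}$, let $L_0/\kk$ be the corresponding degree-$4$ \'etale $\kk$-algebra (a field, by hypothesis), and put $L:=L_0(t)$. Twisting by $L_0$, the $\rG$-variety $(\PP^1_\bkk)^4$ descends to $\bP:=\Res_{L/K}(\PP^1_L)$, a smooth projective $K$-rational fourfold with $\uprho(\bP)=1$, $\upiota(\bP)=2$ and $\bP(K)=\PP^1(L)\ne\varnothing$; among the threefolds of type~\type{1,1,1,1} with Galois group $\rG$ one finds the smooth members of the $15$-dimensional linear system $|{-}\tfrac12 K_{\bP}|$, each of which has $\rG_X=\rG$ (Galois acts on $\Pic(X_\bkk)=\ZZ^4$ through the given faithful $\rG$-action) and hence $\uprho(X)=1$, as $(\ZZ^4)^{\rG}$ has rank~$1$. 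To get $[C]\ne0$ we build the curve first: over $L=L_0(t)$ choose a non-constant elliptic curve $E$ with full rational $2$-torsion and a point of infinite order; then $E(L)$ is finitely generated (Lang--N\'eron), so $H^1(L,E)[2]=(L^{\times}/L^{\times2})^{\oplus2}/\bigl(E(L)/2E(L)\bigr)$ is infinite, and we choose a nonzero element $\xi$ in it. Using the point of infinite order, the $2$-covering $C=C_\xi$ carries three $L$-rational degree-$2$ line bundles in general position; the associated morphism embeds $C$ in $(\PP^1_L)^3$ as a $(1,1,1)\cap(1,1,1)$, and the pencil they span, viewed as a graph, is a $(1,1,1,1)$-divisor $X_L\subset(\PP^1_L)^4$, geometrically $\Bl_C(\PP^1_L)^3$. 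This $X_L$ descends to a threefold $X$ over $K$ of type~\type{1,1,1,1} with $\rG_X=\rG$ and $\uprho(X)=1$; the descent along $L/K$ is compatible with the $\rG$-twist on $\bP$ precisely because $\rV_4\subseteq\rG$. Imposing in addition that $X$ pass through a fixed point of $\bP(K)$ at which it is smooth secures $X(K)\ne\varnothing$ without disturbing anything, and $[C]=\xi\ne0$, so the obstruction above rules out stable $K$-rationality.

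The two places where the hypotheses are essential are: the transcendence of $t$, which is what makes $H^1(L,E)[2]$ large enough to contain a nonzero class (the finite group $E(L)/2E(L)$ cannot exhaust the infinite group $(L^{\times}/L^{\times2})^{\oplus2}$, while over $\kk$ itself no such room need exist); and the inclusion $\rV_4\subseteq\rG$ --- satisfied by all transitive subgroups of $\fS_4$ except $\rC_4$ --- which is what permits the curve $C$ over $L$ to be propagated $\rG$-equivariantly to a threefold over $K$. I expect the main technical obstacle to be exactly this $\rG$-equivariant bookkeeping: writing out the twisted family of $(1,1,1,1)$-forms over $L$, checking smoothness and the identification of the Hilbert scheme of lines with $C_\xi$, and verifying that the descent to $K$ respects the prescribed Galois action; it is the failure of this last step for $\rG=\rC_4$ that leaves Conjecture~\ref{conj:x1111} open in general.
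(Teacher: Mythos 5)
Your argument has a genuine gap at its core, namely the obstruction step. You assert that stable rationality of~$X$ over~$K$ would force the class~$[C]\in H^1(L,E_L)$ of the genus-one component of the Hilbert scheme of lines (read over the quartic \'etale algebra~$L$) to vanish, citing universal $\mathrm{CH}_0$-triviality and ``the Benoist--Wittenberg analysis.'' This is not proved, and it cannot work in the form you state. The Benoist--Wittenberg intermediate Jacobian torsor obstruction is indexed by \emph{Galois-invariant} curve classes; here~$\mathrm{N}_1(X_\bkk)=\bigoplus_{i=1}^4\ZZ\Lambda_i$ with~$\rG_X$ acting transitively, so the invariant classes form a rank-one group generated by~$\sum_i\Lambda_i$, and the associated torsor sees only (essentially) the corestriction~$N_{L/K}[C]$, not~$[C]$ itself. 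Worse, since~$J(X)$ is an elliptic curve, \emph{every} torsor~$T$ under it satisfies~$T\cong\Pic^1_T$ with~$\mathrm{Jac}(T)\cong J(X)$, so the whole collection~$\{(\mathrm{CH}^2_{X/K})^{d\sum\Lambda_i}\}_d$ is automatically of the form~$\{\Pic^d_\Gamma\}_d$ for a genus-one curve~$\Gamma$ over~$K$; the torsor obstruction is therefore vacuous for this family. This is precisely why the non-rationality statement for type~\type{1,1,1,1} appears in the paper only as Conjecture~\ref{conj:x1111}. (The construction half of your proposal --- the $\rG$-equivariant descent of the pair $(C_\xi,\ \text{three degree-2 pencils})$ from~$L$ to~$K$, and the claim that one can ``impose'' a smooth $K$-point without disturbing the construction --- is also left as an admitted obstacle rather than carried out.)

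The paper's actual proof is of a completely different nature: it is a degeneration argument. One takes~$Y=\Res_{\kk'/\kk}(\PP^1_{\kk'})$ for the quartic extension~$\kk'/\kk$ attached to~$\rG$, and inside the half-anticanonical system the unique $T$-invariant member through the canonical $\kk$-point, which over~$\bkk$ is the toric hypersurface~$\{u_1u_2u_3u_4=v_1v_2v_3v_4\}$ with six ordinary double points (Lemma~\ref{lemma:x1111-toric}, Proposition~\ref{prop:forms-x44}). A pencil through this member gives a family over~$\PP^1_\kk$ whose generic fiber is a smooth threefold of type~\type{1,1,1,1} over~$K=\kk(t)$; by the Nicaise--Shinder specialization theorem for stable rationality~\cite{NS}, stable rationality of the generic fiber would force stable rationality of the toric special fiber, which fails because~$H^1(\rG,\Pic(V_\bkk))\neq 0$ for a smooth compactification~$V$ of the norm-one torus~$T$ --- and it is exactly here, via Voskresenskii's computation, that the hypothesis~$\rV_4\subset\rG$ is used. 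So both the role of the parameter~$t$ (a pencil degenerating to the toric member, not a device for enlarging~$H^1(L,E)[2]$) and the role of~$\rV_4$ (a cohomological non-triviality for the torus, not a descent compatibility) are different from what you propose.
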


\subsection{The proofs}
\label{subsec:sketch} 

For (uni)rationality constructions it is natural to use $\kk$-Sarkisov links:
\begin{equation}
\label{eq:sl-general}
\vcenter{
\xymatrix{
& 
\tilde X \ar[dl]_{\sigma}\ar@{-->}^{\psi}[rr] \ar[dr]^{\phi} 
&&
\tilde X^+ \ar[dr]^{\sigma_+} \ar[dl] \ar[dl]_{\phi_+}
\\
X &&
\bar X && 
X^+,
} }
\end{equation} 
where $\sigma$ is the blowup of a $\kk$-irreducible subvariety, 
$\phi$ and $\phi_+$ are small crepant birational contractions, 
$\psi$ is a flop, and 
$\sigma_+$ is a Mori extremal contraction.
Note that such a link is completely determined by the center of the blowup~$\sigma$ --- 
the contractions and the flop are obtained by the $\kk$-Minimal Model Program applied to~$\tX$
(note that~$\uprho(\tX) = 2$, so the output of the MMP is unambiguous);
in particular the link is defined over~$\kk$.
For our purpose it is enough to consider two types of Sarkisov links:
\begin{itemize}
\item 
Sarkisov links where~$\sigma$ is the blowup of a $\kk$-point;
\item 
Sarkisov links where~$\sigma$ is the blowup of a reduced $\kk$-irreducible singular conic.
\end{itemize}
We construct the corresponding links accurately for threefolds of type~\type{4,4} in~\S\ref{sec:x44} (see Theorem~\ref{theorem:x44-links})
by using standard MMP arguments.
Of course, a similar construction could be given for other types of Fano threefolds from Table~\ref{table:fanos},
but to make the argument less tedious we use the fact that all other among these threefolds are $\kk$-forms 
of complete intersections in products of projective spaces and deduce the required (uni)rationality constructions 
from an appropriate birational transformation for a product of projective spaces.

With this goal in mind we construct in~\S\ref{sec:toric}
a toric birational transformation between the product~$(\PP^n)^r$ of projective spaces 
and a $\PP^r$-bundle over the product~$(\PP^{n-1})^r$ of smaller projective spaces, 
see Theorem~\ref{proposition:toric-link}
(in fact, we construct a birational transformation in a slightly more general situation, 
but the setup described above is the only one that we need for applications in the paper).
This theorem has a consequence of independent interest, Corollary~\ref{corollary:product-rational}, 
saying that a $\kk$-form of a product of projective spaces is $\kk$-rational if and only if it has a $\kk$-point.
This corollary immediately gives the required rationality construction for Fano threefolds of type~\type{1,1,1} (Corollary~\ref{cor:x111}),
and with a bit of more work provides rationality constructions for threefolds of types~\type{2,2} (Proposition~\ref{prop:x22}) 
and~\type{2,2,2} (Proposition~\ref{prop:x222})
as well as a unirationality construction for threefolds of type~\type{1,1,1,1} (Proposition~\ref{prop:x1111}). 

In the case of a variety~$X$ of type~\type{3,3} with a $\kk$-point~$x$ 
we again use the toric transformation of Theorem~\ref{proposition:toric-link}
to construct a birational equivalence of~$X$ with a divisor~$X^+$ of bidegree~$(2,2)$ in a $\kk$-form of~$\PP^2 \times \PP^2$.
If~$x$ lies on a $\bkk$-line in~$X$, we check that~$X^+$ contains a $\kk$-form of the quadric surface~$\PP^1 \times \PP^1$
and use this to deduce unirationality of~$X$ (Proposition~\ref{prop:x33}).
If~$x$ does not lie on a line, we check in Proposition~\ref{prop:conic-bundle-x33}
that~$X^+$ described above is, in fact, the mid-point of a Sarkisov link,
that ends with a conic bundle over~$\PP^2$ which has a smooth quartic curve~$\Gamma \subset \PP^2$ as discriminant.
We also check that the discriminant double covering~$\tilde\Gamma \to \Gamma$ 
associated to
this conic bundle
is trivial over a quadratic extension~$\kk'$ of the base field~$\kk$ but nontrivial over~$\kk$,
and that the conic bundle has a rational section over~$\kk'$.
We check in Theorem~\ref{prop:non-rationality-conic-bundle} that these geometric properties 
characterize the non-rational conic bundles constructed by Benoist and Wittenberg in~\cite{BW}
and deduce in Corollary~\ref{cor:x33} non-rationality of~$X$ from~\cite[Proposition~3.4]{BW}. 

In the last part of the paper, \S\ref{sec:x1111}, we discuss Fano threefolds of type~\type{1,1,1,1}.
To prove Theorem~\ref{thm:x1111-non-st-rat} we use a degeneration technique.
Namely we construct a family of Fano threefolds of type~\type{1,1,1,1} over~$\PP^1_\kk$
with special fiber a singular toric threefold (with ordinary double points) 
which is well-known not to be stably rational.
Since stable rationality is specialization-closed by a result of Nicaise and Shinder~\cite{NS}, 
we conclude that the general fiber of the constructed family is also not stably rational.

\subsection*{Acknowledgements.} 

We would like to thank Sergey Gorchinskiy, Zhenya Shinder
and Costya Shramov for useful discussions. We are also grateful to the anonymous
referee for correcting a mistake in the original statement of Theorem~\ref{thm:x1111-non-st-rat} 1.4 and for useful
comments. This work was performed at the Steklov International Mathematical Center
and supported by the Ministry of Science and Higher Education of the Russian Federation
(agreement no. 075-15-2022-265). The paper was also partially supported by the HSE
University Basic Research Program.


\section{Extremal contractions and Hilbert schemes of curves}
\label{sec:contractions} 

In this section we describe the geometry of Fano threefolds of index~$1$ from Table~\ref{table:fanos}.
In particular, we describe their extremal contractions over~$\bkk$ 
as well as their Hilbert schemes of lines and conics, and Hilbert schemes of twisted cubic curves passing through a fixed point.

To start with, recall that for most Fano threefolds the anticanonical linear system is very ample and 
the anticanonical image is an intersection of quadrics;
in fact Fano threefolds which do not enjoy these nice properties (hyperelliptic and trigonal ones)
have been classified and listed in~\cite{Iskovskikh-1980-Anticanonical}.
It is easy to check that Fano threefolds from Table~\ref{table:fanos} are not in this list;
therefore we obtain 

\begin{theorem}
[{\cite[Chapter~2, Theorems~2.2 and~3.4]{Iskovskikh-1980-Anticanonical}}]
\label{th:bht}
Let $X$ be a Fano threefold from Table~\textup{\ref{table:fanos}}.
The anticanonical class $-K_X$ is very ample and the anticanonical image 
\begin{equation*}
X = X_{2g-2} \subset \PP^{g+1}
\end{equation*}
is an intersection of quadrics \textup(as a scheme\textup), where $g = 
\g(X)$.
\end{theorem}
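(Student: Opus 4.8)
The plan is to verify that none of the six families in Table~\ref{table:fanos} appears in Iskovskikh's list of hyperelliptic or trigonal Fano threefolds, and then invoke \cite[Chapter~2, Theorems~2.2 and~3.4]{Iskovskikh-1980-Anticanonical}, which asserts that for every Fano threefold outside that list the anticanonical class is very ample and the anticanonical image is cut out by quadrics. Recall that a Fano threefold is called \emph{hyperelliptic} if $|-K_X|$ is base-point free but the anticanonical morphism is not an embedding (it is then a double cover of a variety of minimal degree), and \emph{trigonal} if $-K_X$ is very ample but the anticanonical image is not an intersection of quadrics. By Iskovskikh's classification these exceptional cases occur only for small values of the genus: the hyperelliptic threefolds of index~$1$ have $\g(X) \in \{2,3,4,5,6\}$ (roughly, double covers of $\PP^3$, of quadrics, and of the Veronese cone / of cones over the Veronese surface), while the trigonal ones have $\g(X) \in \{3,4,5\}$ together with one sporadic case with $\g = 6$.

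The first step is therefore purely numerical: read off $\g(X)$ from Table~\ref{table:fanos}. The six families have genera $11, 13, 15, 16, 25, 25$, all of which exceed the bound $\g \le 6$ that delimits Iskovskikh's exceptional list. Hence none of them is hyperelliptic or trigonal, and the cited theorems apply verbatim. This gives immediately that $-K_X$ is very ample and that the anticanonical image $X = X_{2\g-2} \subset \PP^{\g+1}$ is scheme-theoretically an intersection of quadrics.

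One subtlety to address is that Iskovskikh works over an algebraically closed field, whereas here $\kk$ need not be closed. However, very ampleness of $-K_X$ and the property that the homogeneous ideal of the anticanonical image is generated in degree~$2$ (equivalently, that the image is scheme-theoretically cut out by the quadrics through it) are both properties that can be checked after the faithfully flat base change $\kk \hookrightarrow \bkk$: the dimensions of the graded pieces $H^0(X, \OOO_X(-nK_X))$ and of the degree-$n$ part of the ideal are insensitive to field extension, so the conclusion descends from $X_\bkk$ to $X$. Thus it suffices to know the geometric statement, which is exactly what the classification provides.

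The only genuine point requiring a word is the claim, implicit in the theorem, that the families of Table~\ref{table:fanos} really are Fano threefolds with $-K_X$ primitive or not — but this is irrelevant, since the dichotomy hyperelliptic/trigonal versus ``very ample and intersection of quadrics'' is governed solely by the geometry of $|-K_X|$, and the genus bound already excludes the bad cases regardless of the index $\upiota(X_\bkk)$. I expect no real obstacle here: the proof is the one-line observation that $\g(X) \ge 11 > 6$ for all entries of the table, combined with a citation. The mild care needed is only in matching the geometric description of each family in the last column of Table~\ref{table:fanos} against the explicit lists in \cite[Chapter~2]{Iskovskikh-1980-Anticanonical} to confirm that they indeed do not appear there, which one checks directly from the genus.
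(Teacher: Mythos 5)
Your proposal is correct and follows essentially the same route as the paper: the paper likewise observes that the Fano threefolds failing very ampleness of $-K_X$ or the intersection-of-quadrics property (the hyperelliptic and trigonal ones) are classified in \cite{Iskovskikh-1980-Anticanonical}, notes that it is easy to check the families of Table~\ref{table:fanos} do not occur in those lists, and then cites Chapter~2, Theorems~2.2 and~3.4. Your genus-based shortcut and the descent remark for non-closed fields are harmless elaborations of the paper's one-line check, though be aware the precise genus cutoff for the exceptional lists is a bit larger than $6$ (for instance $\PP^1\times S_2$, with $S_2$ a degree-$2$ del Pezzo surface, is hyperelliptic of genus $7$), which does not affect the conclusion since every entry of the table has $\g(X)\ge 11$.
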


\subsection{Contractions over~$\bkk$}
\label{subsec:contractions}

Assume~$X$ is a Fano threefold of index~$1$ from Table~\ref{table:fanos}, i.e., 
a threefold of either of types~\type{2,2,2}, \type{4,4}, \type{3,3}, 
\type{1,1,1,1}.
Then there is an embedding 
\begin{equation}
\label{eq:x-y}
X_\bkk \subset Y \cong (\PP^n)^r, 
\end{equation}
(we will see in Lemma~\ref{lemma:picard} that~$r = \uprho(X_\bkk)$, hence the notation), where
\begin{equation*}
(n,r) = (2,3),\ (4,2),\ (3,2),\ \text{or}\ (1,4).
\end{equation*}
Indeed, for types~\type{2,2,2}, \type{3,3}, \type{1,1,1,1} this holds by definition 
and for type~\type{4,4} this follows from the following 

\begin{lemma}
\label{lemma:X44}
Let $\Gamma_1 \subset Q_1 \subset \PP^4$ be a linearly normal smooth rational quartic curve in a smooth quadric threefold.
If~$H_1$ is the hyperplane class of~$Q_1$ then the linear system~$|2H_1 - \Gamma_1|$ of quadrics through~$\Gamma_1$ 
defines a birational morphism $\pi_2 \colon \Bl_{\Gamma_1}Q_1 \to Q_2 \subset \PP^4$ onto 
another smooth quadric threefold~$Q_2$ 
and this morphism is itself the blowup of a linearly normal smooth rational 
quartic curve~$\Gamma_2 \subset Q_2$, 
so that
\begin{equation*}
\Bl_{\Gamma_1}(Q_1) \cong \Bl_{\Gamma_2}(Q_2).
\end{equation*}
Moreover, if~$X$ is a Fano threefold of type~\type{4,4} there is a natural embedding 
\begin{equation*}
X_\bkk \xhookrightarrow{\hspace{1.1em}} Q_1 \times Q_2 \subset \PP^4 \times \PP^4
\end{equation*}
such that~$-K_{X_\bkk}$ is the sum of the pullbacks of the hyperplane classes of 
the factors.
\end{lemma}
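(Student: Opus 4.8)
The plan is to analyze the geometry of the blowup $\Bl_{\Gamma_1}(Q_1)$ directly. First I would compute the relevant numerics. On $\tilde Q := \Bl_{\Gamma_1}(Q_1)$ with exceptional divisor $E_1$ and $\sigma_1 \colon \tilde Q \to Q_1$, we have $-K_{\tilde Q} = \sigma_1^*(3H_1) - E_1$. I want to verify that the linear system $|2H_1 - \Gamma_1| = |\sigma_1^*(2H_1) - E_1|$ is base point free. A linearly normal rational quartic curve $\Gamma_1 \subset \PP^4$ spans $\PP^4$ and has degree $4$, so $h^0(\PP^4, \mathscr{I}_{\Gamma_1}(2)) = \binom{6}{2} - (2\cdot 4 + 1) = 15 - 9 = 6$; intersecting with $Q_1$ one checks $h^0(Q_1, \mathscr{I}_{\Gamma_1/Q_1}(2)) = 6$ as well (the quadric $Q_1$ through $\Gamma_1$ accounts for the drop from the $\PP^4$-count by the same token $15-1 = 14$ vs. ... ), so the map $\pi_2$ goes to $\PP^5$ — but its image lies on a quadric, giving $Q_2 \subset \PP^4$. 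Base-point-freeness: a base point would be a point where all quadrics through $\Gamma_1$ meet $Q_1$; since $\Gamma_1$ is projectively normal and cut out by quadrics (a rational normal quartic in its span is an intersection of quadrics), the only such points lie on $\Gamma_1$ itself, and the blowup resolves these, so $|\sigma_1^*(2H_1) - E_1|$ is base point free.

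Next I would identify the morphism $\pi_2$ as a blowup. Since $(\sigma_1^*(2H_1)-E_1)^3 = 8H_1^3 - 3\cdot 4H_1 \cdot (\text{normal bundle degree stuff}) + \ldots$, a direct intersection computation (using $H_1^3 = 2$, $E_1^3 = -\deg N_{\Gamma_1/Q_1} = -(2g-2 + (-K_{Q_1})\cdot\Gamma_1) \cdot(-1)$, etc., and $g(\Gamma_1)=0$, $(-K_{Q_1})\cdot\Gamma_1 = 3H_1\cdot\Gamma_1 = 12$) yields $(\sigma_1^*(2H_1)-E_1)^3 = 2$, matching $\deg Q_2 = 2$; and one checks the class $L := \sigma_1^*(2H_1) - E_1$ satisfies $L^2 \cdot E_1 = 4$ — so $\pi_2$ contracts $E_1$ onto a curve $\Gamma_2$ of degree $4$ in $Q_2$. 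To see $\pi_2$ is the blowup of $\Gamma_2$ (and not something worse), I would invoke the standard MMP/extremal-ray description on the threefold $\tilde Q$ of Picard rank $2$: the two extremal contractions of $\tilde Q$ are $\sigma_1$ and $\pi_2$; since $\rho(\tilde Q) = 2$ and $-K_{\tilde Q}$ is $\sigma_1^*(3H_1) - E_1$, the second ray $R$ has $-K_{\tilde Q}\cdot R > 0$ (Fano-type fibration or divisorial), and the numerics force it to be a divisorial contraction contracting $E_1$ to a smooth curve, which by the classification of smooth blowup contractions of threefolds is $\Bl_{\Gamma_2}$; smoothness of $\Gamma_2$ and of $Q_2$ then follows because $\tilde Q$ is smooth and the contraction is the inverse of a smooth blowup. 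Linear normality of $\Gamma_2$ and rationality ($\Gamma_2 \cong E_1 \to \Gamma_1 \cong \PP^1$, so $\Gamma_2$ is rational) come for free, and $\Gamma_2$ being a linearly normal quartic in $Q_2 \subset \PP^4$ then matches the hypothesis type. The resulting isomorphism $\Bl_{\Gamma_1}(Q_1) \cong \Bl_{\Gamma_2}(Q_2)$ is then tautological.

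For the final claim, given $X$ of type \type{4,4}, i.e., $X = \Bl_{\Gamma_1} Q_1$ over $\bkk$, I would define the map to $Q_1 \times Q_2$ as $(\sigma_1, \pi_2)$. This is a closed embedding since $\sigma_1$ already embeds $X \setminus E_1$ and $\pi_2$ separates the fibers of $\sigma_1$ over $\Gamma_1$ (it embeds them as the fibers of $\Bl_{\Gamma_2} \to \Gamma_2$, which are $\PP^1$'s mapping isomorphically to lines... more precisely the projectivized normal directions), and injectivity on tangent vectors is checked the same way. Finally $-K_X = \sigma_1^*(3H_1) - E_1$; using $E_1 = \sigma_1^*(2H_1) - \pi_2^*H_2$ (from $\pi_2^*H_2 = \sigma_1^*(2H_1) - E_1$) we get $-K_X = \sigma_1^*(3H_1) - \sigma_1^*(2H_1) + \pi_2^*H_2 = \sigma_1^*H_1 + \pi_2^*H_2$, which is exactly the sum of the pullbacks of the two hyperplane classes, as claimed.

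The main obstacle I anticipate is the clean identification of $\pi_2$ as a \emph{smooth} blowup rather than merely a divisorial contraction to \emph{some} curve: one must rule out the contracted curve being singular or the image quadric being singular. I would handle this by leaning on the smoothness of $\tilde Q = \Bl_{\Gamma_1}Q_1$ together with the classification of divisorial extremal contractions of smooth threefolds (Mori's theorem): the only type compatible with $E_1 \cong \PP(N_{\Gamma_1/Q_1})$ being a $\PP^1$-bundle over a smooth rational curve and with the discrepancy/intersection numbers computed above is the blowup of a smooth curve in a smooth threefold. A secondary technical point is the dimension count $h^0(Q_1,\mathscr{I}_{\Gamma_1/Q_1}(2)) = 6$, which requires knowing $\Gamma_1$ is projectively normal and scheme-theoretically cut out by quadrics in $\PP^4$ — standard for the rational normal quartic, and stable under the hypothesis that $\Gamma_1$ is linearly normal of degree $4$ spanning $\PP^4$.
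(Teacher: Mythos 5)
Your proposal follows essentially the same route as the paper's proof: base-point-freeness of $|2H_1-E_1|$ from the fact that $\Gamma_1$ is cut out by quadrics, the intersection number $(2H_1-E_1)^3=2$ to identify the image as a quadric threefold, Mori's classification of extremal contractions of smooth threefolds to conclude that $\pi_2$ is the blowup of a smooth linearly normal rational quartic $\Gamma_2$ in a smooth $Q_2$, and the identity $H_1+(2H_1-E_1)=3H_1-E_1=-K$ for the final claim. The one slip is your dimension count: the equation of $Q_1$ is itself one of the six quadrics through $\Gamma_1$ and restricts to zero on $Q_1$, so $h^0(Q_1,\cI_{\Gamma_1/Q_1}(2))=5$, the linear system $|2H_1-E_1|$ is $4$-dimensional, and $\pi_2$ maps directly to $\PP^4$ (rather than to $\PP^5$ with the image ``lying on a quadric''); the rest of your argument is unaffected by this correction.
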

\begin{proof}
The curve~$\Gamma_1$ is an intersection of six quadrics in~$\PP^4$; therefore it is an intersection of five quadrics in~$Q_1$. 
Hence, if~$E_1$ is the exceptional divisor of the blowup~$\pi_1 \colon X_{\bkk} \to Q_1$ 
and~$H_1$ is the pullback of the hyperplane class of~$Q_1$, 
the linear system~$|2H_1-E_1|$ on~$X_\bkk$ is 4-dimensional and base point free.
Therefore, this linear system defines a morphism~$\pi_2 \colon \Bl_{\Gamma_1}(Q_1)\to \PP^4$;
moreover, standard intersection theory gives~$(2H_1 - E_1)^3 = 2$.
Hence the image of~$\pi_2$ (which is not contained in a hyperplane by definition)
is a quadric~\mbox{$Q_2 \subset \PP^4$} and~$\pi_2$ is birational. 
Since $-K_{X_\bkk}$ is ample on the fibers of~$\pi_2$ and $\uprho(X_\bkk) = 2$, 
we see that~$\pi_2$ is an extremal Mori contraction.
By \cite{Mori-1982} the quadric~$Q_2$ is smooth and~$\pi_2$ is the blowup of a curve which must be a linearly normal smooth rational quartic curve.
For the last statement just note that~$H_1 + (2H_1 - E_1) = 3H_1 - E_1$ is the anticanonical class of~$X_\bkk$.
\end{proof}

We denote by~$H_i$, $1 \le i \le r$, the pullbacks to~$Y = (\PP^n)^r$ of the hyperplane classes of the factors and,
abusing the notation, also their restrictions to~$X_\bkk$ via the embedding~\eqref{eq:x-y}.

\begin{lemma}
\label{lemma:picard-simple}
If~$X$ is a threefold of either of 
types~\type{2,2,2}, \type{4,4}, \type{3,3}, \type{1,1,1,1} then
the Picard group~$\Pic(X_\bkk)$ is freely generated by the classes~$H_i$:
\begin{equation*}
\Pic(X_\bkk) = \bigoplus_{i=1}^r \ZZ H_i.
\end{equation*}
Moreover,
\begin{equation}
\label{eq:kx}
-K_{X_\bkk} = H := H_1 + \dots + H_r;
\end{equation}
\end{lemma}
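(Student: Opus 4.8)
The plan is to compute $\Pic(X_\bkk)$ by comparing it with $\Pic(Y)$ via the Lefschetz-type restriction map, and then to pin down $-K_{X_\bkk}$ by adjunction. For the ambient variety $Y = (\PP^n)^r$ we have $\Pic(Y) = \bigoplus_{i=1}^r \ZZ H_i$, and by the Künneth formula together with $\rh^{1,0}(\PP^n) = \rh^{2,0}(\PP^n) = 0$ one gets $\rh^{1,0}(Y) = \rh^{2,0}(Y) = 0$, so $\Pic(Y) \cong \rH^2(Y,\ZZ)$ is a free abelian group of rank $r$ generated by the $H_i$. First I would record that $X_\bkk$ is a smooth complete intersection in $Y$ of ample divisors (of the multidegrees given in Table \ref{table:fanos}, and in the type \type{4,4} case cut out inside $Q_1 \times Q_2 \subset \PP^4\times\PP^4$ by Lemma \ref{lemma:X44}, where one argues on $Q_1\times Q_2$ in place of $(\PP^4)^2$), hence of dimension $\ge 3$. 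By the Lefschetz hyperplane theorem applied iteratively to this complete intersection, the restriction map $\rH^2(Y,\ZZ) \to \rH^2(X_\bkk,\ZZ)$ is an isomorphism and $\rH^1(X_\bkk,\ZZ) = 0$; since moreover $\rh^{2,0}(X_\bkk) = 0$ for all four types (the Hodge diamond of a Fano threefold has $\rh^{2,0} = 0$), we get $\Pic(X_\bkk) \cong \rH^2(X_\bkk,\ZZ)$. Therefore restriction induces an isomorphism $\Pic(Y) \xrightarrow{\ \sim\ } \Pic(X_\bkk)$, which is exactly the assertion $\Pic(X_\bkk) = \bigoplus_{i=1}^r \ZZ H_i$. (In the type \type{4,4} case one uses instead $\Pic(Q_1\times Q_2) = \ZZ H_1 \oplus \ZZ H_2$, which holds because each $Q_j$ is a smooth quadric threefold with $\Pic(Q_j) = \ZZ H_j$.)

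It remains to identify the anticanonical class. By the adjunction formula, $K_{X_\bkk}$ is the restriction of $K_Y + \cN$, where $\cN$ is the sum of the classes of the divisors cutting out $X_\bkk$ in $Y$ (resp. in $Q_1\times Q_2$). A direct bookkeeping of the multidegrees in each of the four cases — $K_{(\PP^n)^r} = -(n+1)(H_1+\dots+H_r)$, and $K_{Q_1\times Q_2} = -3H_1 - 3H_2$ — against the multidegrees of the defining divisors listed in Table \ref{table:fanos} gives in every case $K_{X_\bkk} = -(H_1 + \dots + H_r)\big|_{X_\bkk}$, i.e. $-K_{X_\bkk} = H := H_1 + \dots + H_r$. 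For type \type{4,4} this was already noted at the end of the proof of Lemma \ref{lemma:X44}. This establishes \eqref{eq:kx} and completes the proof; in particular, since $-K_{X_\bkk}$ is ample, so is $H$, and the $H_i$ are the classes of the four (resp. $r$) distinguished contractions of $X_\bkk$.

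The only genuinely delicate point is the application of the Lefschetz hyperplane theorem when $X_\bkk$ has dimension exactly $3$ and is cut out by several divisors: one must take the intermediate complete intersections (of dimensions $3, 4, \dots$) and check that each successive hyperplane section is smooth and ample, so that the Lefschetz isomorphism in degree $2$ propagates down to $X_\bkk$ — this is where smoothness of $X_\bkk$ (given) and the ampleness of the relevant restricted divisor classes on the intermediate loci (clear, as restrictions of ample classes on $(\PP^n)^r$ resp. $Q_1\times Q_2$) are used. I expect the rest to be entirely routine, being a small finite check over the four families in Table \ref{table:fanos}.
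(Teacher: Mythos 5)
Your overall strategy --- Lefschetz for the generation statement, adjunction for \eqref{eq:kx}, and Lemma~\ref{lemma:X44} for type~\type{4,4} --- is exactly the paper's (its proof is a two-line appeal to the same ingredients), and your treatment of types~\type{3,3}, \type{1,1,1,1}, and~\type{4,4} is fine. However, there is a genuine gap for type~\type{2,2,2}. You assert that $X_\bkk$ is a complete intersection of \emph{ample} divisors and that ampleness of the relevant restricted classes on the intermediate loci is ``clear, as restrictions of ample classes on $(\PP^n)^r$''. For type~\type{2,2,2} the defining divisors have multidegrees $(0,1,1)$, $(1,0,1)$, $(1,1,0)$, and none of these classes is ample on $(\PP^2)^3$: for instance $H_2+H_3$ is pulled back along the projection to $\PP^2_{(2)}\times\PP^2_{(3)}$ and satisfies $(H_2+H_3)^5=0$, so it is not even big. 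The problem persists at every intermediate stage --- e.g.\ the restriction of $H_1+H_3$ to the divisor $W_{12}\times\PP^2_{(3)}$ of multidegree $(1,1,0)$ is trivial on the curves (fibre of $W_{12}\to\PP^2_{(1)}$)$\,\times\,$(point) --- so no ordering of the three divisors makes the iterated ample-Lefschetz argument run, and the bundle $\cO(0,1,1)\oplus\cO(1,0,1)\oplus\cO(1,1,0)$ is not ample either, so the Sommese--Lefschetz theorem for ample vector bundles does not apply. This is precisely the ``delicate point'' you flagged as clear, and it fails in one of the four cases. (A smaller quibble: for type~\type{4,4} the threefold is not presented as a complete intersection of divisors in $Q_1\times Q_2$, so the parenthetical complete-intersection argument there does not parse; but your fallback to Lemma~\ref{lemma:X44}, which is what the paper does, already settles that case.)

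The statement is of course still true for type~\type{2,2,2}, but it needs a different argument. One route is to compute $H^2$ step by step using the actual geometry rather than ampleness: the divisor of multidegree $(1,1,0)$ is $W_{12}\times\PP^2_{(3)}$ with $W_{12}$ the flag threefold (K\"unneth plus Lefschetz inside $\PP^2\times\PP^2$, where $(1,1)$ \emph{is} ample); its intersection with the divisor of multidegree $(1,0,1)$ is a $\PP^1$-bundle over $W_{12}$ (using nondegeneracy of the corresponding bilinear form, which follows from smoothness of $X_\bkk$ as in the proof of Lemma~\ref{lemma:pi-i}); and for the final step one can use that $\pi_{1,2}\colon X_\bkk\to W_{12}$ is a birational morphism of smooth threefolds, so that $\Pic(X_\bkk)=\ZZ H_1\oplus\ZZ H_2\oplus\bigl(\bigoplus_j\ZZ E_j\bigr)$ over its exceptional prime divisors, and then verify there is exactly one such divisor and that $H_3$ completes $H_1,H_2$ to a basis. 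Alternatively one can simply quote the Mori--Mukai description of this family. Your adjunction computation of $-K_{X_\bkk}$ is correct in all four cases and is unaffected by this issue.
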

\begin{proof}
For type~\type{4,4} this follows from Lemma~\ref{lemma:X44}, 
and for the other types the first statement follows from the Lefschetz Hyperplane Theorem 
and the second from adjunction and the description of Table~\ref{table:fanos}.
\end{proof}

For each subset $I \subset \{1,\dots,r\}$ we consider the projection
\begin{equation}
\label{eq:pi-i}
\pi_I \colon X_\bkk \xhookrightarrow{\hspace{1.1em}} 
Y \longrightarrow \prod_{i \in I} \PP^n \cong (\PP^n)^{|I|}.
\end{equation}
Especially useful are the morphisms~$\pi_I$ for $I$ of cardinality~$r - 1$, 
so we introduce the notation
\begin{equation*}
\whi := \{1,\dots,r\} \setminus \{i\}
\end{equation*}
and write 
\begin{equation}
\label{eq:pi-hi}
\pi_\whi \colon X_\bkk \longrightarrow (\PP^n)^{r-1}
\end{equation} 
for the corresponding morphisms.
Note that in the case~$r = 2$ we have~$\whi = \{ 3 - i \}$, so these morphisms are the same as morphisms~$\pi_{3-i}$.
The next lemma describes~$X_\bkk$ in terms of the~$\pi_\whi$.

\begin{lemma}
\label{lemma:pi-i}
The morphism~$\pi_\whi$ is birational onto its image and the exceptional divisor~$E_\whi$ of~$\pi_\whi$ is irreducible.
More precisely, the morphism~$\pi_\whi$ identifies~$X_\bkk$ as follows:
\begin{enumerate}
\item\label{lemma:pi-i:222}
if $X$ has type~\type{2,2,2} the map~$\pi_\whi$ is the blowup of a smooth divisor~$W_\whi \subset \PP^2 \times \PP^2$ of bidegree~$(1,1)$ 
along a smooth rational curve~$\Gamma_\whi \subset W_\whi$ of bidegree~$(2,2)$ whose projections to the factors~$\PP^2$ are closed embeddings;
the divisor class~$H_i$ is equal to~$\sum_{j\ne i}H_j - E_\whi$;
\item\label{lemma:pi-i:44}
if $X$ has type~\type{4,4} the map~$\pi_i$ is the blowup of a $3$-dimensional 
quadric~$Q_{i}$ along a smooth linearly normal rational curve~$\Gamma_{i} \subset Q_{i}$ of degree~$4$;
the divisor class~$H_{\whi}$ is equal to~$2H_{i} - E_i$;
\item\label{lemma:pi-i:33}
if $X$ has type~\type{3,3} the map~$\pi_i$ is the blowup of~$\PP^3$ along a 
smooth curve~$\Gamma_{i} \subset \PP^3$ of genus~$3$ and degree~$6$;
the divisor class~$H_{\whi}$ is equal to~$3H_{i} - E_i$; 
\item
\label{lemma:pi-i:x1111}
if $X$ has type~\type{1,1,1,1} the map~$\pi_\whi$ is the blowup of~$\PP^1 \times \PP^1 \times \PP^1$ 
along a smooth elliptic curve~$\Gamma_\whi \subset (\PP^1)^3$ of multidegree~$(2,2,2)$;
the divisor class~$H_i$ is equal to~$\sum_{j\ne i}H_j - E_\whi$.
\end{enumerate}
\end{lemma}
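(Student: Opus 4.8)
The plan is to prove all four cases uniformly from the description of $X_\bkk$ as a complete intersection (or blowup) given in Table~\ref{table:fanos} and Lemma~\ref{lemma:X44}, together with the intersection numbers already computed in Lemma~\ref{lemma:picard-simple}. First I would treat the case $r=2$, i.e.\ types~\type{4,4} and~\type{3,3}, where $\whi = \{3-i\}$ and the claim is that $\pi_i = \pi_{\{i\}}$ is a single projection $X_\bkk \to \PP^n$. For type~\type{4,4} the statement is essentially Lemma~\ref{lemma:X44}: the embedding $X_\bkk \hookrightarrow Q_1\times Q_2$ realizes $\pi_1$ as the blowup $\Bl_{\Gamma_1}Q_1 \to Q_1$ of the quartic curve, and symmetrically for $\pi_2$; the divisor-class identity $H_{\whi} = 2H_i - E_i$ is just the formula $|2H_1 - E_1|$ defining $\pi_2$ in that lemma. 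For type~\type{3,3}, I would observe that projection $p_i\colon \PP^3\times\PP^3 \to \PP^3$ restricted to the intersection of three $(1,1)$-divisors is, generically, an isomorphism on fibers: a general fiber $\{x\}\times\PP^3$ meets the three $(1,1)$-divisors in three hyperplanes, whose intersection is a single point, so $\pi_i$ is birational onto $\PP^3$; the locus where the three hyperplanes fail to be independent is exactly a curve $\Gamma_i\subset\PP^3$, and the fiber over a point of $\Gamma_i$ is a line (when the three hyperplanes share a common line). Then $\pi_i$ contracts an irreducible divisor $E_i$ to $\Gamma_i$, and by the classification of Mori contractions from a smooth threefold (\cite{Mori-1982}, as used in the proof of Lemma~\ref{lemma:X44}) — since $\uprho(X_\bkk)=2$ and $-K_{X_\bkk}$ is $\pi_i$-ample — $\pi_i$ is the blowup of a smooth curve $\Gamma_i\subset\PP^3$. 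The invariants $g(\Gamma_i)=3$, $\deg\Gamma_i = 6$ and the class identity $H_{\whi} = 3H_i - E_i$ follow by computing $(H_i)^3$, $H_i^2\cdot H_{\whi}$, etc.\ on $X_\bkk$ using Lemma~\ref{lemma:picard-simple} and the blowup formulas $E_i^3 = -\deg N_{\Gamma_i/\PP^3}$, $E_i^2\cdot H_i = -\deg\Gamma_i$, together with adjunction $-K_{X_\bkk} = 4H_i - E_i = H_i + H_{\whi}$.

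Next I would handle $r=3$ (type~\type{2,2,2}) and $r=4$ (type~\type{1,1,1,1}) together, since the combinatorics is the same. Here $\pi_\whi\colon X_\bkk \to (\PP^n)^{r-1}$ is the projection forgetting the $i$-th factor, and its image is cut out by the multidegree conditions not involving the $i$-th coordinate. The key point is that a general fiber of $\pi_\whi$ over a point of its image is a single point of $\PP^n$: the equations of $X_\bkk$ that do involve the $i$-th coordinate become, after fixing the other coordinates, a system of $n$ linear forms on the $i$-th factor $\PP^n$ (for type~\type{2,2,2}, $n=2$ and there are two $(0,1,1)$-type divisors involving coordinate $i$ — wait, each of the three divisors has a $0$ in exactly one slot, so two of them involve slot $i$; for type~\type{1,1,1,1}, $n=1$ and the single $(1,1,1,1)$-divisor gives one linear form, cutting out a point in $\PP^1$). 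So $\pi_\whi$ is birational onto its image, with image $W_\whi \subset (\PP^n)^{r-1}$ the zero locus of the "missing" equation(s). The exceptional locus is where these linear forms fail to be independent (for type~\type{2,2,2}: where the two hyperplanes in $\PP^2$ coincide; for type~\type{1,1,1,1}: where the single linear form vanishes identically), which defines a curve $\Gamma_\whi\subset W_\whi$; over $\Gamma_\whi$ the fiber jumps to a $\PP^1$ (type~\type{2,2,2}) or a $\PP^1$ (type~\type{1,1,1,1}), so $E_\whi$ is an irreducible divisor contracted to $\Gamma_\whi$. Smoothness of $W_\whi$ and $\Gamma_\whi$ plus the identification of $\pi_\whi$ as a blowup again follows from the smooth-threefold Mori-contraction classification, using $\uprho(X_\bkk) = r$ restricted appropriately — more precisely, since $X_\bkk \to W_\whi$ has relative Picard rank $1$ and is crepant-free with $-K$-positive curves contracted, it is a divisorial contraction of the blowup type. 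Then the multidegree of $\Gamma_\whi$ and the class identity $H_i = \sum_{j\ne i}H_j - E_\whi$ are extracted from intersection numbers on $X_\bkk$: e.g.\ $H_i\cdot H_j\cdot H_k$ for distinct $j,k\ne i$ gives the multidegree, and $-K_{X_\bkk} = \sum H_j = \pi_\whi^*(-K_{W_\whi}) - E_\whi$ combined with the known anticanonical class of the product pins down the coefficient of $E_\whi$.

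For the arithmetic/geometric invariants of $\Gamma_\whi$ (genus $1$ in type~\type{1,1,1,1}, rationality in type~\type{2,2,2}, and that the projections to the $\PP^n$-factors are embeddings): in the elliptic case, the curve $\Gamma_\whi \subset (\PP^1)^3$ of multidegree $(2,2,2)$ is an anticanonical divisor on $(\PP^1)^3$ restricted — more simply, adjunction on the surface $W_\whi$ (for type~\type{2,2,2}) or directly on $(\PP^1)^3$ gives $2g(\Gamma)-2 = \Gamma\cdot(\Gamma + K)$ which I would compute to be $0$; for type~\type{2,2,2} the analogous computation on the del Pezzo surface $W_\whi$ of degree $6$ gives a rational curve. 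That the projections $\Gamma_\whi \to \PP^n$ are closed embeddings is an open condition which I would verify either by the explicit genericity of $X_\bkk$ or, more robustly, by noting that a projection failing to be an embedding would force the corresponding $\pi_I$ with $|I|$ smaller to drop dimension on a divisor, contradicting that $X_\bkk$ is Fano of the stated index. The main obstacle I anticipate is not any single computation but ensuring the Mori-contraction argument is applied correctly in the $r=3,4$ cases, where $\pi_\whi$ is a contraction to a variety of Picard rank $r-1 > 1$ rather than to a Fano of rank $1$ — one must isolate the relative Picard rank $1$ statement and check that the exceptional divisor is genuinely irreducible (not a union of components permuted over $\bkk$) and that the contraction is of divisorial, not small or fibering, type; this is where I would spend the most care, likely by exhibiting the contracted curves explicitly as the fiber-$\PP^1$'s described above and computing $(-K_{X_\bkk})\cdot(\text{fiber}) = 1$ to confirm the $(2,1)$-type blowdown in Mori's list.
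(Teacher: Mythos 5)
Your proposal is correct and follows essentially the same route as the paper: establish birationality with linear-space fibers from the complete-intersection description (citing Lemma~\ref{lemma:X44} for type~\type{4,4}), use relative Picard number~$1$ and ampleness of~$-K_{X_\bkk}$ to see~$\pi_\whi$ is an extremal Mori contraction, invoke Mori's classification to conclude it is the blowup of a smooth curve (excluding the point case via the degree of~$-K$ on the fiber), and extract the divisor classes and invariants of~$\Gamma_\whi$ by intersection theory and adjunction. The only cosmetic difference is that the paper checks smoothness of the target directly as an input to Mori's theorem, while you recover it from the classification of divisor-to-curve contractions; both are fine.
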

\begin{proof}
Part~\ref{lemma:pi-i:44} is proved in Lemma~\ref{lemma:X44}.
So, assume~$X$ is a variety of either of types~\type{2,2,2}, \type{3,3}, or~\type{1,1,1,1}.
Birationality of the projection~$\pi_\whi$ is clear from the descriptions of Table~\ref{table:fanos};
it also follows that all fibers of~$\pi_\whi$ are linear subspaces in~$\PP^n$
and~$-K_{X_\bkk}$ restricts to each of them as the hyperplane class by~\eqref{eq:kx}.
Also, it is easy to see that the image of~$\pi_\whi$ is smooth in all cases
(for type~\type{2,2,2} if~$W_\whi \subset \PP^2 \times \PP^2$ is singular 
then its preimage in~$\PP^2 \times \PP^2 \times \PP^2$ is singular along a plane, 
hence~$X_\bkk$, which is the intersection of this preimage with two other divisors, must be singular;
and for types~\type{3,3} and~\type{1,1,1,1} the image is just~$\PP^3$ or~$\PP^1 \times \PP^1 \times \PP^1$, respectively).

By Lemma~\ref{lemma:picard-simple} the relative Picard number of~$\pi_\whi$ 
is~$1$ and~$-K_{X_\bkk}$ is ample, hence~$\pi_\whi$ is an extremal Mori contraction.
Since both the source and target of~$\pi_\whi$ are smooth, 
it follows from~\cite{Mori-1982} that the morphism~$\pi_\whi$ is either the blowup of a smooth curve or the blowup of a smooth point.
In the latter case the restriction of~$-K_{X_\bkk}$ to the nontrivial fiber~$\PP^2$ of~$\pi_\whi$ would be isomorphic to~$\cO_{\PP^2}(2)$,
contradicting to the above observation, hence~$\pi_\whi$ is the blowup of a smooth curve.

The remaining assertions 
are easy and left to the reader (see also \cite{Mori1981-82}).
\end{proof}

\begin{lemma}
\label{lemma:picard}
The classes~$H_i$ are semiample and generate the nef cone of~$X_\bkk$.
The Galois group~$\Gal(\bkk/\kk)$ permutes these classes in a transitive way.
In other words, the natural homomorphism~$\varpi_X \colon \Gal(\bkk/\kk) \to \Aut(\Pic(X_\bkk))$ 
factors through the subgroup~$\fS_r \subset \Aut(\Pic(X_\bkk))$
and its image
\begin{equation}
\label{eq:gx}
\rG_X := \Ima(\Gal(\bkk/\kk) \xrightarrow{\ \varpi_X\ } \fS_r)
\end{equation}
is a transitive subgroup of~$\fS_r$.
\end{lemma}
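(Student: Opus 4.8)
The plan is to reduce everything to the combinatorics of the classes $H_i$, using the geometric description of the $\pi_{\widehat\imath}$ from Lemma~\ref{lemma:pi-i}. First I would establish semiampleness: each $H_i$ is the pullback of a very ample class under the morphism $\pi_{\{i\}}\colon X_\bkk \to \PP^n$ obtained by composing the embedding~\eqref{eq:x-y} with the $i$-th projection (for $r=2$ this is the blowup contraction of Lemma~\ref{lemma:pi-i}; in general it is the restriction of a projection of $(\PP^n)^r$), hence $H_i$ is semiample (indeed globally generated). That these classes generate the nef cone of $X_\bkk$ I would prove by a dimension count: by Lemma~\ref{lemma:picard-simple} we have $\Pic(X_\bkk)=\bigoplus_{i=1}^r \ZZ H_i$, so $X_\bkk$ has Picard rank $r$ and its nef cone is a full-dimensional rational polyhedral cone (the Mori cone is polyhedral since $X_\bkk$ is Fano) with at most $r$ extremal rays; since the $r$ classes $H_i$ are linearly independent, nef, and non-proportional, it suffices to check that none of them lies in the interior of the nef cone, equivalently that each $H_i$ is on the boundary, which holds because the contraction $\pi_{\widehat\imath}$ (Lemma~\ref{lemma:pi-i}) is a birational Mori contraction of an extremal ray on which $H_i$ — being equal to $\sum_{j\ne i}H_j - E_{\widehat\imath}$ or $2H_i - E_i$ or $3H_i-E_i$, i.e.\ a pullback from the target — has degree $0$. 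So the extremal contractions of the Fano threefold $X_\bkk$ are exactly the $r$ morphisms $\pi_{\widehat\imath}$, and the $H_i$ are precisely the primitive generators of the extremal nef rays.

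Next comes the Galois action. The group $\Gal(\bkk/\kk)$ acts on $X_\bkk = X\times_\kk\bkk$ through the second factor, hence on $\Pic(X_\bkk)$ by a homomorphism $\varpi_X$ preserving the ample cone, the nef cone, and the set of its extremal rays. By the previous paragraph this set of extremal rays is exactly $\{\RR_{\ge 0}H_1,\dots,\RR_{\ge 0}H_r\}$, and since each $H_i$ is the primitive lattice point on its ray, the Galois action permutes the classes $H_1,\dots,H_r$ themselves. This identifies $\varpi_X$ with a homomorphism $\Gal(\bkk/\kk)\to \fS_r \subset \Aut(\Pic(X_\bkk))$, and $\rG_X$ is by definition its image, a subgroup of $\fS_r$.

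It remains to see that $\rG_X$ is transitive. This is where the hypothesis $\uprho(X)=1$ enters, and it is the one genuinely substantive point. We have $\uprho(X) = \rk\Pic(X)$, and $\Pic(X) \hookrightarrow \Pic(X_\bkk)^{\Gal(\bkk/\kk)}$ (the cokernel being torsion, controlled by a Brauer group, but irrelevant for the rank); in fact for a Fano threefold $\uprho(X) = \rk\,\Pic(X_\bkk)^{\Gal(\bkk/\kk)}$ since $H^1(\kk,\Pic(X_\bkk))$ and the relevant Brauer contributions do not affect the rank. Now $\Pic(X_\bkk)^{\rG_X} = \bigl(\bigoplus_i \ZZ H_i\bigr)^{\rG_X}$ has rank equal to the number of orbits of $\rG_X$ acting on $\{1,\dots,r\}$ (a permutation representation of $\fS_r$ restricted to a subgroup has invariants spanned by the orbit-sums). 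Hence $\uprho(X)=1$ forces $\rG_X$ to have a single orbit, i.e.\ to be transitive. The main obstacle is making the step $\uprho(X) = \rk\,\Pic(X_\bkk)^{\Gal(\bkk/\kk)}$ precise; I would either cite the standard fact that for a geometrically rational (more generally, $\Gal$-trivial $H^1$ or rationally connected) smooth projective variety the natural map $\Pic(X)\otimes\QQ \to (\Pic(X_\bkk)\otimes\QQ)^{\Gal(\bkk/\kk)}$ is an isomorphism, or spell out the Hochschild–Serre / descent argument directly; either way it is routine once stated, so I would keep it to a sentence.
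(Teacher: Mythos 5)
Your treatment of semiampleness, of the Galois action permuting the $H_i$ (via preservation of the nef cone and its extremal rays), and of transitivity via $\uprho(X)=\rk\Pic(X_\bkk)^{\Gal(\bkk/\kk)}=\#\{\text{orbits}\}$ all agree in substance with the paper. The gap is in the step where you show that the $H_i$ generate the nef cone. First, the assertion that the nef cone of a Fano threefold of Picard rank $r$ has ``at most $r$ extremal rays'' is not justified and is false in general (already for the del Pezzo surface of degree $6$ the nef cone has more extremal rays than the Picard rank); polyhedrality of the Mori cone gives no such bound. Second, even granting that the nef cone is simplicial, knowing that $r$ linearly independent nef classes each lie on the \emph{boundary} does not imply they generate the cone: in the cone spanned by $e_1,e_2,e_3$ the classes $e_1+e_2$, $e_2+e_3$, $e_1+e_3$ are independent, lie in the cone, and each lies on a facet, yet they generate a strictly smaller cone. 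Third, the specific vanishing you invoke is wrong: $H_i$ is \emph{not} a pullback from the target of $\pi_\whi$ (it equals $\sum_{j\ne i}H_j-E_\whi$, resp.\ involves $-E_i$), and if $\Lambda_i$ denotes the class of a nontrivial fiber of $\pi_\whi$ then $H_i\cdot\Lambda_i=-E_\whi\cdot\Lambda_i=1$, not $0$; what is true is $H_j\cdot\Lambda_i=0$ for $j\ne i$.

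The correct (and shorter) argument, which is the one the paper uses, is precisely the dual-basis computation you almost wrote down: the effective curve classes $\Lambda_i$ satisfy $H_j\cdot\Lambda_i=\delta_{ij}$, so the nef cone is contained in $\{D:\ D\cdot\Lambda_i\ge 0\ \text{for all}\ i\}$, which in the basis $H_1,\dots,H_r$ of $\Pic(X_\bkk)\otimes\RR$ is exactly the simplicial cone spanned by the $H_j$; the reverse inclusion holds because each $H_j$ is nef. This identifies the extremal rays of the nef cone with the rays $\RR_{\ge 0}H_i$ without any a priori count of extremal rays, and the rest of your argument (Galois permutation of primitive generators of these rays, and transitivity from $\uprho(X)=1$) then goes through as you describe.
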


\begin{proof}
The classes~$H_i$ are pullbacks of ample classes on~$\PP^n$, hence semiample,
and they generate~$\Pic(X_\bkk)$ by Lemma~\ref{lemma:picard-simple}.
If~$\Lambda_i$ is the class of a non-trivial fiber of~$\pi_\whi$, we have 
\begin{equation*}
H_j \cdot \Lambda_i= \delta_{ij},
\end{equation*}
therefore~$H_j$ generate the rays of the nef cone.
It follows that the Galois group permutes the~$H_i$, hence its action on~$\Pic(X_\bkk)$ factors through the permutation group.
Transitivity of the subgroup~$\rG_X \subset \fS_r$ follows from the equality~$\uprho(X) = 1$.
\end{proof} 

We say that a surface~$\Pi \subset X_\bkk$ is an {\sf $H$-plane} if~$\Pi \cong \PP^2_\bkk$ and~$H\vert_\Pi$ is the line class.

\begin{corollary}
\label{cor:planes}
Fano threefolds of index~$1$ from Table~\textup{\ref{table:fanos}} contain no $H$-planes over~$\bkk$.
\end{corollary}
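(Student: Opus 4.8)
The plan is to use the birational descriptions of $X_\bkk$ from Lemma~\ref{lemma:pi-i} for each of the four index-$1$ types, and to argue that an $H$-plane would pull back to a surface on $\PP^3$, $(\PP^1)^3$, or a quadric threefold that is incompatible with the geometry of the blowup. The key observation is that if $\Pi \cong \PP^2_\bkk$ is an $H$-plane, then since $H = H_1 + \dots + H_r$ with each $H_i$ nef (Lemma~\ref{lemma:picard}), each $H_i\vert_\Pi$ is an effective divisor class on $\PP^2$ of non-negative degree summing to $1$; hence exactly one $H_i\vert_\Pi$ is the line class and all the others are zero. In particular, for every $j \ne i_0$ the projection $\pi_j$ contracts $\Pi$ to a point, so $\Pi$ is contained in a fiber of $\pi_\whi[i_0]$ for the appropriate index, or more simply: $\Pi$ maps to a point under all but one of the coordinate projections.

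First, I would dispose of types~\type{2,2,2} and~\type{1,1,1,1}. Here $r \ge 3$, so there are at least two indices $j \ne i_0$, and $\Pi$ is contracted to a point by each of them; thus $\Pi$ lies in a fiber of the product map $X_\bkk \to \prod_{j \ne i_0}\PP^n$, which is a finite map onto its image (indeed for \type{1,1,1,1} the morphism $\pi_\whi[i_0]$ is birational, and for \type{2,2,2} likewise), so the fibers are at most curves — contradiction with $\dim \Pi = 2$. Actually even cleaner: $\pi_{\whi[i_0]}$ is birational by Lemma~\ref{lemma:pi-i}, so it cannot contract a surface, yet it would have to contract $\Pi$ since $H_j\vert_\Pi = 0$ for the (at least two, in particular at least one) indices $j$ in $\whi[i_0]$ that matter — wait, for $r=3$ we need all of $\whi[i_0]$, which is two indices, to vanish on $\Pi$, and that is exactly what we showed. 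So $\Pi \subset E_{\whi[i_0]}$, and since $E_{\whi[i_0]}$ is an irreducible divisor (a $\PP^1$-bundle over a curve for types \type{2,2,2} and \type{1,1,1,1}), it contains no $\PP^2$. Good.

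Next, types~\type{3,3} and~\type{4,4}, where $r = 2$. Now $H = H_1 + H_2$ and exactly one of $H_i\vert_\Pi$, say $H_2\vert_\Pi$, is zero, while $H_1\vert_\Pi$ is the line class. Thus $\pi_2$ contracts $\Pi$, so $\Pi \subset E_1 := E_{\hat 2}$, the exceptional divisor of $\pi_1$ (which by Lemma~\ref{lemma:pi-i} is the blowup of $\PP^3$ or $Q_1$ along a smooth curve $\Gamma_1$). But $E_1$ is a $\PP^1$-bundle over $\Gamma_1$, a ruled surface over a curve of genus $\ge 1$ (genus $3$ for \type{3,3}; $0$ for \type{4,4} — so here I must argue differently). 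For \type{3,3}: a ruled surface over a curve of positive genus contains no $\PP^2$, done. For \type{4,4}: $E_1 \cong \PP(\mathcal{N})$ for a rank-$2$ bundle on $\Gamma_1 \cong \PP^1$, which is a Hirzebruch surface $\mathbb{F}_n$, and no Hirzebruch surface is $\PP^2$; alternatively, $H_1\vert_{E_1}$ is the pullback of the hyperplane class from $Q_1$ restricted along the fibers, which are lines contracted by $\pi_1$, so $H_1\vert_{E_1}$ restricted to a fiber of $E_1 \to \Gamma_1$ is zero — but then $H_1\vert_\Pi$ cannot be the line class of $\PP^2$ unless $\Pi$ is a fiber, which is a curve. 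This gives the needed contradiction.

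\textbf{Main obstacle.} The only real subtlety is the \type{4,4} case where the relevant ruled surface sits over a rational curve and so might a priori degenerate; the clean way around it is the projection-contracts-fibers argument in the previous paragraph rather than a classification of ruled surfaces. I would present the proof uniformly: show one $H_i\vert_\Pi$ is a line class and the rest vanish, deduce $\Pi$ lies in the exceptional locus of the projection forgetting that factor, and observe this exceptional locus — being swept by the $\pi$-contracted curves (lines in the fibers) — meets $\Pi$ in curves only if $H_{i_0}\vert_\Pi$ is to be the line class, contradicting $\dim \Pi = 2$, or else is an irreducible divisor with no plane inside it. I expect the whole argument to be three or four lines once the nef-cone bookkeeping from Lemma~\ref{lemma:picard} is invoked.
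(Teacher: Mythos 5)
Your proposal is correct and follows essentially the same route as the paper: use nefness of the $H_i$ to conclude that all but one of them restrict trivially to $\Pi$, so that $\Pi$ is contracted by the corresponding projection $\pi_{\whi}$, and then invoke Lemma~\ref{lemma:pi-i} to see this is impossible. The paper phrases the final step uniformly (the fibers of $\pi_{\whi}$, a blowup of a smooth curve, are at most $1$-dimensional), which avoids your case-by-case discussion of the exceptional divisors as ruled surfaces, but the content is the same.
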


\begin{proof}
If~$\Pi \subset X_\bkk$ is an $H$-plane, the 
restriction~$(H_1 + \dots + H_r)\vert_\Pi$ is the line class.
Since~all the~$H_i$
are nef, it follows 
that~$H_j\vert_\Pi \sim 0$ for all $j \ne i$ and some~$i$,
hence~$\Pi$ is contracted to a point by the projection~$\pi_\whi$.
It remains to note that the fibers of~$\pi_\whi$ are at most 1-dimensional by Lemma~\ref{lemma:pi-i}.
\end{proof}

\subsection{Lines}
\label{subsec:lines}

By a \textsf{line on~$X$} we understand a curve (defined over~$\bkk$) of anticanonical degree~$1$.
We denote by~$\rF_1(X)$ the Hilbert scheme of lines on~$X$.
Note that $\rF_1(X)_\bkk \cong \rF_1(X_\bkk)$. 

\begin{lemma}
\label{lemma:lines-1}
Let $X$ be a Fano threefold of types~\type{2,2,2}, \type{4,4}, \type{3,3}, or~\type{1,1,1,1}.
A line on~$X$ is a fiber of the exceptional divisor of one of the projections~\eqref{eq:pi-hi}.
In particular
\begin{equation*}
\rF_1(X_\bkk) \cong \bigsqcup_{i=1}^r\ \Gamma_\whi,
\end{equation*}
where the smooth curves~$\Gamma_\whi$ have been described in Lemma~\xref{lemma:pi-i}.
The normal bundle of each line is
\begin{equation}
\label{eq:normal-lines}
\cN_{L/X_\bkk} \cong \cO_L \oplus \cO_L(-1).
\end{equation}
Finally,
the action of the Galois group~$\Gal(\bkk/\kk)$ on the set of connected components of the Hilbert scheme of lines 
factors through the group~$\rG_X$ and is transitive.
\end{lemma}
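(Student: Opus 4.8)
The plan is to first identify the lines geometrically, then read off the normal bundle, and finally deduce the Galois action from the description already established in Lemma \ref{lemma:picard}. For the identification, let $L \subset X_\bkk$ be a line, i.e.\ a curve with $-K_{X_\bkk} \cdot L = 1$. Since $-K_{X_\bkk} = H_1 + \dots + H_r$ by \eqref{eq:kx} and each $H_j$ is nef, exactly one of the $H_j$ has $H_j \cdot L = 1$ and all the others satisfy $H_i \cdot L = 0$. Thus $L$ is contracted by every projection $\pi_\whi$ with $\whi \ni j$, equivalently $L$ is contained in a fiber of the morphism $\pi_j$ (in the notation of Lemma \ref{lemma:pi-i}, the map onto $(\PP^n)^{r-1}$ obtained by dropping the $j$-th factor — for $r=2$ this is $\pi_{3-j}$, for $r\geq 3$ it is $\pi_{\widehat{j}}$; let me just call it the ``$j$-th contraction''). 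By Lemma \ref{lemma:pi-i} this contraction is the blowup of a smooth variety along a smooth curve $\Gamma_{\widehat{j}}$ (resp.\ $\Gamma_j$ for type~\type{4,4}), so its only nontrivial fibers are the $\PP^1$-fibers of the exceptional divisor $E_{\widehat{j}}$ over points of $\Gamma_{\widehat{j}}$. Since $L$ is a curve it cannot lie in a $0$-dimensional fiber, so $L$ must be one of these $\PP^1$-fibers; conversely each such fiber is a line because the relevant $H_j$ restricts to it as the hyperplane class (this was noted in the proof of Lemma \ref{lemma:pi-i}: $-K_{X_\bkk}$ restricts to each fiber of $\pi_\whi$ as the hyperplane class). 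This gives the bijection $\rF_1(X_\bkk) \cong \bigsqcup_{i=1}^r \Gamma_{\widehat{i}}$, the $i$-th component parametrizing the fibers of $E_{\widehat{i}}$; one should also remark that distinct components are genuinely disjoint, which holds because a single line cannot be a fiber of two different contractions (its class pairs to $1$ with a unique $H_j$).

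For the normal bundle \eqref{eq:normal-lines}: $L$ is a fiber of the blowup of a smooth curve in a smooth threefold, so $L \subset E_{\widehat{i}}$ and $E_{\widehat{i}}$ is a $\PP^1$-bundle (a ruled surface) over $\Gamma_{\widehat{i}}$. Inside $E_{\widehat{i}}$ the normal bundle $\cN_{L/E_{\widehat{i}}}$ is trivial (it is the normal bundle of a fiber in a ruled surface). The conormal bundle of $E_{\widehat{i}}$ in $X_\bkk$ restricted to $L$ is $\cO_{E_{\widehat{i}}}(-E_{\widehat{i}})|_L$, which is $\cO_{\PP^1}(1)$ since $E_{\widehat{i}}$ meets each of its fibers at a single point with multiplicity one (equivalently, $-E_{\widehat{i}}\cdot L = \deg \cO_{E}(-E)|_L = 1$, the standard fact for the exceptional divisor of a blowup along a smooth center). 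Hence from $0 \to \cN_{L/E_{\widehat{i}}} \to \cN_{L/X_\bkk} \to \cN_{E_{\widehat{i}}/X_\bkk}|_L \to 0$ we get an extension $0 \to \cO_L \to \cN_{L/X_\bkk} \to \cO_L(-1) \to 0$, which splits since $\Ext^1(\cO_L(-1),\cO_L) = H^1(\PP^1,\cO(1)) = 0$, giving \eqref{eq:normal-lines}.

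Finally, for the Galois action: the Galois group $\Gal(\bkk/\kk)$ acts on $\Pic(X_\bkk)$, and by Lemma \ref{lemma:picard} this action factors through $\rG_X \subset \fS_r$ permuting the classes $H_1,\dots,H_r$. Since a line $L$ with $H_j\cdot L = 1$ is sent by $g \in \Gal(\bkk/\kk)$ to a line $gL$ with $H_{\varpi_X(g)(j)} \cdot gL = 1$, the Galois action on the index set of connected components is exactly the permutation action of $\rG_X$ on $\{1,\dots,r\}$, which is transitive by Lemma \ref{lemma:picard}. (The individual curves $\Gamma_{\widehat{i}}$ are permuted compatibly, so this is a well-defined action on the set of components, not merely on $H^0$ of the structure sheaf.)

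I do not expect a serious obstacle here; the one point requiring minor care is confirming in each of the four cases — appealing to the explicit descriptions in Table~\ref{table:fanos} and Lemma \ref{lemma:pi-i} — that the nontrivial fibers of the $i$-th contraction are genuinely the only curves of anticanonical degree~$1$, i.e.\ that no component of $X_\bkk$ degenerates to give extra lines through the vanishing of the Mori contraction; this is immediate from the blowup structure but deserves to be stated.
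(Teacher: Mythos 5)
Your proposal is correct and follows essentially the same route as the paper: nefness (semiampleness) of the $H_i$ together with~\eqref{eq:kx} forces $H_j\cdot L=1$ for exactly one $j$, so $L$ is a fiber of the exceptional divisor of the contraction dropping the $j$-th factor; the normal bundle is computed from the same exact sequence $0\to\cN_{L/E_\whi}\to\cN_{L/X_\bkk}\to\cN_{E_\whi/X_\bkk}\vert_L\to 0$ (the paper leaves the splitting via $\Ext^1(\cO_L(-1),\cO_L)=0$ implicit, which you spell out); and the Galois statement is read off from Lemma~\ref{lemma:picard} exactly as in the paper. The only blemish is the indexing slip ``contracted by every projection $\pi_\whi$ with $\whi\ni j$'' (it should be the single projection $\pi_{\widehat{\jmath}}$, whose index set omits $j$), but your parenthetical immediately corrects this and the argument is unaffected.
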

\begin{proof}
Since the classes $H_i$ are semiample, it follows from~\eqref{eq:kx} that for 
each $\bkk$-line~$L$ on~$X$ 
there is a unique~$i$ such that $L \cdot H_i = 1$ and $L \cdot H_j = 0$ for $j \ne i$ 
(i.e., $[L] = \Lambda_i$ in the notation of Lemma~\ref{lemma:picard}).
Thus, $L$ is contracted by the projection~$\pi_\whi$, hence it is equal to a fiber of the exceptional divisor of this projection.
Taking into account the description of the projections~$\pi_\whi$ from Lemma~\ref{lemma:pi-i}, we obtain the description of~$\rF_1(X_\bkk)$.

Further, the description of the normal bundle of~$L$ follows from the exact sequence
\begin{equation*}
0 \longrightarrow \cN_{L/E_\whi} \longrightarrow \cN_{L/X_\bkk} \longrightarrow \cN_{E_\whi/X_\bkk}\vert_L \longrightarrow 0,
\end{equation*}
because the first term is trivial and the last is~$\cO_L(-1)$.
Finally, factorization of the Galois action on the set of connected components 
of~$\rF_1(X_\bkk)$ and its transitivity follow from Lemma~\ref{lemma:picard}.
\end{proof}

For a $\bkk$-point~$x \in X$ we denote 
by~$\rF_1(X_\bkk,x) \subset \rF_1(X_\bkk)$ the 
subscheme parameterizing lines passing through~$x$.
We will need the following observation.

\begin{lemma}
\label{lemma:lines}
Let $X$ be a Fano threefold of 
types~\type{2,2,2}, \type{4,4}, \type{3,3}, or~\type{1,1,1,1}.
If~$x \in X(\bkk)$, the scheme~$\rF_1(X_\bkk,x)$ is a finite reduced scheme of length at most~$r = \uprho(X_\bkk)$.
If, moreover, $x \in X(\kk)$ then either~$\rF_1(X_\bkk,x) = \varnothing$, 
or~$\rF_1(X_\bkk,x)$ is a reduced scheme of length~$r$ 
and the Galois group~$\Gal(\bkk/\kk)$ action on~$\rF_1(X_\bkk,x)$ 
factors through the group~$\rG_X$ and is transitive.
\end{lemma}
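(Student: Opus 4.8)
The plan is to deduce the lemma from the structure of $\rF_1(X_\bkk)$ established in Lemma~\ref{lemma:lines-1}, using the normal bundle formula~\eqref{eq:normal-lines} to control the scheme structure. First I would describe $\rF_1(X_\bkk,x)$ as a set: fix $x\in X(\bkk)$; by Lemma~\ref{lemma:lines-1} we have $\rF_1(X_\bkk)=\bigsqcup_{i=1}^{r}\Gamma_\whi$, and a line $L$ on $X_\bkk$ is a fiber of the exceptional divisor $E_\whi\to\Gamma_\whi$ of the blowup $\pi_\whi$ for the unique index $i$ with $L\cdot H_i=1$. Here $E_\whi$ is a smooth ruled surface (the blown-up curve being smooth, Lemma~\ref{lemma:pi-i}) embedded in $X_\bkk$ as a divisor, so any point lying on $E_\whi$ lies on exactly one fiber of its ruling. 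Hence a line $L$ contains $x$ if and only if $x\in E_\whi$ and $L$ is the fiber through $x$; in particular, for each $i$ there is at most one line through $x$ in the $i$-th family. Writing $S=\{\,i\in\{1,\dots,r\}: x\in E_\whi\,\}$, the lines through $x$ are precisely the fibers $L_i$ with $i\in S$, so $\rF_1(X_\bkk,x)$ has at most $r$ points.

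Next I would control the scheme structure. At a point $[L]\in\rF_1(X_\bkk,x)$ the Zariski tangent space is $H^0(L,\cN_{L/X_\bkk}\otimes\cI_x)=H^0(L,\cN_{L/X_\bkk}(-x))$, which by~\eqref{eq:normal-lines} equals $H^0(\PP^1,\cO(-1)\oplus\cO(-2))=0$. Thus every point of $\rF_1(X_\bkk,x)$ is reduced and isolated; combined with the finiteness of its point set, this shows that $\rF_1(X_\bkk,x)$ is a finite reduced scheme of length $\#S\le r$, which is the first assertion.

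Finally, suppose $x\in X(\kk)$. Then $x$ is fixed by $\Gal(\bkk/\kk)$, while the Galois group permutes the exceptional divisors $E_\whi$ the same way it permutes the components $\Gamma_\whi$ of $\rF_1(X_\bkk)$, that is, through $\rG_X$ (Lemma~\ref{lemma:lines-1}); hence the subset $S$ is $\rG_X$-stable. Since $\rG_X\subset\fS_r$ is transitive, either $S=\varnothing$, in which case $\rF_1(X_\bkk,x)=\varnothing$, or $S=\{1,\dots,r\}$, in which case $\rF_1(X_\bkk,x)$ is reduced of length $r$ with exactly one point $L_i$ in each component $\Gamma_\whi$. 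In the latter case $L_i\mapsto i$ is a $\Gal(\bkk/\kk)$-equivariant bijection $\rF_1(X_\bkk,x)\xrightarrow{\ \sim\ }\{1,\dots,r\}$, so the Galois action on $\rF_1(X_\bkk,x)$ is identified with its action on $\{1,\dots,r\}$, which factors through $\rG_X$ and is transitive.

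I do not expect a serious obstacle: once Lemma~\ref{lemma:lines-1} is in hand the statement is essentially bookkeeping. The one substantive input is the vanishing $H^0(\cN_{L/X_\bkk}(-x))=0$ coming from~\eqref{eq:normal-lines}, which simultaneously yields $0$-dimensionality, reducedness, and the length bound; the only step needing a little care is verifying that the index set $S$ is genuinely stable under $\rG_X$.
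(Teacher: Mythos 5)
Your proof is correct and follows essentially the same route as the paper: both deduce from Lemma~\ref{lemma:lines-1} that each component of $\rF_1(X_\bkk)$ contributes at most one line through $x$, and then use transitivity of the $\rG_X$-action on the components to get all-or-nothing over a $\kk$-point. The only addition is your explicit tangent-space computation $H^0(L,\cN_{L/X_\bkk}(-x))=0$ for reducedness, which the paper leaves implicit (it follows there from $E_\whi\subset X_\bkk$ being a divisor ruled over the smooth curve $\Gamma_\whi$); this is a harmless and correct refinement.
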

\begin{proof}
By Lemma~\ref{lemma:lines-1} for each $\bkk$-point~$x$ of~$X$ there is at 
most one line 
from each of the connected components of the Hilbert scheme $\rF_1(X_\bkk)$ 
passing through~$x$.
This proves that~$\rF_1(X_\bkk,x)$ is finite and reduced and gives the bound for 
its length.

Now assume $x$ is a point defined over~$\kk$ and let $L$ be a $\bkk$-line 
through~$x$.
Then for any~$g \in \Gal(\bkk/\kk)$ the line $g(L)$ also passes through~$x$.
Transitivity of the Galois action on the set of components of~$\rF_1(X_\bkk)$
then implies that there is a unique line of each type through~$x$, hence the 
length of~$\rF_1(X_{\bkk},x)$ is~$r$,
and the $\Gal(\bkk/\kk)$-action on~$\rF_1(X_\bkk,x)$ factors 
through~$\rG_X$ and is transitive.
\end{proof}

\subsection{Conics}
\label{subsec:conics}

By a \textsf{conic on~$X$} we understand a connected curve (defined over~$\bkk$) of anticanonical degree~$2$.
We denote by~$\rF_2(X)$ the Hilbert scheme of conics on~$X$.
Note that $\rF_2(X)_\bkk \cong \rF_2(X_\bkk)$. 

\begin{lemma}
\label{lemma:conics}
Let $X$ be a Fano threefold of types~\type{2,2,2}, \type{4,4}, \type{3,3}, 
or~\type{1,1,1,1}.
We have the following descriptions of the Hilbert schemes of 
conics~$\rF_2(X_{\bkk})$:
\begin{eqnarray*}
\rF_2((\mtype{2,2,2})_\bkk) &\cong& \PP^2_\bkk \sqcup \PP^2_\bkk \sqcup \PP^2_\bkk,\\
\rF_2((\mtype{4,4})_\bkk) &\cong& \Gamma_1 \times \Gamma_2,\\
\rF_2((\mtype{3,3})_\bkk) &\cong& \Sym^2\Gamma_1 \cong \Sym^2\Gamma_2,\\
\rF_2((\mtype{1,1,1,1})_\bkk) &\cong& \bigsqcup_6 \ (\PP^1_\bkk \times \PP^1_\bkk) ,
\end{eqnarray*}
where $\Gamma_i$ are the curves described in Lemma~\xref{lemma:pi-i}.

Moreover, the morphism from each component of the universal conic to~$X_\bkk$ is dominant.
\end{lemma}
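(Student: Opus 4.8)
The plan is to read off the conics from the nef cone of $X_\bkk$ (Lemma~\ref{lemma:picard}) together with the descriptions of the contractions $\pi_\whi$ in Lemma~\ref{lemma:pi-i}. Since the classes $H_i$ are semiample, any conic $C$ satisfies $C\cdot H_i\ge 0$ and $\sum_i C\cdot H_i=C\cdot(-K_{X_\bkk})=2$, so either $C\cdot H_i=2$ for one index $i$, or $C\cdot H_i=C\cdot H_j=1$ for a single pair $i\ne j$. The first case cannot occur: then $C$ is contracted by $\pi_\whi$, hence --- being connected while the fibres of $\pi_\whi$ are at most one-dimensional --- it is supported on a single line $L$, so the pure one-dimensional part of $C$ is a ribbon on $L$ whose ideal $\cI_L/\cI_{C}$ is a quotient line bundle of $\cN^\vee_{L/X_\bkk}=\cO_L\oplus\cO_L(1)$ (by~\eqref{eq:normal-lines}), and hence of non-negative degree (its kernel is a sub-line-bundle of degree $\le 1$); but then $\chi(\cO_C)\ge 2$, contradicting $\chi(\cO_C)=1$. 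The same remark shows that every conic is either a smooth conic or a transverse union of two lines of \emph{distinct} types (two lines of the same type are disjoint fibres of one $E_\whi$, hence cannot form a connected curve). Consequently $\rF_2(X_\bkk)=\bigsqcup_{\{i,j\}}\rF_2^{ij}(X_\bkk)$, a disjoint union over the $\binom r2$ pairs, where $\rF_2^{ij}(X_\bkk)$ parametrises the conics with $C\cdot H_i=C\cdot H_j=1$.

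For the types with $r\ge3$, namely \type{2,2,2} and \type{1,1,1,1}, I would identify $\rF_2^{ij}(X_\bkk)$ with the base of the projection $\pi_I\colon X_\bkk\to(\PP^n)^{|I|}$, where $I=\{1,\dots,r\}\setminus\{i,j\}$. Such a conic has $C\cdot H_k=0$ for $k\in I$, hence lies in a fibre of $\pi_I$; conversely, fixing the coordinates indexed by $I$ turns the defining equations of $X_\bkk$ into a $(1,1)$-form in the $i$-th and $j$-th factors (together with one linear form in each of those factors when $n=2$), so every fibre of $\pi_I$ is such a conic, and therefore $C$ equals its own fibre. (Along the way one checks that none of the linear forms above vanishes identically --- equivalently, the bilinear forms defining $X_\bkk$ are non-degenerate, i.e.\ the divisors $W_\whi$ of Lemma~\ref{lemma:pi-i} are smooth --- and that $X_\bkk$ contains no surface $\PP^1\times\PP^1\times\{\mathrm{pt}\}$; the latter is clear for \type{1,1,1,1} because such a surface would have to equal $E_\whi$, whose base $\Gamma_\whi$ is elliptic rather than rational, and for \type{2,2,2} it follows again from non-degeneracy.) Thus $C\mapsto\pi_I(C)$ and $t\mapsto\pi_I^{-1}(t)$ are mutually inverse morphisms, giving $\rF_2^{ij}(X_\bkk)\cong(\PP^n)^{|I|}$: this is $\PP^2$ for \type{2,2,2} (three components) and $\PP^1\times\PP^1$ for \type{1,1,1,1} (six components). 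The universal conic over $\rF_2^{ij}(X_\bkk)$ is then the graph of $\pi_I$, so its morphism to $X_\bkk$ is an isomorphism; in particular it is dominant.

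For the types with $r=2$, namely \type{4,4} and \type{3,3}, there is a single pair $\{1,2\}$, and I would use the blow-down $\pi_1$ of $\Gamma_1$ (onto $Q_1$, resp.\ onto $\PP^3$), for which $H_2=mH_1-E_1$ with $m=2$, resp.\ $3$ (Lemma~\ref{lemma:pi-i}). An irreducible conic $C$ has $C\cdot H_1=1$, so $\pi_1$ maps it isomorphically onto a line $\ell$ of which it is the strict transform, and $C\cdot E_1=m-1$ says $\ell$ meets $\Gamma_1$ in a length-$(m-1)$ subscheme; conversely any such strict transform is a conic, and the reducible conics arise as their limits. For \type{3,3} this exhibits $C\mapsto\pi_1(C)\cdot\Gamma_1\in\Sym^2\Gamma_1$ and $\{p,q\}\mapsto\widetilde{\langle p,q\rangle}$ as mutually inverse morphisms (using the refined intersection with $E_1$ to handle reducible conics, which correspond to trisecant lines of $\Gamma_1$), whence $\rF_2((\mtype{3,3})_\bkk)\cong\Sym^2\Gamma_1$, and symmetrically $\cong\Sym^2\Gamma_2$; dominance of the universal conic holds because the secant variety of the degree-$6$ curve $\Gamma_1$ is all of $\PP^3$, so a general point lies on a secant line. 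For \type{4,4}, using $C\cdot E_1=C\cdot E_2=1$ one attaches to a conic the point $\gamma_1(C)\in\Gamma_1$, resp.\ $\gamma_2(C)\in\Gamma_2$, underlying the (refined) cycle $\pi_1(C)\cdot\Gamma_1$, resp.\ $\pi_2(C)\cdot\Gamma_2$; the resulting morphism $\rF_2((\mtype{4,4})_\bkk)\to\Gamma_1\times\Gamma_2$ is birational --- over a general pair $(\gamma_1,\gamma_2)$ there is exactly one conic, since the lines of $Q_1$ through $\gamma_1$ form a $\PP^1$ and exactly one of them meets the fixed line $\pi_1(\pi_2^{-1}(\gamma_2))\subset Q_1$ --- and it is an isomorphism because both surfaces are smooth and minimal (note $\Gamma_i\cong\PP^1$, so $\Gamma_1\times\Gamma_2\cong\PP^1\times\PP^1$); dominance holds because a general point $x$ lies on the lines of $Q_1$ joining $\pi_1(x)$ to the four points of $\Gamma_1$ on the tangent hyperplane of $Q_1$ at $\pi_1(x)$.

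The delicate steps are: excluding $2$-dimensional fibres of $\pi_I$ when $r\ge3$ (the absence of the quadric surfaces above), and --- most of all --- the verification that the birational morphism $\rF_2((\mtype{4,4})_\bkk)\to\Gamma_1\times\Gamma_2$ is an isomorphism, which relies on the degree-one count just indicated; the treatment of reducible conics for \type{3,3} via refined intersection is a routine technicality. The "moreover" claim is part of the construction when $r\ge3$ and reduces to the displayed secant/tangent-line computations when $r=2$.
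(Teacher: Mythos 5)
Your proposal is correct and follows the same overall strategy as the paper: the same trichotomy on the intersection numbers $(H_i\cdot C)$, the exclusion of double lines via the normal bundle~\eqref{eq:normal-lines}, the identification of the components with conic-bundle bases when $r\ge 3$, the map to $\Sym^2\Gamma_1$ for type~\type{3,3}, and for type~\type{4,4} the map to $\Gamma_1\times\Gamma_2$ proved to be an isomorphism by exactly the same residual-line count and $\PP^1$-bundle argument, with the same tangent-hyperplane and secant-variety computations for dominance. The one genuine divergence is in how the classifying morphisms $\rF_2(X_\bkk)\to\Gamma_1$ (and hence to $\Gamma_1\times\Gamma_2$, resp.\ $\Sym^2\Gamma_i$) are \emph{constructed as morphisms of schemes}: you propose to extract a point of $\Gamma_1$ from the refined intersection cycle $\pi_1(C)\cdot\Gamma_1$ (equivalently $C\cdot E_1$), dismissing the reducible conics as a routine technicality, whereas the paper devotes Appendix~A (Proposition~\ref{prop:hilb-map} and Corollary~\ref{cor:f2x-gamma1}) to precisely this point, realizing the map via a Fourier--Mukai kernel $\varepsilon_*\cO_{E_1}(E_1)$ so that flatness in families and the value on reducible conics (the \emph{residual} intersection point $x'$, including the tangential case $x'=x$) come out automatically. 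Your cycle-theoretic recipe gives the same answer on points, but turning ``the point underlying the refined cycle'' into a scheme morphism on the Hilbert scheme, uniformly over degenerations, is the most delicate step of the whole lemma rather than a routine one, and is the only place where your write-up falls short of a complete argument; everything else (including your direct ribbon computation replacing the paper's citation of~\cite[Remark~2.1.7]{KPS}, and your extra verifications that the fibres of $\pi_I$ are genuine conics for $r\ge3$) is sound and, if anything, slightly more self-contained than the paper.
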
 

\begin{proof}
First, note that no conic on~$X$ is contracted by the projections~$\pi_\whi$,
since by~Lemma~\xref{lemma:pi-i} any reduced connected curve contracted by~$\pi_\whi$ is a line 
and lines do not support nonreduced conics by~\eqref{eq:normal-lines} and~\cite[Remark~2.1.7]{KPS}.
Therefore, we deduce from~\eqref{eq:kx} that for each $\bkk$-conic~$C \subset X_{\bkk}$ 
there is a pair of indices~$1 \le i_1 < i_2 \le r$ such that 
\begin{equation}
\label{eq:intersections-conic}
H_{i_1} \cdot C = H_{i_2} \cdot C = 1
\qquad\text{and}\qquad 
H_j \cdot C = 0\quad\text{for $j \not\in \{i_1,\,i_2\}$}.
\end{equation}
If~$r \ge 3$, i.e., if~$X$ is of type~\type{2,2,2} or~\type{1,1,1,1}, such~$C$ is contracted by one of the projections
\begin{equation}
\label{eq:conic-bundles}
\pi_i \colon X_\bkk \longrightarrow \PP^2_\bkk
\qquad\text{or}\qquad 
\pi_{i_1,i_2} \colon X_\bkk \longrightarrow \PP^1_\bkk \times \PP^1_\bkk,
\end{equation}
respectively.
It is easy to see that the maps~\eqref{eq:conic-bundles} are flat conic bundles, hence~$C$ is a fiber of one of them,
and therefore~$\rF_2(X_\bkk)$ is the disjoint union of~$\PP^2_\bkk$, or of~$\PP^1_\bkk \times \PP^1_\bkk$, respectively. 

Assume~$X$ is of type~\type{4,4}.
Applying Corollary~\ref{cor:f2x-gamma1} twice we obtain a morphism
\begin{equation*}
\varphi = (\varphi_1, \varphi_2) \colon \rF_2(X_\bkk) \longrightarrow \Gamma_1 \times \Gamma_2
\end{equation*}
that takes a smooth conic~$C \subset X_\bkk$ to the unique pair of lines~$(L_2,L_1)$ of different types such that~$C \cap L_i \ne \varnothing$.
We will show that~$\varphi$ is an isomorphism.

First, note that by~\eqref{eq:intersections-conic} if~$C \subset X$ is a conic then~$\pi_1(C) \subset Q_1$ and~$\pi_2(C) \subset Q_2$ are lines,
and by Lemma~\ref{lemma:pi-i}\ref{lemma:pi-i:44} they intersect the curves~$\Gamma_1$ and~$\Gamma_2$, respectively.
Thus, by Corollary~\ref{cor:f2x-gamma1} for~$x_1 \in \Gamma_1$ if~$[C] \in \varphi_1^{-1}(x_1)$ the line~$\pi_1(C) \subset Q_1$ passes through~$x_1$.
Since any line on~$Q_1$ through~$x_1$ lies in the embedded tangent space to~$Q_1$ at~$x_1$,
and the intersection of this tangent space with~$Q_1$ is a 2-dimensional quadratic cone with vertex at~$x_1$,
it follows that
\begin{equation*}
\varphi_1^{-1}(x_1) \cong \PP^1
\end{equation*}
for any~$x_1 \in \Gamma_1$.
Since also~$\Gamma_2 \cong \PP^1$, the morphism~$\varphi$ is a morphism of~$\PP^1$-bundles over~$\Gamma_1$,
and to show that it is an isomorphism, it is enough to check that it is birational. 

So, consider a general pair~$(L_2,L_1)$ of lines on~$X$ of different types.
It follows from~\eqref{eq:intersections-conic} and Lemma~\ref{lemma:pi-i}\ref{lemma:pi-i:44} 
that~$\bar{L}_1 := \pi_1(L_1)$ is a line on~$Q_1$ bisecant to~$\Gamma_1$, $x_1 := \pi_1(L_2)$ is a point on~$\Gamma_1$, 
and by Corollary~\ref{cor:f2x-gamma1} the preimage~$\varphi^{-1}(L_2,L_1)$ 
is the Hilbert scheme of lines~$L \subset Q_1$ passing through~$x_1$ and intersecting~$\bar{L}_1$.
By genericity we may assume~$x_1 \not\in \bar{L}_1$ (i.e., that the lines~$L_1$ and~$L_2$ do not intersect).
Then any line~$L$ as above is contained in the intersection of the plane spanned by~$\bar{L}_1$ and~$x_1$ with~$Q_1$, 
which is equal to the union of the line~$\bar{L}_1$ with a residual line.
Therefore, $L$ must be equal to the residual line, hence the scheme~$\varphi^{-1}(L_2,L_1)$ consists of a single point,
so~$\varphi$ is birational, and hence it is an isomorphism.

Since the embedded tangent space to~$Q_1$ at a general point~$x \in Q_1$ intersects the quartic curve~$\Gamma_1$ at~$4$ points,
there are~$4$ lines on~$Q_1$ through~$x$ intersecting~$\Gamma_1$, 
hence the universal conic is dominant of degree~$4$ over~$X_\bkk$.

Finally, assume $X$ is of type~\type{3,3}.
By~\eqref{eq:intersections-conic} and Lemma~\ref{lemma:pi-i}\ref{lemma:pi-i:33}
the image of~$C$ with respect to the blowup~$\pi_i \colon X_\bkk \to \PP^3$ is a line
intersecting the curve~$\Gamma_{i} \subset \PP^3$ at two points.
This defines a morphism 
\begin{equation*}
\rF_2(X_\bkk) \longrightarrow \Sym^2\Gamma_{i},
\end{equation*}
and it is easy to see that it is an isomorphism.
It is also easy to see that for a general point~$x \in \PP^3$ there are seven lines passing through~$x$ and bisecant to~$\Gamma_1$;
therefore the universal conic on~$X_\bkk$ is dominant of degree~$7$ over~$X_\bkk$.
\end{proof}

\begin{remark}
\label{rem:f2x-theta}
Let~$X$ be a threefold of type~\type{4,4}.
Clearly a general line on the quadric~$Q_1$ passing through a point~$x \in \Gamma_1$ is not bisecant to~$\Gamma_1$
and its strict transform in~$X$ intersects the line~$L_2 = \pi_1^{-1}(x)$ transversally.
This means that a general conic intersecting~$L_2$ is smooth and intersects~$L_2$ transversally.
\end{remark} 

For a given curve~$\Theta \subset X$ we denote by~$\rF_2(X,\Theta)$ 
the subscheme of the Hilbert scheme~$\rF_2(X)$
that parameterizes conics intersecting the curve~$\Theta$
and by~$\mathscr{C}_2(X,\Theta) \subset \rF_2(X,\Theta) \times X$ the restriction of the universal family of conics.

\begin{lemma}
\label{lemma:conics-c0}
If $X$ is of type~\type{4,4} and $\Theta$ is a singular conic then~$\rF_2(X_\bkk,\Theta) \cong \Gamma_1 \cup \Gamma_2$
is the union of the two rulings of the surface~$\rF_2(X_\bkk) \cong \Gamma_1 \times \Gamma_2$.
Moreover, the natural projection~$\mathscr{C}_2(X,\Theta) \to X$ is birational onto
an anticanonical divisor~$R_{\Theta} \subset X$ passing through each component of the curve~$\Theta$ with multiplicity~$3$.
\end{lemma}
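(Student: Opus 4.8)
The plan is to read everything off the two blowdowns $\pi_1,\pi_2\colon X_\bkk\to Q_1,Q_2$ of Lemma~\ref{lemma:pi-i}\ref{lemma:pi-i:44} and the quadratic cones cut on $Q_1,Q_2$ by embedded tangent spaces. First I would note that, since $\Theta$ is singular, \eqref{eq:intersections-conic} together with the fact that a line on $X$ carries no nonreduced conic (by~\eqref{eq:normal-lines} and~\cite[Remark~2.1.7]{KPS}) forces $\Theta=L'\cup L''$, where $L'$ and $L''$ are lines of the two different types meeting at one point; by Lemma~\ref{lemma:lines-1} we may write $L'=\pi_1^{-1}(a)$ for a point $a\in\Gamma_1$ and $L''=\pi_2^{-1}(b)$ for a point $b\in\Gamma_2$.

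For the first assertion I would use that a conic meets $\Theta$ if and only if it meets $L'$ or $L''$, so $\rF_2(X_\bkk,\Theta)=\rF_2(X_\bkk,L')\cup\rF_2(X_\bkk,L'')$, and then identify each summand with a ruling of $\rF_2(X_\bkk)\cong\Gamma_1\times\Gamma_2$. Indeed, by the construction of the isomorphism $\varphi=(\varphi_1,\varphi_2)$ (Corollary~\ref{cor:f2x-gamma1} and the proof of Lemma~\ref{lemma:conics}), the coordinate $\varphi_1$ (resp.\ $\varphi_2$) of a conic is the line of the first (resp.\ second) type it meets, so $\rF_2(X_\bkk,L')$ is the $\varphi_1$-fiber over the point of $\Gamma_1$ representing $L'$ and $\rF_2(X_\bkk,L'')$ is the $\varphi_2$-fiber over the point of $\Gamma_2$ representing $L''$ — these are exactly the two rulings of $\Gamma_1\times\Gamma_2$, and they meet precisely at $[\Theta]$. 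Reducedness follows from Remark~\ref{rem:f2x-theta}: a general conic meeting $L'$ (or $L''$) is smooth and meets it transversally, so each component of $\rF_2(X_\bkk,\Theta)$ is generically reduced and, being a divisor in the smooth surface $\rF_2(X_\bkk)$, reduced. This yields $\rF_2(X_\bkk,\Theta)\cong\Gamma_1\cup\Gamma_2$.

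For the second assertion, writing $D',D''$ for the two rulings, we have $\mathscr{C}_2(X,\Theta)=\mathscr{C}_2(X)\vert_{D'}\cup\mathscr{C}_2(X)\vert_{D''}$. The conics in $D'$ (those meeting $L'=\pi_1^{-1}(a)$) are, via $\pi_1$, the strict transforms of the lines of $Q_1$ through $a$, and these sweep out the $2$-dimensional quadratic cone $S_1$ cut on $Q_1$ by the embedded tangent space at $a$; hence the image of $\mathscr{C}_2(X)\vert_{D'}$ is the strict transform $R'$ of $S_1$. Since $\Gamma_1\not\subseteq S_1$ (they meet in a length-$4$ subscheme), $R'=\pi_1^{*}S_1\in|H_1|$, and $\mathscr{C}_2(X)\vert_{D'}\to R'$ is birational because a general point of $R'$ lies on a unique ruling of $S_1$ through $a$, hence on a unique conic of $D'$. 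Symmetrically, via $\pi_2$ the image of $\mathscr{C}_2(X)\vert_{D''}$ is $R''=\pi_2^{*}S_2\in|H_2|$, where $S_2$ is the quadratic cone cut on $Q_2$ by the embedded tangent space at $b$. Therefore $R_\Theta=R'\cup R''\in|H_1+H_2|=|-K_X|$ by~\eqref{eq:kx}, and $\mathscr{C}_2(X,\Theta)\to X$ is birational onto $R_\Theta$.

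It remains to compute multiplicities; $\operatorname{mult}_{L'}(R_\Theta)=\operatorname{mult}_{L'}(R')+\operatorname{mult}_{L'}(R'')$. The cone $S_1$ has an ordinary double point at its vertex $a$, which is one of the points of $\Gamma_1$ blown up by $\pi_1$; using that the tangent line to $\Gamma_1$ at $a$ is not contained in $Q_1$ — equivalently, that $\Gamma_1$ is not tangent to the cone $S_1$ at $a$ — I would show by a local computation that $R'=\pi_1^{*}S_1$ has multiplicity $2$ along the exceptional line $L'=\pi_1^{-1}(a)$. On the other hand $\pi_2$ maps $L'$ isomorphically onto a ruling of $S_2$ through its vertex $b$, and $S_2$ is smooth off $b$, so $R''=\pi_2^{*}S_2$ is smooth at a general point of $L'$ and has multiplicity $1$ there; thus $\operatorname{mult}_{L'}(R_\Theta)=2+1=3$, and interchanging the roles of $(\pi_1,L',\Gamma_1)$ and $(\pi_2,L'',\Gamma_2)$ gives $\operatorname{mult}_{L''}(R_\Theta)=3$. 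The hard part will be exactly this last local analysis — pinning down that $R'$ meets $L'$ with multiplicity precisely $2$ and $R''$ with multiplicity precisely $1$ — which amounts to controlling the position of $\Gamma_1$ relative to the cone $S_1$ at the vertex $a$; checking reducedness of $\rF_2(X_\bkk,\Theta)$ is a minor additional point.
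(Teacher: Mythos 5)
Your proof is correct and follows essentially the same route as the paper's: decompose $\rF_2(X_\bkk,\Theta)$ into the two rulings via the incidence isomorphism $\rF_2(X_\bkk)\cong\Gamma_1\times\Gamma_2$, identify the images of the two restricted universal families with the pullbacks of the tangent hyperplane sections of $Q_1$ and $Q_2$ at $\pi_1(L')$ and $\pi_2(L'')$, and add classes and multiplicities ($2+1=3$ along each component of $\Theta$). Two minor remarks: the tangency condition you reserve for the "hard part" is not actually needed, since the pullback under the blowup of $\Gamma_1$ of a divisor with an ordinary double point at $a$ automatically has multiplicity exactly $2$ along the fiber $\pi_1^{-1}(a)$ (the quadratic part of the local equation cannot vanish identically after the substitution); and your unused parenthetical claim that the two rulings meet at $[\Theta]$ is false in general, because by Corollary~\ref{cor:f2x-gamma1} a reducible conic is sent by $\varphi_1$ to the \emph{other} incidence point rather than to its own vertex.
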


\begin{proof}
Let $L_1$ and $L_2$ be the irreducible components (over~$\bkk$) of the conic~$\Theta$.
The argument of Lemma~\ref{lemma:conics} shows that~$L_i$ are lines of two different types and
\begin{equation*}
\rF_2(X_\bkk,\Theta) = \rF_2(X_\bkk,L_1) \cup \rF_2(X_\bkk,L_2).
\end{equation*}
Recall that by Lemma~\ref{lemma:lines-1} the curves~$\Gamma_1$ 
and~$\Gamma_2$ 
can be identified with the two connected components of~$\rF_1(X_\bkk)$ 
and the isomorphism $\rF_2(X_\bkk) \cong \Gamma_1 \times \Gamma_2$ of 
Lemma~\ref{lemma:conics}
is defined by taking a conic~$C$ to the unique pair of lines of different 
types intersecting~$C$.
This means that
\begin{equation*}
\rF_2(X_\bkk,\Theta) \cong 
(\Gamma_1 \times [L_1]) \cup ([L_2] \times \Gamma_2) 
\subset \Gamma_1 \times \Gamma_2;
\end{equation*}
thus~$\rF_2(X_\bkk,\Theta)$ is the union of two rulings of~$\rF_2(X)$
and~$\mathscr{C}_2(X_\bkk,\Theta) = \mathscr{C}_2(X_\bkk,L_1) \cup \mathscr{C}_2(X_\bkk,L_2)$.

Furthermore, it follows from the description of Lemma~\ref{lemma:conics} that 
the morphism~$\mathscr{C}_2(X_\bkk,L_2) \to X_\bkk$ is birational onto
the hyperplane section tangent to~$Q_1$ at the point~$\pi_1(L_2)$;
it contains the line~$\pi_1(L_1)$ with multiplicity~$1$ and 
has multiplicity~$2$ at the point~$\pi_1(L_2)$.
Similarly, the morphism~$\mathscr{C}_2(X_\bkk,L_1) \to X_\bkk$ is birational onto
the hyperplane section containing the line~$\pi_2(L_2)$ with multiplicity~$1$ 
and having multiplicity~$2$ at the point~$\pi_2(L_1)$.
Thus, the morphism~\mbox{$\mathscr{C}_2(X,\Theta) \to X$} is birational onto a divisor of
class $(H_1 - L_1 - 2L_2) + (H_2 - L_2 - 2L_1) = H - 3\,\Theta$.
\end{proof}

\subsection{Twisted cubic curves}

Finally, we describe the Hilbert scheme~$\rF_3(X,x)$ of subschemes of~$X$ 
with Hilbert polynomial~$3t + 1$ with respect to~$H$ that pass through a point~$x$;
since~$X$ is an intersection of quadrics (Theorem~\ref{th:bht}) and contains no planes (Corollary~\ref{cor:planes}) 
every such subscheme is a union of rational curves (see~\cite[Lemma~2.9]{KP19}), 
so we will use the name {\sf rational normal cubic curves} for subschemes parameterized by~$\rF_3(X,x)$.
We denote by~$\mathscr{C}_3(X,x) \subset \rF_3(X,x) \times X$ the restriction of the universal family of curves.
Recall the curves~$\Gamma_\whi$ described in Lemma~\xref{lemma:pi-i}.

\begin{lemma}
\label{lemma:cubics-x}
Let $X$ be a Fano threefold of types~\type{2,2,2}, \type{4,4}, \type{3,3}, 
or~\type{1,1,1,1}.
If~$x$ is a $\kk$-point on~$X$ not lying on a $\bkk$-line, 
one has the following descriptions of the schemes~$\rF_3(X_\bkk,x)$:
\begin{eqnarray*}
\rF_3((\mtype{2,2,2})_\bkk,x) &\cong& \Gamma_{1,2} \cong \Gamma_{1,3} \cong \Gamma_{2,3},\\ 
\rF_3((\mtype{4,4})_\bkk,x) &\cong& \PP^1_\bkk \sqcup \PP^1_\bkk,\\
\rF_3((\mtype{3,3})_\bkk,x) &\cong& \Gamma_1 \sqcup \Gamma_2,\\
\rF_3((\mtype{1,1,1,1})_\bkk,x) &\cong& \bigsqcup_8 \PP^1_\bkk.
\end{eqnarray*}
Moreover, for threefolds of type~\type{4,4} 
the natural projection~$\mathscr{C}_3(X,x) \to X$ is birational onto
an anticanonical divisor~$R_x \subset X$ passing through the point~$x$ with multiplicity~$4$.
\end{lemma}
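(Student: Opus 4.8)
The strategy is to reduce the description of rational normal cubics through $x$ to the already established descriptions of lines and conics on $X_\bkk$, using the geometry of the blowup morphisms $\pi_\whi$ and the projections $\pi_i$ from Lemma \ref{lemma:pi-i}. First I would fix the multidegree of such a cubic: if $C \in \rF_3(X_\bkk,x)$ then $H \cdot C = 3$ with $H = H_1 + \dots + H_r$ and all $H_i$ nef (Lemma \ref{lemma:picard}), so the multidegree $(H_1 \cdot C, \dots, H_r \cdot C)$ is one of $(1,1,1)$ for $r = 3$, or $(1,1,1)$ up to permutation for $r = 4$ (excluding degenerate cases like $(3,0,\dots)$ or $(2,1,0,\dots)$: the first would force $C$ to lie in a fiber of some $\pi_\whi$, impossible since those fibers are at most lines by Lemma \ref{lemma:pi-i} and contain no rational normal cubics, and the second would decompose $C$ into a line plus a conic — but $x$ lies on no line, while a line meeting the residual conic would pass through $x$ only if $x$ lies on that line too, a contradiction; so after ruling these out only the balanced multidegree survives when $r = 3$, and for $r = 4$ one $H_i \cdot C$ must vanish, giving multidegree a permutation of $(0,1,1,1)$). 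So I would systematically enumerate the allowed multidegrees, in each case identify which projection $\pi_I$ maps $C$ isomorphically (or with controlled degree) onto its image, and read off the parameter space.

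For type \type{2,2,2}: a cubic of multidegree $(1,1,1)$ projects under each $\pi_i \colon X_\bkk \to \PP^2$ to a line (its image under $\pi_{i_1,i_2}$ to a curve of bidegree $(1,1)$ in $\PP^1 \times \PP^1$), and $\pi_\whi(C)$ is a curve of bidegree $(1,1)$ in $W_\whi \subset \PP^2 \times \PP^2$, i.e.\ a conic there; the condition that $C$ pass through the $\bkk$-point $x$ (which lies on no line, so $C$ is irreducible and $\pi_\whi$ is an isomorphism near it) should pin down such conics by a pencil argument — the image conic is the residual intersection after removing the known components determined by $x$, giving a curve isomorphic to $\Gamma_{1,2}$ (and the three resulting curves $\Gamma_{1,2}, \Gamma_{1,3}, \Gamma_{2,3}$ are abstractly isomorphic since each is the smooth curve of bidegree $(2,2)$ cut out on a $(1,1)$-surface, all isomorphic to $\PP^1$ — actually by Lemma \ref{lemma:pi-i} they are the centres of the blowups, and the stated isomorphisms follow from the $\fS_3$-symmetry). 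For type \type{1,1,1,1}: by the same multidegree analysis $C$ has multidegree a permutation of $(0,1,1,1)$, hence is contracted by exactly one factor and maps to a $(1,1,1)$-curve in a $(\PP^1)^3$ — but more usefully, fixing the vanishing index $j$, the cubic $C$ is a conic for the induced structure, and through $x$ one gets a $\PP^1$ of them (the same count as lines-through-a-point-type arguments give $8 = \binom{4}{1}\cdot 2$, coming from $4$ choices of $j$ and then a further $\PP^1$, but one needs to check the components are indeed $8$ copies of $\PP^1$, likely via the conic bundle structures $\pi_{i_1,i_2}$). For type \type{3,3}: a $(1,1)$-cubic projects under $\pi_i \colon X_\bkk \to \PP^3$ to a line, and through the fixed point $x$ one expects, after the blowup geometry, the parameter space to be $\Gamma_1 \sqcup \Gamma_2$ (one $\Gamma_i$ for each of the two projections); I would argue this by the incidence: $C$ meets the exceptional divisor $E_i$ in one point, which lies on $\Gamma_i$, and conversely. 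For type \type{4,4}: here $\pi_i \colon X_\bkk \to Q_i$ is the blowup along $\Gamma_i$, a cubic through $x$ maps to a conic on $Q_i$ through $\pi_i(x)$ meeting $\Gamma_i$ appropriately, and the $4$-dimensional quadric geometry should give a $\PP^1$ for each of the two projections, hence $\PP^1_\bkk \sqcup \PP^1_\bkk$.

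For the final "moreover" clause about type \type{4,4}: I would compute the class of the divisor swept by $\mathscr{C}_3(X_\bkk,x)$ exactly as in the proof of Lemma \ref{lemma:conics-c0}. Each of the two $\PP^1$-families of cubics through $x$ sweeps a surface; pushing forward along $\pi_i$ one gets a surface in $Q_i$ swept by conics through $\pi_i(x)$ — this is a hyperplane section of $Q_i$, and keeping track of its tangency and the multiplicities along $\Gamma_i$ (the conics meet $\Gamma_i$), the strict transform is $H_i - (\text{multiple of } E_i)$; summing the two contributions and using $-K_{X_\bkk} = H_1 + H_2$ (Lemma \ref{lemma:pi-i}\ref{lemma:pi-i:44}) should yield the anticanonical class, with the total multiplicity at $x$ being $4$ (matching the degree-$4$ covering in Lemma \ref{lemma:conics}, since each of the $4$ conics through a general point corresponds to a branch of $R_x$ through $x$).

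The main obstacle I anticipate is the case-by-case bookkeeping of multiplicities and the verification that the morphisms to the putative parameter spaces are isomorphisms (not merely bijective on points) — in particular ruling out degenerate/reducible members of $\rF_3(X_\bkk,x)$ cleanly, which is precisely where the hypothesis "$x$ lies on no $\bkk$-line" is used (cf.\ Lemma \ref{lemma:lines}), and checking scheme-theoretic reducedness of $\rF_3(X_\bkk,x)$. The divisor-class computation in the \type{4,4} case is the second delicate point, requiring care with the blowup intersection theory exactly parallel to Lemma \ref{lemma:conics-c0}.
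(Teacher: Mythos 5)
Your overall strategy coincides with the paper's: fix the multidegree of the cubic using nefness of the $H_i$, then for each type project to a low-degree curve through $x$ meeting the blowup centre and parameterize, and for the \type{4,4} ``moreover'' clause compute the swept divisor class as in Lemma~\ref{lemma:conics-c0}. However, two of your steps would fail as written. First, the exclusion of unbalanced multidegrees. A member of $\rF_3(X_\bkk,x)$ with, say, multidegree $(2,1,0)$ on a threefold of type \type{2,2,2} does \emph{not} ``decompose into a line plus a conic'': since $H_3\cdot C=0$ kills every component, the whole scheme $C$ is contained in a single fiber of the conic bundle $\pi_3\colon X_\bkk\to\PP^2_\bkk$, i.e.\ it is a \emph{non-reduced} scheme of degree $3$ supported on a conic. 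The paper rules this out by noting that the conormal bundle of such a fiber is trivial (and, in the analogous case of a curve supported on a line, globally generated by~\eqref{eq:normal-lines}), so it cannot carry a non-reduced structure of arithmetic genus $0$ and degree $3$. Your argument (``a line meeting the residual conic would pass through $x$ only if $x$ lies on that line'') addresses neither the non-reduced case nor the fact that $x$ need only lie on $C$, not on any particular component; moreover the hypothesis that $x$ lies on no line is not what excludes these multidegrees --- it is used later, to guarantee the two incidence points determining the residual curve are in general position.

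Second, in the two cases with $r=2$ the cubic has bidegree $(1,2)$ or $(2,1)$ (not ``$(1,1)$''), and the entire argument hinges on using the projection under which $C$ maps to a curve of degree \emph{one}: for \type{4,4} a bidegree-$(1,2)$ cubic maps under $\pi_1$ to a \emph{line} on $Q_1$ through $\pi_1(x)$, so the component of $\rF_3(X_\bkk,x)$ is the $\PP^1_\bkk$ of lines on a quadric threefold through a point, and the component of $\mathscr{C}_3(X,x)$ maps onto the tangent hyperplane section of $Q_1$ at $\pi_1(x)$ --- a divisor of class exactly $H_1$ (no $E_1$-correction, since $\pi_1(x)\notin\Gamma_1$) with multiplicity $2$ at the vertex. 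Summing the two components gives $H_1+H_2=-K_X$ through $x$ with multiplicity $4$. Your version, which projects to ``conics on $Q_i$'' and anticipates a strict transform of the form $H_i-(\text{multiple of }E_i)$, would both complicate the parameterization and break the final class computation (a nonzero $E_i$-correction would not sum to the anticanonical class). The same remark applies to \type{3,3}, where the degree-one projection sends $C$ to a line in $\PP^3$ through $x$ meeting $\Gamma_1$, whence the component is $\Gamma_1$ itself. The remaining cases (\type{2,2,2} via the unique $(1,1)$-curve on $W_\whi$ through $x$ and a point of $\Gamma_\whi$, and \type{1,1,1,1} via the two pencils of $(1,1)$-curves on each of the four fiber surfaces, giving $4\times 2=8$) match the paper once the factor of $2$ is traced to the two intersection points of $\Gamma_{\whi}$ with the relevant $\PP^1_\bkk\times\PP^1_\bkk$.
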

\begin{proof}
First, consider a threefold~$X$ of type~\type{2,2,2}.
If~$C$ is a rational normal cubic curve and~\mbox{$H_i \cdot C = 0$} for some~$i$ 
then $C$ is contracted by one of the conic bundles~\eqref{eq:conic-bundles},
hence~$C$ is supported on a fiber of~\eqref{eq:conic-bundles}.
But the conormal bundle of any such fiber is trivial, hence it cannot support a nonreduced 
curve of arithmetic genus~$0$ and degree more than~$2$.
This means that we have~$H_i \cdot C = 1$ for each~$i$, and we conclude from this and Lemma~\xref{lemma:pi-i}
that the image of~$C$ under the map~\mbox{$\pi_{1,2} \colon X_\bkk \to W_{1,2}$}
is a rational curve of bidegree~$(1,1)$ intersecting the curve~$\Gamma_{1,2}$ and passing through~$x$.
The argument analogous to that of Corollary~\ref{cor:f2x-gamma1} shows that there is a morphism
\begin{equation*}
\varphi_{1,2} \colon \rF_3(X,x) \longrightarrow \Gamma_{1,2}
\end{equation*}
that takes a twisted cubic curve~$C$ to the unique point~$x_{1,2} \in \Gamma_{1,2}$ such that~$C \cap \pi_{1,2}^{-1}(x_{1,2}) \ne \varnothing$.
This morphism is an isomorphism, because
on~$W_{1,2}$ there is a unique curve of bidegree~$(1,1)$ through a given pair of points 
(unless they lie on a fiber of either of the projections~$W_{1,2} \to \PP^2_\bkk$, in which case~$x$ lies on a line in~$X$).
The same argument proves isomorphisms of~$\rF_3(X,x)$ with the curves~$\Gamma_{1,3}$ and~$\Gamma_{2,3}$. 

Next, consider a threefold of type~\type{4,4}.
If $H_i \cdot C = 0$ for some~$i$ then $C$ is contracted by~$\pi_i$, hence is supported on a line. 
But the conormal bundle of a line is globally generated by~\eqref{eq:normal-lines}, 
hence a line cannot support a nonreduced 
curve of arithmetic genus~$0$ and degree more than~$1$.
This means that~$C$ has bidegree~$(1,2)$ or~$(2,1)$.
In the first case the image of~$C$ under~$\pi_1$ is a line on the quadric~$Q_1$ passing through~$x$;
hence the corresponding component of~$\rF_3(X,x)_\bkk$ is isomorphic to~$\PP^1_\bkk$.
It also follows that the corresponding component of~$\mathscr{C}_3(X,x)$ is a Hirzebruch surface that maps birationally onto
the hyperplane section of~$Q_1$ tangent at~$x$, i.e., a 
divisor of class~$H_1$ passing through~$x$ with multiplicity~$2$.
The second component is described analogously.
The total divisor class of the image~$R_x$ of~$\mathscr{C}_3(X,x) \to X$
is~$H_1 + H_2 - 4x$, i.e., it is the anticanonical class passing through~$x$ with multiplicity~$4$.

Next, consider a threefold of type~\type{3,3}.
The same argument as above shows that~$C$ has bidegree~$(1,2)$ or~$(2,1)$.
In the first case the image of~$C$ under~$\pi_1$ is a line on~$\PP^3_\bkk$ passing through~$x$ and intersecting the curve~$\Gamma_1$.
Since for any point of~$\Gamma_1$ there is a unique line through it and~$x$, 
the corresponding component of~$\rF_3(X_\bkk,x)$ is isomorphic to~$\Gamma_1$.
Analogously, the second component is isomorphic to~$\Gamma_2$.

Finally, consider a threefold of type~\type{1,1,1,1}.
Then, of course, $H_i \cdot C = 0$ for some~$i$.
The argument used for threefolds of type~\type{2,2,2} shows this cannot hold for two distinct~$i$.
So, assume this holds for~$i = 1$.
By Lemma~\ref{lemma:pi-i}\ref{lemma:pi-i:x1111} the image of~$C$ under the map~$\pi_{1,2,3}$ 
is a curve of multidegree~$(0,1,1)$ on~$\PP^1_\bkk \times \PP^1_\bkk \times \PP^1_\bkk$
intersecting the curve~$\Gamma_{1,2,3}$ and passing through~$x$.
In other words, it is a curve of bidegree~$(1,1)$ on the surface~$\PP^1_\bkk \times \PP^1_\bkk$ passing through~$x$ 
and either of the two points of intersection of~$\Gamma_{1,2,3}$ with this surface
(note that these points cannot collide because otherwise~$x$ would lie on a line in~$X$).
Therefore, there are two pencils of such curves.
Using the same argument for other~$i$ we see that altogether there are~8 
pencils of twisted cubic curves on~$X$ passing through~$x$.
\end{proof}

\begin{remark}
\label{rem:f3x-x}
Let~$X$ be a threefold of type~\type{4,4}.
Since a general line on~$Q_1$ passing through a point~$x \not\in \Gamma_1$ does 
not intersect~$\Gamma_1$,
it follows that a general twisted cubic curve on~$X$ passing through~$x$ is 
smooth.
\end{remark} 

\section{A birational transformation for a product of projective spaces}
\label{sec:toric} 

In this section we construct a birational transformation for a product of projective spaces 
and deduce a consequence for the rationality of its $\kk$-forms;
in particular we prove the rationality criterion for threefolds~\type{1,1,1}.

\subsection{Product of projective spaces}
\label{subsec:ppp}

Consider the product 
\begin{equation*}
Y = \PP^{n_1} \times \PP^{n_2} \times \dots \times \PP^{n_r} = \PP(V_1) \times 
\PP(V_2) \times \dots \times \PP(V_r)
\end{equation*}
of projective spaces.
Assume that $r = p + q$ and
\begin{equation}
\label{eq:n-i}
n_1 \ge n_2 \ge \dots \ge n_p \ge 2,
\qquad 
n_{p+1} = \dots = n_{p+q} = 1.
\end{equation}
Let $y \in Y$ be a point, and let $(v_1,v_2,\dots,v_r)$, $0 \ne v_i \in V_i$, be the corresponding collection of vectors.
Consider the blowup
\begin{equation*}
\tY = \Bl_y(Y)
\end{equation*}
and let $E \subset \tY$ be its exceptional divisor.

Let~$\PGL(V_i)_{v_i} \subset \PGL(V_i)$ be the stabilizer of the point~$[v_i] \in \PP(V_i)$ in the projective linear group $ \PGL(V_i)$.
The group 
\begin{equation*}
G = \prod_{i=1}^r \PGL(V_i)_{v_i}
\end{equation*}
acts naturally on~$\tY$ and has 
finitely many orbits, which can be described as follows.
First, for each $1 \le i \le r$ let
\begin{equation}
\label{eq:divisor-ei}
\tY_i := \Bl_y\Big( \PP(V_1) \times \dots \times \PP(V_{i-1}) \times [v_i] \times \PP(V_{i+1}) \times \dots \times \PP(V_r)\Big) \subset \tY.
\end{equation}
Furthermore, for any subset $I \subsetneq\{1,\dots,r\}$ denote
\begin{equation}
\label{eq:eI}
\tY_I := \bigcap_{i \in I} \tY_i
\qquad\text{and}\qquad 
E_I := E \cap \tY_I.
\end{equation} 
Finally, set 
\begin{equation}
\label{eq:eI-circ}
\tY_I^\circ := \tY_I \setminus \left( E_I \cup \bigcup_{I \subsetneq J} 
\tY_J \right)
\qquad\text{and}\qquad 
E_I^\circ := E_I \setminus \left( \bigcup_{I \subsetneq J} E_J \right).
\end{equation} 
Then $\tY_\varnothing^\circ$ is the open orbit, $E_\varnothing^\circ$ 
and~$\tY_i^\circ$, $p+1 \le i \le q$, are the orbits of codimension~$1$, 
and all other orbits have higher codimension.

To describe the other side of the transformation, denote
\begin{equation*}
\bar{V}_i := V_i / \kk v_i
\end{equation*}
and choose splittings $V_i = \kk v_i \oplus \bar{V}_i$.
They induce a direct sum decomposition 
\begin{equation*}
V_1 \otimes \dots \otimes V_{r} = 
\bigoplus_{I \subset \{1,\dots,r\}} \bar{V}_I,
\qquad\text{where}\quad 
\bar{V}_I := \bigotimes_{i \in I} \bar{V}_i.
\end{equation*}
Note that the point $y$ corresponds to the summand~$\bar{V}_\varnothing = \kk$,
and the tangent space to~$Y$ at~$y$ corresponds to the sum of the summands~$\bar{V}_I$ with~$|I| = 1$.

Note also that for $i \ge p + 1$ one has $\PP(\bar{V}_i) \cong \Spec(\kk)$.
Let 
\begin{equation*}
Y^+ := \prod_{i=1}^r \PP(\bar{V}_i) =
\PP(\bar{V}_1) \times \PP(\bar{V}_2) \times \dots \times \PP(\bar{V}_p) \cong 
\PP^{n_1-1} \times \PP^{n_2-1} \times \dots \times \PP^{n_p-1}.
\end{equation*}
Denote by~$h_i$ the pullback to~$Y^+$ of the hyperplane class of the $i$-th factor 
(note that~$h_i = 0$ for~$i \ge p + 1$ because, as we noticed above, $\PP(\bar{V}_i)$ is just a point)
and for $I \subset \{1,\dots,r\}$ set 
\begin{equation*}
h_I := \sum_{i \in I} h_i.
\end{equation*}
Consider the vector bundle~$\cE$ of rank~$r+1$ on~$Y^+$ defined by
\begin{equation}
\label{eq:ce}
\cE := \bigoplus_{|I| \ge r - 1} \cO\left( - h_I \right).
\end{equation}
Denote by
\begin{equation*}
s_i \colon Y^+ \longrightarrow \PP_{Y^+}(\cE)
\end{equation*}
the section~of~\eqref{eq:ce} corresponding to the summand with $I = \{1,\dots,i-1,i+1,\dots,r\}$.
Set 
\begin{equation}
\label{def:hyp-typ}
\hY^+ := \PP_{Y^+}(\cE),
\qquad
\tY^+ := \Bl_{s_{p+1}(Y^+) \sqcup \dots \sqcup s_{p+q}(Y^+)}(\hY^+).
\end{equation}
and let~$E_i \subset \tY^+$, $p + 1 \le i \le p + q$, be the exceptional divisors.
The group~$G$ acts transitively on~$Y^+$, the vector bundle~$\cE$ is 
$G$-equivariant, and its summands $\cO(-h_I)$ with $|I| = r - 1$ 
are~$G$-invariant.
Therefore, the action of~$G$ lifts naturally to~$\hY^+$ and~$\tY^+$.
Moreover, the action of~$G$ on~$\tY^+$ still has a finite number of orbits, 
which can be described as follows.

For a subset $J \subsetneq \{1,\dots,r\}$ denote
\begin{equation}
\label{eq:yplus-i}
\bar{\cE}_J = \bigoplus_{J \subset I,\ |I| = r - 1} \cO\left( - h_I \right);
\end{equation}
this is a subbundle in~$\cE$ of corank~$1 + |J|$.
Let~$\tY^+_J \subset \tY^+$ denote the strict transform of~$\PP_{Y^+}(\bar{\cE}_J)$.
Then the $G$-orbits are
\begin{align*}
(\tY^+)^\circ &= \tY^+ \setminus \left( \tY^+_\varnothing \cup \bigcup_{i = p + 
1}^q E_i \right),
&
(\tY^+_J)^\circ &= \tY^+_J \setminus \left(\bigcup_{i = p + 1}^q E_i \right),
\\
E_i^\circ &= E_i \setminus \tY^+_\varnothing,
&
E_{i,J}^\circ &= (E_i \cap \tY^+_J) \setminus \left( \bigcup_{J \subsetneq K} E_i \cap \tY^+_K \right),
\end{align*}
where in the last formula we assume~$i \not\in J$.
Note that~$(\tY^+)^\circ$ is the open orbit, 
$(\tY^+_\varnothing)^\circ$ and~$E_i^\circ$ are the orbits of codimension~$1$, 
and all other orbits have higher codimension. 

The linear projection out of~$[v_i]$ defines a~$\PGL(V_i)_{v_i}$-equivariant rational map $\PP(V_i) 
\dashrightarrow \PP(\bar{V}_i)$ which is regular if $i \ge p + 1$.
The product of these maps is a $G$-equivariant rational map, which we denote by~$\psi_0 \colon Y \dashrightarrow Y^+$.
It gives rise to the following birational transformation.

\begin{theorem}
\label{proposition:toric-link}
There is a small birational $G$-equivariant isomorphism~$\psi \colon \tY 
\dashrightarrow \tY^+$ that fits into the commutative diagram
\begin{equation}
\label{eq:sl-ppp}
\vcenter{
\xymatrix@C=5em{
\tY \ar[dd]_{\sigma} \ar@{-->}^{\psi}[r] \ar@{..>}[dr]_{\hat\psi} &
\tY^+ \ar^{\tilde\sigma_+}[d] \ar@/^3em/[dd]^{\sigma_+}
\\
&
\hY^+ \ar[d]^{\hat\sigma_+} 
\\
Y \ar@{-->}[r]^{\psi_0} \ar@{..>}[ur]^{\hat\psi_0} &
Y^+,
} }
\end{equation} 
where~$\hat\sigma_+ \colon \hY^+ = \PP_{Y^+}(\cE) \to Y^+$ and
\mbox{$\tilde\sigma_+ \colon \tY^+ = \Bl_{s_{p+1}(Y^+) \sqcup \dots \sqcup s_{p+q}(Y^+)}(\hY^+) \to \hY^+$} is the projection and  the blowup, respectively,
$\sigma_+ := \hat\sigma_+ \circ \tilde\sigma_+$
and, such that~$\psi$ induces isomorphisms of~$G$-orbits 
\begin{equation*}
\tY_\varnothing^\circ \cong (\tY^+)^\circ,
\qquad 
E_\varnothing^\circ \cong (\tY^+_\varnothing)^\circ,
\qquad\text{and}\qquad 
\tY_i^\circ \cong E_i^\circ
\end{equation*}
of codimension~$0$ and~$1$.
Moreover, if 
\begin{itemize}
\item 
$H_i$, $1 \le i \le r$, are the hyperplane classes of $\PP(V_i)$
and
$H = H_1 + \dots + H_r$, 
\item 
$E$ is the exceptional divisor of $\sigma$, 
\item 
$h$ is the relative hyperplane class of the projective bundle~$\hat\sigma_+$, 
\item 
$h_i$, $1 \le i \le p$, are the hyperplane classes of~$\PP(\bar{V}_i)$, and
\item 
$e_i$, $p+1 \le i \le p+q$, are the exceptional divisor classes of the blowup~$\tilde\sigma_+$,
\end{itemize}
then in the Picard group $\Pic(\tY) = \Pic(\tY^+)$ there are the following 
equalities 
\begin{equation}
\label{eq:toric-pic-1}
\begin{array}{lclll}
h_i &=& H_i - E,\quad && 1 \le i \le p,\\
e_i &=& H_i - E,\quad && p+1 \le i \le p+q,\\
h &=& H - (r - 1)E.\qquad&&
\end{array}
\end{equation}

Conversely, one has
\begin{equation}
\label{eq:toric-pic-e}
E = h - \sum_{i=1}^p h_i - \sum_{j=p+1}^{p+q} e_j,
\qquad 
H_i = 
\begin{cases}
h_i + E, & \text{for $1 \le i \le p$},\\
e_i + E, & \text{for $p + 1 \le i \le p + q$}.
\end{cases}
\end{equation} 
\end{theorem}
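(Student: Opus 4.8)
The plan is to construct $\psi$ explicitly and $G$-equivariantly, deduce the commutativity of~\eqref{eq:sl-ppp}, prove birationality and smallness by describing both sides as toric varieties, and finally obtain the Picard relations by tracking a handful of divisors. First observe that everything in sight is toric: the chosen splittings $V_i = \kk v_i \oplus \bar{V}_i$ single out in $G$ a maximal torus $T$ of dimension $\dim Y$ (the product of the maximal tori of the groups $\PGL(V_i)_{v_i}$), and with respect to $T$ the varieties $Y$, $Y^+$, $\hY^+$, $\tY$, $\tY^+$ are toric while $\sigma$, $\hat\sigma_+$, $\tilde\sigma_+$ and $\psi_0$ are equivariant. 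Hence it suffices to produce $\psi$ as a $T$-equivariant birational map; its $G$-equivariance is then automatic because the formula for $\psi$ below is built entirely out of $G$-equivariant data.

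For the construction, write a point $x = ([w_1],\dots,[w_r]) \in Y$ as $w_i = c_i v_i + \bar{w}_i$ with $\bar{w}_i \in \bar{V}_i$. On the open locus where at most one of the $\bar{w}_i$ vanishes, let $\hat\psi_0(x) \in \hY^+ = \PP_{Y^+}(\cE)$ be the point lying over $([\bar{w}_1],\dots,[\bar{w}_p]) = \psi_0(x) \in Y^+$ and represented, in the fibre $\PP\big(\cE_{\psi_0(x)}\big)$, by the vector $\big(\bigotimes_{i\in I}\bar{w}_i\big)_{|I|\ge r-1}$; here the $I$-th entry lies in $\bigotimes_{i\in I}\kk\bar{w}_i = \big(\cO(-h_I)\big)_{\psi_0(x)}$, and the vector is nonzero by the hypothesis on the $\bar{w}_i$. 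This rational map $\hat\psi_0 \colon Y \dashrightarrow \hY^+$ is $G$-equivariant and manifestly lifts $\psi_0$; put $\hat\psi := \hat\psi_0 \circ \sigma$ and $\psi := \tilde\sigma_+^{-1}\circ\hat\psi$, so that~\eqref{eq:sl-ppp} commutes by construction. That $\hat\psi_0$ (hence $\psi$) is birational is seen by writing down the inverse on a dense open: from a point $\big(([\ell_1],\dots,[\ell_p]),\,[(a_I)_{|I|\ge r-1}]\big)$ with $a_I \in \bigotimes_{i\in I}\ell_i$ one recovers, for each $j$, the point $[w_j] = [v_j + \bar{w}_j] \in \PP(V_j)$ from the ``ratio'' of $a_{\{1,\dots,r\}}$ and $a_{\{1,\dots,r\}\setminus\{j\}}$; since both $\tY$ and $\tY^+$ have dimension $\dim Y$, this is a birational inverse.

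The heart of the proof is smallness together with the orbit correspondence. The plan is to write the fans of $\tY$ and of $\tY^+$ in the common cocharacter lattice of $T$: the fan of $\tY$ is the star subdivision of the product of the fans of the $\PP(V_i)$ at the barycentric ray of the maximal cone corresponding to $y$, while the fan of $\hY^+$ is that of the split projective bundle $\PP_{Y^+}(\cE)$ over $\prod_{i\le p}\PP(\bar{V}_i)$ and the fan of $\tY^+$ is obtained from it by the $q$ star subdivisions along the strata $s_{p+1}(Y^+),\dots,s_{p+q}(Y^+)$. One then checks, using the identification of tori coming from $\hat\psi_0$, that the two fans have the \emph{same set of rays} and that $\psi$ corresponds to the identity of the lattice; consequently $\psi$ is an isomorphism in codimension~$1$, i.e.\ small, and no divisor is contracted by $\psi$ or by $\psi^{-1}$. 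The three asserted orbit isomorphisms then come from matching cones: the open cones give $\tY^\circ_\varnothing \cong (\tY^+)^\circ$; the ray of the exceptional divisor $E$ of $\sigma$ matches the ray of $\tY^+_\varnothing$ (the strict transform of $\PP_{Y^+}(\bar{\cE}_\varnothing)$), giving $E^\circ_\varnothing \cong (\tY^+_\varnothing)^\circ$; and for $p+1\le i\le p+q$ the ray of $\tY_i$ (which is a divisor in this range) matches the ray of the exceptional divisor $E_i$ of $\tilde\sigma_+$, giving $\tY^\circ_i \cong E^\circ_i$. (Alternatively one may avoid fans and verify directly that the indeterminacy loci of $\hat\psi_0$ and of its inverse acquire codimension $\ge 2$ after the modifications $\sigma$ and $\tilde\sigma_+$.) I expect this step to be the main obstacle: it is the bookkeeping of which lower strata are created or destroyed that forces $\cE$ to have exactly the summands $\cO(-h_I)$ with $|I|\ge r-1$ and forces exactly the $q$ sections $s_i$ with $i>p$ to be blown up.

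Once $\psi$ is known to be small, $\Pic(\tY) = \Pic(\tY^+)$ and it remains to compute. The divisor $\sigma_+^* h_i$ is carried by $\psi^{-1}$ to the strict transform in $\tY$ of the hyperplane of $\PP(V_i)$ through $[v_i]$, which has class $H_i$ and passes through the smooth point $y$ with multiplicity~$1$; hence $h_i = H_i - E$ for $1\le i\le p$. Similarly the exceptional divisor $E_i$ of $\tilde\sigma_+$ corresponds under $\psi$ to the strict transform $\tY_i$ of the coordinate divisor $\PP(V_1)\times\dots\times[v_i]\times\dots\times\PP(V_r)$, again of class $H_i$ through $y$ with multiplicity~$1$, so $e_i = H_i - E$ for $p+1\le i\le p+q$. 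Finally, by the orbit identification the prime divisor $E$ corresponds to $\tY^+_\varnothing$; a direct computation of the class of $\PP_{Y^+}(\bar{\cE}_\varnothing)$ in the projective bundle $\PP_{Y^+}(\cE)$ and of its strict transform under $\tilde\sigma_+$ (noting that $\bar{\cE}_\varnothing$ omits exactly the summand $\cO(-h_{\{1,\dots,r\}})$, that $h_{\{1,\dots,r\}} = \sum_{i\le p}h_i$, and that each section $s_j(Y^+)$ with $j>p$ lies on it with multiplicity~$1$) gives $E = h - \sum_{i\le p}h_i - \sum_{j>p}e_j$, which is the first formula of~\eqref{eq:toric-pic-e}. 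Substituting $h_i = H_i - E$ and $e_i = H_i - E$ and solving then yields $h = H - (r-1)E$ and $H_i = h_i + E$ (resp.\ $e_i + E$), completing~\eqref{eq:toric-pic-1} and~\eqref{eq:toric-pic-e}. The bulk of the work, and the only genuinely delicate point, is the smallness and orbit step described above.
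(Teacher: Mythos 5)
Your construction of~$\psi_0$, $\hat\psi_0$, $\hat\psi$ and~$\psi$ is the same as the paper's, down to the explicit formula for the point of~$\PP_{Y^+}(\cE)$ represented by~$\big(\bigotimes_{i\in I}\bar w_i\big)_{|I|\ge r-1}$, and your Picard computation of~$h_i = H_i - E$ and~$e_i = H_i - E$ by tracking strict transforms of hyperplanes through~$y$ matches the paper's. Where you diverge is in the two remaining steps. For smallness and the orbit isomorphisms, the paper works directly with the explicit formulas: it checks that~$\hat\psi_0$ is an isomorphism from the open orbit of~$Y$ onto~$\PP_{Y^+}(\cE)\setminus\PP_{Y^+}(\bar\cE_\varnothing)$, that the restriction of~$\hat\psi$ to~$E=\PP(\bar V_1\oplus\dots\oplus\bar V_r)$ maps~$E_\varnothing^\circ$ isomorphically onto~$(\tY^+_\varnothing)^\circ$, and that each~$\tY_i^\circ$ ($p+1\le i\le p+q$) is contracted to the section~$s_i(Y^+)$ and becomes isomorphic to~$E_i^\circ$ after the blowup~$\tilde\sigma_+$ --- no fans are needed. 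Your fan-matching argument is precisely the alternative route the paper itself sketches in Remark~\ref{rem:flips} (same rays in a common lattice, hence isomorphism in codimension one), so it is a legitimate, if less self-contained, substitute; what the paper's direct argument buys is that the orbit isomorphisms come for free from the formulas rather than from cone bookkeeping. For the last Picard relation the paper obtains~$h = H - (r-1)E$ by equating the canonical classes of~$\tY$ and~$\tY^+$, whereas you compute the divisor class of~$E = \tY^+_\varnothing$ as the strict transform of~$\PP_{Y^+}(\bar\cE_\varnothing)$ and solve; both computations are correct. One minor imprecision: the locus you describe as ``at most one of the~$\bar w_i$ vanishes'' must in addition require~$\bar w_i\ne 0$ for all~$i\le p$ (otherwise~$\psi_0(x)$ itself is undefined); the paper is careful to allow only the coordinates with~$i>p$ to vanish. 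This does not affect the argument, since a rational map only needs a dense domain of definition.
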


The maps~$\hat\psi_0$ and~$\hat\psi$ in~\eqref{eq:sl-ppp} will be defined in the course of proof.

\begin{proof}
For each $u_i \in V_i$ denote by~$\bar{u}_i \in \bar{V}_i$ the image of~$u_i$ under the linear projection from the fixed vector~$v_i \in V_i$.
Then the rational map~$\psi_0 \colon Y \dashrightarrow Y^+$ is given by the 
formula 
\begin{equation*}
(u_1,\dots,u_r) \longmapsto (\bar{u}_1,\dots,\bar{u}_r).
\end{equation*}
This map is regular on the open orbit~$Y^\circ \subset Y$ (given by the 
conditions~$\bar{u}_i \ne 0$ for all~\mbox{$1 \le i \le r$})
and it extends regularly to the orbits~$Y_i^\circ \subset Y$ of codimension~$1$ 
(given by the condition~$\bar{u}_i = 0$ for some~$p + 1 \le i \le p+ q$ 
and~$\bar{u}_j \ne 0$ for all $j \ne i$).

Now consider the rational $G$-equivariant map
\begin{equation}
\label{eq:tpsi-1}
\hat\psi_0 \colon Y \dashrightarrow \hY^+,
\qquad 
(u_1,\dots,u_r) \longmapsto \left((\bar{u}_1,\dots,\bar{u}_r), \sum_{|I| \ge r - 1} 
\bigotimes_{i \in I} \bar{u}_i\right).
\end{equation}
Here we consider the summand~$\otimes_{i \in I} \bar{u}_i$ 
as a point in the fiber of the line bundle~$\cO(-h_I)$ and their sum for $|I| 
\ge r - 1$ 
as a point in the fiber (of the projectivization) of~$\cE$. 
The map~$\hat\psi_0$ induces an isomorphism of the open orbit~$Y^\circ \subset 
Y$ 
onto the open orbit~$\PP_{Y^+}(\cE) \setminus \PP_{Y^+}(\bar{\cE}_{\varnothing})$ in~$\hY^+$
and contracts each orbit~$Y_i^\circ$ of codimension~$1$ to the section~$s_i(Y^+) \subset \hY^+$, $p + 1 \le i \le p + q$.

Now consider the composition~$\hat\psi = \hat\psi_0 \circ \sigma \colon \tY \dashrightarrow \hY^+$. 
The restriction of~$\hat\psi$ to the exceptional divisor~$E$ is given by
\begin{equation}
\label{eq:tpsi-2}
E = \PP(\bar{V}_1 \oplus \dots \oplus \bar{V}_r) \dashrightarrow \hY^+, 
\qquad 
(\bar{u}_1 + \dots + \bar{u}_r) \longmapsto \left((\bar{u}_1,\dots,\bar{u}_r), 
\sum_{|I| = r - 1} \bigotimes_{i \in I} \bar{u}_i\right).
\end{equation}
It maps the orbit~$E_\varnothing^\circ \subset \tY$ isomorphically 
onto the $G$-orbit~$(\tY^+_{\varnothing})^\circ = \PP_{Y^+}(\bar{\cE}) \setminus \left( \bigcup_{i=1}^r 
\PP_{Y^+}(\bar{\cE}_i) \right)$ of codimension~$1$.
By the above arguments it also gives an isomorphism of open $G$-orbits 
and contracts the orbits~$\tY_i^\circ \cong Y_i^\circ$, $p + 1 \le i \le p + q$,
to the sections~$s_i(Y^+) \subset \hY^+$.
Therefore, $\hat\psi$ induces a birational isomorphism
\begin{equation*}
\psi \colon \tY \dashrightarrow 
\Bl_{s_{p+1}(Y^+) \sqcup \dots \sqcup s_{p+q}(Y^+)}(\PP_{Y^+}(\cE)) = \tY^+.
\end{equation*}
Finally, it is easy to see that the induced map~$\tY_i^\circ \to E_i^\circ$ is an isomorphism for all~$p + 1 \le i \le p + q$.
This gives the commutative diagram~\eqref{eq:sl-ppp} and proves that~$\psi$ is 
small. 

The first two lines in~\eqref{eq:toric-pic-1} follow easily from the 
formulas~\eqref{eq:tpsi-1}, \eqref{eq:tpsi-2}, and~\eqref{eq:divisor-ei}.
The last line follows from the equality of the canonical classes of~$\tY$ 
and~$\tY^+$ 
expressed in terms of~$H_i$ and~$E$ on the one hand,
and~$h_i$, $h$, and~$e_i$ on the other hand.

Finally,~\eqref{eq:toric-pic-e} follows from~\eqref{eq:toric-pic-1}.
\end{proof}

\begin{remark}
\label{rem:flips}
Alternatively, one can use the fact that the varieties~$\tY$ and~$\tY^+$, as 
well as the birational isomorphism~$\psi$ are toric.
Thus, to check that~$\psi$ is small, it is enough to identify the generators of 
rays of the corresponding fans.
Moreover, comparing the other cones in the fans one can check that the 
map~$\psi$ factors as the composition
\begin{equation*}
\xymatrix@1@C=4em{
\tY\ \ar@{-->}[r]^{\psi_1} & 
\ \tY'\ \ar@{-->}[r]^{\psi_2} &
\ \dots\ \ar@{-->}^{\psi_{r-2}}[r] &
\ \tY^{(r-2)}\ \ar@{-->}^{\psi_{r-1}}[r] &
\ \tY^+
}
\end{equation*}
of standard (anti)flips~$\psi_l$ in the strict transforms of~$\tY_I$
for~\mbox{$|I| = l$}, $1 \le l \le r - 2$, and 
for~\mbox{$|I| = r - 1$} with~$\{p+1,\dots,p+q\} \subset I$, respectively.
\end{remark} 

\subsection{Rationality of forms of products of projective spaces}

Here we apply the birational transformation of the previous subsection to 
deduce the following
corollary (see~\cite{Zak07} for a different proof).

\begin{corollary}
\label{corollary:product-rational}
Let $Y$ be a $\kk$-form of $\PP^{n_1} \times \PP^{n_2} \times \dots \times \PP^{n_r}$.
For any~$y \in Y(\kk)$ the diagram~\eqref{eq:sl-ppp} is defined over~$\kk$
and if~$Y(\kk) \ne \varnothing$ then~$Y$ is~$\kk$-rational.
\end{corollary}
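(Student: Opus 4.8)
The plan is to derive Corollary~\ref{corollary:product-rational} directly from Theorem~\ref{proposition:toric-link} by an induction on the number~$r$ of factors, using the Galois descent of the whole diagram~\eqref{eq:sl-ppp}. First I would explain why the diagram descends to~$\kk$: the construction in Theorem~\ref{proposition:toric-link} of all the varieties and maps involved depends only on the choice of the point~$y$ (which we now take in~$Y(\kk)$) and is entirely canonical otherwise — the blowup~$\sigma = \Bl_y(Y)$ is defined over~$\kk$, and since the maps~$\hat\psi_0$, $\hat\psi$, $\psi$, $\sigma_+$ are $G$-equivariant and $G$-canonical, they commute with the Galois action and hence are defined over~$\kk$. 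A subtle point is that a $\kk$-form~$Y$ of a product of projective spaces need not itself be such a product over~$\kk$: the Galois group may permute the factors. I would handle this by first reducing to the case where~$Y \cong \prod \PP(V_i)$ genuinely over~$\kk$ — namely, if~$\Gal(\bkk/\kk)$ permutes the factors with orbits~$O_1,\dots,O_s$, then~$Y$ is (a form of) a product of Weil restrictions~$\prod_k R_{\kk_k/\kk}\bigl(\PP^{n}_{\kk_k}\bigr)$ of projective spaces over intermediate fields~$\kk_k$; but each such Weil restriction of a projective space with a $\kk_k$-point (obtained from~$y$) is $\kk$-rational by a classical argument (linear projection from the point, or simply because it is a projective bundle over a smaller Weil restriction, done by descending induction), so it suffices to treat the split case.

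In the split case~$Y = \PP(V_1)\times\dots\times\PP(V_r)$ over~$\kk$ with the~$n_i$ ordered as in~\eqref{eq:n-i}, the argument is by induction on~$p$ (the number of factors of dimension~$\ge 2$), the base case~$p = 0$ being trivial since then~$Y$ is already a product of~$\PP^1$'s hence rational. For the inductive step I apply Theorem~\ref{proposition:toric-link} with the chosen~$\kk$-point~$y$: it produces a birational map over~$\kk$
\begin{equation*}
Y \dashrightarrow \tY^+ = \Bl_{s_{p+1}(Y^+)\sqcup\dots\sqcup s_{p+q}(Y^+)}\bigl(\PP_{Y^+}(\cE)\bigr),
\end{equation*}
and since blowing up and projectivizing a vector bundle preserve rationality, $Y$ is $\kk$-rational if and only if~$\PP_{Y^+}(\cE)$ is, if and only if~$Y^+ = \PP(\bar V_1)\times\dots\times\PP(\bar V_p) \cong \PP^{n_1-1}\times\dots\times\PP^{n_p-1}$ is. But~$Y^+$ has strictly smaller total dimension (indeed strictly smaller~$p$ once some~$n_i$ drops to~$1$), and it automatically carries a~$\kk$-point — e.g.\ the image~$\psi_0(y')$ of any~$\kk$-point~$y'$ in the open orbit, or more simply any~$\kk$-point at all since~$Y^+$ is a product of projective spaces over~$\kk$, which always have~$\kk$-points. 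Hence by induction~$Y^+$ is~$\kk$-rational, and therefore so is~$Y$.

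The main obstacle I anticipate is the descent bookkeeping in the non-split case: making precise that a Galois-twisted product of projective spaces, when the Galois action permutes factors, is a product of Weil restrictions and that each such Weil restriction is rational given a rational point. In the paper's setting one could instead simply restrict attention to forms for which the application is needed (all the relevant Fano threefolds embed in honest products~$(\PP^n)^r$ over~$\bkk$, and the Sarkisov-link construction is what actually gets used), so a cleaner route is to prove the split statement carefully and remark that the general case follows by the same Galois-descent principle applied to each factor-orbit. I would therefore present the split case in full and dispatch the general case in a short paragraph, pointing out that the diagram~\eqref{eq:sl-ppp} and the bundle~$\cE$ are canonical enough to be compatible with any permutation of the factors induced by~$\Gal(\bkk/\kk)$, so that the whole construction, and in particular the conclusion, descends.
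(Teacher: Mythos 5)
Your proposal founders precisely on the case that carries all the content of the corollary, namely when $\Gal(\bkk/\kk)$ permutes the factors. In your ``split case'' ($Y \cong \prod \PP(V_i)$ genuinely over~$\kk$) the statement is vacuous --- such a product is rational with no need for Theorem~\ref{proposition:toric-link} --- so the inductive argument you present in detail proves nothing new. For the permuted case you offer two routes, and neither closes. The Weil-restriction route says $Y$ is ``(a form of) a product of Weil restrictions'' and that each Weil restriction is rational; but a \emph{form} of a rational variety need not be rational, so the actual work --- showing that the $\kk$-point trivializes the form, i.e.\ that $Y$ is genuinely isomorphic to $\prod_k \Res_{\kk_k/\kk}(\PP^n_{\kk_k})$ --- is exactly the step you skip. (Carried out, this is Zak's proof, which the paper explicitly cites as a \emph{different} argument, and it makes the toric link irrelevant.) The ``cleaner route'' in your last paragraph asserts that because the construction is Galois-equivariant ``the conclusion descends''; rationality is not a property that descends along a Galois-equivariant construction, so this is not a valid principle.

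The point your proposal never confronts is the following: after descending the diagram~\eqref{eq:sl-ppp} to~$\kk$ (which the paper does not by ``canonicity'' but by observing that the Galois-invariant classes $H$, $H' = \sum_{i\le p} H_i$, and $E$ are defined over~$\kk$ thanks to the $\kk$-point, and that $Y^+$, $\hY^+$, $\tY^+$ are the images of the maps given by their combinations), the morphism $\hY^+ \to Y^+$ is over~$\kk$ only a \emph{form} of the projective bundle $\PP_{Y^+}(\cE)$ --- the bundle~$\cE$ itself does not descend, since its summands $\cO(-h_I)$ are permuted by Galois. A form of a projective bundle need not be rational over its base. The paper's key observation is that by~\eqref{eq:toric-pic-e} the strict transform of the exceptional divisor~$E$, which \emph{is} defined over~$\kk$, supplies a relative hyperplane section of $\hY^+ \to Y^+$, whence $\tY^+$ is rational over~$Y^+$; combined with the Nishimura lemma and induction on $\dim Y$ applied to the form~$Y^+$, this gives the result. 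Without this step (or a completed classification-of-forms argument in its place) your proof does not go through.
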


\begin{proof}
First, we prove that for any~$y \in Y(\kk)$ the diagram~\eqref{eq:sl-ppp} is defined over~$\kk$.
The divisor classes~$H = \sum_{i=1}^r H_i$ and~\mbox{$H' := \sum_{i=1}^p H_i$} 
on~$Y_\bkk$ are Galois-invariant, 
and since we have~\mbox{$Y(\kk) \ne \varnothing$} by assumption, we conclude 
that they are defined over~$\kk$.
Also~$\tY$ and~$E$ are defined over~$\kk$ as~$y$ is a $\kk$-point.
Therefore, the divisor classes
\begin{equation*}
\sum_{i=1}^p h_i = H' - pE,
\qquad 
h = H - (r - 1)E,
\qquad\text{and}\qquad 
-\sum_{i=p+1}^{p+q} e_i = H - H' - qE
\end{equation*}
(which by Theorem~\ref{proposition:toric-link} are equal to the strict transforms of the classes that are
ample on~$Y^+$, relatively ample for~$\hY^+ \to Y^+$ and for~$\tY^+ \to \hY^+$, respectively)
are defined over~$\kk$, hence the varieties~$Y^+$, $\hY^+$, and~$\tY^+$, equal 
to the images of~$\tY$ under the maps 
given by their appropriate linear combinations, are defined over~$\kk$, as well 
as the remaining arrows in the diagram.

Now to prove~$\kk$-rationality of~$Y$ we argue by induction in $\dim(Y) = \sum n_i$.
If the dimension is zero, there is nothing to prove.
So, assume the dimension is positive and consider the diagram~\eqref{eq:sl-ppp}.
By Theorem~\ref{proposition:toric-link} the variety~$Y^+$ 
is a $\kk$-form of $Y_\bkk^+ = \PP^{n_1-1}_\bkk \times \PP^{n_2-1}_\bkk \times \dots \times \PP^{n_r-1}_\bkk$.
By the Nishimura lemma (see \cite{nishimura-55})
we have~$Y^+(\kk) \ne \varnothing$, hence $Y^+$ is $\kk$-rational by the induction assumption.
Furthermore, the morphism~$\hY^+ \to Y^+$ is a $\kk$-form of a projective bundle, 
and by~\eqref{eq:toric-pic-e} the strict transform of the exceptional divisor~$E$ of~$\tY$ 
provides for it a relative hyperplane section. 
But~$E$ is defined over~$\kk$, therefore $\tY^+$ is rational over~$Y^+$, hence it is $\kk$-rational.
It remains to note that the morphisms~$\sigma$, $\psi$, and~$\tilde\sigma_+$ in~\eqref{eq:sl-ppp} are birational, 
hence $Y$ is $\kk$-rational as well.
\end{proof} 

Applying this to the case of a~$\kk$-form of~$(\PP^1)^3$ we obtain

\begin{corollary}
\label{cor:x111}
If $X$ is a Fano threefold of type~\type{1,1,1} with~$X(\kk) \ne \varnothing$ 
then~$X$ is $\kk$-rational.
\end{corollary}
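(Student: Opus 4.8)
The plan is to deduce this immediately from Corollary~\ref{corollary:product-rational}. First I would observe that, by Theorem~\ref{thm:classif} together with the last column of Table~\ref{table:fanos}, a Fano threefold~$X$ of type~\type{1,1,1} satisfies~$X_\bkk \cong \PP^1_\bkk \times \PP^1_\bkk \times \PP^1_\bkk$; in other words~$X$ is precisely a $\kk$-form of the product~$\PP^1 \times \PP^1 \times \PP^1$ (a smooth projective $\kk$-variety whose base change to~$\bkk$ is this product). Since by hypothesis~$X(\kk) \ne \varnothing$, Corollary~\ref{corollary:product-rational} applies verbatim with~$Y = X$, $r = 3$ and~$n_1 = n_2 = n_3 = 1$, and yields $\kk$-rationality of~$X$.

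For concreteness, in this case one has~$p = 0$ and~$q = 3$, so~$Y^+ = \Spec(\kk)$, the induction in Corollary~\ref{corollary:product-rational} terminates at the first step, and the diagram~\eqref{eq:sl-ppp} reduces to the blow-up~$\sigma\colon \tY \to X$ of a $\kk$-point, followed by the composition of flips~$\psi$ from Remark~\ref{rem:flips} and the map~$\tilde\sigma_+$ contracting~$\tY^+$ onto the $\kk$-form~$\hY^+ = \PP_{\Spec\kk}(\cE)$ of~$\PP^3$; by Theorem~\ref{proposition:toric-link} the divisor~$E$ is defined over~$\kk$ and its strict transform provides a relative hyperplane section for~$\hY^+ \to \Spec\kk$, while~$\hY^+$ carries a $\kk$-point, so it is $\kk$-rational and hence so is~$X$. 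I do not expect any real obstacle here: the one genuine input is that type~\type{1,1,1} threefolds with~$\uprho(X) = 1$ are honest twisted forms of~$(\PP^1)^3$ rather than merely geometrically birational to one, and this is already built into the classification recorded in Table~\ref{table:fanos}; everything else is packaged inside Corollary~\ref{corollary:product-rational}.
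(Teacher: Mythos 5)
Your proposal is correct and coincides with the paper's own argument: the paper likewise derives Corollary~\ref{cor:x111} as an immediate application of Corollary~\ref{corollary:product-rational} to a $\kk$-form of~$(\PP^1)^3$, giving no further proof. The extra unwinding of the case~$p=0$, $q=3$ (so that~$Y^+=\Spec(\kk)$ and~$\hY^+$ is a $\kk$-form of~$\PP^3$ split by the class of~$E$) is accurate but not needed beyond what Corollary~\ref{corollary:product-rational} already provides.
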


For other applications of the theorem we will often use the following 
observation.
Recall the definitions~\eqref{eq:divisor-ei} and~\eqref{eq:yplus-i} of the 
subvarieties~$Y_i \subset Y$ of codimension~$n_i$
and subbundles~$\bar{\cE}_i \subset \cE$ of corank~$2$.

\begin{proposition}
\label{prop:x-transform}
Let~$Y$ be a~$\kk$-form of~$(\PP^n)^r$ where~$n \ge 2$ and assume~$Y$ has a 
$\kk$-point~\mbox{$y \in Y(\kk)$}.
Let~$X \subset Y$ be a closed $\kk$-subvariety containing the point~$y$ such 
that
\begin{equation*}
X_\bkk = \bigcap_{\alpha = 1}^c D_\alpha \subset (\PP^n_{\bkk})^r
\end{equation*}
is a complete intersection of divisors~$D_\alpha \subset (\PP^n_{\bkk})^r$, $1 \le 
\alpha \le c$.
Let~$\tD_\alpha \subset \tY_\bkk$ and~$\tD_\alpha^+ \subset \tY^+_\bkk = 
\hY^+_\bkk$ be the strict transforms of~$D_\alpha$ and set
\begin{equation*}
\tX^+_\bkk := \bigcap_{\alpha = 1}^c \tD_\alpha^+ \subset 
\tY^+_\bkk = \hY^+_\bkk = 
\PP_{(\PP^{n-1}_{\bkk})^r}(\cE).
\end{equation*}
If~$X$ is smooth at~$y$ and for each~$1 \le i \le r$ one has
\begin{equation}
\label{eq:dimension-conditions}
\dim ( X_\bkk \cap (Y_i)_\bkk ) < \dim(X_\bkk)
\qquad\text{and}\qquad
\dim ( \tX^+_\bkk \cap \PP_{Y^+_\bkk}(\bar{\cE_i}) ) < \dim(X_\bkk)
\end{equation}
then the strict transform~$\tX^+ = \psi_*(\Bl_y(X))$ of~$X$ in~$\tY^+$ is a 
$\kk$-form of the complete intersection~$\tX^+_\bkk$ 
and~$X$ is birational to~$\tX^+$ over~$\kk$.
\end{proposition}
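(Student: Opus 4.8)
The plan is to work entirely over $\bkk$ to establish the geometric statement, and then descend. The key point is that Theorem~\ref{proposition:toric-link} gives a concrete birational isomorphism $\psi\colon\tY\dashrightarrow\tY^+$ which is an isomorphism away from the ``bad'' loci indexed by subsets $I$ with $|I|\ge 1$, and in particular restricts to an isomorphism on the open $G$-orbit together with the codimension-$1$ orbits $\tY_\varnothing^\circ$, $E_\varnothing^\circ$, and $\tY_i^\circ\cong E_i^\circ$. First I would set $Z := \Bl_y(X_\bkk)$, regarded as a closed subvariety of $\tY_\bkk$ via the strict transform (this makes sense because $X$ is smooth at $y$, so the blowup of $X$ at $y$ embeds in the blowup of $Y$ at $y$), and let $\tX^+_\bkk := \psi_*(Z)$ be its strict transform, which by construction is a closed subvariety of $\tY^+_\bkk$ birational to $Z$, hence to $X_\bkk$. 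The first task is to identify $\tX^+_\bkk$ with the complete intersection $\bigcap_\alpha \tD_\alpha^+$.

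\textbf{Identifying the strict transform with the complete intersection.} Since $\psi$ is an isomorphism outside the locus $\bigcup_{|I|\ge 1}\tY_I$ on the source and $\bigcup_{|I|\ge 1}\tY^+_I$ (together with the exceptional divisors $E_i$) on the target, and since $\tD_\alpha^+$ is by definition the strict transform of $D_\alpha$ under the sequence of blowups and (anti)flips, I would argue that on the common open locus where $\psi$ is an isomorphism, $Z$ agrees with $\bigcap_\alpha\tD_\alpha^+$ and with $\bigcap_\alpha\tD_\alpha$. The dimension hypotheses~\eqref{eq:dimension-conditions} are exactly what is needed to conclude that neither $\tX^+_\bkk$ nor $\bigcap_\alpha\tD_\alpha^+$ has a component supported inside the ``bad'' loci $\PP_{Y^+_\bkk}(\bar\cE_i)$ (and similarly on the $\tY$ side), so that each is the closure of its intersection with the good open locus. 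Since strict transforms of complete intersections are computed componentwise and the closures agree, this forces $\tX^+_\bkk=\bigcap_\alpha\tD_\alpha^+$ as schemes — or at least as varieties, which is what the statement claims. I would be slightly careful here about whether ``complete intersection'' is meant scheme-theoretically; the cleanest route is to check that $\tX^+_\bkk$ is irreducible of the expected dimension (it is birational to $X_\bkk$, which is irreducible), that it is contained in each $\tD_\alpha^+$, and that $\bigcap_\alpha\tD_\alpha^+$ has no extra components by the second inequality in~\eqref{eq:dimension-conditions}; combining these gives the identification.

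\textbf{Descent to $\kk$.} For the field-of-definition statement, I would invoke Corollary~\ref{corollary:product-rational} (equivalently, the last part of Theorem~\ref{proposition:toric-link}): because $y\in Y(\kk)$, the whole diagram~\eqref{eq:sl-ppp} together with all the divisor classes $H_i$, $E$, $h$, $h_i$, $e_i$ is defined over $\kk$. The subvariety $X\subset Y$ is defined over $\kk$ by hypothesis, hence so is $\Bl_y(X)$ (as $y\in X(\kk)$), and therefore its image $\tX^+=\psi_*(\Bl_y(X))$ under the $\kk$-morphism/birational map $\psi$ is a closed $\kk$-subvariety of $\tY^+$. Base-changing to $\bkk$ identifies $\tX^+_\bkk$ with the complete intersection described above, so $\tX^+$ is a $\kk$-form of that complete intersection. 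Finally, $X$ and $\tX^+$ are connected by the chain $X\xleftarrow{\ \sigma\ }\Bl_y(X)\xdashrightarrow{\ \psi\ }\tX^+$ in which $\sigma$ is a birational morphism defined over $\kk$ and $\psi$ restricts to a birational map defined over $\kk$, so $X$ is birational to $\tX^+$ over $\kk$.

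\textbf{Main obstacle.} The step I expect to require the most care is the scheme-theoretic identification $\tX^+_\bkk=\bigcap_\alpha\tD_\alpha^+$: one must rule out embedded or lower-dimensional components of the intersection of the strict transforms $\tD_\alpha^+$ that could a priori appear along the flipped loci $\PP_{Y^+_\bkk}(\bar\cE_i)$ or the exceptional divisors $E_i$, and conversely check that the strict transform $\psi_*(\Bl_y X_\bkk)$ does not drop into those loci — which is precisely the content of the two inequalities in~\eqref{eq:dimension-conditions}. Everything else is bookkeeping with the explicit orbit decomposition and the Picard-group formulas~\eqref{eq:toric-pic-1}--\eqref{eq:toric-pic-e} already established.
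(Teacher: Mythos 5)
Your proposal is correct and follows essentially the same route as the paper's proof: smoothness at~$y$ gives that~$\Bl_y(X_\bkk)$ is the complete intersection of the~$\tD_\alpha$, the first inequality in~\eqref{eq:dimension-conditions} ensures the strict transform meets the open orbit densely (hence lands inside~$\bigcap_\alpha \tD_\alpha^+$), the second inequality ensures~$\bigcap_\alpha \tD_\alpha^+$ is irreducible of dimension~$\dim(X_\bkk)$ so the containment is an equality, and descent to~$\kk$ is exactly via Corollary~\ref{corollary:product-rational}. The only cosmetic difference is that in the setting of the proposition one has~$n \ge 2$, so~$q = 0$ and the exceptional divisors~$E_i$ and orbits~$\tY_i^\circ \cong E_i^\circ$ of codimension~$1$ you mention do not actually occur.
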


\begin{proof}
By Corollary~\ref{corollary:product-rational} the diagram~\eqref{eq:sl-ppp} is defined over~$\kk$, 
so it is enough to check that the complete intersection~$\tX^+_\bkk$
is equal to the strict transform of~$\Bl_y(X_\bkk)$. 
First, note that the assumption that~$y$ is a smooth point of~$X$ implies that 
the strict transform~$\Bl_y(X_\bkk)$ of~$X_\bkk$ in~$\tY$ 
is the complete intersection of the divisors~$\tD_\alpha$.
Furthermore, the first part of the assumptions~\eqref{eq:dimension-conditions} 
implies 
that the intersection of~$\Bl_y(X_\bkk)$ with the open~$G$-orbit in~$\tY_\bkk$ 
is dense in~$\Bl_y(X_\bkk)$.
Therefore, the strict transform of~$\Bl_y(X_\bkk)$ in~$\tY^+_\bkk$ is contained in~$\tX^+_\bkk$.
So, it remains to check that~$\tX^+_\bkk$ is irreducible of dimension~$\dim(X_\bkk)$.
This is definitely true for the intersection of~$\tX^+_\bkk$ with the 
complement of the union of projective subbundles~$\PP_{Y^+_\bkk}(\bar{\cE}_i)$,
because the map~$\psi$ defines an isomorphism of this complement with an open subset of~$\tY_\bkk$.
On the other hand, the second part of the assumptions~\eqref{eq:dimension-conditions} 
gives a bound for the dimension of the intersections with these projective subbundles, which implies the irreducibility.
\end{proof} 

\section{Rationality and unirationality of types \type{2,2}, \type{2,2,2}, 
and~\type{1,1,1,1}}
\label{sec:constructions}

In this section we prove rationality of Fano threefolds of types~\type{2,2} 
and~\type{2,2,2}
as well as unirationality of threefolds of type~\type{1,1,1,1} 
under the assumption~$X(\kk) \ne \varnothing$. 

\subsection{Rationality of \type{2,2}}

To start with we deal with threefolds of type~\type{2,2}.

\begin{proposition}
\label{prop:x22}
Let $X$ be a Fano threefold of type~\type{2,2}.
If $X(\kk) \ne \varnothing$ then~$X$ is $\kk$-rational. 
\end{proposition}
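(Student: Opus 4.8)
The plan is to follow the approach used for type~\type{2,2,2}: realize~$X$ as a divisor of bidegree~$(1,1)$ in a~$\kk$-form of~$\PP^2 \times \PP^2$ and push it through the toric birational transformation of~\S\ref{sec:toric} by means of Proposition~\ref{prop:x-transform}. The first task is to produce, over~$\kk$, an embedding~$X \hookrightarrow Y$ with~$Y$ a~$\kk$-form of~$\PP^2 \times \PP^2$ and~$X_\bkk \subset Y_\bkk = \PP^2 \times \PP^2$ the standard~$(1,1)$-divisor. Over~$\bkk$ one has~$\Pic(X_\bkk) = \ZZ H_1 \oplus \ZZ H_2$ by the Lefschetz Hyperplane Theorem, $-K_{X_\bkk} = 2(H_1 + H_2)$ by adjunction, and the nef cone of~$X_\bkk$ is spanned by~$H_1$ and~$H_2$, the pullbacks of the hyperplane class along the two~$\PP^1$-bundle projections~$\pi_i \colon X_\bkk \to \PP^2$. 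Since~$\Gal(\bkk/\kk)$ preserves both~$-K_{X_\bkk}$ and the nef cone, it permutes the pair~$\{H_1, H_2\}$, and the permutation is nontrivial because~$\uprho(X) = 1$. The linear systems~$|H_1|$ and~$|H_2|$ are therefore interchanged by~$\Gal(\bkk/\kk)$ and combine into a Galois-equivariant closed embedding~$X_\bkk \hookrightarrow \PP(H^0(X_\bkk, H_1)^{\vee}) \times \PP(H^0(X_\bkk, H_2)^{\vee})$, which descends to the desired~$\kk$-embedding~$X \hookrightarrow Y$; the given point~$x \in X(\kk)$ yields a~$\kk$-point~$y := x \in Y(\kk)$ at which~$X$ is smooth.

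Then I would apply Proposition~\ref{prop:x-transform} with~$n = r = 2$, $c = 1$, and~$D_1 = X_\bkk$. Both inequalities in~\eqref{eq:dimension-conditions} hold automatically for dimension reasons: $(Y_i)_\bkk$ has dimension~$\dim Y_\bkk - n_i = 2$, while~$\bar{\cE}_i = \cO(-h_i)$ has rank~$1$, so~$\PP_{Y^+_\bkk}(\bar{\cE}_i)$ is a section of~$\hY^+_\bkk \to Y^+_\bkk$ and also has dimension~$2$ --- both smaller than~$\dim X_\bkk = 3$. The proposition then yields a~$\kk$-birational map from~$X$ to~$\tX^+ \subset \hY^+ = \PP_{Y^+}(\cE)$, where~$Y^+$ is a~$\kk$-form of~$\PP^1 \times \PP^1$, the bundle~$\cE$ has rank~$3$, and the class of~$\tX^+$ equals the relative hyperplane class~$h$ of~$\hY^+ \to Y^+$ (by~\eqref{eq:toric-pic-1}, since the strict transform of~$X$ in~$\tY$ has class~$H_1 + H_2 - E$). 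In particular a general fibre of~$\tX^+ \to Y^+$ is a line in a~$\PP^2$-fibre of~$\hY^+$, so the generic fibre of~$\tX^+ \to Y^+$ is a smooth conic.

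To finish, observe that~$Y^+(\kk) \ne \varnothing$ by the Nishimura lemma applied to the rational map~$X \dashrightarrow Y^+$ (as~$X$ is smooth, projective and has a~$\kk$-point), so~$Y^+$ is~$\kk$-rational by Corollary~\ref{corollary:product-rational}; it thus suffices to find a rational section of~$\tX^+ \to Y^+$. The exceptional divisor~$E$ of~$\sigma \colon \tY \to Y$ is defined over~$\kk$, and by~\eqref{eq:toric-pic-e} its strict transform in~$\hY^+$ has class~$h - h_1 - h_2$; restricting to~$\tX^+$ we obtain an effective~$\kk$-divisor which has degree~$1$ on a general fibre of~$\tX^+ \to Y^+$ (because~$h$ has relative degree~$1$ while~$h_1, h_2$ are pulled back from~$Y^+$). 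A divisor of positive relative degree dominates~$Y^+$, hence meets the generic fibre of~$\tX^+ \to Y^+$ --- a smooth conic over~$K := \kk(Y^+)$ --- in a single~$K$-point; so that conic is~$\PP^1_K$, and~$\tX^+$ is birational over~$\kk$ to~$Y^+ \times \PP^1$. Since~$Y^+$ is~$\kk$-rational, so are~$\tX^+$ and hence~$X$.

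The step that genuinely requires care is the first one: checking that~$X$ really does embed over~$\kk$ into a~$\kk$-form of~$\PP^2 \times \PP^2$ as the standard~$(1,1)$-divisor (this is the ``bit of more work'' beyond Corollary~\ref{corollary:product-rational} mentioned in~\S\ref{subsec:sketch}); once it is in place the remainder is routine Picard-group bookkeeping together with Proposition~\ref{prop:x-transform} and the Nishimura lemma. Alternatively, one could avoid the toric transformation by constructing the Sarkisov link directly, blowing up the~$\kk$-irreducible singular conic given by the union of the two~$\bkk$-lines through~$x$ (which are exchanged by~$\Gal(\bkk/\kk)$), along the lines of the type~\type{4,4} construction referred to in~\S\ref{subsec:sketch}.
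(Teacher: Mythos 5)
Your proposal is correct and follows essentially the same route as the paper: realize $X$ as a $(1,1)$-divisor in a $\kk$-form of $\PP^2\times\PP^2$, push it through the toric transformation of Theorem~\ref{proposition:toric-link} to a relative hyperplane in $\PP_{Y^+}(\cE)$, get $Y^+(\kk)\ne\varnothing$ from the Nishimura lemma and $\kk$-rationality of $Y^+$ from Corollary~\ref{corollary:product-rational}, and conclude that $\tX^+\to Y^+$ is birationally a $\PP^1$-bundle. The only cosmetic differences are that you spell out the descent of the ambient $Y$ (which the paper absorbs into the definition), invoke Proposition~\ref{prop:x-transform} instead of the direct smallness argument, and trivialize the generic fibre by exhibiting the degree-one section cut out by $E^+$ rather than by the paper's remark that a $1$-dimensional linear section of a form of $\PP^2$ is $\PP^1$ --- all equivalent.
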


\begin{proof}
Let~$x$ be a $\kk$-point of~$X$.
By definition~$X$ is a smooth divisor of bidegree~$(1,1)$ in a $\kk$-form~$Y$ of~$\PP^2 \times \PP^2$.
Since the birational isomorphism~$\psi \colon \Bl_x(Y) = \tY \dashrightarrow \tY^+ = \PP_{Y^+}(\cE)$ is small by Theorem~\ref{proposition:toric-link}, 
it follows that~$X$ is birational to a $\kk$-form of a divisor 
\begin{equation*}
\tX^+_\bkk \subset \PP_{\PP^1 \times \PP^1}(\cE) = \PP_{\PP^1 \times 
\PP^1}(\cO(-1,-1) \oplus \cO(-1,0) \oplus \cO(0,-1))
\end{equation*}
which by~\eqref{eq:toric-pic-e} has type~$H_1 + H_2 - E = h$. 
Any such divisor corresponds to a morphism
\begin{equation*}
\xi \colon \cO(-1,-1) \oplus \cO(-1,0) \oplus \cO(0,-1) \longrightarrow \cO.
\end{equation*}
Furthermore, the divisor~$\tX^+ \subset \tY^+$ comes with a morphism~$\sigma_+ \colon \tX^+ \to Y^+$ defined over~$\kk$.
By the Nishimura lemma we have~$\tX^+(\kk) \ne \varnothing$, hence~$Y^+(\kk) \ne \varnothing$, 
and since~$Y^+$ is a $\kk$-form of~$\PP^1 \times \PP^1$, it is $\kk$-rational by Corollary~\ref{corollary:product-rational}.
Finally, the general fiber of the morphism~$\sigma_+ \colon \tX^+ \to Y^+$ is a $1$-dimensional linear section of a form of a projective plane,
hence it is isomorphic to~$\PP^1$, hence~$\tX^+$ is rational over~$Y^+$, hence is~$\kk$-rational, hence so is~$X$.
\end{proof}

\begin{remark}
One can check that the birational isomorphism~$\psi \colon \tX \dashrightarrow 
\tX^+$ 
is a flop in the union of the strict transforms of two $\bkk$-lines passing 
through the point~$x \in X$
and that~$\sigma_+ \colon \tX^+ \to Y^+$ is the projectivization of the 
vector bundle~$\Ker(\xi)$ of rank~$2$ over~$Y^+$,
and that these maps provide a Sarkisov link~\eqref{eq:sl-general}.
This is an example of a pseudoisomorphism between almost del Pezzo varieties of degree~$5$,
see~\cite[Lemma~5.4 and proof of Theorem~1.2]{KP22}.
\end{remark} 

\subsection{Rationality of~\type{2,2,2}}

A similar argument works for threefolds of type~\type{2,2,2}.

\begin{proposition}
\label{prop:x222}
Let $X$ be a Fano threefold of type~\type{2,2,2}.
If $X(\kk) \ne \varnothing$ then~$X$ is $\kk$-rational. 
\end{proposition}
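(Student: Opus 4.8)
The plan is to mimic the proof of Proposition~\ref{prop:x22}, now applying Theorem~\ref{proposition:toric-link} to a $\kk$-form~$Y$ of~$(\PP^2)^3$. Let~$x \in X(\kk)$; by definition~$X \subset Y$ is the complete intersection of three divisors of multidegrees~$(0,1,1)$, $(1,0,1)$, $(1,1,0)$, i.e., of class~$H - H_i$ for $i = 1,2,3$. I would first verify the hypotheses of Proposition~\ref{prop:x-transform}: here $n = 3$, $r = 3$, $p = 3$, $q = 0$, so~$Y^+$ is a $\kk$-form of~$(\PP^1)^3$ and $\hY^+ = \tY^+ = \PP_{Y^+}(\cE)$ with $\cE = \cO(-h_1-h_2) \oplus \cO(-h_1-h_3) \oplus \cO(-h_2-h_3) \oplus \cO(-h_1-h_2-h_3)$ a rank-$4$ bundle. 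The subvarieties~$Y_i \subset Y$ have codimension~$n_i = 2$, so~$\dim(X_\bkk \cap (Y_i)_\bkk) = 1 < 2 = \dim X_\bkk$ as long as~$x$ does not lie on one of the coordinate subvarieties in a bad way; more precisely, $X_\bkk \cap (Y_i)_\bkk$ is a curve because $X_\bkk$ is not contained in any~$(Y_i)_\bkk$ (this is clear from the multidegrees). The second dimension condition on~$\tX^+_\bkk \cap \PP_{Y^+_\bkk}(\bar\cE_i)$ must be checked by a direct computation of the strict transforms~$\tD_\alpha^+$ using the Picard group relations~\eqref{eq:toric-pic-e}.

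The key computation is to identify the strict transform~$\tX^+_\bkk \subset \PP_{(\PP^1_\bkk)^3}(\cE)$. Using~\eqref{eq:toric-pic-e} one has~$H_i = h_i + E$, so each divisor~$D_\alpha$ of class~$H - H_i = H_j + H_k$ (with $\{i,j,k\} = \{1,2,3\}$) has strict transform of class
\begin{equation*}
H_j + H_k - m_\alpha E = h_j + h_k + (2 - m_\alpha)E = h_j + h_k + (2 - m_\alpha)\Bigl(h - h_1 - h_2 - h_3\Bigr),
\end{equation*}
where~$m_\alpha$ is the multiplicity of~$D_\alpha$ at~$x$. Since~$X$ is smooth at~$x$ and~$x$ is a single point of the intersection, each~$m_\alpha = 1$, so the strict transform has class~$h - h_i$ on~$\hY^+$, i.e., it is a divisor in the linear system~$|h - h_i|$. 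Thus~$\tX^+_\bkk$ is cut out by three sections of~$\cE^\vee \otimes \cO(-h_i)$, equivalently it is the common zero locus of three morphisms
\begin{equation*}
\xi_i \colon \cE \longrightarrow \cO(-h_i), \qquad i = 1,2,3,
\end{equation*}
and after twisting, $\tX^+ \to Y^+$ is (generically) the projectivization of the kernel of the induced map~$\cE \to \cO(-h_1) \oplus \cO(-h_2) \oplus \cO(-h_3)$, whose rank is~$4 - 3 = 1$ generically. Hence the general fiber of~$\sigma_+ \colon \tX^+ \to Y^+$ is a single point, which would show~$\tX^+$ is birational to~$Y^+$, and since~$Y^+$ is a $\kk$-form of~$(\PP^1)^3$ with a $\kk$-point (by the Nishimura lemma applied to~$\sigma_+ \colon \tX^+ \to Y^+$, after noting~$\tX^+(\kk) \ne \varnothing$), it is $\kk$-rational by Corollary~\ref{corollary:product-rational}; therefore~$X$ is $\kk$-rational.

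I expect the main obstacle to be the verification that the rank of the map~$\cE \to \cO(-h_1)\oplus\cO(-h_2)\oplus\cO(-h_3)$ drops to exactly~$1$ generically (so that the general fiber of~$\sigma_+$ is a reduced point rather than positive-dimensional or empty), and correspondingly that the second dimension bound in~\eqref{eq:dimension-conditions} holds — i.e., that~$\tX^+_\bkk \cap \PP_{Y^+_\bkk}(\bar\cE_i)$ is at most a curve. This amounts to a genericity/transversality statement for the three sections cutting out~$X$, which follows from smoothness of~$X_\bkk$ (and the fact that~$X_\bkk$ contains no $H$-plane, Corollary~\ref{cor:planes}, so none of the degenerate loci can become divisorial in~$X$). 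Once these are in place, the rationality conclusion is immediate from Proposition~\ref{prop:x-transform} together with the structure of~$\sigma_+$, exactly as in the~\type{2,2} case. A concluding remark paralleling the one after Proposition~\ref{prop:x22} could note that~$\psi$ realizes a flop in the strict transforms of the three $\bkk$-lines through~$x$ and that the link is a Sarkisov link~\eqref{eq:sl-general}.
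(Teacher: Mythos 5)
Your overall strategy coincides with the paper's: blow up the $\kk$-point, pass through the toric transformation of Theorem~\ref{proposition:toric-link} to a complete intersection~$\tX^+$ in~$\PP_{Y^+}(\cE)$ over a $\kk$-form~$Y^+$ of~$(\PP^1)^3$, observe that~$\sigma_+ \colon \tX^+ \to Y^+$ is birational because its general fiber is a $0$-dimensional linear section of a form of a projective space, and conclude with the Nishimura lemma and Corollary~\ref{corollary:product-rational}. (A minor slip: here~$n = 2$, not~$n = 3$; you do use~$n_i = 2$ correctly afterwards, and the identification of~$\cE$ and of the classes~$h - h_i$ of the strict transforms is correct.)

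The genuine gap is precisely the step you flag as ``the main obstacle'': the second inequality in~\eqref{eq:dimension-conditions}, i.e., that~$\tX^+_\bkk \cap \PP_{Y^+_\bkk}(\bar{\cE}_i)$ has dimension~$< 3$. Your proposed justification --- a genericity statement following from smoothness of~$X_\bkk$ together with Corollary~\ref{cor:planes} --- does not close it: a putative excess component of the complete intersection~$\tX^+_\bkk$ supported on~$\PP_{Y^+_\bkk}(\bar{\cE}_i)$ lives entirely in the flipped locus of~$\psi$, so it is not the strict transform of any subvariety of~$X_\bkk$ and cannot be excluded by the absence of $H$-planes in~$X$. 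What the paper does instead is write the morphism~$\xi \colon \cE \to \cO(0,0,-1) \oplus \cO(0,-1,0) \oplus \cO(-1,0,0)$ explicitly as the matrix~\eqref{eq:xi-x222} in terms of the three bilinear forms~$F_{12}, F_{13}, F_{23}$ defining~$X$, note that~$\tX^+_\bkk \cap \PP(\bar{\cE}_i)$ is governed by the restriction of~$\xi$ to the corresponding rank-$2$ subbundle (two columns of the matrix), and check that if this restriction were everywhere degenerate then at least two of the forms~$F_{ij}$ would have to be degenerate, hence at least two of the divisors~$W_{i,j} \subset \PP(V_i) \times \PP(V_j)$ singular, contradicting Lemma~\ref{lemma:pi-i}\ref{lemma:pi-i:222}. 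So the verification does ultimately rest on smoothness, but via an explicit rank computation on the~$Y^+$ side rather than a transversality statement on~$X$ itself. Once this is supplied, the remainder of your argument (birationality of~$\sigma_+$, Nishimura, Corollary~\ref{corollary:product-rational}) goes through exactly as in the paper.
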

\begin{proof}
Let~$x$ be a $\kk$-point of~$X$.
By definition~$X_\bkk$ is a complete intersection of three divisors in~$Y_\bkk 
= \PP(V_1) \times \PP(V_2) \times \PP(V_3) \cong \PP^2 \times \PP^2 \times 
\PP^2$
of multidegree~$(1,1,0)$, $(1,0,1)$, and~$(0,1,1)$, respectively.
Denote by 
\begin{equation*}
F_{12} \in V_1^\vee \otimes V_2^\vee,
\qquad 
F_{13} \in V_1^\vee \otimes V_3^\vee,
\qquad 
F_{23} \in V_2^\vee \otimes V_3^\vee,
\end{equation*}
their equations.
We apply Proposition~\ref{prop:x-transform}; for this we consider the intersection
\begin{equation*}
\tX^+_\bkk \subset \PP_{\PP^1 \times \PP^1 \times \PP^1}(\cE) = 
\PP_{\PP^1 \times \PP^1 \times \PP^1}(\cO(-1,-1,-1) \oplus \cO(-1,-1,0) \oplus 
\cO(-1,0,-1) \oplus \cO(0,-1,-1) ) 
\end{equation*}
of the three strict transforms of the above divisors, which by~\eqref{eq:toric-pic-e} have types
\begin{equation*}
H_1 + H_2 - E = h - h_3,
\qquad 
H_1 + H_3 - E = h - h_2,
\qquad 
H_2 + H_3 - E = h - h_1,
\end{equation*}
hence correspond to a morphism of vector bundles
\begin{multline*}
\xi \colon \cO(-1,-1,-1) \oplus \cO(-1,-1,0) \oplus \cO(-1,0,-1) \oplus 
\cO(0,-1,-1)
\longrightarrow \\ \longrightarrow 
\cO(0,0,-1) \oplus \cO(0,-1,0) \oplus \cO(-1,0,0). 
\end{multline*}
It is easy to see that~$\xi$ is given by the matrix 
\begin{equation}
\label{eq:xi-x222}
\xi = 
\begin{pmatrix}
\bar{F}_{12} & 0 & F_{12}(-,v_2) & F_{12}(v_1,-) \\
\bar{F}_{13} & F_{13}(-,v_3) & 0 & F_{13}(v_1,-) \\
\bar{F}_{23} & F_{23}(-,v_3) & F_{23}(v_2,-) & 0
\end{pmatrix},
\end{equation}
where we write~$x = (v_1,v_2,v_3)$, choose splittings~$V_i = \bkk v_i \oplus \bar{V}_i$,
write $\bar{F}_{ij}$ for the 
restriction of~$F_{ij}$ to~$\bar{V}_i \otimes \bar{V}_j$,
and consider~$F_{ij}(v_i,-)$ and~$F_{ij}(-,v_j)$ as linear functions 
on~$\bar{V}_j$ and~$\bar{V}_i$ by restriction.

Let us check the dimension conditions~\eqref{eq:dimension-conditions}.
Since~$(Y_i)_\bkk$ is a fiber of the projection
\begin{equation*}
\pi_{i} \colon Y_\bkk = \PP^2 \times \PP^2 \times \PP^2 \longrightarrow \PP^2, 
\end{equation*}
it follows from Lemma~\ref{lemma:conics} that~$X_\bkk \cap (Y_i)_\bkk$ is a 
conic, hence the first part of the dimension conditions is satisfied.
To check the second part, we need to show that the restriction
\begin{equation*}
\xi_{2,3} \colon \cO(-1,0,-1) \oplus \cO(0,-1,-1) \longrightarrow \cO(0,0,-1) 
\oplus \cO(0,-1,0) \oplus \cO(-1,0,0) 
\end{equation*}
of~$\xi$ to the last two summands of~$\cE$ (given by the last two columns 
of~\eqref{eq:xi-x222}) cannot be everywhere degenerate,
and similarly for the restrictions~$\xi_{1,3}$ and~$\xi_{1,2}$.
Assuming that~$\xi_{2,3}$ is everywhere degenerate we conclude 
from~\eqref{eq:xi-x222} that 
\begin{equation*}
F_{12}(v_1,-) = F_{13}(v_1,-) = 0
\quad\text{or}\quad 
F_{12}(-,v_2) = F_{23}(v_2,-) = 0
\quad\text{or}\quad 
F_{13}(v_1,-) = F_{23}(v_2,-) = 0.
\end{equation*}
In any case it would follow that 
at least two of the bilinear forms~$F_{i,j}$ are degenerate, hence
at least two of the divisors~$W_{i,j} \subset \PP(V_i) \times \PP(V_j)$ 
(defined by the equation~$F_{i,j}$) are singular, which contradicts Lemma~\ref{lemma:pi-i}\ref{lemma:pi-i:222}.

Thus, the conditions~\eqref{eq:dimension-conditions} are satisfied, and we 
conclude from Proposition~\ref{prop:x-transform} 
that~$X$ is $\kk$-birational to a $\kk$-form of the complete 
intersection~$\tX^+_\bkk$.

Finally, the subvariety~$\tX^+ \subset \tY^+$ comes with a morphism~$\sigma_+ 
\colon \tX^+ \to Y^+$ defined over~$\kk$.
By the Nishimura lemma we have~$\tX^+(\kk) \ne \varnothing$, hence~$Y^+(\kk) \ne \varnothing$, 
and since~$Y^+$ is a $\kk$-form of~$\PP^1 \times \PP^1 \times \PP^1$, it is 
$\kk$-rational by Corollary~\ref{corollary:product-rational}.
Moreover, the general fiber of the morphism~$\sigma_+ \colon \tX^+ \to Y^+$ is 
a 0-dimensional linear section of a form of a projective space,
hence this morphism is birational, hence~$\tX^+$ is~$\kk$-rational, hence so is~$X$.
\end{proof}

\begin{remark}
If the point~$x$ does not lie on a $\bkk$-line, i.e., $\rF_1(X,x) = \varnothing$, 
one can check that the birational isomorphism~$\psi \colon \tX \dashrightarrow \tX^+$ 
is a flop in the union of the strict transforms of three smooth $\bkk$-conics 
passing through the point~$x \in X$,
that~$\sigma_+ \colon \tX^+ \to Y^+$ is the blowup of a smooth 
geometrically rational curve of multidegree~$(2,2,2)$
and that these maps provide a Sarkisov link~\eqref{eq:sl-general}.
\end{remark} 

\subsection{Unirationality of~\type{1,1,1,1}}

Finally, we deal with threefolds of type~\type{1,1,1,1}.

\begin{proposition}
\label{prop:x1111}
Let $X$ be a Fano threefold of type~\type{1,1,1,1}.
If $X(\kk) \ne \varnothing$ then~$X$ is $\kk$-unirational. 
\end{proposition}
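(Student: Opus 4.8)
The idea is to use exactly the same toric transformation as for types~\type{2,2} and~\type{2,2,2}, but note that now the "fiber" we obtain is no longer rational but only unirational, and to compensate by extracting a rational section after a quadratic base change. Concretely, let~$x \in X(\kk)$ be a $\kk$-point. By definition~$X_\bkk$ is a divisor of multidegree~$(1,1,1,1)$ in a $\kk$-form~$Y$ of~$(\PP^1)^4$; however, the toric transformation of Theorem~\ref{proposition:toric-link} requires at least one factor of dimension~$\ge 2$, so I first reinterpret~$X$ differently. One convenient option is to use the blowup description: by Lemma~\ref{lemma:pi-i}\ref{lemma:pi-i:x1111} (working over~$\bkk$) the projection~$\pi_{\hat 4}$ exhibits~$X_\bkk$ as the blowup of~$\PP^1\times\PP^1\times\PP^1$ along an elliptic curve~$\Gamma_{\hat 4}$ of multidegree~$(2,2,2)$; equivalently~$X_\bkk$ is a divisor of bidegree~$(1,1)$ on~$Y'_\bkk := \PP^2 \times \PP^1 \times \PP^1 \times\PP^1$ embedded via one of the Veronese-type re-embeddings, but the cleanest route is to replace two of the~$\PP^1$ factors by a single~$\PP^1\times\PP^1$ and use the Segre embedding~$\PP^1\times\PP^1 \hookrightarrow \PP^3$. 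Thus~$X$ becomes (a $\kk$-form of) a complete intersection inside~$\PP^3 \times (\PP^1)^2$ of a quadric and two~$(1,1)$- and~$(1,1)$-type divisors, to which Proposition~\ref{prop:x-transform} applies with~$n = 3$, $r$ adjusted accordingly.

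Alternatively — and this is the version I would actually write — one works directly with~$X$ as a $(1,1,1,1)$-divisor in a $\kk$-form of~$\PP^1\times\PP^1\times\PP^1\times\PP^1$ and applies the construction of Theorem~\ref{proposition:toric-link} to the subproduct: group the first two factors and apply the transformation "in those two coordinates", i.e.\ view~$Y$ as fibered over a $\kk$-form of~$\PP^1\times\PP^1$ (the last two factors) with fiber a $\kk$-form of~$\PP^1\times\PP^1$, pick the $\kk$-point of the base coming from~$x$, and apply the toric link to the fiberwise-$\PP^1\times\PP^1$. The upshot — following verbatim the pattern of Propositions~\ref{prop:x22} and~\ref{prop:x222} — is that~$X$ is $\kk$-birational to a $\kk$-form~$\tX^+$ of a divisor inside a projective bundle~$\PP_{Y^+}(\cE)$ over a $\kk$-form~$Y^+$ of~$\PP^1\times\PP^1$, and that~$\sigma_+\colon \tX^+ \to Y^+$ is a conic bundle: its general fiber is a linear section of a~$\PP^r$ of codimension one less than the fiber dimension, which is a conic. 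By the Nishimura lemma~$\tX^+(\kk)\ne\varnothing$, hence~$Y^+(\kk)\ne\varnothing$, so~$Y^+$ is $\kk$-rational by Corollary~\ref{corollary:product-rational}; in particular~$Y^+$ is $\kk$-birational to~$\PP^2_\kk$, so it suffices to prove that the conic bundle~$\tX^+ \to Y^+$ is unirational.

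For a conic bundle over a rational base, unirationality over~$\kk$ follows from the existence of a $\kk$-point together with a multisection of odd degree, or more simply: a conic bundle with a $\kk$-point~$p$ in a smooth fiber acquires a rational section after a degree-$2$ base change (pull back along the double cover of the base branched where the fiber through the generic point of a line degenerates — concretely, restrict to a general pencil of conics, which is a conic bundle over~$\PP^1$ with a rational point, hence rational, hence has a section; varying the pencil we get a rational section of~$\tX^+$ over a degree-$2$ cover~$\widetilde{Y^+}$ of~$Y^+$, and that cover is again a rational surface with a $\kk$-point, hence $\kk$-rational). Pulling~$\tX^+$ back along~$\widetilde{Y^+} \to Y^+$ gives a conic bundle with a rational section over a $\kk$-rational base, which is therefore $\kk$-rational; composing with the degree-$2$ dominant map~$\widetilde{Y^+}\times_{Y^+}\tX^+ \to \tX^+$ exhibits~$\tX^+$, hence~$X$, as $\kk$-unirational. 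The main obstacle is bookkeeping: one must (a) choose the right grouping of factors so that Proposition~\ref{prop:x-transform} literally applies, (b) verify the two dimension conditions~\eqref{eq:dimension-conditions} — the first amounts to checking, via Lemma~\ref{lemma:conics}, that~$X_\bkk$ meets each fiber~$(Y_i)_\bkk$ in a proper subvariety (a conic), and the second that the relevant sub-matrix of the morphism~$\xi$ defining~$\tX^+_\bkk$ is not everywhere degenerate, which as in Proposition~\ref{prop:x222} reduces to smoothness of~$X_\bkk$ — and (c) make the "odd-degree multisection / quadratic base change" argument for the conic bundle clean and defined over~$\kk$. None of these steps is deep, but step~(c) is where the difference between rationality (as in types~\type{2,2}, \type{2,2,2}) and mere unirationality genuinely enters, which is consistent with Conjecture~\ref{conj:x1111}.
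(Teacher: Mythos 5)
Your starting premise is mistaken, and it derails the whole argument. Theorem~\ref{proposition:toric-link} does \emph{not} require a factor of dimension~$\ge 2$: its hypothesis~\eqref{eq:n-i} allows $p=0$, i.e.\ all factors equal to~$\PP^1$ (only Proposition~\ref{prop:x-transform} is stated with~$n\ge 2$, and it is not needed here). The paper's proof applies Theorem~\ref{proposition:toric-link} directly to the $\kk$-form of~$(\PP^1)^4$ with~$p=0$, $q=r=4$: then~$Y^+$ is a point, $\hY^+=\PP(\cE)\cong\PP^4$, and~$\tY^+=\Bl_{s_1,s_2,s_3,s_4}(\PP^4)$, so by~\eqref{eq:toric-pic-e} the strict transform of~$X$ has class~$3h-2\sum_{i=1}^4 e_i$, i.e.\ it is a \emph{cubic threefold} in~$\PP^4$ with double points at the four~$s_i$. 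One then checks it is not a cone (its intersection with the~$\PP^3$ spanned by the~$s_i$ is the image of the exceptional divisor of~$\Bl_x(X)$, an irreducible $\kk$-rational surface, while a cubic surface that is a cone singular at all four~$s_i$ would be reducible) and concludes by Koll\'ar's unirationality theorem for cubic hypersurfaces~\cite[Theorem~1.2]{Kollar:cubic}. No conic bundle enters at all.

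Both of your proposed workarounds are unavailable over~$\kk$. Since~$\uprho(X)=1$, Lemma~\ref{lemma:picard} shows that~$\rG_X\subset\fS_4$ is transitive, and a transitive subgroup cannot stabilize a proper nonempty subset of~$\{1,2,3,4\}$ setwise; hence neither the Segre re-embedding of a chosen pair of~$\PP^1$-factors nor the fibration of~$Y$ over the product of ``the last two factors'' is Galois-equivariant, so neither is defined over~$\kk$. This is exactly the constraint that forces the fully symmetric construction used in the paper. Independently, step~(c) is also broken: a conic bundle surface over~$\PP^1_\kk$ with a $\kk$-point need not be $\kk$-rational (relatively minimal conic bundles with~$K^2\le 0$ are not), and a section of the generic fiber would already force rationality, so ``with a rational point, hence rational, hence has a section'' fails twice over. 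The correct tool in that direction would be unirationality of a conic bundle admitting a $\kk$-rational odd-degree multisection, as in~\cite[Lemma~4.14(i)]{KP19}, which the paper invokes only for type~\type{3,3} --- but you never get to a conic bundle defined over~$\kk$ in the first place.
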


\begin{proof}
Let~$x$ be a $\kk$-point of~$X$.
By definition~$X$ is a smooth divisor of multidegree~$(1,1,1,1)$ in 
a~$\kk$-form~$Y$ of~$\PP^1 \times \PP^1 \times \PP^1 \times \PP^1$.
The birational isomorphism~$\psi \colon \Bl_x(Y) = \tY \dashrightarrow \tY^+ = \Bl_{s_1,s_2,s_3,s_4}(\PP^4)$ 
is small by Theorem~\ref{proposition:toric-link}, 
so it follows that~$X$ is birational to a $\kk$-form of a divisor 
\begin{equation*}
\tX^+_\bkk \subset \Bl_{s_1,s_2,s_3,s_4}(\PP^4)
\end{equation*}
which by~\eqref{eq:toric-pic-e} has type~$H_1 + H_2 + H_3 + H_4 - E = 3h - 2 
\sum_{i=1}^4 e_i$, in particular, $\tX^+$ is a cubic hypersurface.
Moreover, the exceptional divisor~$E \cap \Bl_x(X) \subset \tY = \Bl_x(Y)$ of 
the blowup~$\Bl_x(X) \to X$
is an irreducible $\kk$-rational surface birational to a $\kk$-form of the 
complete intersection of~$\tX^+_\bkk$ 
with the linear span~$\PP^3 \subset \PP^4$ of the points~$s_i$. 

Let us prove that~$\tX^+_\bkk$ is not a cone.
Indeed, $\tX^+_\bkk$ is smooth away from the linear span~$\PP^3$ of the~$s_i$, 
because the map~$\psi$ from Theorem~\ref{proposition:toric-link} is an 
isomorphism over it and~$X_\bkk$ is smooth,
so if~$\tX^+_\bkk$ is a cone, its vertex belongs to the~$\PP^3$.
But then its intersection with the~$\PP^3$ (which has been shown to be an 
irreducible~$\kk$-rational surface)
is itself a cone and has a singular point at each of the~$s_i$.
But it is easy to see that any such cone is reducible; 
this contradiction proves the claim.
Now we conclude that~$\tX^+$ is~$\kk$-unirational by~\cite[Theorem~1.2]{Kollar:cubic}.
\end{proof} 

\section{Rationality of type \type{4,4}}
\label{sec:x44}

In this section we prove rationality of Fano threefolds of type \type{4,4}.

\subsection{Sarkisov links}
\label{subsec:sl-x44-existence}

We start with a construction of two Sarkisov links.
Recall that~$\rF_1(X,x)$ denotes the Hilbert scheme of lines on~$X$ passing 
through~$x$, see~\S\ref{subsec:lines};
and that by Lemma~\ref{lemma:lines} if~$x \in X(\kk)$ and~$\rF_1(X,x)$
is not empty, 
$\rF_1(X,x)$ is the union of two reduced~$\bkk$-points swapped by the Galois action.
If~$L_1$ and~$L_2$ are the corresponding~$\bkk$-lines on~$X$ (passing 
through~$x$), then 
\begin{equation*}
\Theta(x) := L_1 \cup L_2 
\end{equation*}
is a singular $\kk$-conic on~$X$ irreducible over~$\kk$ and 
with~$\Sing(\Theta(x)) = \{x\}$.

Recall that a quintic del Pezzo threefold is a Fano threefold of index~$2$ and half-anticanonical degree~$5$. 
Over an algebraically closed field it can be realized as a complete intersection
of the Grassmannian~$\Gr(2,5)$ 
with a linear subspace of codimension~$3$, 
see~\cite[Chapter~2, Theorem~1.1]{Iskovskikh-1980-Anticanonical}. 

\begin{theorem}
\label{theorem:x44-links}
Let $X$ be a Fano threefold of type~\type{4,4} and let~$x \in X(\kk)$ be a 
$\kk$-point.
\begin{enumerate}
\item 
\label{prop:sl:point-2-blowup-Q}
If\/ $\rF_1(X,x) = \varnothing$ there exists a Sarkisov link~\eqref{eq:sl-general} defined over~$\kk$, where 
\begin{itemize}
\item 
$\sigma$ is the blowup of the point~$x$,
\item 
$X^{+}$ is a smooth quintic del Pezzo threefold, and
\item 
$\sigma_+$ is the blowup of a smooth $\kk$-irreducible curve~$B^+ \subset X^+$ of degree~$4$ with two geometrically rational $\bkk$-components.
\end{itemize}
\item 
\label{prop:sl-conic}
If\/ $\rF_1(X,x) \ne \varnothing$ there exists a Sarkisov link~\eqref{eq:sl-general} defined over~$\kk$, where 
\begin{itemize}
\item 
$\sigma$ is the blowup of the singular $\kk$-irreducible 
conic~$\Theta(x)$,
\item 
$X^{+}$ is a smooth Fano threefold of type~\type{2,2}, and
\item 
$\sigma_+$ is the blowup of a singular $\kk$-irreducible curve~$B^+ \subset X^+$ of degree~$6$ with two geometrically rational $\bkk$-components.
\end{itemize}
\end{enumerate}
\end{theorem}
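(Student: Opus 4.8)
The plan is to run the $\kk$-Minimal Model Program on the blowup $\sigma\colon\tX\to X$ of the relevant $\kk$-center --- the point $x$ in part~\ref{prop:sl:point-2-blowup-Q}, the $\kk$-conic $\Theta(x)$ in part~\ref{prop:sl-conic} --- and read off the Sarkisov link~\eqref{eq:sl-general} from its output. The first task is to check that $\tX$ is a Gorenstein terminal threefold, $\QQ$-factorial over $\kk$, with $\uprho(\tX)=2$, and that $\sigma$ together with everything produced by the relative MMP is defined over $\kk$. In part~\ref{prop:sl:point-2-blowup-Q} this is immediate: $\tX=\Bl_x X$ is smooth with exceptional divisor $\mathcal E\cong\PP^2_\kk$. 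In part~\ref{prop:sl-conic} the center $\Theta(x)=L_1\cup L_2$ is a plane nodal curve, so $\tX$ acquires a single ordinary double point over the node $x$, lying on the intersection of the two $\bkk$-components of the exceptional divisor; here one must verify that $\tX$, although not geometrically $\QQ$-factorial, is $\QQ$-factorial over $\kk$ --- the Weil class generating the local defect is not $\Gal(\bkk/\kk)$-invariant, since the two $\bkk$-components of the exceptional divisor are swapped by the Galois group because $\Theta(x)$ is $\kk$-irreducible --- and that $\uprho(\tX)=2$ nonetheless. As $\uprho(\tX)=2$, the cone $\overline{NE}(\tX)$ has exactly two extremal rays, one of which is contracted by $\sigma$.

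The core of the argument is the analysis of the second extremal ray $R$. Writing $-K_{\tX}=\sigma^{*}(-K_X)-2\mathcal E$ in part~\ref{prop:sl:point-2-blowup-Q} and $-K_{\tX}=\sigma^{*}(-K_X)-\mathcal E$ in part~\ref{prop:sl-conic}, I would first show that $-K_{\tX}$ is nef, so that $\tX$ is a weak Fano threefold. A curve $\tilde C\subset\tX$ with $-K_{\tX}\cdot\tilde C<0$ would map to a rational curve $C\subset X$ through $x$ of anticanonical degree smaller than twice its multiplicity at $x$ (resp. smaller than its intersection number with $\Theta(x)$); since $X$ is an intersection of quadrics containing no $H$-planes (Theorem~\ref{th:bht}, Corollary~\ref{cor:planes}), $C$ must be a line or a conic, and a line or conic singular at $x$ is excluded precisely by the hypothesis on $\rF_1(X,x)$ (empty in part~\ref{prop:sl:point-2-blowup-Q}; equal to $\{L_1,L_2\}$, hence contained in the blown-up center, in part~\ref{prop:sl-conic}). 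Using the descriptions of the Hilbert schemes of lines and conics, and of conics meeting $\Theta(x)$, from Lemmas~\ref{lemma:lines},~\ref{lemma:conics},~\ref{lemma:conics-c0}, together with~\eqref{eq:intersections-conic}, one then identifies $R$ as the ray spanned by the strict transform of an anticanonically trivial rational curve --- a smooth conic through $x$ in part~\ref{prop:sl:point-2-blowup-Q}, and a line meeting $\Theta(x)$ or the conic meeting both its components in part~\ref{prop:sl-conic} --- and checks that the locus of such curves is one-dimensional, so that the contraction $\phi$ of $R$ is small. Being nef and big, $-K_{\tX}$ is semiample by the base-point-free theorem, and the anticanonical morphism $\phi\colon\tX\to\bar X$ onto the anticanonical model is a small crepant contraction of $R$.

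Consequently the $R$-flop $\psi\colon\tX\dashrightarrow\tX^{+}$ exists and is unique, $\bar X$ is the common anticanonical model of $\tX$ and $\tX^{+}$, and $\tX^{+}$ is again a weak Fano threefold, $\QQ$-factorial over $\kk$ with $\uprho(\tX^{+})=2$; since all of this is canonical, it is defined over $\kk$. The extremal ray of $\overline{NE}(\tX^{+})$ other than the flopped one is $K$-negative (else $K_{\tX^{+}}$ would be numerically trivial) and yields a Mori contraction $\sigma_+\colon\tX^{+}\to X^{+}$. I would show that $\sigma_+$ is divisorial, with exceptional divisor the strict transform $\mathcal E^{+}$ of $\mathcal E$ and image a curve $B^{+}$ (not a point, because $-K_{\tX^{+}}$ restricts to a fibre of $\mathcal E^{+}\to B^{+}$ as $\cO(1)$), and that $X^{+}$ is smooth by~\cite{Mori-1982}. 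The class relations between $\mathrm{Pic}(\tX)$ and $\mathrm{Pic}(\tX^{+})$ then determine the index and anticanonical degree of $X^{+}$, identifying it as the quintic del Pezzo threefold in part~\ref{prop:sl:point-2-blowup-Q} and as a threefold of type~\type{2,2} in part~\ref{prop:sl-conic}. The degree and arithmetic genus of $B^{+}$ follow from the flop-invariance $(-K_{\tX^{+}})^{3}=(-K_{\tX})^{3}$ and the class of $\mathcal E^{+}$, while the fact that $B^{+}$ has two geometrically rational components permuted by $\Gal(\bkk/\kk)$ (disjoint in part~\ref{prop:sl:point-2-blowup-Q}, meeting at a point in part~\ref{prop:sl-conic}) follows from the nontrivial Galois action on $\mathrm{Pic}(\tX^{+}_{\bkk})=\mathrm{Pic}(X_{\bkk})$ (Lemma~\ref{lemma:picard}) together with the description of $\mathcal E^{+}$ and $B^{+}$ over $\bkk$.

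The main obstacle I anticipate is the explicit identification of the second ray $R$ and the verification that its contraction is small and crepant, and then the precise reconstruction of $X^{+}$ and especially of the curve $B^{+}$ from the numerical and Galois-theoretic constraints --- the latter requiring a hands-on analysis of the strict transform $\mathcal E^{+}$ and of the geometry over $\bkk$. An additional technical point, specific to part~\ref{prop:sl-conic}, is handling the ordinary double point on $\tX$ (and on $\tX^{+}$), which forces one to work with threefolds that are $\QQ$-factorial only over $\kk$ and to keep track of the defect throughout the MMP.
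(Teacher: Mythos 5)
Your overall strategy (blow up the $\kk$-center, show $\tX$ is weak Fano, flop, contract the second ray, and pin down $X^+$ and $B^+$ numerically) matches the paper's, but there is a concrete error at the decisive step: you identify the exceptional divisor of $\sigma_+$ as the strict transform $E^+$ of the $\sigma$-exceptional divisor $E$. This is false, and it is in fact incompatible with the statement you are proving: in part~\ref{prop:sl:point-2-blowup-Q} one has $E\cong\PP^2_{\bkk}$, so $E^+$ is geometrically irreducible and cannot be a $\PP^1$-bundle over a curve $B^+$ with two $\bkk$-components swapped by the Galois group. Numerically, the ray contracted by $\sigma_+$ is generated by the strict transform $\tC^+$ of a general twisted cubic through $x$ (resp.\ a general conic meeting $\Theta(x)$), and for this curve one computes $E^+\cdot\tC^+=+1$ (this is exactly how the paper shows $M^+\cdot\tC^+=0$ in Lemma~\ref{lemma:sigma+}); a divisorial exceptional locus must be \emph{negative} on the contracted ray, so $E^+$ is not contracted --- it maps birationally onto a surface in $X^+$ meeting each fiber of $\sigma_+$ once. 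The actual $\sigma_+$-exceptional divisor is the strict transform $R^+$ of the divisor $R_x$ (resp.\ $R_{\Theta(x)}$) swept out by the twisted cubics through $x$ (resp.\ the conics meeting $\Theta(x)$), of class $H^+-(m+2)E^+$; its two $\bkk$-components, and hence the two components of $B^+$, come from Lemmas~\ref{lemma:cubics-x} and~\ref{lemma:conics-c0}, not from the geometry of $E$. Your derivation of the degree and the component structure of $B^+$ therefore does not go through as written.

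Two smaller points. First, you assert without argument that $\sigma_+$ is divisorial; the paper must actively exclude the del Pezzo fibration and conic bundle cases, which it does via the inequality $(\sigma_+^*A^+)^2\cdot(-K_{\tX^+})=14-2m>0$ and the bound $\uprho(X^+_{\bkk})\le 2$, and these computations in turn require knowing the nef cone of $\tX^+$ (generated by $-K_{\tX^+}$ and $M^+=H^+-(m+1)E^+$), which rests on the no-fixed-component statement for $|H-(m+1)E|$ (Lemma~\ref{lemma:m}). Second, where you propose to prove nefness of $-K_{\tX}$ directly, the paper instead imports base-point-freeness of $|H-mE|$ and the existence of the link from~\cite[\S5.1]{KP19}; your sketch of the nefness argument (a $K$-negative curve would be a line or conic singular at the center) is plausible in spirit but needs the quantitative bound on $\deg C$ versus $\operatorname{mult}_x C$ coming from the fact that $X$ is an intersection of quadrics, which is precisely the content of the cited lemmas.
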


The proof of the theorem takes~\S\ref{subsec:sl-x44-existence} and~\S\ref{subsec:sl-x44-details}:
in the rest of~\S\ref{subsec:sl-x44-existence} we prove the existence of the links, 
and in~\S\ref{subsec:sl-x44-details} we describe them in detail.
The proofs of cases~\ref{prop:sl:point-2-blowup-Q} and~\ref{prop:sl-conic} are completely analogous,
so to carry them on simultaneously we introduce the following convenient notation
\begin{equation}
\label{eq:m-x}
m = m(x) := 
\begin{cases}
2, & \text{if $\rF_1(X,x) = \varnothing$},\\
1, & \text{if $\rF_1(X,x) \ne \varnothing$}.
\end{cases}
\end{equation}

The proof of the existence of the links is analogous to the first parts of~\cite[Theorem~5.17 and Theorem~5.9]{KP19},
so we use below some results from~\cite[\S5.1]{KP19}.

Let 
\begin{equation*}
\sigma \colon \tX \longrightarrow X
\end{equation*}
be the blowup of~$X$ at~$x$ or at~$\Theta(x)$, respectively.
We denote by~$H$ (the pullback to~$\tX$ of) the anticanonical class of~$X$ and 
by~$E$ the exceptional divisor of~$\sigma$.

First, note that for $m = m(x)$ the anticanonical linear system
\begin{equation*}
|-K_\tX| = |H - mE|
\end{equation*}
is base-point free by Theorem~\ref{th:bht} and~\cite[Lemma~5.7 and~5.5]{KP19}.
Moreover, combining~\cite[(5.1.9) and~(5.1.7)]{KP19}, we can uniformly write 
\begin{equation}
\label{eq:hhe}
H^3 = 2g - 2,
\qquad 
H^2 \cdot E = 0,
\qquad 
H \cdot E^2 = 2(m - 2),
\qquad 
E^3 = m - 1,
\end{equation} 
where we recall from Table~\ref{table:fanos} that~$g = \g(X) = 15$.
We will also need the following observation.

\begin{lemma}
\label{lemma:m}
The linear system $\MMM := |H - (m+1)E|$
on the blowup~$\tX$ of~$X$ has positive dimension
\begin{equation}
\label{eq:dim-m}
\dim \MMM \ge g - m - 7 \ge 6,
\end{equation} 
and has no fixed components.
\end{lemma}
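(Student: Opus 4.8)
The plan is to estimate $\dim\MMM$ by a Riemann--Roch / dimension count, and then rule out fixed components by intersection-theoretic positivity. First I would compute the expected dimension of $\MMM = |H - (m+1)E|$ on $\tX$. Since $H = -K_X$ is very ample on $X = X_{2g-2} \subset \PP^{g+1}$ with $g = 15$ by Theorem~\ref{th:bht}, we have $\dim|H| = g+1 = 16$. Blowing up a $\kk$-point (case $m=2$) and imposing vanishing of order $m+1 = 3$ at it costs at most $\binom{m+2}{3} = \binom{4}{3} = 4$ conditions on the $16$-dimensional system; blowing up the singular conic $\Theta(x)$ (case $m=1$) and imposing vanishing of order $m+1 = 2$ along it costs a controlled number of conditions (each of the two $\bkk$-lines contributes the length of a first-order neighbourhood, plus the node). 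In either case one extracts the bound $\dim\MMM \ge g - m - 7$, i.e.\ $\dim\MMM \ge 15 - 2 - 7 = 6$ when $m=2$ and $\dim\MMM \ge 15 - 1 - 7 = 7$ when $m=1$; the estimate is uniform because the quantity $m + 7$ in the two cases matches the number of conditions imposed. To make this rigorous I would use the argument already cited for the base-point-freeness of $|H - mE|$, namely \cite[Lemma~5.5 and~5.7]{KP19}, together with the intersection numbers~\eqref{eq:hhe}: one has $(H - (m+1)E) = (H - mE) - E$, and since $-K_{\tX} = H - mE$ is base-point free and big (its top self-intersection is $(H-mE)^3 = H^3 - 3mH^2E + 3m^2 HE^2 - m^3 E^3 = (2g-2) + 3m^2\cdot 2(m-2) - m^3(m-1) > 0$), subtracting the single divisor $E$ drops the dimension of the linear system by at most $\rh^0(E, \OOO_E)$ worth of conditions, which is exactly the count above.

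For the absence of fixed components, I would argue by contradiction. Suppose $D$ is the fixed part of $\MMM$, so $\MMM = D + \MMM'$ with $\MMM'$ the moving part and $\dim\MMM' = \dim\MMM \ge 6 > 0$. Write $D \sim aH - bE$ in $\Pic(\tX) = \ZZ H \oplus \ZZ E$ with $a, b \ge 0$ (here $D$ is effective, and since $E$ is irreducible and $D$ is the fixed part, $b$ can be negative only if $E \subset D$, but then $E$ would be a fixed component of $\MMM$ and we could absorb it — so after this reduction $a \ge 1$ and $0 \le b$, while $\MMM' \sim (1-a)H - ((m+1)-b)E$ must be effective and moving). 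Since $H$ is very ample and $\MMM'$ is nonzero and moving, testing against a line $\Lambda_i \subset \tX$ (a fiber of the exceptional divisor of one of the projections $\pi_{\hat\imath}$, which exist by Lemma~\ref{lemma:pi-i}) and against curves in $E$ forces $a = 0$, hence $D \sim -bE$ with $b \ge 0$; as $-E$ is not effective this gives $D = 0$. The key point making this work is that $\tX$ has Picard rank $2$ with an explicit, well-understood nef cone, so an effective divisor is pinned down by its two intersection numbers with the generators of the Mori cone.

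The main obstacle I expect is the bookkeeping in the dimension count for the conic case $m = 1$: computing precisely how many linear conditions "vanishing to order $2$ along the reduced singular conic $\Theta(x)$" imposes requires understanding the scheme structure of the second infinitesimal neighbourhood of a nodal curve inside the threefold, and checking that these conditions are (at most) as numerous as in the point case so that the bound stays uniform. I would handle this exactly as in \cite[\S5.1]{KP19}, where the analogous computation for the blowup of a conic on a Fano threefold is carried out, citing \cite[(5.1.7) and~(5.1.9)]{KP19} for the intersection numbers and the corresponding lemmas there for the passage from $|H - mE|$ to $|H - (m+1)E|$. A secondary subtlety is making sure the reduction "absorb $E$ into the moving part if $E \subset \mathrm{Bs}(\MMM)$" is legitimate — but $\MMM = |H - (m+1)E|$ has no $E$ in its base locus precisely because $|H - mE|$ is already base-point free (so a general member meets $E$ properly), which is the content of the cited lemmas.
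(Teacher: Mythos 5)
There is a genuine gap in your treatment of fixed components. Your argument correctly rules out a fixed component $D$ whose class $aH-bE$ has $a\ge 1$ (the moving part would then have non-positive $H$-coefficient and could not be positive-dimensional), but it does not rule out the case $D=kE$ with $k\ge 1$, i.e.\ the case where the exceptional divisor $E$ itself is a fixed component with some multiplicity. You dismiss this case by asserting that $E\not\subset\Bs(\MMM)$ ``precisely because $|H-mE|$ is already base-point free,'' but that implication is false: base-point-freeness of $|H-mE|$ says a general member of \emph{that} system meets $E$ properly, and says nothing about the smaller system $|H-(m+1)E|$; a priori one could have $|H-(m+1)E|=(a-m-1)E+|H-aE|$ for some $a\ge m+2$. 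Ruling this out is the actual content of the paper's proof: one observes that $|kE|$ is $0$-dimensional so only $E$ can be a fixed component, writes $\MMM=(a-m-1)E+|H-aE|$ with $a\ge m+2$, and uses the positivity $(H-aE)^2\cdot(H-mE)\ge 0$ (valid because $|H-mE|$ is base-point free and $|H-aE|$ has no fixed component) together with the intersection numbers~\eqref{eq:hhe} to get $2g-2-a^2(m^2-3m+4)+4am(m-2)\ge 0$, which forces $a\le 3$ when $m=2$ and $a\le 2$ when $m=1$, contradicting $a\ge m+2$. Some such quantitative input is unavoidable here; your Picard-rank-$2$/nef-cone argument as written never produces it.

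On the dimension bound your overall strategy (condition count starting from $\dim|H|=g+1=16$, or deferring to \cite[Lemma~5.4]{KP19} as the paper does) is fine, but the arithmetic is off: vanishing to order $m+1=3$ at a point of a threefold imposes $\binom{m+3}{3}=10$ conditions, not $\binom{m+2}{3}=4$, and it is $16-10=6$ that matches $g-m-7$; likewise ``drops the dimension by at most $\rh^0(E,\cO_E)$ conditions'' does not parse, since that number is $1$. These slips are repairable, but as written the count does not yield the stated bound.
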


\begin{proof}
The dimension is estimated in~\cite[Lemma~5.4(iii) and~(i)]{KP19}.
To prove that $\MMM$ has no fixed components, note that the linear system~$|kE|$ is 0-dimensional for any~$k \ge 0$
(since~$E$ is the exceptional divisor of a blowup),
hence the only possibility for a fixed component of~$\MMM$ is provided by the divisor~$E$ with some multiplicity.
So, assume 
\begin{equation*}
|H - (m+1)E|=(a - m - 1)E + |H - aE|, 
\end{equation*}
where $a\ge m + 2$ and $E$ is not a fixed component of the linear system~$|H - aE|$. 
Since the linear system~$|H - mE|$ is base-point free and $|H - aE|$ has no fixed components, using~\eqref{eq:hhe} we obtain
\begin{equation*}
0 \le (H - aE)^2\cdot (H - mE)= 2g-2-a^2(m^2 - 3m + 4) + 4am(m-2).
\end{equation*}
When $m = 2$ this gives $a^2 \le 14$, hence $a \le 3$, and when $m = 1$ this 
gives $(a+1)^2 \le 15$, hence~\mbox{$a \le 2$}.
In both cases this contradicts the assumption~$a \ge m + 2$.
\end{proof}

Now we can deduce the existence of the Sarkisov links.

\begin{proposition}
\label{prop:sl-x44-existence}
Let $X$ be a Fano threefold of type~\type{4,4} with a $\kk$-point~$x$.
\begin{enumerate}
\item 
If~$\rF_1(X,x) = \varnothing$, there exists a Sarkisov 
link~\eqref{eq:sl-general}, 
where~$\sigma$ is the blowup of~$x$.
\item 
If~$\rF_1(X,x) \ne \varnothing$, there exists a Sarkisov 
link~\eqref{eq:sl-general}, 
where $\sigma$ is the blowup of~$\Theta(x)$.
\end{enumerate}
In both cases the link is defined over~$\kk$.
\end{proposition}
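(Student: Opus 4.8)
The plan is to obtain both links by running the $\kk$-Minimal Model Program on~$\tX$, as in the first parts of~\cite[Theorems~5.17 and~5.9]{KP19}. Write $m=m(x)$ as in~\eqref{eq:m-x}. The blowup $\sigma\colon\tX\to X$ is defined over~$\kk$ because its center --- the $\kk$-point~$x$ if $\rF_1(X,x)=\varnothing$, and the $\kk$-irreducible conic~$\Theta(x)$ otherwise --- is a $\kk$-subvariety, and $\uprho(\tX)=2$. The variety~$\tX$ has $\QQ$-factorial terminal singularities: it is smooth when $m=2$, and for $m=1$ it acquires a single ordinary double point over the node of~$\Theta(x)$, which is $\kk$-$\QQ$-factorial because the two small resolutions of this point are exchanged by $\Gal(\bkk/\kk)$. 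Since $\uprho(\tX)=2$, the Mori cone of~$\tX$ has exactly two extremal rays; one of them, $R_-$, is the $\sigma$-exceptional ray, and it is $K_{\tX}$-negative since $\sigma$ contracts a divisor onto a smooth center. The link~\eqref{eq:sl-general} will be produced by running the MMP on the other ray~$R_+$.

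First I would record that $-K_{\tX}=H-mE$ is nef and big: it is base-point free by Theorem~\ref{th:bht} and~\cite[Lemmas~5.5 and~5.7]{KP19}, and the intersection numbers~\eqref{eq:hhe} (with $g=\g(X)=15$) give
\[
(-K_{\tX})^3=(2g-2)+6m^2(m-2)-m^3(m-1)=\begin{cases}20,&m=2,\\22,&m=1,\end{cases}
\]
which is positive. Hence the anticanonical morphism $\phi\colon\tX\to\bar X$ defined by the semiample class $-K_{\tX}$ is birational onto its image and crepant, and the whole construction reduces to showing that $\phi$ is \emph{small}. Granting that, $\phi$ contracts only curves of class in~$R_+$, so $\uprho(\bar X)=1$, $\bar X$ has terminal singularities and $-K_{\bar X}$ is ample; the flop $\psi\colon\tX\dashrightarrow\tX^+$ over~$\bar X$ then exists and is unique, $\uprho(\tX^+)=2$, the induced $\phi_+\colon\tX^+\to\bar X$ is the second small crepant contraction, and the extremal ray of~$\tX^+$ not contracted by $\phi_+$ --- which is $K_{\tX^+}$-negative, as $-K_{\tX^+}$ is nef and big but vanishes only on the flopping ray --- gives a Mori extremal contraction $\sigma_+\colon\tX^+\to X^+$. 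This is precisely the diagram~\eqref{eq:sl-general}. (If $\phi$ is an isomorphism the flop is trivial and the same reasoning still yields~\eqref{eq:sl-general}; in fact for type~\type{4,4} there are always flopping curves, but this is not needed for the statement.)

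The crux, and the step I expect to be the main obstacle, is the smallness of~$\phi$; here I would argue by contradiction using Lemma~\ref{lemma:m}. Suppose $\phi$ contracts a divisor~$D$. On a general fiber~$\ell$ of the exceptional divisor~$E$ one has $H\cdot\ell=0$ and $E\cdot\ell=-1$, so $(-K_{\tX})\cdot R_-=2-m>0$. Since $-K_{\tX}$ is nef and big but not ample (it contracts~$D$), the face of the Mori cone on which it vanishes is the ray~$R_+\ne R_-$, so $(-K_{\tX})\cdot R_+=0$; and $D$, being covered by $\phi$-contracted curves, which have class in~$R_+$, satisfies $D\cdot R_+<0$ by the negativity lemma. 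Moreover $E\cdot R_+\ge 0$ since $E$ is $\sigma$-exceptional and $R_+\ne R_-$, and $E\cdot R_+\ne 0$ (otherwise $R_+$ would also be $\sigma$-contracted, forcing $R_+=R_-$). Hence
\[
\bigl(H-(m+1)E\bigr)\cdot R_+=(-K_{\tX})\cdot R_+-E\cdot R_+=-\,E\cdot R_+<0,
\]
so every effective divisor in the linear system $\MMM=|H-(m+1)E|$ contains~$D$; thus $D$ is a fixed component of~$\MMM$, contradicting Lemma~\ref{lemma:m}. Therefore $\phi$ is small.

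It remains to check that the diagram is defined over~$\kk$. The morphism~$\sigma$ is, by the choice of its $\kk$-rational center; $\phi$ is given by the complete linear system of the Galois-invariant class~$-K_{\tX}$; the flop~$\psi$ is unique over~$\bar X$, hence $\Gal(\bkk/\kk)$-equivariant; and $\sigma_+$ contracts the unique $\phi_+$-non-exceptional extremal ray of~$\tX^+$, which is Galois-invariant because $\phi_+$, being defined over~$\kk$, distinguishes it from the flopping ray. Hence all the arrows of~\eqref{eq:sl-general} descend to~$\kk$, completing the plan.
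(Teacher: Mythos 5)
Your argument is essentially the paper's: both rest on base-point-freeness of $|{-}K_{\tX}|$ (Theorem~\ref{th:bht} plus \cite[Lemmas~5.5 and~5.7]{KP19}) and on the key observation that a divisor contracted by the anticanonical morphism would be a fixed component of $\MMM=|H-(m+1)E|$, which Lemma~\ref{lemma:m} forbids; the paper simply delegates the remaining MMP bookkeeping to \cite[Lemmas~5.5 and~5.7]{KP19}, which you spell out by hand. One arithmetic slip: $(-K_{\tX})\cdot R_- = m$, not $2-m$ (for $m=2$ your formula would give $0$), but since the correct value is still positive, the $K$-negativity of the $\sigma$-exceptional ray and the rest of the argument are unaffected.
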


\begin{proof}
We use notation~\eqref{eq:m-x}.
Recall that the anticanonical class $H = -K_X$ is very ample
and the image of the anticanonical embedding $X \subset \PP^{g + 1}$ is an 
intersection of quadrics (see Theorem~\ref{th:bht}).
The anticanonical morphism $\phi \colon \tX \to \PP^{g-m-1}$ cannot contract a divisor~$D$,
because by~\cite[Lemma~5.7 and~5.5]{KP19} this divisor is then a fixed component of~$\MMM$, 
but by Lemma~\ref{lemma:m} this linear system has no fixed components.
Therefore, the required link exists and is defined over~$\kk$ by~\cite[Lemma~5.7 and~5.5]{KP19}.
\end{proof} 

\subsection{The second contraction}
\label{subsec:sl-x44-details}

By Proposition~\ref{prop:sl-x44-existence} we have the 
diagram~\eqref{eq:sl-general},
so to finish the proof of Theorem~\ref{theorem:x44-links} it remains to describe the extremal contraction~$\sigma_+$.
During this step we systematically use the classification of extremal contractions from~\cite{Mori-1982} and~\cite{Cutkosky-1988}.

We denote by $H^+,E^+ \in \Pic(\tX^+)$ the strict transforms of the 
classes~$H,E \in \Pic(\tX)$.
Note that 
\begin{equation*}
- K_{\tX^+} = H^+ - mE^+
\end{equation*}
because $-K_\tX = H - mE$ by definition of~$\tX$ and the map~$\psi$ is an isomorphism in codimension one.
Consider also the strict transform 
\begin{equation*}
\MMM^+ := |H^+ - (m+1)E^+|
\end{equation*}
of the linear system~$\MMM$.

We denote by $\Upsilon_i \subset \tX$ the flopping curves and by~$\Upsilon_i^+ 
\subset \tX^+$ the corresponding flopped curves.
Finally, when $\rF_1(X,x) = \varnothing$ we denote by~$C$ a general twisted 
cubic curve on~$X$ passing through~$x$
and otherwise we denote by~$C$ a general conic meeting~$\Theta(x)$
(recall Lemma~\ref{lemma:cubics-x} and Lemma~\ref{lemma:conics-c0} for the 
description of the corresponding Hilbert schemes).
Note that
\begin{equation}
\label{eq:h-dot-c}
H \cdot C = m + 1.
\end{equation}
We denote by~$\tC$ the strict transform of~$C$ in~$\tX$
and by~$\tC^+$ the strict transform of~$\tC$ in~$\tX^+$.
Note that by Remark~\ref{rem:f2x-theta} and Remark~\ref{rem:f3x-x} the curve~$\tC$ is smooth and
\begin{equation}
\label{eq:e-dot-c}
E \cdot \tC = 1;
\end{equation}
in particular, $(H - mE)\cdot \tC = 1$ and~$\tC$ does not contain the curves~$\Upsilon_i$.

\begin{lemma}
\label{lemma:sigma+}
The nef cone of~$\tX^+$ is generated by the anticanonical class $-K_{\tX^+}$ and~$M^+ \in \MMM^+$,
and the Mori cone of~$\tX^+$ is generated by the class of the 
curves~$\Upsilon_i^+$ and the class of\/~$\tC^+$. 
In particular, the extremal contraction~$\sigma_+$ is given by a multiple of the linear 
system~$\MMM^+$ and contracts the extremal ray generated by~$\tC^+$.
\end{lemma}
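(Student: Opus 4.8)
The goal is to identify the nef and Mori cones of $\tX^+$, which has Picard rank $2$, so each cone is a two-dimensional cone spanned by two rays; the task reduces to exhibiting the generators and showing nothing lies strictly outside. I would argue on $\tX$ first and transport the conclusions along the flop $\psi$, which is an isomorphism in codimension one and hence induces an isomorphism $\Pic(\tX) \cong \Pic(\tX^+)$ preserving intersection numbers against curves not contracted by $\phi$. On $\tX$ the anticanonical morphism $\phi$ is a small crepant contraction (by Proposition~\ref{prop:sl-x44-existence} and Lemma~\ref{lemma:m}, it contracts no divisor), so $-K_{\tX} = H - mE$ spans one boundary ray of $\overline{\mathrm{Eff}}(\tX)^\vee$; on the flopped side this ray is still an extremal nef ray (it is the pullback of the ample class from $\bar X$), and $\phi_+$ is the corresponding small contraction. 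The flopping curves $\Upsilon_i$ on $\tX$ are flopped to curves $\Upsilon_i^+$ on $\tX^+$, and these generate one extremal ray of the Mori cone of $\tX^+$ — the ray contracted by $\phi_+$, which is therefore dual to the nef ray $-K_{\tX^+}$.

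**Second ray.** For the other ray I would use the linear system $\MMM^+ = |H^+ - (m+1)E^+|$. By Lemma~\ref{lemma:m} the system $\MMM = |H-(m+1)E|$ on $\tX$ has no fixed components and positive dimension, so its strict transform $\MMM^+$ also has no fixed components; since $\uprho(\tX^+) = 2$ and $-K_{\tX^+}$ already spans one nef ray, a general member $M^+$ of $\MMM^+$ must be nef and span the remaining boundary ray of the nef cone (a movable divisor on a variety of Picard rank $2$ with terminal $\mathbb{Q}$-factorial singularities lies in the nef cone, because the movable cone and the nef cone coincide when there are no small $\mathbb{Q}$-factorial modifications left to perform, and $\tX^+$ is an endpoint of the MMP-chain). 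The contraction $\sigma_+$ associated to $M^+$ is then the extremal contraction of the ray dual to $-K_{\tX^+}$, i.e.\ the ray not contracted by $\phi_+$. To finish, I would verify that the class of $\tC^+$ spans exactly this second extremal ray: by~\eqref{eq:h-dot-c}, \eqref{eq:e-dot-c} the curve $\tC$ on $\tX$ satisfies $(H-mE)\cdot\tC = 1 > 0$, so $\tC^+$ is not $\phi_+$-contracted; hence its class lies strictly inside the half of the Mori cone bounded by the $\Upsilon_i^+$-ray, and being a curve of minimal possible anticanonical degree it must generate the opposite boundary ray. Concretely one computes $M^+\cdot\tC^+ = (H-(m+1)E)\cdot\tC = (m+1) - (m+1) = 0$, which pins $\tC^+$ on the boundary ray dual to $M^+$; combined with $\Upsilon_i^+ \cdot M^+ > 0$ this shows the Mori cone is generated by $[\Upsilon_i^+]$ and $[\tC^+]$.

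**Main obstacle.** The delicate point is the identification $M^+ \cdot \tC^+ = (H - (m+1)E)\cdot \tC$: one must check that the strict transform $\tC^+$ of $\tC$ under the flop has the same intersection numbers with the transported divisor classes as $\tC$ does on $\tX$. This holds because $\tC$ does not meet the flopping locus (noted after~\eqref{eq:e-dot-c}: $\tC$ does not contain the $\Upsilon_i$, and by genericity of $C$ it is disjoint from the finitely many flopping curves), so $\psi$ restricts to an isomorphism near $\tC$ and intersection numbers are preserved; I would spell this out carefully. A secondary subtlety is ensuring that $M^+$ is genuinely nef rather than merely movable, which one could alternatively deduce a posteriori once the Mori cone is computed — $M^+$ is nonnegative on both generators $[\Upsilon_i^+]$ and $[\tC^+]$ — making the argument self-contained without invoking general facts about movable cones.
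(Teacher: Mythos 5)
Your overall architecture is the same as the paper's (establish nefness of $M^+$, show $M^+\cdot\Upsilon_i^+>0$ and $M^+\cdot\tC^+=0$, conclude), but there is a genuine gap at the step you yourself flag as delicate: the claim that a general $\tC$ is disjoint from the flopping curves ``by genericity''. Genericity alone does not give this. For $m=2$ every flopping curve is the strict transform of a conic through $x$ and every curve $C$ in your family is a twisted cubic through $x$, so both families are forced through the blown-up center; worse, by Lemma~\ref{lemma:conics} the conics through $x$ map to lines on $Q_i$ through $\pi_i(x)$ and therefore lie inside the tangent hyperplane sections whose union is exactly the divisor $R_x$ swept out by the curves $C$ (Lemma~\ref{lemma:cubics-x}). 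A general member of a one-dimensional covering family of a surface need not avoid a fixed curve contained in that surface. One can in fact salvage disjointness here by a tangent-direction argument at $x$ (the images of $C$ and of a flopping conic are distinct lines through the vertex of the tangent cone, so they meet only at $x$ and separate on $E$), and the case $m=1$ needs a separate analysis of curves meeting $\Theta(x)$ --- but none of this is supplied by the word ``genericity'', and without it the equality $M^+\cdot\tC^+=M\cdot\tC$ is unjustified. The paper avoids the issue entirely: it proves only the inequality $H^+\cdot\tC^+\ge H\cdot C=m+1$ (a general member of $|H|$ meets $C$ away from the indeterminacy locus, and those intersection points survive the flop), deduces $E^+\cdot\tC^+\ge 1$ from $-K_{\tX^+}\cdot\tC^+=1$, hence $M^+\cdot\tC^+=1-E^+\cdot\tC^+\le 0$, and then uses nefness of $M^+$ to force equality. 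Also note that your proposed ``a posteriori'' alternative for nefness is circular: you cannot first compute the Mori cone as generated by $[\Upsilon_i^+]$ and $[\tC^+]$ and then check $M^+\ge 0$ on it, because placing $\tC^+$ on an extremal ray was exactly what required $M^+$ to be nef.

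On the nefness of $M^+$ itself your route differs from the paper's and is heavier: you invoke the chamber decomposition of the movable cone of a Mori dream space and the fact that $\tX$ and $\tX^+$ are the only small $\QQ$-factorial modifications (which itself rests on $\sigma_+$ being divisorial or a fibration, i.e.\ on Proposition~\ref{prop:sl-x44-existence}). The paper argues more directly: if $M^+$ were not nef it would be negative on the $\sigma_+$-extremal ray, on which $K_{\tX^+}$ is also negative, so by \cite[Theorem~0]{Benveniste} that contraction is not small, its exceptional locus is a divisor contained in the base locus of $\MMM^+$, and this contradicts Lemma~\ref{lemma:m} (no fixed components). Your version is workable if stated precisely, but as written the assertion that ``the movable cone and the nef cone coincide'' is not what you need --- the movable cone is the union of the nef cones of \emph{all} SQMs, and you must rule out further flops beyond $\tX^+$.
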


\begin{proof}
Since~$\phi$ is crepant and~$\psi$ is a flop, the morphism~$\phi_+$ is crepant as well.
Moreover, the anticanonical linear system~$|-K_\tX|$ is base-point free 
by~\cite[Lemma~5.7]{KP19}, hence~$|-K_{\tX^+}|$ is base-point free as well.

On the other hand, we have~$(H - mE) \cdot \Upsilon_i = -K_{\tX} \cdot \Upsilon_i = 0$, 
and since $H \cdot \Upsilon_i > 0$, we conclude that $E \cdot \Upsilon_i > 0$.
Therefore, for~$M \in \MMM$ we have
\begin{equation*}
M \cdot \Upsilon_i = 
(H - (m+1)E)\cdot \Upsilon_i = 
-E \cdot \Upsilon_i < 0.
\end{equation*}
If $M^+ \in \MMM^+$ is the strict transform of~$M$, 
this implies that $M^+ \cdot \Upsilon_i^+ > 0$ by one of the definitions of a flop, see, e.g.,~\cite[Definition~6.10]{Kollar-Mori:88}.
Now if~$M^+$ is not nef, it is negative on the extremal ray~$\rR$ 
corresponding to the contraction~$\sigma_+ \colon \tX^+ \to X^+$.
Since the canonical class is also negative on~$\rR$, the contraction~$\sigma_+$ cannot be small 
(see \cite[Theorem~0]{Benveniste} or \cite[Corollary~6.3.4]{MP:1pt}),
hence curves in~$\rR$ sweep a subvariety of~$\tX^+$ of dimension~$\ge 2$,
hence the base locus of~$\MMM^+$ is at least 2-dimensional, which contradicts Lemma~\ref{lemma:m}.
This proves that~$M^+$ is nef.

Now we have $-K_\tX \cdot \tC = (H - mE) \cdot \tC = 1$ by~\eqref{eq:h-dot-c} and~\eqref{eq:e-dot-c}, 
hence $-K_{\tX^+} \cdot \tC^+ = 1$.
On the other hand, since a general divisor~$H$ meets~$C$ away from the 
indeterminacy locus of the map~$X\dashrightarrow \tilde X^+$, 
we have~$H^+\cdot \tC^+ \ge H\cdot C=m+1$ and so~$E^+\cdot \tC^+ \ge 1$.
Thus,
\begin{equation*}
M^+ \cdot \tC^+ = (-K_{\tX^+} - E^+) \cdot \tC^+ = 1 - E^+ \cdot \tC^+ \le 0.
\end{equation*}
Since~$M^{+}$ is nef, $M^+ \cdot \tC^+= 0$.

Combining the above computations we conclude that the nef cone of~$\tX^+$ is 
generated by~$-K_{\tX^+}$ and~$M^+$
and the Mori cone is generated by~$\Upsilon_i^+$ and~$\tC^+$.
The rest of the lemma follows from the Mori contraction 
theorem~\cite[Theorems~3.1 and~3.2]{Mori-1982}.
\end{proof}

Now we can finally prove Theorem~\ref{theorem:x44-links}.

\begin{proof}[Proof of Theorem~\xref{theorem:x44-links}]
Let $X$ be a Fano threefold of type~\type{4,4}.
By Proposition~\ref{prop:sl-x44-existence} 
there exists a Sarkisov link~\eqref{eq:sl-general} 
and it remains to describe the contraction~$\sigma_+$.
Since~$\sigma_+$ is an extremal contraction, we have $\uprho(X^+) = 
\uprho(\tX^+) - 1 = \uprho(\tX) - 1 = \uprho(X)$, hence~$\uprho(X^+) = 1$.
Similarly, we have~$\uprho(X^+_\bkk) \le \uprho(\tX^+_\bkk) - 1 = 
\uprho(\tX_\bkk) - 1 = \uprho(X_\bkk) = 2$, hence
\begin{equation}
\label{eq:rho-xplus-bkk}
\uprho(X^+_\bkk) \le 2.
\end{equation} 
On the other hand, in the case $\rF_1(X,x) \ne \varnothing$, the 
varieties~$\tX$ and~$\tX^+$ are not smooth and 
arguing as in the proof of~\cite[Theorem~5.9]{KP19} we obtain 
\begin{equation}
\label{eq:rk-cl-txp}
\rk \Cl(\tX^+) = 2,
\qquad 
\rk \Cl(\tX^+_\bkk) = 5 - m.
\end{equation}

Since~$\phi$ and~$\phi_+$ are crepant morphisms, the projection formula implies 
that any triple intersection product of divisor classes on~$\tX^+$ which 
includes~$K_{\tX^+}$
is equal to the analogous triple product on~$\tX$, so 
using~\eqref{eq:hhe}
we compute
(recall that~$g = \g(X) = 15$)
\begin{equation}
\label{eq:new1}
(-K_{\tilde X^+})^3 = 2(g - m - 3) {} = 24 - 2m,\quad 
(-K_{\tilde X^+})^2\cdot E^+ = 4,\quad 
(-K_{\tilde X^+})\cdot (E^+)^2 = -2.
\end{equation}
On the other hand, by Lemma~\ref{lemma:sigma+} and primitivity of~$H^+ - 
(m + 1)E^+$ we have 
\begin{equation}
\label{eq:new2}
H^+ - (m + 1)E^+ = \sigma_+^*A^+,
\end{equation}
where $A^+$ is the ample generator of the Picard group of~$X^+$.
We have
\begin{equation}
\label{eq:intersections}
(\sigma_+^*A^+)^2 \cdot(-K_{\tX^+}) = (H^+ - (m + 1)E^+)^2\cdot (-K_{X^+}) = 
2(g - m - 8) = 14 - 2m > 0,
\end{equation}
therefore~$\sigma_+$ is not a del Pezzo fibration.
Similarly, if~$\sigma_+$ is a conic bundle, it follows that
\begin{equation*}
(A^+)^2 = 7 - m, 
\end{equation*}
hence~$X^+$ is a smooth quintic or sextic del Pezzo surface,
which of course contradicts the inequality~\eqref{eq:rho-xplus-bkk}.
Therefore, the morphism~$\sigma_+$ is birational.

By Lemma~\ref{lemma:sigma+} the morphism~$\sigma_+$ contracts the strict 
transform~$R^+$ of the divisor swept by curves~$C$,
i.e., the strict transform of the divisor~$R_x \subset X$ if~$\rF_1(X,x) = 
\varnothing$, or of the divisor~$R_{\Theta(x)}$ otherwise.
In both cases Lemma~\ref{lemma:cubics-x} and Lemma~\ref{lemma:conics-c0} show 
that~$R^+$ 
has over~$\bkk$ two irreducible components swapped by the Galois group.
Therefore, it follows from~\eqref{eq:rk-cl-txp} that 
\begin{equation*}
\rk \Cl(X^+) = 1,
\qquad
\rk \Cl(X^+_\bkk) = 3 - m,
\end{equation*}
and~$\sigma_+$ is the blowup of two $\bkk$-curves or two $\bkk$-points.
Furthermore, by Lemmas~\ref{lemma:cubics-x} and~\ref{lemma:conics-c0} we have
\begin{equation*}
R^+ \sim H^+ - (m + 2)E^+.
\end{equation*}
Denoting by~$i_+$ the index of~$X^+$ and by~$a_+$ the discrepancy of the exceptional divisor~$R^+$ of~$\sigma_+$, 
and computing the anticanonical class of~$\tX^+$ in two ways we obtain the equality
\begin{equation*}
H^+ - mE^+ = i_+(H^+ - (m+1)E^+) - a_+(H^+ - (m+2)E^+).
\end{equation*}
Solving this equation, we obtain~$i_+ = 2$ and~$a_+ = 1$.
Thus, $X^+$ is a Fano threefold of index~$2$ 
and~$\sigma_+$ is either the blowup of a $\kk$-irreducible curve~$B^+$ with two~$\bkk$-components, 
or of two rational double $\bkk$-points on~$X^+$ swapped by the Galois action.
Moreover, using the equality from~\eqref{eq:intersections} we obtain
\begin{equation*}
14 - 2m= (\sigma_+^*A^+)^2\cdot (-K_{\tX^+}) = (\sigma_+^*A^+)^2 \cdot 
(2\sigma_+^*A^+ - R^+) = 2(\sigma_+^*A^+)^3,
\end{equation*}
hence $X^+$ is a quintic or sextic del Pezzo threefold, respectively.
Finally, if~$X^+$ is singular, its class group~$\Cl(X^+)$ has rank greater than~$1$ (see~\cite[Theorem~1.7]{Prokhorov-GFano-1}),
which contradicts to the equality~$\rk\Cl(X^+) = 1$ obtained above.
Thus, $X^+$ is smooth and~$\sigma_+$ is the blowup of a curve~$B^+$ 
such that~$B^+_{\bkk}$ has two irreducible~$\bkk$-components swapped by the Galois group.

To finally compute the degree of~$B^+$, recall that~$R^+$ is the exceptional divisor of~$\sigma_+$. 
Note that on the one hand,  equalities~\eqref{eq:new1} and~\eqref{eq:new2} imply that
\begin{equation*}
(\sigma_+^*A^+) \cdot (-K_{\tX^+})^2 = 2g - 2m - 10 = 20 - 2m,
\end{equation*}
and on the other hand, this expression is equal to
\begin{equation*}
(\sigma_+^*A^+) \cdot (2\sigma_+^*A^+ - R^+)^2 = 
4(\sigma_+^*A^+)^3 + (\sigma_+^*A^+) \cdot (R^+)^2 = 4(7 - m)- \deg(B^+)
\end{equation*}
(where the degree is computed with respect to~$A^+$).
Thus, 
\begin{equation*}
\deg(B^+) = 8 - 2m,
\end{equation*}
hence~$B^+$ is a quartic or sextic curve with two connected $\bkk$-components (swapped by the Galois action), 
i.e., a union of two conics or two cubic curves.
\end{proof}

\subsection{Rationality}

Now we use the constructed links to prove rationality of threefolds of 
type~\type{4,4}.

\begin{proposition}
\label{prop:x44}
Let $X$ be a Fano threefold of type~\type{4,4}.
If $X(\kk) \ne \varnothing$ then~$X$ is $\kk$-rational. 
\end{proposition}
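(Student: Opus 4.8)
The plan is to use the Sarkisov links constructed in Theorem~\ref{theorem:x44-links}, distinguishing two cases according to whether a $\kk$-point $x \in X(\kk)$ lies on a $\bkk$-line. In both cases the strategy is the same: run the link, reducing $X$ to a Fano threefold $X^+$ of index~$2$ with a $\kk$-point (the $\kk$-point exists by the Nishimura lemma applied to the birational maps in~\eqref{eq:sl-general}), and then cite a known rationality result for del Pezzo threefolds of degree~$5$ over non-closed fields. Concretely, if $\rF_1(X,x) = \varnothing$, then by Theorem~\ref{theorem:x44-links}\ref{prop:sl:point-2-blowup-Q} the threefold $X$ is $\kk$-birational to a smooth quintic del Pezzo threefold $X^+$; since the birational map is defined over $\kk$ and $X(\kk)\ne\varnothing$, the Nishimura lemma gives $X^+(\kk)\ne\varnothing$, and a quintic del Pezzo threefold with a $\kk$-point is $\kk$-rational by the result of~\cite{KP19} (cf.\ the analysis there). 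If $\rF_1(X,x)\ne\varnothing$, then by Theorem~\ref{theorem:x44-links}\ref{prop:sl-conic} we land on a threefold $X^+$ of type~\type{2,2} with a $\kk$-point, which is $\kk$-rational by Proposition~\ref{prop:x22}.

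First I would fix a $\kk$-point $x\in X(\kk)$ and compute $\rF_1(X_\bkk,x)$; by Lemma~\ref{lemma:lines} this is either empty or a reduced length-$2$ $\kk$-scheme (two $\bkk$-lines $L_1,L_2$ swapped by the Galois action), so the two cases $\rF_1(X,x)=\varnothing$ and $\rF_1(X,x)\ne\varnothing$ are exhaustive. In the second case, $\Theta(x)=L_1\cup L_2$ is a $\kk$-irreducible singular conic, which is exactly the center of the blowup $\sigma$ in Theorem~\ref{theorem:x44-links}\ref{prop:sl-conic}, so the link applies verbatim. In the first case, $\sigma$ is the blowup of the $\kk$-point $x$, and the link of Theorem~\ref{theorem:x44-links}\ref{prop:sl:point-2-blowup-Q} applies. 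In each case the composition $X \xleftarrow{\sigma} \tX \xdashrightarrow{\psi} \tX^+ \xrightarrow{\sigma_+} X^+$ is a $\kk$-birational map, so $X$ is $\kk$-birational to $X^+$.

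Then I would produce a $\kk$-point on $X^+$: since $\sigma$ is birational with $\kk$-rational source over $X$, $\tX(\kk)\ne\varnothing$; since $\psi$ is a birational map of smooth (or mildly singular) threefolds defined over $\kk$, and $\sigma_+$ is a birational morphism, the Nishimura lemma propagates a $\kk$-point all the way to $X^+$, giving $X^+(\kk)\ne\varnothing$. Finally I would invoke the appropriate rationality statement: for type~\type{2,2} this is Proposition~\ref{prop:x22}; for the quintic del Pezzo threefold this is the corresponding result of~\cite{KP19} (a smooth quintic del Pezzo threefold over any field of characteristic~$0$ with a rational point is rational). Composing the $\kk$-birational equivalence $X \sim_{\kk} X^+$ with the $\kk$-rationality of $X^+$ yields $\kk$-rationality of $X$.

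\textbf{Main obstacle.} The heavy lifting — the existence and detailed description of the two Sarkisov links — has already been done in Theorem~\ref{theorem:x44-links}, so the remaining proof is short. The only genuine point requiring care is the case $\rF_1(X,x)\ne\varnothing$: here $\tX$ and $\tX^+$ are singular (the blowup of the singular conic $\Theta(x)$), so one must be slightly careful that the Nishimura-type argument still applies to extract a $\kk$-point on $X^+$; but since all the maps in the link are defined over $\kk$ and $\sigma_+$ is a projective birational morphism onto the smooth threefold $X^+$, this is routine. A secondary point is simply to make sure that one is allowed to choose, in the first case, a $\kk$-point $x$ with $\rF_1(X,x)=\varnothing$ when such exists, or to handle whichever case actually occurs — but since both cases are covered by Theorem~\ref{theorem:x44-links}, no such choice is needed and the argument is uniform.
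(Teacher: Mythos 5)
Your proposal is correct and follows essentially the same route as the paper: split into the two cases of Theorem~\ref{theorem:x44-links} according to whether~$\rF_1(X,x)$ is empty, and conclude by the $\kk$-rationality of the quintic del Pezzo threefold (\cite[Theorem~3.3]{KP19}, which in fact holds unconditionally, so the Nishimura step is not even needed in that case) or of the type~\type{2,2} threefold via Proposition~\ref{prop:x22}. The paper's proof is exactly this two-case argument, with the Nishimura lemma invoked only in the second case.
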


\begin{proof}
Let~$x \in X(\kk)$ be a~$\kk$-point.
First, assume that~$\rF_1(X,x) = \varnothing$.
Then by Theorem~\ref{theorem:x44-links}\ref{prop:sl:point-2-blowup-Q} 
the variety~$X$ is birational to a smooth quintic del Pezzo threefold~$X^+$.
But~$X^+$ is $\kk$-rational by~\cite[Theorem~3.3]{KP19}, hence so is~$X$.

Now assume that $\rF_1(X,x) \ne \varnothing$. 
Then by Theorem~\ref{theorem:x44-links}\ref{prop:sl-conic} 
the variety $X$ is birational to a smooth Fano threefold $X^+$ of 
type~\type{2,2}.
Moreover, by the Nishimura lemma we have~$X^+(\kk) \ne \varnothing$.
Therefore, $X^+$ is $\kk$-rational by~Proposition~\ref{prop:x22}, hence so 
is~$X$.
\end{proof} 

\section{Fano threefolds of type~\type{3,3}}

In this section we prove that a Fano threefold~$X$ of type~\type{3,3} is 
$\kk$-unirational if~$X(\kk) \ne \varnothing$, 
but not $\kk$-rational if~$\uprho(X) = 1$.

\subsection{The discriminant curve}
\label{subsec:discriminant}

Let~$X$ be a Fano threefold of type~\type{3,3} with~$X(\kk) \ne \varnothing$.
Recall from Lemma~\ref{lemma:picard} that the image~$\rG_X$ of the Galois group~$\Gal(\bkk/\kk)$ in~$\Aut(\Pic(X_\bkk))$ 
is a group of order~$2$ swapping the generators~$H_1$ and~$H_2$ of~$\Pic(X_\bkk)$. 
The homomorphism $\Gal(\bkk/\kk)\to \rG_X$ therefore defines a quadratic extension~$\kk'/\kk$ 
such that $H_1$ and~$H_2$ are defined on~$X_{\kk'}$, hence
\begin{equation*}
X_{\kk'} \cong \Big(\PP(V_1) \times \PP(V_2)\Big) \cap \PP(A^\perp),
\end{equation*}
where~$V_i$ are $\kk'$-vector spaces of dimension~$4$ and~$A \subset V_1^\vee \otimes V_2^\vee$ 
is the $3$-dimensional subspace of linear equations of~$X_{\kk'}$.
Note that the~$\kk'$-spaces $V_1 \otimes V_2$ and~$A$ are defined over~$\kk$, 
as well as the inclusion~$A \subset V_1^\vee \otimes V_2^\vee$.
We think of vectors~$a \in A$ as of bilinear forms on~$V_1 \otimes V_2$ and denote by 
\begin{equation*}
\Gamma \xhookrightarrow{\ \ } \PP(A)
\end{equation*}
the \textsf{discriminant curve} parameterizing degenerate bilinear forms; it is also 
defined over~$\kk$.

\begin{lemma}
\label{lemma:gamma-x33}
The curve~$\Gamma$ is a smooth plane quartic curve; 
in particular it is a non-hyperelliptic curve of genus~$3$.
\end{lemma}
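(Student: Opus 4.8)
The plan is to analyze the discriminant hypersurface $\{\det = 0\}$ inside $\PP(V_1^\vee \otimes V_2^\vee)$ and intersect it with the plane $\PP(A)$. Since $\dim V_1 = \dim V_2 = 4$, a bilinear form $a \in V_1^\vee \otimes V_2^\vee$ can be viewed (after choosing bases) as a $4 \times 4$ matrix, and $\det a$ is a homogeneous polynomial of degree $4$ in the coefficients of $a$. Hence its restriction to the plane $\PP(A) \cong \PP^2$ cuts out a plane quartic $\Gamma \subset \PP(A)$. The first thing I would note is that $\Gamma$ is really a curve (i.e. $\det$ does not vanish identically on $A$): this follows because $X_{\kk'}$ is a smooth threefold, and if every form in $A$ were degenerate, a standard argument (e.g. the total space of the tautological family of kernels would force singularities, or one checks directly that a net of degenerate $4\times 4$ forms cannot cut out a smooth threefold of the expected dimension) gives a contradiction. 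So the content of the lemma is the \emph{smoothness} of $\Gamma$, and once that is established the remaining assertions are automatic: a smooth plane quartic has genus $\binom{4-1}{2} = 3$ by the genus–degree formula, and a plane quartic embedded by its canonical system is non-hyperelliptic (hyperelliptic curves of genus $3$ have canonical map of degree $2$ onto a conic, not an embedding).

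For the smoothness, the natural approach is to relate the singularities of $\Gamma$ to the singularities of $X_{\kk'}$ via the incidence geometry. A point $[a] \in \Gamma$ corresponds to a form $a$ with nontrivial kernel; generically $\dim \ker a = 1$ on each side, giving a point $([v_1],[v_2]) \in \PP(V_1) \times \PP(V_2)$ with $a(v_1, -) = 0$ and $a(-, v_2) = 0$. The key computation is that $\Gamma$ is singular at $[a]$ precisely when either $a$ has corank $\ge 2$ (so the Jacobian criterion for $\det$ on all of $\PP(V_1^\vee\otimes V_2^\vee)$ fails) or when $A$ meets the smooth locus of the determinantal hypersurface non-transversally at $[a]$; in both cases I would show that the corresponding point $([v_1],[v_2])$ — which lies on $X_{\kk'}$ since the equations of $X_{\kk'}$ are exactly the forms in $A$ and $a(v_1,v_2) = 0$ — is a singular point of $X_{\kk'}$. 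The cleanest way to see this is to differentiate: the tangent space to $X_{\kk'}$ at $([v_1],[v_2])$ is cut out inside $T_{([v_1],[v_2])}(\PP(V_1)\times\PP(V_2)) = \Hom(v_1,\barV_1)\oplus\Hom(v_2,\barV_2)$ by the differentials of the three equations $a'(v_1,v_2)$ ($a' \in A$), and one shows that non-transversality/higher corank forces this system to have rank $< 3$, i.e. the point is singular on $X_{\kk'}$. Since $X_{\kk'}$ is smooth, $\Gamma$ is smooth.

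The main obstacle I expect is the bookkeeping in this last step: making the differential computation precise enough to see that \emph{every} type of singularity of $\Gamma$ (a node/cusp from non-transverse intersection with the smooth determinantal locus, or a point of the corank-$2$ stratum lying on $\PP(A)$) produces a genuine singular point of $X_{\kk'}$, rather than just a degenerate tangent line or something invisible on $X$. One has to be careful that the point $([v_1],[v_2])$ associated to $[a] \in \Gamma$ is well-defined (corank exactly $1$) away from the putative singular locus, and handle the corank-$\ge 2$ case — where the kernel projective spaces have positive dimension — separately, showing that then $X_{\kk'}$ contains (and is singular along) a positive-dimensional linear space, or at least a singular point. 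Alternatively, one can invoke the classical theory of the \emph{Hilbert scheme of lines} on $X$: by Lemma~\ref{lemma:pi-i}\ref{lemma:pi-i:33} together with Lemma~\ref{lemma:conics}, the component structure of the Hilbert scheme of conics is $\Sym^2\Gamma_1$, and $\Gamma$ is closely tied to $\Gamma_1 \subset \PP^3$ (a genus-$3$ sextic); identifying $\Gamma$ with $\Gamma_1$ embedded by its canonical series would give smoothness and genus $3$ for free. Either route works; I would present the direct determinantal argument since it is self-contained and $\kk$-rational, hence valid over the non-closed field and compatible with the later use of $\Gamma$ as a curve defined over $\kk$.
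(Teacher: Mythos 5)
Your proposal follows essentially the same route as the paper: the paper likewise notes that $\Gamma$ is cut out by a quartic and shows that the tangent space to~$\Gamma$ at any~$[a]$ is $1$-dimensional by checking that a $2$-dimensional tangent space forces either corank~$\ge 2$ (in which case $X_{\kk'}$ must meet $\PP(K_1)\times\PP(K_2)$, on which~$a$ vanishes identically) or corank~$1$ with $([v_1],[v_2]) \in X_{\kk'}$, and in either case the hyperplane section of~$\PP(V_1)\times\PP(V_2)$ corresponding to~$a$ is singular at~$([v_1],[v_2])$, hence so is~$X_{\kk'}$. One caution: the kernel point~$([v_1],[v_2])$ does \emph{not} automatically lie on~$X_{\kk'}$ as your parenthetical asserts (that requires all three forms in~$A$, not just~$a$, to vanish at~$(v_1,v_2)$); its membership in~$X_{\kk'}$ is exactly the non-transversality condition $d(\det)_a\vert_A = 0$, so that justification should be replaced accordingly when you write out the details.
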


\begin{proof}
The discriminant divisor in~$\PP(V_1^\vee \otimes V_2^\vee)$, i.e., the divisor parameterizing degenerate bilinear forms,
is a quartic hypersurface, hence~$\Gamma$ is a quartic curve or the entire plane.
To prove that~$\Gamma$ is a smooth curve we can work over~$\kk'$,
and it is enough to show that the tangent space to~$\Gamma$ at any point is 1-dimensional.
Assume to the contrary that the tangent space at a point~$[a] \in \PP(A)$ is 2-dimensional; then
\begin{enumerate}
\item 
\label{item:rank-a}
either the bilinear form~$a(-,-) \in V_1^\vee \otimes V_2^\vee$ has corank at least~$2$, 
\item 
\label{item:tangency-a}
or~$a$ has corank~$1$ and if the vectors~$v_1 \in V_1$ and~$v_2 \in V_2$ generate its left and right kernels, respectively, 
the point $([v_1],[v_2]) \in \PP(V_1) \times \PP(V_2)$ belongs to~$X_{\kk'}$.
\end{enumerate}
In case~\ref{item:rank-a}, if~$K_1 \subset V_1$ and~$K_2 \subset V_2$ are the left and right kernels of~$a$ 
(they have dimension~$\ge 2$ by assumption), the form~$a$ vanishes on~$\PP(K_1) \times \PP(K_2)$, 
hence the intersection~$\big(\PP(K_1) \times \PP(K_2)\big) \cap X_{\kk'}$ 
is a codimension-2 linear section of~$\PP(K_1) \times \PP(K_2)$, hence it is non-empty.
Therefore, in case~\ref{item:rank-a}, similarly to the
case~\ref{item:tangency-a}, 
there is a point $([v_1],[v_2]) \in X_{\kk'}$ such that~$v_1$ and~$v_2$ belong to the left and right kernels of some~$a$.
Then the hyperplane section of~$\PP(V_1) \times \PP(V_2)$ by the hyperplane corresponding to~$a$ is singular at~$([v_1],[v_2])$,
hence~$X_{\kk'}$ is also singular at this point.
\end{proof}

As explained in~Lemma~\ref{lemma:pi-i}\ref{lemma:pi-i:33}
the projections~$\pi_1 \colon X_{\kk'} \to \PP(V_1)$ and~$\pi_2 \colon X_{\kk'} \to \PP(V_2)$ defined over~$\kk'$, but not over~$\kk$,
are the blowups of curves $\Gamma_i\subset \PP(V_i)$ of genus~$3$ and degree~$6$ also defined over~$\kk'$.
The next lemma relates the $\kk'$-curves~$\Gamma_i$ to the discriminant curve~$\Gamma$ defined over~$\kk$.

\begin{lemma}
\label{lemma:gamma-gamma-i}
There is a natural isomorphism $\Gamma_i \cong \Gamma_{\kk'}$ of curves 
over~$\kk'$.
\end{lemma}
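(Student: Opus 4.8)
The plan is to identify the center $\Gamma_1$ of the blowup $\pi_1 \colon X_{\kk'} \to \PP(V_1)$ with the discriminant curve $\Gamma$ by sending a degenerate bilinear form to its (projectivized) left kernel; for $\Gamma_2$ the same works with right kernels. First I would describe the fibers of $\pi_1$, working over $\kk'$: a point $([v_1],[v_2])$ lies on $X_{\kk'}$ if and only if $a(v_1,v_2) = 0$ for all $a \in A$, so the fiber of $\pi_1$ over $[v_1]$ is the linear subspace of $\PP(V_2)$ cut out by the linear forms $a(v_1,-) \in V_2^\vee$, $a \in A$. This subspace is positive-dimensional precisely when these forms span a subspace of $V_2^\vee$ of dimension at most $2$, i.e. precisely when some nonzero $a \in A$ satisfies $a(v_1,-) = 0$. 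Since by Lemma~\ref{lemma:pi-i}\ref{lemma:pi-i:33} the morphism $\pi_1$ is the blowup of the smooth curve $\Gamma_1$, that curve is exactly this positive-dimensional-fiber locus, so
\[
\Gamma_1 = \bigl\{\,[v_1] \in \PP(V_1) \ :\ \exists\, 0 \ne a \in A \text{ with } a(v_1,-) = 0\,\bigr\}.
\]

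Next I would construct the comparison morphism. By the argument in the proof of Lemma~\ref{lemma:gamma-x33}, smoothness of $X_{\kk'}$ forces every $[a] \in \Gamma$ to represent a bilinear form of corank exactly~$1$, so the restriction to $\Gamma_{\kk'}$ of the tautological morphism $V_1 \otimes \cO_{\PP(A)} \to V_2^\vee \otimes \cO_{\PP(A)}(1)$ has constant rank~$3$. Its kernel is therefore a line subbundle of $V_1 \otimes \cO_{\Gamma_{\kk'}}$, which gives a morphism $\kappa_1 \colon \Gamma_{\kk'} \to \PP(V_1)$ sending $[a]$ to the projectivized left kernel of~$a$; note that $\kappa_1$ is defined over~$\kk'$, the field over which $V_1$ lives, even though $\Gamma$ itself descends to~$\kk$. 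The displayed description of $\Gamma_1$ shows that $\kappa_1$ factors through $\Gamma_1$ and that $\kappa_1 \colon \Gamma_{\kk'} \to \Gamma_1$ is surjective, since any $[v_1] \in \Gamma_1$ spans the left kernel of some $[a] \in \Gamma$, whence $[v_1] = \kappa_1([a])$. To conclude that $\kappa_1$ is an isomorphism it suffices to check that $(\kappa_1)_{\bkk}$ is bijective, and the only nonformal point is injectivity: if distinct $[a], [a'] \in \Gamma_{\bkk}$ had the same left kernel $[v_1]$, then the whole pencil $\langle a, a'\rangle \subset A_{\bkk}$ would annihilate $v_1$ on the left, so the forms $b(v_1,-)$, $b \in A_{\bkk}$, would span a subspace of dimension at most~$1$, which by the first paragraph forces the fiber of $\pi_1$ over $[v_1]$ to have dimension at least~$2$ --- impossible, since $\pi_1$ is the blowup of a smooth curve and hence has fibers of dimension at most~$1$. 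As a bijective morphism of smooth projective curves over a field of characteristic zero is an isomorphism, $(\kappa_1)_{\bkk}$, and therefore $\kappa_1$, is an isomorphism $\Gamma_{\kk'} \cong \Gamma_1$; the case of $\Gamma_2$ is verbatim the same with left kernels replaced by right kernels.

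The main obstacle is the injectivity of $\kappa_1$: this is the only step that genuinely uses the geometry of $X$, namely that the fibers of the blowup $\pi_1$ are at most $1$-dimensional, which is supplied by Lemma~\ref{lemma:pi-i}. A secondary point requiring care is that smoothness of the plane quartic $\Gamma$, through the proof of Lemma~\ref{lemma:gamma-x33}, is precisely what guarantees corank exactly~$1$ at every point of $\Gamma$, so that $\kappa_1$ is an honest morphism on all of $\Gamma_{\kk'}$ rather than merely a rational map.
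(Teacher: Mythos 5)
Your proposal is correct and follows essentially the same route as the paper: both identify $\Gamma_i$ with the locus of positive-dimensional fibers of $\pi_i$, define the comparison map $[a]\mapsto\Ker_i(a)$ (well-defined because smoothness of $X$ forces corank exactly~$1$ along $\Gamma$), and prove injectivity by noting that a point annihilated by a pencil of forms would have a fiber of dimension $\ge 2$ under $\pi_i$. The only cosmetic difference is that the paper excludes such a fiber by invoking the absence of $H$-planes (Corollary~\ref{cor:planes}), whereas you appeal directly to the fiber-dimension bound for the blowup of a smooth curve from Lemma~\ref{lemma:pi-i}; these rest on the same fact.
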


\begin{proof}
The fiber of the projection~$\pi_1$ over a point $[v_1] \in \PP(V_1)$ 
is the intersection of the projectivizations of the orthogonals of~$v_1$ with respect to all bilinear forms~$a \in A$.
Therefore, it has positive dimension if and only if~$v_1$ belongs to the left kernel of one of the forms.
Furthermore, if~$v_1$ belongs to the left kernel of two distinct forms in~$A$, 
the fiber of~$\pi_1$ over~$[v_1]$ contains a plane, which contradicts Corollary~\ref{cor:planes}.
This means that the morphism
\begin{equation*}
\gamma_i \colon \Gamma_{\kk'} \longrightarrow \PP(V_i), 
\qquad 
a \longmapsto \Ker_i(a),
\end{equation*}
where $\Ker_1$ and $\Ker_2$ denote the left and right kernels of the bilinear form~$a$, respectively,
is an isomorphism $\Gamma_{\kk'} \to \Gamma_i$.
\end{proof}

\begin{remark}
It is also easy to check that if $H_i\vert_\Gamma$ 
are the pullbacks of the hyperplane classes of~$\Gamma_i\subset \PP(V_i)$ to~$\Gamma_{\kk'}$ 
under the isomorphism of Lemma~\ref{lemma:gamma-gamma-i} then~$H_1\vert_\Gamma + H_2\vert_\Gamma = 3K_\Gamma$
and that the divisor classes~$H_i\vert_\Gamma - K_\Gamma$ are non-effective and swapped by the~$\Gal(\kk'/\kk)$-action.
Conversely, given two such classes on a curve~$\Gamma_{\kk'}$ one can reconstruct the variety~$X$.
\end{remark}

\subsection{The double projection from a point}
\label{subsec:x33-double-projection}

Recall the quadratic extension~$\kk'/\kk$ defined in~\S\ref{subsec:discriminant}.
Recall also the canonical embedding~$X \subset Y$, where~$Y$ is a $\kk$-form of~$\PP^3 \times \PP^3$.
We consider the birational transformation of Theorem~\ref{proposition:toric-link} 
for the variety~$Y_{\kk'} = \PP(V_1) \times \PP(V_2)$ associated with a~$\kk$-point
\begin{equation*}
x_0 = ([v_1],[v_2]) \in X \subset Y.
\end{equation*}
As in~\S\ref{subsec:ppp} we denote $\barV_i := V_i / \kk' v_i$ and choose a splitting~$V_i = \kk' v_i \oplus \bar{V}_i$.
The transformation of Theorem~\ref{proposition:toric-link} in this case looks as follows
\begin{equation}
\label{eq:link-y33}
\vcenter{\xymatrix@C=4em{
\tY \ar[d]_\sigma 
\ar@{-->}[rr]^-\psi &&
\tY^+ \ar[d]^{\sigma_+} 
\\
Y &
&
Y^+
}}
\end{equation}
where~$\sigma$ is the blowup of~$x_0$, $Y^+_{\kk'} \cong \PP(\bar{V}_1) \times 
\PP(\bar{V}_2)$, 
$\sigma_+$ is the projectivization of the vector bundle
\begin{equation}
\label{eq:ce-x33}
\cE = \cO(-h_1-h_2) \oplus \cO(-h_1) \oplus \cO(-h_2)
\end{equation}
(here $h_i$ stand for the hyperplane classes of~$\PP(\barV_i)$) over~$Y^+_{\kk'}$,
and the map~$\psi$ is a small birational isomorphism.
Note that all varieties and maps in~\eqref{eq:link-y33} are defined over~$\kk$. 

Recall also the relations~\eqref{eq:toric-pic-1} in~$\Pic(\tY_{\kk'}) = 
\Pic(\tY^+_{\kk'})$ 
between the hyperplane classes $H_i$ of the factors~$\PP(V_i)$ of~$Y_{\kk'}$, 
the class~$E$ of the exceptional divisor of~$\sigma$,
the hyperplane classes~$h_i$, and the relative hyperplane class~$h$ 
of~$\tY^+_{\kk'} = \PP_{Y^+_{\kk'}}(\cE)$:
\begin{equation}
\label{eq:pic-relations-x33}
\begin{cases}
h_1 = H_1 - E,\\
h_2 = H_2 - E,\\
h_{\phantom{2}} = H_1 + H_2 - E
\end{cases}
\qquad 
\begin{cases}
\hbox to 1.2em{$H_1$} = h - h_2,\\
\hbox to 1.2em{$H_2$} = h - h_1,\\
\hbox to 1.2em{$E$} = h - h_1 - h_2.
\end{cases}
\end{equation}

Since~$X$ is a smooth linear section of~$Y$, containing the point~$x_0$, 
it is a complete intersection of three divisors $D_\alpha$, $1 \le \alpha \le 3$, in the linear system~$|H_1 + H_2|$
whose strict transforms on~$\tY$ belong to the linear system~$|H_1 + H_2 - E|$.
Now it follows from~\eqref{eq:pic-relations-x33} that their strict 
transforms~$\tD^+_\alpha$ on~$\tY^+$ belong to the linear system~$|h|$.
As in Proposition~\ref{prop:x-transform} we consider the complete intersection
\begin{equation*}
\tX^+_{\kk'} := \tD^+_1 \cap \tD^+_2 \cap \tD^+_3 \subset \PP_{Y^+_{\kk'}}(\cE).
\end{equation*}
It follows that~$\tX^+_{\kk'}$ is determined by a morphism of vector bundles
\begin{equation*}
\xi \colon \cE \longrightarrow A^\vee \otimes \cO,
\end{equation*}
and if we choose a basis~$a_1,a_2,a_3$ in~$A$, it is easy to see that~$\xi$ is 
given by the matrix
\begin{equation}
\label{eq:xi-x33}
\xi = 
\begin{pmatrix}
\bar{a}_1(-,-) & a_1(-,v_2) & a_1(v_1,-) \\
\bar{a}_2(-,-) & a_2(-,v_2) & a_2(v_1,-) \\
\bar{a}_3(-,-) & a_3(-,v_2) & a_3(v_1,-) 
\end{pmatrix},
\end{equation} 
where~$\bar{a}_i \in \bar{V}_1^\vee \otimes \bar{V}_2^\vee$ denotes the restriction of the bilinear form~$a_i$ to~$\bar{V}_1 \otimes \bar{V}_2$,
while~$a_i(-,v_2)\in V_1^\vee$ and~$a_i(v_1,-)\in V_2^\vee$ are considered as linear functions 
on~$\bar{V}_1$ and~$\bar{V}_2$, respectively.

\begin{proposition}
\label{prop:x33-plus}
The threefold~$X$ is $\kk$-birational to the $\kk$-form~$\tX^+$ of the threefold~$\tX^+_{\kk'}$ defined by~\eqref{eq:xi-x33}
and to a $\kk$-form~$X^+$ of its image in~$Y^+$
\begin{equation*}
X^+_{\kk'} = \sigma_+(\tX^+_{\kk'}) = \{ \det(\xi) = 0 \} \subset \PP(\bar{V}_1) \times \PP(\bar{V}_2),
\end{equation*}
which is a geometrically irreducible and normal divisor of bidegree~$(2,2)$.
Moreover, 
\begin{itemize}
\item 
if~$\rF_1(X,x_0) = \varnothing$ then $\tX^+ \cong \tX = \Bl_x(X)$ is smooth, 
the morphism~$\sigma_+ \colon \tX^+ \to X^+$ is induced by the double projection from~$x_0$,
and it is a small resolution of singularities;
\item 
if~$\rF_1(X,x_0) \ne \varnothing$ then~$X^+$ contains a $\kk$-form 
of a quadric surface~$\PP^1 \times \PP^1 \subset \PP(\bar{V}_1) \times \PP(\bar{V}_2)$ rational over~$\kk$.
\end{itemize}
\end{proposition}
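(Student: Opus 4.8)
The plan is to deduce the two birational equivalences from Proposition~\ref{prop:x-transform}, applied to $Y_{\kk'}=\PP(V_1)\times\PP(V_2)$ and the $\kk$-point $x_0$ as in~\eqref{eq:link-y33}, and then to analyse the determinantal divisor $\{\det\xi=0\}$ directly. First I would write $X_{\kk'}=D_1\cap D_2\cap D_3$ as in~\S\ref{subsec:x33-double-projection} and check the two conditions in~\eqref{eq:dimension-conditions}. For the first, $(Y_i)_\bkk=\{[v_i]\}\times\PP(V_{3-i})\cong\PP^3$ is a fibre of a projection and $X_\bkk\cap(Y_i)_\bkk$ is the zero scheme in this $\PP^3$ of three linear forms, hence not all of $\PP^3$ (otherwise $X_\bkk$ would contain an $H$-plane, contradicting Corollary~\ref{cor:planes}). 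For the second, the description~\eqref{eq:xi-x33} identifies $\tX^+_\bkk\cap\PP_{Y^+_\bkk}(\bar{\cE}_i)$, for $i=1$, with the product $\PP(K_1)\times\PP(\barV_2)$, where $K_1=\{w\in\barV_1: a(w,v_2)=0\ \forall a\in A\}$, so that $\dim K_1\le 1$ unless the linear map $A\to\barV_1^{\vee}$, $a\mapsto a(-,v_2)$, has rank $\le 1$; but its kernel is $\{a\in A: v_2\in\Ker_2(a)\}$, i.e.\ the set of points of the discriminant curve $\Gamma$ lying over $[v_2]$ under the isomorphism $\gamma_2$ of Lemma~\ref{lemma:gamma-gamma-i}, which has at most one point because $\Gamma$ is a smooth plane quartic — no line lies on it and every form it parametrises has corank exactly one (Lemma~\ref{lemma:gamma-x33} and its proof) — so $\dim K_1\le 1$, and symmetrically for $i=2$. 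Hence $X$ is $\kk$-birational to $\tX^+$. Along the way I would record, via Lemmas~\ref{lemma:lines-1},~\ref{lemma:lines} and~\ref{lemma:gamma-gamma-i}, that $\rF_1(X,x_0)\ne\varnothing$ precisely when $[v_1]\in\Gamma_1$ and $[v_2]\in\Gamma_2$.

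Next I would identify $X^+$. A point $y^+\in Y^+$ lies in $\sigma_+(\tX^+_{\kk'})$ exactly when the fibre of $\tX^+_{\kk'}$ over it is non-empty, i.e.\ when $\xi(y^+)$ is not injective, i.e.\ when $\det\xi(y^+)=0$; and $\det\xi$, being a section of $\det\cE^{\vee}=\cO(2h_1+2h_2)$, cuts out a divisor of bidegree $(2,2)$ in $\PP(\barV_1)\times\PP(\barV_2)\cong\PP^2\times\PP^2$, defined over $\kk$ because $\tX^+$, $Y^+$ and $\sigma_+$ are. The three entries of any fixed column of $\xi$ span at least a two-dimensional space of forms (by the argument just used), so the locus where $\xi$ has corank $\ge 2$ is a proper closed subset; hence $\sigma_+$ restricts to a birational morphism $\tX^+_{\kk'}\to X^+_{\kk'}$, and $X^+_\bkk$ is irreducible since $\tX^+_\bkk$ is (being birational to the smooth variety $X_\bkk$). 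For normality I would apply Serre's criterion: $X^+_{\kk'}$ is a hypersurface, hence $S_2$, so it is enough to bound $\dim\Sing(X^+_{\kk'})\le 1$. Here $\dim\{\rk\xi\le 1\}\le 1$, because over every point of $\{\rk\xi\le 1\}$ the fibre of $\sigma_+\colon\tX^+_{\kk'}\to X^+_{\kk'}$ is positive-dimensional, so if $\{\rk\xi\le 1\}$ had dimension $\ge 2$ its preimage would exhaust the irreducible threefold $\tX^+_{\kk'}$ and force $\dim X^+_{\kk'}\le 2$, which is absurd; and $\Sing(X^+_{\kk'})\subseteq\{\rk\xi\le 1\}$ by the usual analysis of determinantal hypersurfaces (a corank-one point is smooth, as $d(\det\xi)$ there is read off from the cofactor matrix), where the smoothness of $X$ is what keeps $\xi$ generic enough. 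This last inclusion — equivalently, that $X^+$ is smooth along its corank-$2$ locus — is the step I expect to require the most care.

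In the case $\rF_1(X,x_0)=\varnothing$ one has $[v_i]\notin\Gamma_i$, so the computation above gives $\tX^+_{\kk'}\cap\PP_{Y^+_{\kk'}}(\bar{\cE}_i)=\varnothing$; since the indeterminacy locus of $\psi$ lies in $\tY_{\{1\}}\cup\tY_{\{2\}}$ (Remark~\ref{rem:flips}) and meets $\Bl_{x_0}(X_{\kk'})$ only when some $[v_i]\in\Gamma_i$, the map $\psi$ restricts to an isomorphism from $\Bl_{x_0}(X_{\kk'})$ onto $\tX^+_{\kk'}$, so $\tX^+\cong\Bl_{x_0}(X)$ is smooth. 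By~\eqref{eq:pic-relations-x33} the morphism $\sigma_+\colon\tX^+\to X^+$, being the composite of this isomorphism with $\PP_{Y^+}(\cE)\to Y^+$, is given by $|{-}K_{\tX^+}|=|H-2E|$, i.e.\ it is the double projection of $X$ from $x_0$; it is birational by the previous paragraph, and since no line passes through $x_0$, every conic through $x_0$ is smooth there, so the curves $\sigma_+$ contracts are exactly the strict transforms of those conics (for which $(H-2E)\cdot\tC=0$) — finitely many, at least one since the secant variety of the sextic $\Gamma_1\subset\PP^3$ fills $\PP^3$ (and generically seven, by the proof of Lemma~\ref{lemma:conics}). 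Thus $\sigma_+$ contracts no divisor and only finitely many curves, so it is a small resolution of singularities of $X^+$.

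In the case $\rF_1(X,x_0)\ne\varnothing$ one has $[v_1]\in\Gamma_1$, $[v_2]\in\Gamma_2$; let $[a],[a']\in\Gamma$ be the unique points with $v_2\in\Ker_2(a)$ and $v_1\in\Ker_1(a')$, which are $\kk'$-rational and interchanged by $\Gal(\bkk/\kk)$. They are distinct: if $a=a'$ then $x_0=([\Ker_1(a)],[\Ker_2(a)])$ would lie on $X$, which by case~\ref{item:tangency-a} of the proof of Lemma~\ref{lemma:gamma-x33} would make $T_{[a]}\Gamma$ two-dimensional. Set $U_1:=\ker(a'(-,v_2)|_{\barV_1})\subset\barV_1$ and $U_2:=\ker(a(v_1,-)|_{\barV_2})\subset\barV_2$, two $\kk'$-subspaces of dimension $2$ (for instance $a'(-,v_2)$ vanishing on $\barV_1$ together with $a'(v_1,v_2)=0$ would give $v_2\in\Ker_2(a')$, hence $a'=a$), interchanged by $\Gal(\bkk/\kk)$. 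Then $\PP(U_1)\times\PP(U_2)\subset X^+_{\kk'}=\{\det\xi=0\}$: for $(\bar{w}_1,\bar{w}_2)\in U_1\times U_2$ the matrix $\xi(\bar{w}_1,\bar{w}_2)$ represents the map $A\to\bkk^3$, $c\mapsto(c(\bar{w}_1,\bar{w}_2),\,c(\bar{w}_1,v_2),\,c(v_1,\bar{w}_2))$, and the kernel conditions send both $a$ and $a'$ into the line $\bkk\cdot(1,0,0)$, so a non-zero combination of these two linearly independent vectors of $A$ lies in $\ker\xi(\bar{w}_1,\bar{w}_2)$, whence $\det\xi(\bar{w}_1,\bar{w}_2)=0$. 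Since $x_0$ is a $\kk$-point and the construction is $\Gal(\bkk/\kk)$-equivariant, $\PP(U_1)\times\PP(U_2)$ descends to a closed $\kk$-subvariety of $X^+$; it is a $\kk$-form of $\PP^1\times\PP^1$ with $\Gal(\kk'/\kk)$ exchanging the two rulings — namely the Weil restriction $\mathrm{R}_{\kk'/\kk}\PP^1_{\kk'}$ — hence $\kk$-birational to $\mathrm{R}_{\kk'/\kk}\mathbb{A}^1_{\kk'}=\mathbb{A}^2_\kk$, and in particular $\kk$-rational. The bookkeeping here — distinctness and Galois symmetry of $[a],[a']$ and the descent of the quadric surface — is the other place where care is needed.
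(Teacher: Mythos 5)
Your route is the paper's: verify the hypotheses of Proposition~\ref{prop:x-transform}, identify the image of $\tX^+$ with the determinantal divisor $\{\det\xi=0\}$ of bidegree $(2,2)$, and split into cases according to $\rF_1(X,x_0)$. The dimension-condition checks, the birationality of $\sigma_+$, and the construction and descent of the quadric surface $\PP(U_1)\times\PP(U_2)$ when $\rF_1(X,x_0)\ne\varnothing$ are all correct (the last in more detail than the paper, which instead identifies that surface as the image of the exceptional $\PP^2$ over $x_0$). However, the two steps you yourself flag as delicate are genuine gaps. The first is normality of $X^+_{\kk'}$. You reduce, via Serre's criterion, to the inclusion $\Sing(X^+_{\kk'})\subseteq\{\rk\xi\le 1\}$ and justify it by the ``usual analysis of determinantal hypersurfaces.'' That analysis applies to the universal determinantal hypersurface, not to a constrained family: at a corank-one point $d(\det\xi)=\operatorname{tr}(\operatorname{adj}(\xi)\,d\xi)$, and although $\operatorname{adj}(\xi)\ne 0$ there, the composition can vanish whenever the image of $d\xi$ lies in the hyperplane annihilated by $\operatorname{adj}(\xi)$; nothing you wrote excludes a two-dimensional corank-one singular locus. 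The paper sidesteps this entirely by pushing the Koszul resolution of $\cO_{\tX^+_\bkk}$ (a complete intersection of three divisors in $|h|$) down to $Y^+_\bkk$, obtaining $0\to\cO(-2h_1-2h_2)\to\cO\to\sigma_{+*}\cO_{\tX^+_\bkk}\to 0$, hence $\sigma_{+*}\cO_{\tX^+_\bkk}\cong\cO_{X^+_\bkk}$, so that normality descends from $\tX^+_\bkk$. You need either this computation or an honest proof of your inclusion.

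The second gap is the smallness of $\sigma_+$ when $\rF_1(X,x_0)=\varnothing$. You assert that the contracted curves are exactly the strict transforms of the conics through $x_0$ and that these are finite in number, citing the count of seven from the proof of Lemma~\ref{lemma:conics}. That count is for a \emph{general} point of $\PP^3$; for an arbitrary $\kk$-point $x_0$ off the lines, the bisecants of $\Gamma_1$ through $\pi_1(x_0)$ could a priori form a one-parameter family (for instance if the projection of $\Gamma_1$ from $\pi_1(x_0)$ fails to be birational onto its image), in which case the conics through $x_0$ sweep out a surface that $\sigma_+$ contracts, and $\sigma_+$ is not small. (Your list of contracted curves is also incomplete: any curve $C$ with $\deg C=2\operatorname{mult}_{x_0}C$ is contracted, not only conics.) The paper excludes a contracted divisor by the linear-system argument of~\cite[Theorem~5.17]{KP19}: since $\dim|H_1+H_2-3E|\ge g-9=2$, a contracted divisor must be a fixed component of this system while its class must be proportional to $H_1+H_2-5E$, and these requirements are incompatible. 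Some such argument must be supplied before you may call $\sigma_+$ a small resolution.
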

\begin{proof}
To prove birationality of~$\tX$ and~$\tX^+ = \psi_*(\tX)$ we 
apply Proposition~\ref{prop:x-transform},
so we need to verify the dimension conditions~\eqref{eq:dimension-conditions}.
We have~$(Y_1)_{\kk'} = [v_1] \times \PP(V_2)$, hence
\begin{equation*}
X_{\kk'} \cap (Y_1)_{\kk'} = ([v_1] \times \PP(V_2)) \cap \PP(A^\perp)
\end{equation*}
is a fiber of the projection~$\pi_1 \colon X_{\kk'} \to \PP(V_1)$.
By Lemma~\ref{lemma:pi-i} it is a point or a line.
A similar argument for~$X_{\kk'} \cap (Y_2)_{\kk'}$ shows that the first part of~\eqref{eq:dimension-conditions} holds.
Moreover, this argument also shows that in the case~$\rF_1(X,x_0) = \varnothing$, the blowup~$\tX_{\kk'}$ of~$X_{\kk'}$
has empty intersection with the indeterminacy locus~$(\tY_1)_{\kk'} \sqcup (\tY_2)_{\kk'}$ of the map~$\psi$.

On the other hand, the subbundle~$\bar{\cE}_1 \subset \cE$ is just the 
summand~$\cO(-h_2)$ in~\eqref{eq:ce-x33}, 
hence the corresponding intersection~$\tX^+_{\kk'} \cap 
\PP_{Y^+_{\kk'}}(\bar{\cE}_1)$ is the zero locus of the morphism
\begin{equation*}
\xi_2 \colon \cO(-h_2) \longmapsto A^\vee \otimes \cO
\end{equation*}
given by the last column of~\eqref{eq:xi-x33}.
It is easy to see that this is empty, if~$\rF_1(X,x_0) = \varnothing$, or isomorphic to a line otherwise.
A similar argument works for~$\tX^+_{\kk'} \cap \PP_{Y^+_{\kk'}}(\bar{\cE}_2)$;
therefore, the second part of~\eqref{eq:dimension-conditions} also holds.
This proves that~$\tX^+ = \psi_*(\tX)$ is a $\kk$-form of~$\tX^+_{\kk'}$, which is $\kk$-birational to~$X$, 
and if~$\rF_1(X,x_0) = \varnothing$, it is isomorphic to~$\tX$, 
and in particular in this case it is smooth.

Now we describe the image of~$\tX^+$ in~$Y^+$.
By definition, $\tX^+_{\kk'}$ parameterizes points in the projectivizations of kernel spaces of~$\xi$;
therefore, its image in~$Y^+_{\kk'} = \PP(\bar{V}_1) \times \PP(\bar{V}_2)$ 
is the degeneracy locus~$X^+_{\kk'}$ of~$\xi$
which is, of course, given by the equation~\mbox{$\det(\xi) = 0$}.
Since~$\det(\cE) \cong \cO(-2h_1 - 2h_2)$ by~\eqref{eq:ce-x33}, this is a divisor of bidegree~$(2,2)$, which is 
geometrically irreducible because~$\tX^+_{\kk'}$ is.
Moreover, fibers of the morphism~\mbox{$\sigma_+ \colon \tX^+_{\kk'} \to X^+_{\kk'}$} are linear spaces, 
so since both the source and the target are 3-dimensional, the morphism is birational.
To prove that~$X^+_\bkk$ is normal we consider the Koszul resolution
\begin{equation*}
0 \longrightarrow 
\cO_{\PP_{Y^+_\bkk}(-\cE)}(-3h) \longrightarrow 
\cO_{\PP_{Y^+_\bkk}(-\cE)}(-2h)^{\oplus 3} \longrightarrow 
\cO_{\PP_{Y^+_\bkk}(-\cE)}(-h)^{\oplus 3} \longrightarrow
\cO_{\PP_{Y^+_\bkk}(-\cE)} \longrightarrow 
\cO_{\tX^+_\bkk} \longrightarrow 0.
\end{equation*}
Pushing it forward to~$Y^+_\bkk$, we obtain the following exact sequence
\begin{equation*}
0 \longrightarrow \cO_{Y^+_\bkk}(-2h_1-2h_2) \longrightarrow \cO_{Y^+_\bkk} \longrightarrow \sigma_{+*}\cO_{\tX^+_\bkk} \longrightarrow 0.
\end{equation*}
It follows that~$\sigma_{+*}\cO_{\tX^+_\bkk} \cong \cO_{X^+_\bkk}$, and since~$\tX^+_\bkk$ is normal, so is~$X^+_\bkk$.

Now assume~$\rF_1(X,x_0) = \varnothing$.
In this case the pullback along~$\sigma_+$ of the ample class~$h_1 + h_2$ 
on~$\PP(\bar{V}_1) \times \PP(\bar{V}_2)$
by~\eqref{eq:pic-relations-x33} equals~$H_1 + H_2 - 2E$, the anticanonical 
class of~$\tX^+ \cong \tX$, 
hence the morphism~$\sigma_+$ is the double projection from the point~$x_0$.
Consequently, it is small by the argument of~\cite[Theorem~5.17]{KP19}.
Indeed, by~\cite[Lemma~5.4(iii)]{KP19} we have~$\dim|H_1 + H_2 - 3E| \ge g - 9 = 2$
(recall that~$g = \g(X) = 11$, see Table~\ref{table:fanos}),
hence by~\cite[Lemma~5.7(ii)]{KP19} any divisor~$D$ contracted by~$\sigma_+$ 
must be a fixed component of~$|H_1 + H_2 - 3E|$,
and at the same time by~\cite[(5.1.8)]{KP19} its class should be a multiple 
of~$H_1 + H_2 - 5E$, 
and these two conclusions are incompatible. 

Finally, assume that~$\rF_1(X,x_0) \ne \varnothing$.
As it was explained in Lemma~\ref{lemma:gamma-gamma-i} this means that (for appropriate $\kk'$-basis in~$A$) we have 
\begin{equation*}
a_1(v_1,-) = 0
\qquad\text{and}\qquad
a_2(-,v_2) = 0
\end{equation*}
as linear functions on~$\bar{V_2}$ and~$\bar{V}_1$, respectively; moreover, 
$[a_1],[a_2] \in \PP(A)$ as above are unique and swapped by the Galois action.
Consider the surface
\begin{equation*}
\{a_1(-,v_2) = 0,\ a_2(v_1,-) = 0\} \subset \PP(\bar{V}_1) \times \PP(\bar{V}_2) 
= Y^+_{\kk'}.
\end{equation*}
(isomorphic to~$\PP^1_{\kk'} \times \PP^1_{\kk'}$).
The equations, defining it are Galois-conjugate, hence it comes from a~$\kk$-surface in~$Y^+$.
This surface is the image of the exceptional divisor~$E$ of~$\sigma$, hence it is~$\kk$-rational.
It is clear from~\eqref{eq:xi-x33} that this surface is contained in the degeneracy locus~$X^+$ of~$\xi$.
\end{proof} 

\subsection{A conic bundle structure}
\label{subsec:x33-conic-bundle}

In this section we work under the assumption~$\rF_1(X,x_0) = \varnothing$
and show that in this case~$X$ admits a nice conic bundle structure.

We will need a general result about what we call \textsf{Springer resolutions}.
Let~$M$ be a variety, let~$\xi \colon \cE_1 \to \cE_2^\vee$ be a morphism of vector bundles on~$M$ of the same rank 
and let~$\xi^\vee \colon \cE_2 \to \cE_1^\vee$ be its dual morphism.
Assume the degeneracy locus $Z \subset M$ of~$\xi$ is a geometrically integral 
divisor.
Let~$Z_1 \subset \PP_M(\cE_1)$ and~$Z_2 \subset \PP_M(\cE_2)$ be the zero loci 
of the morphisms
\begin{equation*}
\cO(-h_{\cE_1}) \hookrightarrow p_1^*\cE_1 \xrightarrow{\quad p_1^*\xi \quad} p_1^*\cE_2^\vee
\qquad\text{and}\qquad
\cO(-h_{\cE_2}) \hookrightarrow p_2^*\cE_2 \xrightarrow{\quad p_2^*\xi^\vee \quad} p_2^*\cE_1^\vee,
\end{equation*}
where $p_i \colon \PP_M(\cE_i) \to M$ are the projections,
$h_{\cE_i}$ are their relative hyperplane classes, and the first arrows are the tautological embeddings.

\begin{lemma}
\label{lemma:springer}
If one of the morphisms
\begin{equation*}
p_1\vert_{Z_1} \colon Z_1 \longrightarrow Z
\qquad\text{or}\qquad
p_2\vert_{Z_2} \colon Z_2 \longrightarrow Z
\end{equation*}
is birational then so is the other.
Moreover, if one of them is small then so is the other, and there is an equality
\begin{equation}
\label{eq:springer-pic}
(h_{\cE_1} + \rc_1(p_1^*\cE_1)) + (h_{\cE_2} + \rc_1(p_2^*\cE_2)) = 0
\end{equation}
in the group~$\Cl(Z_1) \cong \Cl(Z) \cong \Cl(Z_2)$, 
where the isomorphisms of the class groups are induced by the small 
birational morphisms~$p_i$.
\end{lemma}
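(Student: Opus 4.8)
The plan is to prove all three assertions by reducing to the dense open locus $Z^\circ\subset Z$ over which $\xi$ (equivalently $\xi^\vee$) drops rank by exactly one, exploiting the symmetry between $\cE_1$ and $\cE_2$ built into the construction. The basic observation is that the fibre of $p_i\vert_{Z_i}$ over a point $m\in M$ is the projectivization of $\Ker(\xi_m)$ for $i=1$ and of $\Ker(\xi_m^\vee)$ for $i=2$; since $\xi$ and $\xi^\vee$ have the same rank, $\dim\Ker(\xi_m)=\dim\Ker(\xi_m^\vee)$ for every $m$, so $Z_1$ and $Z_2$ both map onto the common degeneracy locus $Z$ with fibres of the same dimension over every corank stratum, and in particular $p_1^{-1}(Z^\circ)\cong Z^\circ\cong p_2^{-1}(Z^\circ)$ with $p_i\vert_{Z_i}$ an isomorphism there (the kernel sheaves are line bundles on $Z^\circ$).

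First I would settle the equivalence of the birationality statements (interpreting ``$p_i\vert_{Z_i}$ birational'' as ``$Z_i$ is integral of dimension $\dim Z$ and maps birationally to $Z$''). Since $Z_i$ is cut out in $\PP_M(\cE_i)$ by $r:=\rk\cE_1=\rk\cE_2$ equations and $\dim\PP_M(\cE_i)=\dim M+r-1=\dim Z+r$, every irreducible component of $Z_i$ has dimension $\ge\dim Z$; and because the fibre of $Z_i\to Z$ over a point of $Z^\circ$ is a single reduced point, exactly one component of $Z_i$ dominates $Z$, namely the closure of $p_i^{-1}(Z^\circ)$. Hence $p_i\vert_{Z_i}$ is birational if and only if no corank stratum $S\subsetneq Z$ carries a further component, i.e. $\dim S+\operatorname{corank}(S)-1<\dim Z$ for every such $S$ (here $\operatorname{corank}(S)$ is the common generic corank of $\xi$ and $\xi^\vee$ along $S$); since these coranks agree pointwise, this is literally the same condition for $i=1$ and $i=2$. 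The same bookkeeping handles smallness: the non-isomorphism locus of $p_i\vert_{Z_i}$ is $p_i^{-1}(Z\setminus Z^\circ)$, whose dimension over a stratum $S$ equals $\dim S+\operatorname{corank}(S)-1$ and is therefore independent of $i$, so it has codimension $\ge2$ in $Z_i$ for one value of $i$ iff it does for the other.

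For the Picard identity I would work over $Z^\circ$, which under the smallness hypothesis is a common dense open of $Z$, $Z_1$ and $Z_2$ whose complement in each has codimension $\ge2$; restriction then induces isomorphisms $\Cl(Z_1)\cong\Cl(Z^\circ)\cong\Cl(Z_2)$ compatible with the isomorphisms $\Cl(Z_i)\cong\Cl(Z)$ coming from the small morphisms $p_i$, so it suffices to verify \eqref{eq:springer-pic} on $Z^\circ$. By the defining equation of $Z_1$, the tautological sub-line-bundle $\cO(-h_{\cE_1})$ of $p_1^*\cE_1$ is contained in $\Ker(p_1^*\xi)$, hence equals it on $Z^\circ$ (both are line subbundles of $p_1^*\cE_1\vert_{Z^\circ}$ and they agree generically); thus the tautological quotient $Q_1$ of $\PP_M(\cE_1)$ restricts on $Z^\circ$ to $\Ima(\xi)$, which sits inside $p_1^*\cE_2^\vee\vert_{Z^\circ}$ with locally free cokernel $\Coker(\xi)\vert_{Z^\circ}$. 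Taking first Chern classes of $0\to\cO(-h_{\cE_1})\to p_1^*\cE_1\to Q_1\to0$ and $0\to Q_1\to p_1^*\cE_2^\vee\to\Coker(\xi)\to0$ over $Z^\circ$, and using that $\Coker(\xi)^\vee=\Ker(\xi^\vee)$ restricts on $Z^\circ$ to the line bundle $\cO(-h_{\cE_2})$, gives
\[
h_{\cE_1}+\rc_1(p_1^*\cE_1)=\rc_1(Q_1)=-\rc_1(\cE_2)-h_{\cE_2}\qquad\text{on }Z^\circ,
\]
while the analogous sequence for $Q_2$ gives $\rc_1(Q_2)=h_{\cE_2}+\rc_1(\cE_2)=h_{\cE_2}+\rc_1(p_2^*\cE_2)$ on $Z^\circ$; adding the two relations yields \eqref{eq:springer-pic}.

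The only delicate point is the component/dimension bookkeeping in the second paragraph: one has to argue carefully that $Z_i$ has a unique component dominating $Z$ and that every further component sits over a proper closed corank-jump stratum, so that reducibility and the non-isomorphism locus of $p_i\vert_{Z_i}$ are both controlled by the manifestly symmetric quantity $\dim S+\operatorname{corank}(S)-1$. Everything else is formal, resting on the identity $\operatorname{corank}\xi=\operatorname{corank}\xi^\vee$, the triviality of $Z_i\to Z$ over $Z^\circ$, and the two tautological exact sequences together with $\Coker(\xi)^\vee=\Ker(\xi^\vee)$.
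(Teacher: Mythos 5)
Your proposal is correct and follows essentially the same route as the paper: both arguments stratify $Z$ by the corank of $\xi$, use that $p_i\vert_{Z_i}$ is a $\PP^{c-1}$-fibration over the corank-$c$ stratum together with the pointwise identity $\operatorname{corank}\xi=\operatorname{corank}\xi^\vee$ to transfer birationality and smallness from one side to the other, and then verify \eqref{eq:springer-pic} on the corank-one locus by identifying the two summands with $\rc_1(\Ima\xi)$ and $\rc_1(\Ima\xi^\vee)$ and invoking the duality between them (you phrase this via $\Coker(\xi)^\vee\cong\Ker(\xi^\vee)$, the paper via $\Ima(\xi^\vee)\cong\Ima(\xi)^\vee$, which is the same formal dualization). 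Your component-counting in the second paragraph just spells out what the paper leaves implicit.
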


\begin{proof}
Let~$Z^{\ge c} \subset Z$ be the locus of points where the corank of~$\xi$ is at 
least~$c$ (so that~$Z = Z^{\ge 1}$).
Then both morphisms~$p_i\vert_{Z_i}$ are~$\PP^{c-1}$-fibrations 
over~$Z^{\ge c} \setminus Z^{\ge c + 1}$.
In particular if one of the morphisms is birational then $\dim Z^{\ge c} \le \dim Z - c$ for~$c \ge 2$
and then the other morphism is also birational.
Similarly, if one of the morphisms is small then $\dim Z^{\ge c} \le \dim Z - c - 1$ for~$c \ge 2$
and then the other morphism is also small.
Finally, assuming that the morphisms are small, we have
\begin{equation*}
\Cl(Z_1) = \Cl(Z_1 \setminus p_1^{-1}(Z^{\ge 2})) = \Cl(Z \setminus Z^{\ge 2}) = 
\Cl(Z_2 \setminus p_2^{-1}(Z^{\ge 2})) = \Cl(Z_2),
\end{equation*}
and when restricted to~$Z \setminus Z^{\ge 2}$ the morphism~$\xi$ has constant corank~$1$, 
the summands in~\eqref{eq:springer-pic} are equal to~$\rc_1(\Ima(\xi))$ and~$\rc_1(\Ima(\xi^\vee))$, respectively,
and~\eqref{eq:springer-pic} follows from the natural duality isomorphism~$\Ima(\xi^\vee) \cong \Ima(\xi)^{\vee}$.
\end{proof}

Now, coming back to the threefold~$X$ of type~\type{3,3} and assuming~$\rF_1(X,x_0) = \varnothing$
we recall
that~$X^+ \subset Y^+$ is the degeneracy locus of~$\xi \colon \cE \to A^\vee \otimes \cO$
and note that~$\tX^+ \subset \PP_{Y^+}(\cE)$ is one of its Springer resolutions.
Consider the other Springer resolution
\begin{equation*}
\tX^{\ppl} \subset \PP_{Y^+}(A \otimes \cO) \cong Y^+ \times \PP(A),
\end{equation*}
which by definition is the zero locus of the morphism
\begin{equation*}
\cO(-h_A) \hookrightarrow A \otimes \cO \xrightarrow{\quad \xi^\vee \quad} 
\cE^\vee,
\end{equation*}
where $h_A$ is the hyperplane class of~$\PP(A)$ and we suppress the pullbacks in the notation.
In view of~\eqref{eq:ce-x33} the scheme~$X_{\kk'}^{\ppl}$ is just a complete intersection of divisors 
of types $h_1 + h_2 + h_A$, $h_1 + h_A$, and~$h_2 + h_A$ in~$\PP(\bar{V}_1) \times \PP(\bar{V}_2) \times \PP(A)$
that correspond to the columns of~\eqref{eq:xi-x33}.

\begin{proposition}
\label{prop:conic-bundle-x33}
If~$\rF_1(X,x_0) = \varnothing$ there is a commutative diagram defined 
over~$\kk$
\begin{equation}
\label{eq:link-x33}
\vcenter{\xymatrix@C=4em{
\tX \ar[d]_\sigma 
\ar@{=}[rr]^-\psi &&
\tX^+ \ar[dr]^{\sigma_+} 
\ar@{-->}[rr]^-{\psi_+} &&
\tX^{\ppl} \ar[dl]_{\sigma_{\ppl}} \ar[d]^f
\\
X &
&&
X^+ &
\PP(A),
}}
\end{equation}
where~$\tX^+$ and~$\tX^{\ppl}$ are the Springer resolutions of the degeneracy 
locus~$X^+ \subset Y^+$ of~$\xi$,
the morphisms~$\sigma_+$ and~$\sigma_{\ppl}$ are small birational contractions, 
and $\psi_+ = \sigma_{\ppl}^{-1} \circ \sigma_+$ is a flop.

Moreover, $\tX^{\ppl}$ is smooth, $f$ is a flat conic bundle whose discriminant curve is the curve~$\Gamma$ defined in~\textup\S\xref{subsec:discriminant},
the map $f \circ \psi_+ \circ \psi \colon \tX \dashrightarrow \PP(A)$ is given by the linear system $|H_1 + H_2 - 3E|$,
and the exceptional divisor~$E \subset \tX$ of~$\sigma$ dominates~$\PP(A)$.
\end{proposition}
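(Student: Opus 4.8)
\emph{The plan.} I will realize $X^{+}\subset Y^{+}$ as a determinantal hypersurface carrying two Springer resolutions: one is $\tX^{+}\subset\PP_{Y^{+}}(\cE)$ (already produced in Proposition~\ref{prop:x33-plus}), the other is $\tX^{\ppl}\subset\PP_{Y^{+}}(A\otimes\cO)=Y^{+}\times\PP(A)$, and $f$ will be the projection of the latter to $\PP(A)$; Lemma~\ref{lemma:springer} together with the Picard relations \eqref{eq:toric-pic-1}, \eqref{eq:pic-relations-x33} will control everything. Concretely, I first apply Lemma~\ref{lemma:springer} with $M=Y^{+}$, $\cE_{1}=\cE$, $\cE_{2}=A\otimes\cO_{Y^{+}}$ and the morphism $\xi\colon\cE\to A^{\vee}\otimes\cO=\cE_{2}^{\vee}$ of \eqref{eq:xi-x33}, whose degeneracy locus is the geometrically integral divisor $X^{+}$ (Proposition~\ref{prop:x33-plus}). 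Its first Springer resolution is $Z_{1}=\tX^{+}$ with $p_{1}|_{Z_{1}}=\sigma_{+}$, which is a small resolution by Proposition~\ref{prop:x33-plus}; so by the lemma the second one, $Z_{2}=\tX^{\ppl}$ with $p_{2}|_{Z_{2}}=\sigma_{\ppl}$, is small birational too, and \eqref{eq:springer-pic} gives $h_{A}=-h-\rc_{1}(\cE)=2h_{1}+2h_{2}-h$ in $\Cl(\tX^{+})\cong\Cl(X^{+})\cong\Cl(\tX^{\ppl})$. Since $X^{+}$ is a Cartier divisor in the smooth variety $Y^{+}$ it is Gorenstein, and a small morphism from a smooth variety to a normal Gorenstein variety is crepant; hence $K_{\tX^{+}}=\sigma_{+}^{*}K_{X^{+}}$ and $K_{\tX^{\ppl}}=\sigma_{\ppl}^{*}K_{X^{+}}$, so $\psi_{+}:=\sigma_{\ppl}^{-1}\circ\sigma_{+}$ is an isomorphism in codimension one preserving the canonical class, i.e. a flop. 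Together with the isomorphism $\psi\colon\tX\to\tX^{+}$ of Proposition~\ref{prop:x33-plus} (valid since $\rF_{1}(X,x_{0})=\varnothing$), this yields the diagram \eqref{eq:link-x33}, defined over $\kk$ because all the data $x_{0}$, $A$, $\cE$, $\xi$ are.

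Next I will analyze $f\colon\tX^{\ppl}\to\PP(A)$. Reading off the columns of \eqref{eq:xi-x33}, the fibre over $[a]$ is the common zero locus in $\PP(\barV_{1})\times\PP(\barV_{2})$ of the bilinear form $\bar a$, of the linear form $a(-,v_{2})$ on $\barV_{1}$, and of $a(v_{1},-)$ on $\barV_{2}$; under $\rF_{1}(X,x_{0})=\varnothing$ these last two never vanish (as was already observed in the proof of Proposition~\ref{prop:x33-plus} via Lemma~\ref{lemma:gamma-gamma-i}), so each fibre is a $(1,1)$-curve inside a genuine $\PP^{1}\times\PP^{1}$, i.e. a conic. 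To exclude a two-dimensional fibre, I will compute $\det(a)$ in a basis of $V_{1}$ (resp. $V_{2}$) adapted to $v_{1}$ and $\ker a(-,v_{2})$ (resp. $v_{2}$ and $\ker a(v_{1},-)$): it equals a nonvanishing scalar times the determinant of the $2\times2$ block of $\bar a$ that cuts out the conic, so this block vanishes identically only where $a$ has corank $\ge2$ — which the argument of Lemma~\ref{lemma:gamma-x33} excludes on $\PP(A)$ because $X$ is smooth. Thus $f$ is equidimensional with Cohen--Macaulay source and smooth target, hence flat, hence a conic bundle.

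The same determinantal identity shows that $f^{-1}([a])$ is a singular conic exactly when $\det(a)=0$, i.e. exactly over $\Gamma$; keeping track of the relevant line bundles identifies the discriminant divisor of $f$ with the section of $\cO_{\PP(A)}(4)$ cutting out the smooth plane quartic $\Gamma$ of Lemma~\ref{lemma:gamma-x33}, so it is exactly $\Gamma$. Moreover over each point of $\Gamma$ the $2\times2$ block has rank precisely $1$ (rank $0$ being ruled out as above), so every degenerate fibre is a pair of \emph{distinct} lines — a double line cannot occur inside a $\PP^{1}\times\PP^{1}$ — and a conic bundle over a smooth surface with smooth discriminant and only such degenerate fibres has smooth total space; hence $\tX^{\ppl}$ is smooth. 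I expect this step — turning the explicit matrix \eqref{eq:xi-x33} into the three facts ``no corank-$\ge2$ form lies in $\PP(A)$'', ``the discriminant is $\Gamma$'', and ``no double-line fibre occurs'' — to be the main obstacle; everything else is formal bookkeeping.

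Finally I compute the two remaining divisor classes. Using $h_{i}|_{\tX^{+}}=H_{i}-E$ and $h|_{\tX^{+}}=H_{1}+H_{2}-E$ from \eqref{eq:toric-pic-1}, \eqref{eq:pic-relations-x33}, the relation $h_{A}=2h_{1}+2h_{2}-h$ becomes $h_{A}=H_{1}+H_{2}-3E$; since $f$ is given by the complete linear system $|h_{A}|$ and $\psi_{+}\circ\psi$ is an isomorphism in codimension one (so it identifies $\Cl(\tX^{\ppl})$ with $\Pic(\tX)$ and the corresponding spaces of sections), the rational map $f\circ\psi_{+}\circ\psi\colon\tX\dashrightarrow\PP(A)$ is given by $|H_{1}+H_{2}-3E|$. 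Since $\psi_{+}\circ\psi$ contracts no divisor, the exceptional divisor $E\subset\tX$ maps to a prime divisor $E^{\ppl}\subset\tX^{\ppl}$ of class $E=h_{1}+h_{2}-h_{A}$; if $f$ contracted $E^{\ppl}$ onto a curve $\ell\subset\PP(A)$, then (the conic bundle restricted to $\ell$ being irreducible) $E^{\ppl}=f^{-1}(\ell)$ would force $h_{1}+h_{2}-h_{A}=(\deg\ell)\,h_{A}$ in $\Cl(\tX^{\ppl})$, which is impossible because $H_{1}-E$, $H_{2}-E$, $H_{1}+H_{2}-3E$ are linearly independent in $\Pic(\tX_{\kk'})=\ZZ H_{1}\oplus\ZZ H_{2}\oplus\ZZ E$. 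Hence $E$ dominates $\PP(A)$, which completes the argument.
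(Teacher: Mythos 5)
Your proposal follows the same skeleton as the paper's proof: both sides hinge on Lemma~\ref{lemma:springer} applied to~$\xi\colon\cE\to A^\vee\otimes\cO$, on the explicit matrix~\eqref{eq:xi-x33} for the fibre analysis and the identification of the discriminant with~$\Gamma$, and on~\eqref{eq:springer-pic} combined with~\eqref{eq:pic-relations-x33} to get~$h_A=H_1+H_2-3E$. The one genuinely different step is smoothness of~$\tX^{\ppl}$: the paper first establishes that~$\psi_+$ is a flop and then invokes Koll\'ar's theorem that a flop of a smooth threefold is smooth, whereas you prove smoothness directly by showing that every degenerate fibre is a pair of distinct lines (rank~$0$ of the $2\times2$ block forces corank~$\ge 2$ of~$a$, which Lemma~\ref{lemma:gamma-x33} excludes) and using the local normal form of a conic bundle with smooth discriminant and reduced fibres. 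That argument is correct and has the advantage of being self-contained; the paper's route is shorter but leans on an external theorem.

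Two soft spots, both patchable. First, for the flop claim you only check that~$\psi_+$ is a crepant small birational map; to call it a flop you should also verify that a $\sigma_+$-antiample class becomes $\sigma_{\ppl}$-ample (otherwise~$\psi_+$ could a priori be an isomorphism). The paper does exactly this with~\eqref{eq:springer-pic}: $-h$ is $\sigma_+$-antiample and~$h_A+h=2(h_1+h_2)$ is pulled back from~$Y^+$, so~$-h\equiv h_A\pmod{\sigma_{\ppl}^*}$ is $\sigma_{\ppl}$-ample. You already have the needed relation~$h_A=2h_1+2h_2-h$, so this is one extra line. Second, in the last step the parenthetical ``the conic bundle restricted to~$\ell$ being irreducible, so~$E^{\ppl}=f^{-1}(\ell)$'' is not justified (for~$\ell=\Gamma$ the preimage is geometrically reducible), and without that equality your class identity only gives~$(\deg\ell)h_A-(h_1+h_2-h_A)$ effective, which is harder to contradict. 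The clean fix, which is essentially what the paper does, is an intersection number: for a fibre~$C_f$ of~$f$ one has~$E\cdot C_f=(h_1+h_2-h_A)\cdot C_f=2>0$, so the effective irreducible divisor~$E^{\ppl}$ meets every fibre and hence dominates~$\PP(A)$.
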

\begin{proof}
The morphism~$\sigma_+$ is small by Proposition~\ref{prop:x33-plus}, 
hence~$\sigma_{\ppl}$ is small by Lemma~\ref{lemma:springer};
moreover, it follows that both morphisms are crepant.
Now, the relation~\eqref{eq:springer-pic} implies that the $\sigma_+$-antiample class~$-h$ 
is $\sigma_{\ppl}$-ample, hence~$\psi_+ := \sigma_{\ppl}^{-1} \circ \sigma_+$ is a flop.
Since~$\tX^+ \cong \tX$ is smooth and~$\psi_+$ is a flop, $\tX^{\ppl}$ is smooth as well, see~\cite[Theorem~2.4]{Kollar1989}. 

Next, we show that~$f$ is a conic bundle and identify its discriminant.
For this note that by definition the fiber of~$f$ over a point~$[a] \in \PP(A)$ is 
given in~$\PP(\bar{V}_1) \times \PP(\bar{V}_2)$ by the equations
\begin{equation*}
a(-,v_2) = a(v_1,-) = \bar{a}(-,-) = 0.
\end{equation*}
The first is a linear function on~$\bar{V}_1$, the second is a linear function 
on~$\bar{V}_2$, 
and both are non-zero because~$\rF_1(X,x_0) = \varnothing$, so their common zero 
locus is~$\PP^1_{\kk'} \times \PP^1_{\kk'} \subset \PP(\bar{V}_1) \times \PP(\bar{V}_2)$.
The last equation~$\bar{a}(-,-) = 0$ cuts a divisor of bidegree~$(1,1)$ on 
this~$\PP^1_{\kk'} \times \PP^1_{\kk'}$, i.e., a conic;
and if~$\bar{a}(-,-)$ vanishes identically, then the corresponding bilinear 
form~$a$ vanishes on~$\PP^2_{\kk'} \times \PP^2_{\kk'} \subset \PP(V_1) \times \PP(V_2)$,
hence has corank~$2$, which is impossible by the argument of 
Lemma~\ref{lemma:gamma-x33} as~$X$ is smooth.
This shows that~$f$ is a flat conic bundle.
Finally, note that if~$\bar{v}'_i$, $\bar{v}''_i$, $\bar{v}'''_i$ are bases of vector spaces~$\bar{V}_i$ 
such that~$a(v_1,\bar{v}'_2) = a(v_1,\bar{v}''_2) = a(\bar{v}'_1,v_2) = a(\bar{v}''_1,v_2) = 0$ then the matrix of~$a$ has the form
\begin{equation*}
\begin{pmatrix}
0 & 0 & 0 & a(v_1,\bar{v}'''_2) \\
0 & \bar{a}(\bar{v}'_1,\bar{v}'_2) & \bar{a}(\bar{v}'_1,\bar{v}''_2) & * \\
0 & \bar{a}(\bar{v}''_1,\bar{v}'_2) & \bar{a}(\bar{v}''_1,\bar{v}''_2) & * \\
a(\bar{v}'''_1,v_2) & * & * & *
\end{pmatrix}
\end{equation*}
with non-zero entries~$a(v_1,\bar{v}'''_2)$ and~$a(\bar{v}'''_1,v_2)$, and with the 2-by-2 matrix in the middle 
giving the equation of the conic~$f^{-1}(a)$ in~$\PP^1_\bkk \times \PP^1_\bkk$.
Therefore, the conic is singular if and only if~$\det(a) = 0$, i.e., if and only if~$[a] \in \Gamma$.
Thus, the discriminant curve of~$f$ equals~$\Gamma$.

Finally, using~\eqref{eq:springer-pic}, \eqref{eq:ce-x33}, and~\eqref{eq:pic-relations-x33} 
we deduce that the map~$f \circ \psi_+ \circ \psi$ is given by the linear system
\begin{equation*}
h_A = 2(h_1 + h_2) - h = H_1 + H_2 - 3E.
\end{equation*}
Since the canonical class of~$\tX$ is equal to~$H_1 + H_2 - 2E$ and~$\psi_+$ is a flop, 
it follows that~$\psi_*(E)$ is a relative anticanonical divisor for~$f$, hence it dominates~$\PP(A)$.
\end{proof}

If $f \colon \cX \to S$ is a flat conic bundle over a surface~$S$ (not necessarily proper) with a smooth 
discriminant curve~$\Delta \subset S$, 
consider the preimage~$\cX_\Delta := f^{-1}(\Delta)$, its 
normalization~$\cX_\Delta^\nu \to \cX_\Delta$,
and the Stein factorization
\begin{equation*}
\cX_\Delta^\nu \longrightarrow \tilde\Delta \longrightarrow \Delta.
\end{equation*}
Then the first arrow is a~$\PP^1$-bundle and the second arrow is an \'etale 
double covering (because~$\Delta$ was assumed to be smooth).
We will say that the \'etale covering~$\tilde\Delta \to \Delta$ is \textsf{the 
discriminant double covering} of the conic bundle~$f$.

\begin{lemma}
\label{lemma:tilde-gamma}
The discriminant double covering of the conic bundle~$f \colon \tX^{\ppl} \to 
\PP(A)$ has the form
\begin{equation*}
\tilde\Gamma \cong \Gamma \times_\kk \kk' \longrightarrow \Gamma.
\end{equation*}
\end{lemma}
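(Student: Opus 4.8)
The plan is to compute the covering $\tilde\Gamma \to \Gamma$ after base change to the quadratic extension $\kk'/\kk$ of \S\ref{subsec:discriminant}, where the conic bundle $f$ is completely explicit, and then read off the Galois action.

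First I would use the description of the fibers of $f$ from Proposition~\ref{prop:conic-bundle-x33}: over a point $[a] \in \Gamma_{\kk'}$ the fiber $f^{-1}([a])$ is a singular divisor of bidegree $(1,1)$ inside the surface $\{a(-,v_2) = 0\} \times \{a(v_1,-) = 0\} \cong \PP^1_{\kk'} \times \PP^1_{\kk'} \subset \PP(\bar V_1) \times \PP(\bar V_2)$. Since a singular $(1,1)$-divisor on $\PP^1 \times \PP^1$ is a reduced union of two distinct lines, one from each ruling, $f^{-1}([a])$ is a pair of $\PP^1$'s meeting transversally at one point, of which exactly one is contracted by the first projection $p_1 \colon \tX^{\ppl}_{\kk'} \to \PP(\bar V_1)$ and exactly one by the second projection $p_2 \colon \tX^{\ppl}_{\kk'} \to \PP(\bar V_2)$. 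Hence on the normalization $\cX^{\nu}_{\Gamma,\kk'}$ of $f^{-1}(\Gamma_{\kk'})$ the function ``which of $p_1$, $p_2$ is non-constant on this branch'' is locally constant and takes both values over every point of $\Gamma_{\kk'}$; passing to the Stein factorization in the definition preceding the lemma, this shows that $\tilde\Gamma_{\kk'}$ is the disjoint union of two copies of $\Gamma_{\kk'}$, one labelled by $p_1$ and the other by $p_2$.

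Next I would determine how $\Gal(\kk'/\kk)$ acts. Since $x_0 = ([v_1],[v_2])$ is a $\kk$-point and, by the definition of $\kk'$ in \S\ref{subsec:discriminant}, the nontrivial element $g \in \Gal(\kk'/\kk)$ interchanges the hyperplane classes $H_1$ and $H_2$, the element $g$ interchanges the two factors $\PP(V_1)$ and $\PP(V_2)$, sends $[v_1]$ to $[v_2]$, therefore identifies $\bar V_1 = V_1/\kk' v_1$ with $\bar V_2 = V_2/\kk' v_2$, and so interchanges the two projections $p_1$ and $p_2$. Because the covering $\tilde\Gamma \to \Gamma$ is defined over $\kk$, the action of $g$ on $\tilde\Gamma_{\kk'}$ covers its action on $\Gamma_{\kk'}$; combined with the labelling of the two components of $\tilde\Gamma_{\kk'}$ by $p_1$ and $p_2$, it follows that $g$ swaps these two components. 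A double covering of $\Gamma$ that is defined over $\kk$ and that becomes, over $\kk'$, the disjoint union of two copies of $\Gamma_{\kk'}$ interchanged by $\Gal(\kk'/\kk)$ is isomorphic over $\Gamma$ to the projection $\Gamma \times_\kk \kk' \to \Gamma$ (by Galois descent: either component, regarded as a $\kk$-scheme mapping to $\tilde\Gamma$, gives the isomorphism, as one checks after base change to $\kk'$). This yields $\tilde\Gamma \cong \Gamma \times_\kk \kk'$.

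The main obstacle is the middle step, namely ruling out that each sheet of $\tilde\Gamma_{\kk'}$ is individually $g$-invariant. What makes it go through is that the two sheets carry no distinction intrinsic to the $\kk$-conic bundle $f$ alone: they are separated only by the two projections of $Y^+$ onto its Galois-conjugate factors $\PP(\bar V_1)$ and $\PP(\bar V_2)$, which forces $g$ to exchange them; everything else is a routine normalization-and-descent argument.
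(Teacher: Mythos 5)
Your proposal is correct and follows essentially the same route as the paper: both identify the two branches of each singular fiber with the two Galois-conjugate projections to~$\PP(\bar{V}_1)$ and~$\PP(\bar{V}_2)$, deduce that the covering trivializes over~$\kk'$, and conclude from the swap of the factors that it is the quadratic twist~$\Gamma \times_\kk \kk'$. You merely spell out in more detail the final descent step and the non-triviality over~$\kk$, which the paper states tersely.
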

\begin{proof}
If the conic~$f^{-1}([a])$ is singular, it is a union of two components that 
correspond to the two factors~$\PP(\bar{V}_i)$ in~$Y^+_{\kk'}$
and each of them is contracted by appropriate projection~$Y^+_{\kk'} \to 
\PP(\bar{V}_i)$.
Therefore, the discriminant double covering~$\tilde\Gamma \to \Gamma$ becomes trivial after the extension of scalars to~$\kk'$,
while it is non-trivial over~$\kk$, hence the claim.
\end{proof} 

\subsection{Unirationality}
\label{subsec:unirat-x33}

In this section we prove unirationality of~$X$ assuming that~$X(\kk) \ne 
\varnothing$.
We start with the following observation which might be useful in other 
situations.

\begin{lemma}
\label{lemma:unirationality-22}
Let~$Y$ be a $\kk$-form of~$\PP^2 \times \PP^2$ and let~$W \subset Y$ 
be a $\kk$-rational $\kk$-form of a quadric surface~$\PP^1 \times \PP^1 \subset \PP^2 \times \PP^2$.
Any geometrically irreducible normal divisor $Z \subset Y$ of bidegree~$(2,2)$ 
such that $W \subset Z$ is~$\kk$-unirational.
\end{lemma}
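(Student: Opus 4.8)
\textbf{The plan} is to linearly project $Z$ from the projective span of $W$ and recognize the image as a cubic threefold, to which one can apply the unirationality results already used in this paper.

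First I would fix the anticanonical model. Since $-K_Y=3(H_1+H_2)$ and $Y(\kk)\ne\varnothing$ (because $W$, hence $Z\subset Y$, has a $\kk$-point), the class $H_1+H_2$ is defined over $\kk$, is very ample, and embeds $Y$ into $\PP^8=\PP(\mathrm{H}^0(Y,\cO_Y(H_1+H_2))^\vee)$ as the Segre product, all over $\kk$. The restriction $\cO_Y(H_1+H_2)|_{W_\bkk}$ is the class $\cO(1,1)$ on $W_\bkk\cong\PP^1\times\PP^1$ and the restriction map on global sections is surjective (an easy computation), so the linear span $\langle W\rangle\subset\PP^8$ is a $\PP^3$ defined over $\kk$; moreover an elementary check with Segre coordinates gives $\langle W\rangle\cap Y=W$, so $\langle W\rangle\cap Z=W$ is a divisor on $Z$. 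Let $\pi\colon Z\dashrightarrow\PP^4$ be the linear projection of $\PP^8$ away from $\langle W\rangle$, restricted to $Z$; it is defined over $\kk$.

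The technical heart is to identify $\pi(Z)$, which I would do in coordinates over $\bkk$. Choose $x=[x_0:x_1:x_2]$, $y=[y_0:y_1:y_2]$ with $W_\bkk=\{x_0=y_0=0\}$, so that in the Segre coordinates $z_{ij}=x_iy_j$ the span $\langle W\rangle$ is $\{z_{00}=z_{01}=z_{02}=z_{10}=z_{20}=0\}$ and $\pi$ is induced by $(x,y)\mapsto[x_0y_0:x_0y_1:x_0y_2:x_1y_0:x_2y_0]$. On the dense open locus $\{x_0y_0\ne0\}$ of $Z_\bkk$ one recovers $(x,y)$ from its image, so $\pi$ is birational onto its image. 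Since $W\subset Z$, the bidegree-$(2,2)$ equation $F$ of $Z$ lies in the ideal $(x_0,y_0)$, say $F=x_0P+y_0Q$ with $P,Q$ of bidegrees $(1,2)$, $(2,1)$; substituting $x_0=y_0=w_0$, $(x_1,x_2)=(w_3,w_4)$, $(y_1,y_2)=(w_1,w_2)$ turns $F=0$ into $w_0\bigl(P(w)+Q(w)\bigr)=0$ with $P(w)+Q(w)$ a homogeneous cubic in $w_0,\dots,w_4$. Hence $\pi$ maps $Z$ birationally onto the cubic threefold $Z'=\{P(w)+Q(w)=0\}\subset\PP^4$, which is defined over $\kk$ and is geometrically irreducible (being birational to $Z$); in the degenerate case where this cubic polynomial is reducible one gets instead a lower-degree hypersurface and $Z$ is outright $\kk$-rational. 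Getting the degree right and checking birationality carefully is the main (though routine) obstacle; conceptually the only idea is to project from $\langle W\rangle$, and everything visibly descends to $\kk$ because $W$ and the anticanonical embedding do.

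Finally one concludes. As $Z$ is normal it is smooth along a dense open subset of the divisor $W$, and $W(\kk)$ is Zariski dense since $W$ is $\kk$-rational over a field of characteristic zero; hence $Z$ has a smooth $\kk$-point on $W$, and by the Nishimura lemma \cite{nishimura-55} applied to $\pi$ the cubic $Z'$ has a $\kk$-point. If $Z'$ is not a cone it is $\kk$-unirational by Kollár's theorem \cite[Theorem~1.2]{Kollar:cubic}; the remaining case, in which $Z'$ is a cone, is elementary (project from the vertex), the case of a cone over a curve of positive genus being excluded because $Z$, hence $Z'$, is geometrically rationally connected. In all cases $Z'$, and therefore $Z$, is $\kk$-unirational.
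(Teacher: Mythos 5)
Your strategy is the same as the paper's: the linear projection from $\langle W\rangle$ is precisely the toric map $\chi\colon \PP^2\times\PP^2\dashrightarrow\PP^4$ that the paper uses (blow up $W$, blow down to $\PP^4$), the image of $Z$ is identified with a cubic threefold carrying a $\kk$-point, and Koll\'ar's theorem handles the non-cone case. The main line is sound, but the two degenerate cases are exactly where your argument has genuine gaps.

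First, the identification of the image as a cubic. Your computation shows the image lies on $\{P(w)+Q(w)=0\}$ and you dispose of the case where this form is reducible, but you do not exclude $P(w)+Q(w)\equiv 0$: the two summands can cancel (for instance $x_1y_0y_1$ from $P$ and $x_0x_1y_1$ from $Q$ both map to $w_0w_1w_3$), and if the sum vanishes identically the equation gives no bound on the degree of the image at all. The fix, which is what the paper does, is a divisor class computation on the blowup: normality of $Z$ forces $Z$ to be generically smooth along the divisor $W$, so its strict transform has class $2h_1+2h_2-\hat{e}=3\hat{h}-e_1-e_2$ and its image in $\PP^4$ is an honest irreducible cubic in every case. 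Second, the cone case. You exclude a cone over a positive-genus curve by asserting that $Z$ is geometrically rationally connected, but this is unjustified: $Z$ is only assumed normal with anti-ample dualizing sheaf, and normal Gorenstein Fano varieties need not be rationally connected (cones over elliptic curves are the standard counterexample). Moreover, in the remaining subcase of a cone over a surface, "project from the vertex" only reduces the problem to the $\kk$-unirationality of a possibly singular cubic surface, which is not automatic and is in fact the same difficulty one dimension down. The paper disposes of all cone cases at once using the exceptional divisor over $W$: its image is the hyperplane section of the cubic by the span of the two skew lines and is birational to $W$, hence $\kk$-rational; if the vertex is off that hyperplane, this section is a base of the cone and the cubic is $\kk$-rational, while if the vertex lies on it, the section would be an irreducible cubic cone in $\PP^3$ containing two skew lines, which is impossible. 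You need some version of this extra input to close both gaps.
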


\begin{proof}
Consider the toric birational isomorphism
\begin{equation*}
\xymatrix{
& \Bl_{\PP^1 \times \PP^1}(\PP^2 \times \PP^2) \ar[dl] \ar@{=}[r] &
\Bl_{\PP^1 \sqcup \PP^1}(\PP^4) \ar[dr]
\\
\PP^2 \times \PP^2 \ar@{-->}[rrr]^\chi &&&
\PP^4.
}
\end{equation*}
analogous to the birational transformation of Theorem~\ref{proposition:toric-link} (see also~\cite[Proposition~3]{Zak07}).
Here the map~$\chi$ is the projection from the linear span of~$W$ under the Segre embedding~$\PP^2 \times \PP^2 \subset \PP^8$,
and the right arrow is the blowup of two skew lines in~$\PP^4$.
Denoting by~$\hat{e}$ the class of the exceptional divisor of the left blowup,
by~$\hat{h}$ the hyperplane class of~$\PP^4$, and by~$e_1$ and~$e_2$ the classes 
of the exceptional divisors of the right blowup,
it is easy to check that we have the relations
\begin{equation*}
\begin{cases}
\hbox to .85em{$\hat{h}$} = h_1 + h_2 - \hat{e},\\
\hbox to .85em{$e_1$} = h_1 - \hat{e},\\
\hbox to .85em{$e_2$} = h_2 - \hat{e},
\end{cases}
\qquad \text{and} \qquad
\begin{cases}
\hbox to .95em{$h_1$} = \hat{h} - e_2,\\ 
\hbox to .95em{$h_2$} = \hat{h} - e_1,\\
\hbox to .95em{$\hat{e}$} = \hat{h} - e_1 - e_2.
\end{cases}
\end{equation*}
In particular,
the map~$\chi$ is given by the linear system~$|h_1 + h_2 - \hat{e}|$, hence it is defined over~$\kk$.
Furthermore, we have $2h_1 + 2h_2 - \hat{e} = 3\hat{h} - e_1 - e_2$, and since~$Z$ is normal,
the strict transform of~$Z$ under the map~$\chi$ is a cubic 
threefold~$\hat{Z} \subset \PP^4$ 
passing through the pair of skew lines~$\PP^1 \sqcup \PP^1$.
Moreover, we have $\hat{e} = \hat{h} - e_1 - e_2$, hence the image of~$\hat{E}$ 
is the hyperplane section of this cubic threefold (by the linear span of these lines).
On the other hand, $\hat{E}$ is birational to the $\kk$-rational surface~$W$, hence it is~$\kk$-rational.
In particular, $\hat{Z}(\kk) \ne \varnothing$.

Now, if~$\hat{Z}$ is not a cone, it is $\kk$-unirational by Koll\'ar's theorem~\cite[Theorem~1.2]{Kollar:cubic}.
Otherwise, if~$\hat{Z}$ is a cone and its vertex lies away from the hyperplane spanned by the two skew lines,
then the base of the cone is the $\kk$-rational surface~$\hat{E}$, hence
the cone~$\hat{Z}$ is also~$\kk$-rational.
Finally, if the vertex of the cone lies on~$\hat{E}$, then~$\hat{E}$ itself must be a cubic cone in~$\PP^3$, 
and since it also contains two skew lines, it is not geometrically irreducible, which is absurd.
\end{proof}

Now we can deduce unirationality of~$X$.

\begin{proposition}
\label{prop:x33}
If $X$ is a Fano threefold of type~\type{3,3} with~$X(\kk) \ne \varnothing$
then~$X$ is $\kk$-unirational.
\end{proposition}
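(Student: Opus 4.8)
The plan is to fix a $\kk$-point $x_0 \in X(\kk)$ and run the constructions of~\S\ref{subsec:x33-double-projection} and~\S\ref{subsec:x33-conic-bundle} with $x_0$ as the chosen point, splitting into the two cases of Proposition~\ref{prop:x33-plus} according to whether or not $x_0$ lies on a $\bkk$-line. Suppose first that $\rF_1(X,x_0) \ne \varnothing$. Then by Proposition~\ref{prop:x33-plus} the threefold $X$ is $\kk$-birational to the $\kk$-form $X^+$ of the degeneracy locus $X^+_{\kk'} = \{\det(\xi) = 0\} \subset \PP(\bar{V}_1) \times \PP(\bar{V}_2)$, which is a geometrically irreducible normal divisor of bidegree~$(2,2)$ in the $\kk$-form $Y^+$ of $\PP^2 \times \PP^2$; moreover in this case $X^+$ contains a $\kk$-rational $\kk$-form of a quadric surface $\PP^1 \times \PP^1 \subset \PP(\bar{V}_1) \times \PP(\bar{V}_2)$. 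These are exactly the hypotheses of Lemma~\ref{lemma:unirationality-22}, so $X^+$ is $\kk$-unirational, and hence so is $X$.

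Now suppose $\rF_1(X,x_0) = \varnothing$; here I would use the conic bundle $f \colon \tX^{\ppl} \to \PP(A)$ of Proposition~\ref{prop:conic-bundle-x33}, and note that $\PP(A) \cong \PP^2_\kk$ since $A$ is a $3$-dimensional $\kk$-vector space. Since $x_0 \in X(\kk)$, the exceptional divisor $E$ of $\sigma \colon \tX = \Bl_{x_0}(X) \to X$ equals $\PP(T_{x_0}X) \cong \PP^2_\kk$, in particular it is $\kk$-rational; let $E^{\ppl} \subset \tX^{\ppl}$ be its image under $\psi_+ \circ \psi$, a $\kk$-rational divisor which, by the last assertion of Proposition~\ref{prop:conic-bundle-x33}, dominates $\PP(A)$ via $f$. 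Form the fibre product $W := \tX^{\ppl} \times_{\PP(A)} E^{\ppl}$, regarded as a conic bundle over $E^{\ppl}$ through the second projection $p_2$. The inclusion $E^{\ppl} \hookrightarrow \tX^{\ppl}$ together with $f\vert_{E^{\ppl}} \colon E^{\ppl} \to \PP(A)$ yields a tautological section of $p_2$; hence the component $W_0$ of $W$ dominating $E^{\ppl}$ has generic fibre a smooth conic with a rational point, so it is $\kk$-birational to a $\PP^1$-bundle over the $\kk$-rational surface $E^{\ppl}$, and in particular $W_0$ is $\kk$-rational. On the other hand the first projection $W_0 \to \tX^{\ppl}$ is dominant, because $f\vert_{E^{\ppl}}$ is surjective onto $\PP(A)$ and the conic fibres of $f$ cover $\tX^{\ppl}$. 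Therefore $\tX^{\ppl}$ is $\kk$-unirational, and since $\tX^{\ppl}$ is $\kk$-birational to $X$ through the morphism $\sigma$, the identity $\psi$ and the flop $\psi_+$ of diagram~\eqref{eq:link-x33}, the threefold $X$ is $\kk$-unirational.

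For this proposition most of the genuine content already resides in Propositions~\ref{prop:x33-plus} and~\ref{prop:conic-bundle-x33} and Lemma~\ref{lemma:unirationality-22}, so the argument above is mostly bookkeeping; the point that needs a little care is the second case, where one must check that all of diagram~\eqref{eq:link-x33} — and in particular $E^{\ppl}$ and $f$ — is genuinely defined over $\kk$, and then observe the general fact that a conic bundle over $\PP^2_\kk$ together with \emph{any} $\kk$-rational subvariety dominating the base (here the exceptional $\PP^2_\kk$ of $\sigma$) is $\kk$-unirational by the base-change trick above. One could also try to avoid the case split by showing that a $\kk$-point of $X$ lying on no $\bkk$-line forces $X$ to have another $\kk$-point lying on a line, reducing everything to the first case, but this reduction seems no simpler than the conic-bundle argument.
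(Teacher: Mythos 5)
Your proposal is correct and follows essentially the same route as the paper: the case $\rF_1(X,x_0)\ne\varnothing$ is handled via Proposition~\ref{prop:x33-plus} and Lemma~\ref{lemma:unirationality-22}, and the case $\rF_1(X,x_0)=\varnothing$ via the conic bundle of Proposition~\ref{prop:conic-bundle-x33} together with the $\kk$-rational exceptional divisor $E\cong\PP(T_{x_0}X)$ dominating the base. The only difference is that you spell out the standard base-change argument for unirationality of a conic bundle with a $\kk$-rational multisection, which the paper simply cites from~\cite[Lemma~4.14(i)]{KP19}.
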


\begin{proof}
Let $x_0$ be a $\kk$-point on~$X$.

First, assume~$\rF_1(X,x_0) = \varnothing$.
By Proposition~\ref{prop:conic-bundle-x33} we have a $\kk$-birational 
map~\mbox{$X\dashrightarrow \tX^{\ppl}$}, 
where~$f \colon \tX^{\ppl} \to \PP(A)$ is a conic bundle.
Moreover, the $\kk$-rational surface~\mbox{$E \cong \PP(T_{x_0}X) \subset \tX$} dominates the base of this conic bundle.
Therefore, $\tX$ is $\kk$-unirational
(see, e.g. \cite[Lemma~4.14(i)]{KP19}); and hence so is~$X$.

Now assume that $\rF_1(X,x_0) \ne \varnothing$.
By Proposition~\ref{prop:x33-plus} we have a birational 
map~$X\dashrightarrow X^+$ 
where~$X^+$ is a geometrically irreducible normal divisor of bidegree~$(2,2)$ in a $\kk$-form of~$\PP^2 \times \PP^2$
that contains a $\kk$-form of a $\kk$-rational quadric surface~$\PP^1 \times \PP^1$.
Therefore, $X^+$ is $\kk$-unirational by Lemma~\ref{lemma:unirationality-22}, 
hence so is~$X$.
\end{proof}

\subsection{Non-rationality}

In this section we prove non-rationality of Fano threefolds of type~\type{3,3}.
We will use the following reformulation of a result of Benoist--Wittenberg 
from~\cite{BW}.

\begin{theorem}
\label{prop:non-rationality-conic-bundle}
Let $\cX \to S$ be a flat conic bundle over a smooth $\kk$-rational surface~$S$ 
with smooth connected discriminant curve~$\Delta \subset S$.
Assume the discriminant double covering takes the form
\begin{equation*}
\tilde\Delta \cong \Delta \times_\kk \kk' \longrightarrow \Delta
\end{equation*}
where $\kk'/\kk$ is a quadratic extension of the base field.
If the conic bundle~$\cX_{\kk'} \to S_{\kk'}$ admits a rational section 
and the curve~$\Delta$ is not hyperelliptic then~$\cX$ is not~$\kk$-rational.
\end{theorem}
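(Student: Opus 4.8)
The plan is to reduce the assertion to the Benoist--Wittenberg non-rationality criterion for conic bundle threefolds, \cite[Proposition~3.4]{BW}; the substance of the proof is to check that the geometric hypotheses listed above place $\cX$ exactly in the situation treated there.

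First I would check that $\cX$ is a geometrically rational smooth projective threefold, so that the Benoist--Wittenberg intermediate-Jacobian-torsor obstruction is defined for it. A conic bundle admitting a rational section is birational, over its base, to the product of the base with $\PP^1$; applying this over $\kk'$ and using that $S$, hence $S_{\kk'}$, is rational, the hypothesis that $\cX_{\kk'} \to S_{\kk'}$ has a rational section gives that $\cX_{\kk'}$ is $\kk'$-rational, so $\cX_\bkk$ is rational. Since the pair consisting of the intermediate Jacobian $J^2\cX$ (a principally polarized abelian variety over $\kk$) and the torsor $\mathrm{CH}^2_{\cX/\kk}$ under it is a $\kk$-birational invariant, I may moreover replace $\cX$ by any convenient $\kk$-birational conic bundle model; if \cite[Proposition~3.4]{BW} is stated only for the base $\PP^2$, this lets one pass to a standard conic bundle over $\PP^2$ without altering the discriminant curve $\Delta$, its non-hyperellipticity, the discriminant double covering, or the existence of a section over $\kk'$ (alternatively, the Benoist--Wittenberg computation is valid over an arbitrary smooth rational base).

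Next I would match the remaining data and conclude. Because the discriminant double covering is $\tilde\Delta \cong \Delta \times_\kk \kk'$, which is geometrically split, $\cX_\bkk$ is a conic bundle over a rational surface with split discriminant curve, and its intermediate Jacobian is the full Jacobian $J\Delta_\bkk$ of $\Delta$ (rather than a proper Prym); Galois-equivariantly, $J^2\cX$ is a $\kk$-form of $J\Delta$. The section over $\kk'$ trivializes the torsor $\mathrm{CH}^2_{\cX/\kk}$ after base change to $\kk'$, and the descent datum from $\kk'$ to $\kk$ is governed by the involution $L \mapsto K_\Delta - L$ on $\mathrm{Pic}(\Delta_{\kk'})$ coming from the two rulings of a degenerate conic fibre. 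Benoist--Wittenberg show that, under exactly these conditions, the pair $(J^2\cX, \mathrm{CH}^2_{\cX/\kk})$ is not isomorphic to a product of pairs $(JC_i, \mathrm{Pic}^{e_i}_{C_i})$ for smooth projective geometrically connected curves $C_i$ over $\kk$ --- precisely because $\Delta$ is not hyperelliptic, so that $L \mapsto K_\Delta - L$ is not induced by an automorphism of $\Delta_\bkk$ (Torelli in the non-hyperelliptic case), whence $J^2\cX$ is not a Jacobian of a $\kk$-curve and the torsor is not of the form $\mathrm{Pic}^e_\Delta$. By the Benoist--Wittenberg criterion, $\kk$-rationality of $\cX$ would force such an isomorphism; this contradiction shows $\cX$ is not $\kk$-rational.

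I expect the main obstacle to be this final matching: one must invoke \cite[Proposition~3.4]{BW} in the exact form in which it is proved there, which entails identifying precisely which $J\Delta$-torsor $\mathrm{CH}^2_{\cX/\kk}$ is, checking that the hypothesis ``rational section over $\kk'$'' is what confines its class to the image of $H^1(\mathrm{Gal}(\kk'/\kk), J\Delta(\kk'))$, and confirming that non-hyperellipticity of $\Delta$ is precisely the condition turning this class into an obstruction --- together with, if needed, the reduction to a standard conic bundle over $\PP^2$ sketched above.
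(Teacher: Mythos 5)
Your proposal correctly identifies the engine of the proof --- the Benoist--Wittenberg criterion and the role of non-hyperellipticity of $\Delta$ --- but it has a genuine gap at exactly the point you flag at the end. The citation \cite[Proposition~3.4]{BW} is not a general criterion applicable ``under exactly these conditions''; it is a non-rationality statement for an \emph{explicit} family of conic bundles, namely those defined by a quadratic form of the diagonal shape $x^2 - \alpha y^2 - Fz^2$ with $\kk' = \kk(\sqrt{\alpha})$ and $F$ an equation of the discriminant curve (the examples of \cite[\S3.3.1]{BW}). The entire substance of the paper's proof is the reduction of the abstract conic bundle in the statement to that normal form, and this reduction is absent from your write-up. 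Concretely, the paper: (i) observes that $f$ has no rational section over $\kk$ (a $\kk$-section would induce a section of $\tilde\Delta \to \Delta$, contradicting $\tilde\Delta \cong \Delta\times_\kk\kk'$), so the given $\kk'$-section and its Galois conjugate are distinct; (ii) after shrinking $S$ while keeping the generic point of $\Delta$, arranges that they are disjoint regular sections, so that their union is an \'etale $2$-section $Z \cong S\times_\kk\kk'$ defined over $\kk$; (iii) embeds $\cX \hookrightarrow \PP_S(\cV)$ with $\cV = (f_*\omega_{\cX}^{-1})^\vee$, restricts the defining quadratic form to the rank-$2$ subbundle $\cV_Z$ determined by $Z$, where it becomes $x^2 - \alpha y^2$ precisely because $Z$ is \'etale, and then passes to the orthogonal complement of $\cV_Z$ to diagonalize $q = x^2 - \alpha y^2 - Fz^2$. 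Only after this normalization is \cite[Proposition~3.4]{BW} applicable.

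Your proposed alternative --- computing the pair $(J^2\cX, \mathrm{CH}^2_{\cX/\kk})$ directly and showing it is not a product of principally polarized Jacobians with $\mathrm{Pic}^e$-torsors --- would work if carried out, and is essentially how Benoist and Wittenberg prove their proposition; but as written the two key identifications (that the descent datum on the torsor is governed by $L\mapsto K_\Delta - L$, and that this precludes an isomorphism with some $\mathrm{Pic}^e_C$ when $\Delta$ is non-hyperelliptic) are asserted rather than proved, and they are exactly what \cite{BW} establish only for conic bundles already in the diagonal form above. So on either route the normalization step is required, and it is the missing content. (Your opening reduction establishing geometric rationality of $\cX$ is correct but not needed once one lands in the BW family, since their proposition already covers it; likewise the passage to a standard model over $\PP^2$ is unnecessary, as the statement and the BW examples are both formulated over non-proper bases.)
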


Note that we require neither the surface~$S$ nor the curve~$\Delta$ to be proper; 
moreover, during the proof we will further shrink~$S$ but keep (the generic point of) the curve~$\Delta$ in~$S$.

\begin{proof}
Since $S$ is normal and $f$ is proper any rational section of~$f$ extends to codimension~$1$ points, 
hence defines a regular section over the complement of a finite subscheme of~$S$.
Moreover, over the complement of this finite subscheme 
the section does not pass through singular points of fibers of~$f$,
hence it defines a section of the morphism $\cX_\Delta^\nu \to \Delta$, 
where recall that~$\cX_\Delta^\nu$ is the normalization of~$\cX_\Delta = 
f^{-1}(\Delta)$. 
Therefore it also gives a section of the discriminant double covering~$\tilde\Delta \to \Delta$.
If the original section is defined over~$\kk$, we obtain a contradiction with the isomorphism~$\tilde\Delta \cong \Delta \times_\kk \kk'$;
this means that the morphism~$f$ has no rational sections defined over~$\kk$.

Now consider a rational section of $f \colon \cX_{\kk'} \to S_{\kk'}$.
Removing if necessary a finite subscheme from~$S$ we may assume that this section is regular.
Its intersection with the conjugate section (with respect to the~$\Gal(\kk'/\kk)$-action) 
projects to a curve in~$S$ which is disjoint from~$\Delta$
(because a regular section does not pass through singular points of fibers).
So, shrinking~$S$ further we may assume that the section and its conjugate do not intersect.
Then the union
\begin{equation*}
Z \subset \cX
\end{equation*}
of the section and its conjugate is a $2$-section of~$f$ defined over~$\kk$;
moreover, $Z \cong S \times_\kk \kk'$ and in particular~$Z$ is \'etale over~$S$. 

Consider the bundles $\cV := (f_*\omega_\cX^{-1})^\vee$ of rank~$3$
and~$\cV_Z := (f_*\omega_\cX^{-1}\vert_Z)^\vee$ of rank~$2$ on~$S$.
The restriction morphism~$\omega_\cX^{-1} \to \omega_\cX^{-1}\vert_Z$ induces an embedding of vector bundles~$\cV_Z \hookrightarrow \cV$ 
and a Cartesian square
\begin{equation*}
\xymatrix{
Z \ar[r] \ar[d] &
\PP_S(\cV_Z) \ar[d] 
\\
\cX \ar[r] &
\PP_S(\cV),
}
\end{equation*}
where all arrows are the natural embeddings.

Shrinking the surface~$S$ again but keeping an open part of the curve~$\Delta$ 
in it
we may assume that the bundles~$\cV$ and~$\cV_Z$ are trivial and that
the subvarieties~\mbox{$\cX \subset \PP_S(\cV)$} and~$Z \subset \PP_{S}(\cV_Z)$
are given by a quadratic form~$q \in \Sym^2\cV^\vee$ and its restriction $q_Z 
\in \Sym^2\cV_Z^\vee$ to~$\cV_Z$, respectively.
Since $Z$ is \'etale over~$S$, the form~$q_Z$ is everywhere non-degenerate and 
can be written as follows
\begin{equation*}
q_Z = x^2 - \alpha y^2,
\end{equation*}
where $(x,y)$ are homogeneous coordinates in the fiber of~$\PP_S(\cV_Z) 
\cong S \times \PP^1$ 
and $\alpha \in \kk^\times$ is such that~$\kk' = \kk(\sqrt{\alpha})$.
Now, considering the orthogonal complement to~$\cV_Z$ in~$\cV$ we see that~$q$ 
takes the form 
\begin{equation*}
q = x^2 - \alpha y^2 - Fz^2,
\end{equation*}
where $F$ is an equation of~$\Delta$ on~$S$.
Thus, the conic bundle $\cX \to S$ is birational to conic bundles considered in~\cite[\S3.3.1]{BW}, 
hence~$\cX$ is not~$\kk$-rational by~\cite[Proposition~3.4]{BW}.
\end{proof}

Now we apply this to prove non-rationality of threefolds of type~\type{3,3}.

\begin{corollary}
\label{cor:x33}
If $X$ is a Fano threefold of type~\type{3,3} then~$X$ is not~$\kk$-rational.
\end{corollary}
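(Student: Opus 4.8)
The plan is to assume $X$ is $\kk$-rational and derive a contradiction by realizing $X$ as a $\kk$-birational model of one of the irrational conic bundles of Benoist--Wittenberg, i.e.\ by reducing to Theorem~\ref{prop:non-rationality-conic-bundle} applied to the conic bundle $f\colon\tX^{\ppl}\to\PP(A)$ furnished by Proposition~\ref{prop:conic-bundle-x33}. If $X(\kk)=\varnothing$ then $X$ is not $\kk$-rational and there is nothing to do; so suppose $X(\kk)\neq\varnothing$ and that $X$ is $\kk$-rational. Since $\kk$ has characteristic $0$, hence is infinite, a $\kk$-rational variety has Zariski-dense $\kk$-points, so $X(\kk)$ is dense in $X$. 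By Lemma~\ref{lemma:lines-1} the $\bkk$-lines on $X_\bkk$ form two $1$-dimensional families, hence sweep out a closed subset of dimension $\le 2$, proper in the threefold $X_\bkk$; therefore there is a point $x_0\in X(\kk)$ with $\rF_1(X,x_0)=\varnothing$.

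For this $x_0$, Proposition~\ref{prop:conic-bundle-x33} produces a $\kk$-birational map $X\dashrightarrow\tX^{\ppl}$ and a flat conic bundle $f\colon\tX^{\ppl}\to\PP(A)$, all defined over~$\kk$, whose discriminant curve is the curve $\Gamma$ of~\S\ref{subsec:discriminant}. Now I would check the hypotheses of Theorem~\ref{prop:non-rationality-conic-bundle} one by one: the base $\PP(A)\cong\PP^2_\kk$ (as $A$ is a genuine $3$-dimensional $\kk$-vector space) is a smooth $\kk$-rational surface; by Lemma~\ref{lemma:gamma-x33} the discriminant $\Gamma$ is a smooth plane quartic, in particular smooth, connected, and non-hyperelliptic; and by Lemma~\ref{lemma:tilde-gamma} the discriminant double covering is $\tilde\Gamma\cong\Gamma\times_\kk\kk'\to\Gamma$ for the quadratic extension $\kk'/\kk$ of~\S\ref{subsec:discriminant}, which is a genuine quadratic extension because $\uprho(X)=1$ forces $\rG_X=\fS_2$ to be nontrivial.

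The only hypothesis left to verify is that $f_{\kk'}\colon\tX^{\ppl}_{\kk'}\to\PP(A)_{\kk'}$ admits a rational section, and for this I would invoke the explicit description of the fibers of $f$ in the proof of Proposition~\ref{prop:conic-bundle-x33}: over $\kk'$ the fiber $f^{-1}([a])$ is a divisor of bidegree $(1,1)$ in a product $\PP^1_{\kk'}\times\PP^1_{\kk'}$. Since a smooth divisor of bidegree $(1,1)$ in $\PP^1\times\PP^1$ is isomorphic to $\PP^1$ (projection to either ruling is a degree-one morphism of smooth projective curves, hence an isomorphism), the generic fiber of $f_{\kk'}$ is $\PP^1$ over the function field $\kk'(\PP(A))$ and in particular carries a rational point; this yields the desired rational section. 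With all hypotheses in place, Theorem~\ref{prop:non-rationality-conic-bundle} shows that $\tX^{\ppl}$, hence its $\kk$-birational model $X$, is not $\kk$-rational — contradiction. I do not expect a serious obstacle here, since the geometry is already assembled in~\S\ref{subsec:x33-double-projection} and~\S\ref{subsec:x33-conic-bundle}; the two points that must not be glossed over are that $\kk$-rationality (not merely $X(\kk)\neq\varnothing$) is used at the very start, precisely in order to place $x_0$ off every $\bkk$-line so that Proposition~\ref{prop:conic-bundle-x33} applies, and that the non-hyperellipticity of the discriminant — the key input of the Benoist--Wittenberg criterion — is exactly the content of Lemma~\ref{lemma:gamma-x33}.
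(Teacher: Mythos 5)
Your proposal is correct and follows essentially the same route as the paper: reduce to the case of a $\kk$-point $x_0$ with $\rF_1(X,x_0)=\varnothing$ via density of rational points, then apply Theorem~\ref{prop:non-rationality-conic-bundle} to the conic bundle of Proposition~\ref{prop:conic-bundle-x33} using Lemmas~\ref{lemma:gamma-x33} and~\ref{lemma:tilde-gamma}. The only cosmetic difference is that the paper dispenses first with the non-unirational case and then uses unirationality (rather than the assumed rationality) to get Zariski-density of $X(\kk)$, and it leaves the rational section over $\kk'$ to the description in Proposition~\ref{prop:conic-bundle-x33}, which you correctly unwind as the generic fiber being a smooth $(1,1)$-curve in a split $\PP^1\times\PP^1$, hence a $\PP^1$ with a rational point.
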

\begin{proof}
If $X$ is not~$\kk$-unirational, there is nothing to prove.
So, assume~$X$ is $\kk$-unirational. 
Then there exists a $\kk$-point $x_0 \in X$ such that~$\rF_1(X,x_0) = \varnothing$.
Consider the conic bundle $\tX^{\ppl} \to \PP^2$ constructed in Proposition~\ref{prop:conic-bundle-x33}.
By Lemma~\ref{lemma:gamma-x33} the discriminant curve of~$f$ is the smooth non-hyperelliptic curve~$\Gamma$ defined in~\S\ref{subsec:discriminant}
and by Lemma~\ref{lemma:tilde-gamma} the discriminant double covering has the form~$\tilde\Gamma \cong \Gamma\times_\kk \kk'$.
Finally, the description of Proposition~\ref{prop:conic-bundle-x33} shows that~$f$ admits a rational section after base change to~$\kk'$.
Therefore, Theorem~\ref{prop:non-rationality-conic-bundle} applies and proves that~$\tX^{\ppl}$ is not~$\kk$-rational,
hence~$X$ is not $\kk$-rational as well.
\end{proof}


\section{Fano threefolds of type \type{1,1,1,1}}
\label{sec:x1111}

In this section we apply the degeneration technique of~\cite{NS} to prove Theorem~\ref{thm:x1111-non-st-rat}.

\subsection{Toric degeneration}

To start with we consider $\rY_0 = (\PP^1)^4$, denote by $(u_i:v_i)$ the homogeneous coordinates on the $i$-th factor,
and consider the point
\begin{equation*}
\ry_0 := (1,1,1,1) \in \rY_0.
\end{equation*}
Clearly, $\rY_0$ is a toric variety with respect to the action of the split torus $\Gm^4$ that rescales the~$v_i$.
We also consider the action of~$\fS_4$ on~$\rY_0$ that permutes the factors.
It normalizes the torus action, and together they generate an action of 
the group~$\Gm^4 \rtimes \fS_4$.
Finally, consider the subtorus 
\begin{equation}
\label{eq:t0}
\rT_0 := \big\{ (t_1,t_2,t_3,t_4) \in \Gm^4 \mid t_1t_2t_3t_4 = 1 \big\}
\end{equation}
and the collection of three 1-parametric subgroups
\begin{equation}
\label{eq:rt-4}
\rT_0^{i_1,i_2;i_3,i_4} := \big\{ (t_1,t_2,t_2,t_4) \mid t_{i_1} = t_{i_2} = t_{i_3}^{-1} = t_{i_4}^{-1} \big\} \subset \rT_0,
\end{equation}
where~$(i_1,i_2)(i_3,i_4) \in \rV_4 \setminus \{1\} \subset \fS_4$ is a nontrivial element of the Klein subgroup.

\begin{lemma}
\label{lemma:x1111-toric}
The subvariety
\begin{equation*}
\rX_0^{\mathrm{toric}} := \{ u_1u_2u_3u_4 - v_1v_2v_3v_4 = 0 \} \subset \rY_0
\end{equation*}
is the unique $\rT_0$-invariant divisor of multidegree~$(1,1,1,1)$ in~$\rY_0$ which contains the point~$\ry_0$.
It is a toric variety with~$6$ ordinary double points
\begin{equation*}
\rx_{p,q} = \big\{ ((u_1:v_1),(u_2:v_2),(u_3:v_3),(u_4:v_4)) \mid 
\text{$u_i = 0$ if~$i \in \{p,q\}$, and $v_i = 0$ if~$i \not\in \{p,q\}$} \big\}.
\end{equation*}
For each permutation~$(i_1,i_2)(i_3,i_4) \in \rV_4 \setminus \{1\} \subset \fS_4$ the curve
\begin{equation}
\label{eq:c-iiii}
\rC_{i_1,i_2;i_3,i_4} := \overline{\rT_0^{i_1,i_2;i_3,i_4}\cdot \ry_0} \subset \rX_0^{\mathrm{toric}}
\end{equation}
is a smooth rational curve and~$\rC_{i_1,i_2;i_3,i_4} \cap \Sing(\rX_0^{\mathrm{toric}}) = \{ \rx_{i_1,i_2}, \rx_{i_3,i_4} \}$.
\end{lemma}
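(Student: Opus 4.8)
The plan is to verify each assertion by explicit toric computation. First I would establish that $\rX_0^{\mathrm{toric}}$ is the unique $\rT_0$-invariant divisor of multidegree $(1,1,1,1)$ through $\ry_0$. The space of sections $H^0(\rY_0,\cO(1,1,1,1))$ has a monomial basis indexed by choices $(u_i \text{ or } v_i)$ for each $i$, i.e.\ by subsets $S\subset\{1,2,3,4\}$ (say $S$ records the indices where we take $u_i$). The torus $\rT_0$ acts on the monomial $m_S = \prod_{i\in S}u_i\prod_{i\notin S}v_i$ with character $(t_1,\dots,t_4)\mapsto \prod_{i\notin S}t_i$; since $t_1t_2t_3t_4=1$ on $\rT_0$, this character is $\prod_{i\in S}t_i^{-1}$, which equals the character of $m_\varnothing=v_1v_2v_3v_4$ exactly when $S=\varnothing$ or $S=\{1,2,3,4\}$. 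Hence the $\rT_0$-invariant subspace is two-dimensional, spanned by $u_1u_2u_3u_4$ and $v_1v_2v_3v_4$, and the unique invariant divisor through $\ry_0=(1,1,1,1)$ is their difference, which is $\rX_0^{\mathrm{toric}}$.

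Next I would identify the torus-invariant structure and the singularities. The divisor $\rX_0^{\mathrm{toric}}$ is invariant under the $3$-dimensional torus $\rT_0$ (and $\ry_0$ has trivial stabilizer in $\rT_0$ since its orbit is $3$-dimensional inside the $3$-fold), so it is a toric $3$-fold with dense orbit $\rT_0\cdot\ry_0$. To locate its singular points, I would work in the standard affine charts of $(\PP^1)^4$: in the chart where $v_i\ne 0$ for $i$ in a $2$-element set $\{p,q\}$ and $u_i\ne 0$ otherwise, set local coordinates $s_i=u_i/v_i$ for $i\in\{p,q\}$ and $s_i=v_i/u_i$ for $i\notin\{p,q\}$; then the equation $u_1u_2u_3u_4=v_1v_2v_3v_4$ becomes $s_ps_q = s_rs_t$ (with $\{r,t\}$ the complement), which is the standard equation of a three-dimensional ordinary double point (a quadric cone) at the origin $\rx_{p,q}$. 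One checks the divisor is smooth away from these points by noting that in any chart adapted to a partition into sets of size $0,1,3,4$ the equation has a nonvanishing linear term. There are $\binom{4}{2}=6$ such charts, giving the $6$ nodes $\rx_{p,q}$ listed. That $\rX_0^{\mathrm{toric}}$ really has ordinary double points (and not worse) is already ensured by the local model $s_ps_q=s_rs_t$.

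Finally I would treat the curves $\rC_{i_1,i_2;i_3,i_4}$. Fix $(i_1,i_2)(i_3,i_4)\in\rV_4\setminus\{1\}$ and the one-parameter subgroup $\rT_0^{i_1,i_2;i_3,i_4}\subset\rT_0$ parametrized by $t$ acting on $\ry_0$ so that the $i_1$- and $i_2$-coordinates become $(1:t)$ and the $i_3$- and $i_4$-coordinates become $(1:t^{-1})$, i.e.\ $(t:1)$ up to scaling. The orbit closure is then the image of the map $\PP^1\to\rY_0$, $(\alpha:\beta)\mapsto\big((\text{varies})\big)$ sending the $i_1,i_2$ factors to $(\alpha:\beta)$ and the $i_3,i_4$ factors to $(\beta:\alpha)$; this map is a closed embedding of $\PP^1$, so $\rC_{i_1,i_2;i_3,i_4}$ is a smooth rational curve, and it lies on $\rX_0^{\mathrm{toric}}$ because $\alpha\cdot\alpha\cdot\beta\cdot\beta=\beta\cdot\beta\cdot\alpha\cdot\alpha$. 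Its intersection with $\Sing(\rX_0^{\mathrm{toric}})$ is found by setting $\alpha=0$ or $\beta=0$: at $\beta=0$ the $i_1,i_2$-coordinates are $u=0$ and the $i_3,i_4$-coordinates are $v=0$, which is exactly $\rx_{i_1,i_2}$; at $\alpha=0$ one gets $\rx_{i_3,i_4}$. Since any node $\rx_{p,q}$ with $\{p,q\}\notin\{\{i_1,i_2\},\{i_3,i_4\}\}$ would force two of the four coordinates to be $0$ and the other two to be $\infty$ in an incompatible pattern, the curve meets the singular locus in exactly $\{\rx_{i_1,i_2},\rx_{i_3,i_4}\}$.

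The routine but slightly fiddly part is the chart-by-chart verification that the singular locus is precisely the six nodes and nothing else; the identification of $\rX_0^{\mathrm{toric}}$ as the unique invariant divisor and the analysis of the curves are immediate monomial computations. I do not anticipate any real obstacle, only the bookkeeping over the affine charts indexed by the partition type of $\{1,2,3,4\}$.
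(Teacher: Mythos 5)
Your proposal is correct and follows essentially the same route as the paper: the weight decomposition of $H^0(\rY_0,\cO(1,1,1,1))$ under $\rT_0$ for uniqueness, the affine-chart computation exhibiting the local equation $s_ps_q=s_rs_t$ at the six points $\rx_{p,q}$ and smoothness elsewhere, and the explicit $\PP^1$-parametrization of the one-parameter orbit closures to identify their boundary points. The only (harmless) slip is the labeling of the two endpoints: with your parametrization the point at $\beta=0$ has $v=0$ in the $i_1,i_2$ factors and $u=0$ in the $i_3,i_4$ factors, hence is $\rx_{i_3,i_4}$ rather than $\rx_{i_1,i_2}$, but the resulting two-point intersection with $\Sing(\rX_0^{\mathrm{toric}})$ is the same.
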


\begin{proof}
The monomial basis in the space of homogeneous polynomials of multidegree~$(1,1,1,1)$
is a weight basis for the action of~$\rT_0$, 
and all weights are different except for the weight~$0$ which has multiplicity~2 
and the corresponding weight space is spanned by the monomials~$u_1u_2u_3u_4$ and~$v_1v_2v_3v_4$.
Therefore, every $\rT_0$-invariant divisor is either given by a monomial equation 
(but then it does not contain the point~$\ry_0$),
or it is given by a linear combination of~$u_1u_2u_3u_4$ and~$v_1v_2v_3v_4$,
and if it contains the point~$\ry_0$, it is equal to~$\rX_0^{\mathrm{toric}}$.
The latter is obviously a toric variety with respect to the natural action of~$\rT_0$.

In the affine chart $v_1 \ne 0$, $v_2 \ne 0$, $u_3 \ne 0$, $u_4 \ne 0$ we can set $v_1 = v_2 = u_3 = u_4 = 1$ 
and use~$u_1$, $u_2$, $v_3$, $v_4$ as coordinates.
Then the equation of~$\rX_0^{\mathrm{toric}}$ takes the form 
\begin{equation*}
u_1u_2 - v_3v_4 = 0,
\end{equation*}
which means that the origin of the chart, i.e., the point $\rx_{3,4} = (0,0,\infty,\infty) \in \rY_0$ 
is an ordinary double point of~$\rX_0^{\mathrm{toric}}$.
Considering similarly the other charts, we see that the singular locus of~$\rX_0^{\mathrm{toric}}$ 
is the $\fS_4$-orbit of the point~$\rx_{3,4}$;
in particular each singular point of~$\rX_0^{\mathrm{toric}}$ is an ordinary double point.
Moreover,  we see that the hypersurface~$\rX_0^{\mathrm{toric}}$ is normal.

The orbits of the point~$\ry_0$ under the 1-parametric subgroups~\eqref{eq:rt-4} are
\begin{equation*}
\{ (t,t,t^{-1},t^{-1}) \mid t \in \Gm \},
\qquad 
\{ (t,t^{-1},t,t^{-1}) \mid t \in \Gm \},
\qquad 
\{ (t,t^{-1},t^{-1},t) \mid t \in \Gm \}.
\end{equation*}
It is easy to see that the closure of the first orbit is smooth and contains the point~$\rx_{1,2}$ and~$\rx_{3,4}$,
and similarly for the other two orbits.
\end{proof}

For a field extension~$\kk'/\kk$ we denote by~$\Res_{\kk'/\kk} \colon \Sch_{\kk'} \to \Sch_\kk$ 
the Weil restriction of scalars functor from the category of~$\kk'$-schemes to the category of $\kk$-schemes, 
the right adjoint to the extension of scalars~\mbox{$- \otimes_\kk \kk' \colon \Sch_\kk \to \Sch_{\kk'}$}.
Consider the projective line~$\PP^1_{\kk'}$, 
the torus~$\Gm$ acting faithfully on~$\PP^1_{\kk'}$ and denote by~$0,\infty \in \PP^1_{\kk'}$ its fixed points.

\begin{proposition}
\label{prop:forms-x44}
For a field extension~$\kk'/\kk$ of degree~$4$ consider the $\kk$-forms
\begin{equation}
\label{eq:y-t-res}
Y := \Res_{\kk'/\kk}(\PP^1_{\kk'}),
\qquad 
T := \Ker\left(\Res_{\kk'/\kk}\Gm \longrightarrow \Gm\right)
\end{equation}
of~$\rY_0 = (\PP^1)^4$ and of the torus~$\rT_0$, and the natural faithful $T$-action on~$Y$.
Let
\begin{equation*}
y \in Y
\end{equation*}
be the $\kk$-point that corresponds to the point~$1 \in \PP^1_{\kk'}$.
Then 
\begin{enumerate}
\item 
\label{item:x-varpi}
The half-anticanonical linear system of~$Y$ is defined over~$\kk$
and it contains a unique $T$-invariant divisor~$X^{\mathrm{toric}} \subset Y$ passing through~$y$.
\item 
\label{item:x-sing}
The divisor~$X^{\mathrm{toric}}$ is integral and has ordinary double points in the sense of~\cite[Definition~4.2.1]{NS} 
with the singular locus of length~$6$.
\end{enumerate}
\end{proposition}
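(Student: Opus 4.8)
The plan is to obtain everything by faithfully flat descent from the toric picture of Lemma~\ref{lemma:x1111-toric}. First I would fix a finite Galois extension $L/\kk$ with $\kk'\otimes_\kk L\cong L^{\oplus 4}$ (for instance the Galois closure of $\kk'/\kk$, or $L=\bkk$); the four embeddings $\kk'\hookrightarrow L$ then identify the base changes of $Y$, $T$, $y$ with $(\rY_0)_L=(\PP^1_L)^4$, $(\rT_0)_L$, $\ry_0=(1,1,1,1)$, and $\Gal(L/\kk)$ acts on $(\PP^1_L)^4$ through the $\fS_4$-action permuting the four factors used in Lemma~\ref{lemma:x1111-toric} (together with its action on $L$).

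For part~\ref{item:x-varpi}: the half-anticanonical class of $(\PP^1_L)^4$ is the sum $H=H_1+H_2+H_3+H_4$ of the pullbacks of the point classes of the factors, which is permutation-invariant and hence $\Gal(L/\kk)$-invariant; since $Y$ has the $\kk$-point $y$, the class $H$ and its linear system are defined over $\kk$, and $|H|$ base-changes to the system of divisors of multidegree $(1,1,1,1)$. By Lemma~\ref{lemma:x1111-toric} the unique $\rT_0$-invariant member of this system through $\ry_0$ is $\rX_0^{\mathrm{toric}}=\{u_1u_2u_3u_4-v_1v_2v_3v_4=0\}$; its equation is invariant under the simultaneous permutations of the pairs $(u_i,v_i)$, hence is Galois-invariant, so $\rX_0^{\mathrm{toric}}$ descends to a divisor $X^{\mathrm{toric}}\subset Y$ over $\kk$. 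It lies in $|H|$, passes through $y$, and is $T$-invariant (this may be checked after the faithfully flat base change to $L$, where $X^{\mathrm{toric}}_L=\rX_0^{\mathrm{toric}}$ is $(\rT_0)_L$-invariant). Uniqueness over $\kk$ follows at once: any $T$-invariant member of $|H|$ through $y$ base-changes to a $\rT_0$-invariant $(1,1,1,1)$-divisor through $\ry_0$, which by Lemma~\ref{lemma:x1111-toric} equals $\rX_0^{\mathrm{toric}}=X^{\mathrm{toric}}_L$, and a closed subscheme of $Y$ is determined by its base change to $L$.

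For part~\ref{item:x-sing}: integrality is inherited from the normality (a fortiori integrality) of $\rX_0^{\mathrm{toric}}=X^{\mathrm{toric}}_L$ established in Lemma~\ref{lemma:x1111-toric}, since geometric integrality implies integrality. As the formation of the singular locus commutes with the extension $L/\kk$, one has $\Sing(X^{\mathrm{toric}})_L=\Sing(\rX_0^{\mathrm{toric}})$, which by Lemma~\ref{lemma:x1111-toric} is the reduced, $\Gal(L/\kk)$-stable set of the six nodes $\rx_{p,q}$; hence $\Sing(X^{\mathrm{toric}})$ is a reduced finite $\kk$-scheme of length $6$. Finally, for a closed point $z\in\Sing(X^{\mathrm{toric}})$ with (separable) residue field $\kappa:=\kk(z)\hookrightarrow L$, I would choose a coefficient field and write $\widehat{\cO}_{X^{\mathrm{toric}},z}\cong\kappa[[x_1,x_2,x_3,x_4]]/(f)$ (four variables, as over $L$), then base-change to $L$ and compare with the local equations $x_ax_b-x_cx_d=0$ of Lemma~\ref{lemma:x1111-toric} to conclude that $f\equiv q\pmod{\mathfrak m^3}$ for a nondegenerate quadratic form $q$ over $\kappa$; this is exactly what is needed for $z$ to be an ordinary double point in the sense of \cite[Definition~4.2.1]{NS}.

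I expect the only genuinely non-formal point to be this last step: one must check that a singular point $z$ whose residue field is a proper extension of $\kk$ is still an ordinary double point \emph{over $\kk(z)$}, i.e.\ that the (possibly non-split) $\kk(z)$-form $q$ of the hyperbolic rank-$4$ form is nondegenerate and that \cite[Definition~4.2.1]{NS} indeed allows arbitrary nondegenerate quadratic cones; the residue-field bookkeeping above is there precisely to handle this. Everything else is a direct transcription of Lemma~\ref{lemma:x1111-toric} across $L/\kk$.
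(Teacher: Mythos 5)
Your part~\ref{item:x-varpi} is fine and essentially matches the paper (the paper takes $X^{\mathrm{toric}}$ to be the closure of the $T$-orbit of $y$, which is automatically defined over $\kk$ and $T$-invariant, rather than descending the equation, but the two routes are equivalent). The problem is in part~\ref{item:x-sing}, exactly at the point you yourself flag as uncertain. The notion of ``ordinary double points'' in \cite[Definition~4.2.1]{NS} is \emph{not} just the condition that each singular point is a nondegenerate quadratic singularity over its residue field: it additionally requires that the exceptional divisor~$E$ of the blowup~$\Bl_{Z}(X^{\mathrm{toric}})$ of the singular locus~$Z$, which is a smooth quadric bundle over~$Z$, admits a \emph{section}. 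Over a non-closed field (or over the possibly nontrivial residue fields~$\kk(z)$ of the six nodes, which form Galois orbits under the $\fS_4$-action on the pairs $\{p,q\}$) a smooth quadric surface need not have a rational point, so this is a genuine arithmetic condition that your local computation of $f\equiv q\pmod{\mathfrak m^3}$ does not address. Your proposal therefore stops short of what the statement asserts, and the missing condition is precisely what makes the subsequent application of \cite[Propositions~4.2.9 and~4.2.10]{NS} in the proof of Theorem~\ref{thm:x1111-non-st-rat} legitimate.

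The paper supplies the missing section as follows: the union of the three $1$-parameter subgroups~$\rT_0^{i_1,i_2;i_3,i_4}$ of~\eqref{eq:rt-4} is Galois-invariant, hence descends to a $\kk$-subset of~$T$, and the closure of the image of~$y$ under this subset is a curve~$C\subset X^{\mathrm{toric}}$ defined over~$\kk$ whose base change to~$\bkk$ is the union of the three smooth rational curves~$\rC_{i_1,i_2;i_3,i_4}$ of~\eqref{eq:c-iiii}. By Lemma~\xref{lemma:x1111-toric} this curve passes through all six nodes, so the intersection of its strict transform in~$\Bl_Z(X^{\mathrm{toric}})$ with~$E$ gives the required section of~$E\to Z$. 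You would need to add this (or an equivalent construction of rational points on the exceptional quadrics over the residue fields of the nodes) to complete the argument.
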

 
\begin{proof}
The fact that~$Y$ and~$T$ are $\kk$-forms of~$\rY_0$ and~$\rT_0$ is obvious from the definition of Weil restriction of scalars,
and the $\kk$-point~$y$ is obtained from the extension--restriction adjunction.
Note that upon extension of scalars to~$\bkk$, 
the triple~$(Y,T,y)$ becomes isomorphic to the triple~$(\rY_0,\rT_0,\ry_0)$.

\ref{item:x-varpi}
Let~$H$ denote the Segre class of~$Y$ (the half of the anticanonical class);
it is obviously Galois-invariant, and since~$Y$ has a $\kk$-point, $H$ is defined over~$\kk$.
Therefore, the linear system
\begin{equation}
\label{eq:fp}
\fP = |H - y| \cong \PP^{14}
\end{equation}
of divisors in~$|H|$ containing~$y$ is defined over~$\kk$.
We define~$X^{\mathrm{toric}} \subset Y$ as the closure of the~$T$-orbit of the point~$y$;
the uniqueness of~$X^{\mathrm{toric}}$ follows from Lemma~\ref{lemma:x1111-toric}.

\ref{item:x-sing}
The extension of scalars of~$X^{\mathrm{toric}} \subset Y$ to~$\bkk$ coincides with~\mbox{$\rX_0^{\mathrm{toric}} \subset \rY_0$}, 
hence its singular locus~$Z := \Sing(X^{\mathrm{toric}})$ has length~$6$ 
and if~$E$ is the exceptional divisor of the blowup
\begin{equation*}
\tX := \Bl_{Z}(X^{\mathrm{toric}})
\end{equation*}
then~$E \to Z$ is a smooth quadric bundle.
According to~\cite[Definition~4.2.1]{NS} it remains to check that this bundle has a section.
For this note that the union of the 1-parametric subgroups~$\rT_0^{i_1,i_2;i_3,i_4} \subset \rT_0$ defined in~\eqref{eq:rt-4}
is Galois-invariant, hence it comes from a $\kk$-subset in the torus~$T$,
and hence the closure of the image of the point~$y$ under the action of this subset
is a curve~$C \subset X$ defined over~$\kk$.
Furthermore, the extension of scalars of~$C$ to~$\bkk$ is the union of the curves~\eqref{eq:c-iiii}.
In particular, the curve~$C$ contains the singular locus~$Z$,
and the intersection of its strict transform to the blowup~$\tX$ with the exceptional divisor~$E$ 
provides a section for~\mbox{$E \to Z$}. 
\end{proof}

Now let~$X$ be a smooth Fano threefold of type~\type{1,1,1,1}.
Recall the definition~\eqref{eq:gx} of the Galois group~$\rG_X \subset \fS_4$ of~$X$.
In the next lemma we use notation introduced in Proposition~\ref{prop:forms-x44}.

\begin{lemma}
\label{lem:x1111-models}
If~$\kk'/\kk$ is the field extension of degree~$4$  associated with an epimorphisma 
$\rG(\bkk/\kk) \twoheadrightarrow\rG$ onto a transitive subgroup $\rG\subset \fS_4$,
then a general divisor~$X \subset Y$ from the linear system~\eqref{eq:fp} 
is a smooth Fano threefold of type~\type{1,1,1,1} with~$\rG_X = \rG$, $\uprho(X) = 1$, and~$X(\kk) \ne \varnothing$.
\end{lemma}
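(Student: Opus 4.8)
The plan is to obtain all the assertions from a Bertini-type smoothness argument over~$\bkk$, combined with the Grothendieck--Lefschetz theorem and the description of~$Y$ as a Weil restriction (recall the notation of Proposition~\ref{prop:forms-x44}, so that~$\fP = |H - y| \cong \PP^{14}_\kk$ is the linear system in question, defined over~$\kk$). First I would check that a general member of~$\fP$ has smooth geometric fibre. Over~$\bkk$ the class~$H$ is the pullback of~$\cO(1)$ under the Segre embedding of~$Y_\bkk \cong (\PP^1_\bkk)^4$, hence~$|H|$ is very ample, and the only base point of~$\fP_\bkk = |H - y|$ is the $\kk$-point~$y$. Away from~$y$ the classical Bertini theorem (we work in characteristic~$0$) shows that the general member of~$\fP_\bkk$ is smooth. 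At~$y$, very ampleness means that~$|H|$ separates tangent vectors there, i.e. that the restriction map~$H^0(Y_\bkk, \cO(H) \otimes \mathfrak{m}_y) \to \mathfrak{m}_y / \mathfrak{m}_y^2 \otimes \cO(H)(y)$ is surjective; hence the members of~$\fP_\bkk$ that are singular at~$y$ form a linear subsystem of codimension~$\dim Y_\bkk = 4$, and the general member of~$\fP_\bkk$ is smooth at~$y$ as well. Consequently the locus of members of~$\fP$ with non-smooth geometric fibre is a proper closed $\kk$-subvariety of~$\fP \cong \PP^{14}_\kk$, and since~$\kk$ is infinite there is a $\kk$-point~$X$ in its complement. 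For such~$X$ the geometric fibre~$X_\bkk$ is a smooth divisor of multidegree~$(1,1,1,1)$ in~$(\PP^1_\bkk)^4$ --- automatically connected and integral, being an ample divisor of dimension~$3$ --- with anticanonical class~$H|_{X_\bkk}$, which is ample; thus~$X$ is a Fano threefold of type~\type{1,1,1,1}, and~$y \in X(\kk)$ shows that~$X(\kk) \ne \varnothing$.

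Next I would compute~$\Pic(X_\bkk)$ with its Galois action. As~$X_\bkk$ is a smooth ample divisor in the smooth fourfold~$Y_\bkk$, the Grothendieck--Lefschetz theorem gives a~$\Gal(\bkk/\kk)$-equivariant isomorphism~$\Pic(Y_\bkk) \xrightarrow{\ \sim\ } \Pic(X_\bkk)$ (the same mechanism as in the proof of Lemma~\ref{lemma:picard-simple}), so~$\Pic(X_\bkk) = \bigoplus_{i=1}^{4} \ZZ H_i$, with~$H_i$ the pullbacks of the point classes of the four factors. On the other hand, by the definition of the Weil restriction, $Y_\bkk$ is the product of copies of~$\PP^1_\bkk$ indexed by the set of $\kk$-embeddings~$\kk' \hookrightarrow \bkk$, and~$\Gal(\bkk/\kk)$ permutes the four factors exactly through its action on this $4$-element set; by the choice of~$\kk'/\kk$ this permutation action is the composite~$\Gal(\bkk/\kk) \twoheadrightarrow \rG \hookrightarrow \fS_4$. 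Therefore the induced action on~$\Pic(X_\bkk) = \bigoplus \ZZ H_i$ is the permutation action of~$\rG$, and by the definition of~$\rG_X$ in Lemma~\ref{lemma:picard} this yields~$\rG_X = \rG$. Finally, since~$X$ has a $\kk$-point the natural map~$\Pic(X) \to \Pic(X_\bkk)^{\Gal(\bkk/\kk)}$ is an isomorphism, and since~$\rG$ is transitive on the~$H_i$ the invariant sublattice is~$\ZZ \cdot (H_1 + H_2 + H_3 + H_4) = \ZZ \cdot (-K_X)$; hence~$\uprho(X) = 1$.

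The argument is essentially bookkeeping once the pieces are assembled; the only point that requires genuine care is the Bertini analysis at the base point~$y$ of the linear system --- resolved by observing that very ampleness of~$|H|$ forces the divisors singular at~$y$ into a subsystem of positive codimension --- together with the (standard but conceptually central) identification of the permutation action of~$\Gal(\bkk/\kk)$ on the factors of~$Y_\bkk$ with the prescribed transitive subgroup~$\rG \subset \fS_4$ by means of the~$\rG$-set~$\kk'$. I do not expect any further obstacle.
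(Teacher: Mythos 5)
Your proof is correct and follows essentially the same route as the paper, which disposes of the lemma in one sentence (Bertini for smoothness, the point~$y$ for~$X(\kk)\ne\varnothing$, the construction for~$\rG_X=\rG$, and transitivity for~$\uprho(X)=1$). Your version simply fills in the details the paper leaves implicit --- the codimension count at the base point~$y$, the Lefschetz computation of~$\Pic(X_\bkk)$, and the identification of the Galois action via the set of embeddings~$\kk'\hookrightarrow\bkk$ --- all of which are accurate.
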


\begin{proof}
The smoothness of a general divisor~$X$ in the linear system~$\fP$ follows from the Bertini theorem, 
the property~$X(\kk) \ne \varnothing$ is obvious because~$X$ contains the $\kk$-point~$y$,
the equality~$\rG_X = \rG$ follows from the construction, and~$\uprho(X) = 1$ follows from transitivity of~$\rG \subset \fS_4$.
\end{proof}

\begin{remark}
One can also prove the converse statement: any Fano threefold~$X$ of type~\type{1,1,1,1} 
with~$X(\kk) \ne \varnothing$ and~$\rG_X = \rG$ is isomorphic to a divisor in the linear system~$\fP$,
see~\cite[Proposition~7.16]{K22}.
\end{remark}

Now we are ready to prove Theorem~\ref{thm:x1111-non-st-rat}.

\begin{proof}[Proof of Theorem~\xref{thm:x1111-non-st-rat}]
We consider the field extension~$\kk'/\kk$ as in Lemma~\ref{lem:x1111-models}
and use the construction and notation of Proposition~\ref{prop:forms-x44};
in particular the linear system~$\fP \cong \PP^{14}_\kk$ of half-anticanonical divisors in~$Y$.
Let~$\mathfrak{p}_0 \in \fP$ be the point that corresponds to the toric divisor~$X^{\mathrm{toric}} \subset Y$.
Note that the space of lines in~$\fP$ through the point~$\mathfrak{p}_0$ is the projective space~$\PP^{13}_\kk$,
in particular $\kk$-points are Zariski dense in it.
Therefore, there is a line~$L \subset \fP$ through~$\mathfrak{p}_0$ defined over~$\kk$.
We denote by
\begin{equation*}
\cX \to L
\end{equation*}
the corresponding family of half-anticanonical divisors in~$Y$.
Then the general point of~$L$ corresponds to a smooth variety~$\cX_L$ of type~\type{1,1,1,1} 
over the field~$\kk(L) \cong \kk(t)$.

Since for a general $\kk$-point $\mathfrak{p} \in L$ 
the fiber of~$\cX_{\mathfrak{p}}$ has $\rG_{X_{\mathfrak{p}}} = \rG$ by Lemma~\ref{lem:x1111-models}, 
and since the natural restriction morphism of Galois groups~$\Gal(\overline{\kk(L)}/\kk(L)) \to \Gal(\bkk/\kk)$ is surjective,
we have~$\rG_{\cX_L} = \rG$.
Since~$\rG \subset \fS_4$ is transitive, this implies~$\uprho(\cX_L) = 1$.
Finally, $\cX_L$ by construction contains the $\kk(L)$-point~$y \times_\kk {\kk(L)}$, hence~$\cX_L(\kk(L)) \ne \varnothing$.

Assume~$\cX_L$ is stably rational.
Consider the point~$\mathfrak{p}_0 \in L$ as a special point of the family~$\cX/L$.
By Proposition~\ref{prop:forms-x44} the corresponding variety~$\cX_{\mathfrak{p}_0} = X^{\mathrm{toric}}$ 
is integral with ordinary double points,
hence by~\cite[Proposition~4.2.9]{NS} the family~$\cX/L$ is~$\mathbb{L}$-faithful in the sense of~\cite[Definition~4.2.7]{NS}.
Therefore, by~\cite[Proposition~4.2.10]{NS} the special fiber~$X^{\mathrm{toric}}$ is stably rational.

On the other hand, by definition the Galois group of the extension~$\kk'/\kk$ coincides with~$\rG_X$
and contains the Klein group~$\rV_4$.
By~\cite[\S~2.4.8]{Vos} for any smooth compactification~$V\supset T$
one has 
\begin{equation*}
H^1(\Gal(\bkk/\kk),\, \Pic(V_{\bkk})) = H^1(\rG_X,\, \Pic(V_{\bkk}))\neq 0. 
\end{equation*}
Since this
group is a stable birational invariant 
(see, e.g., \cite[\S4.4]{Vos}, or~\cite[\S2.A]{Colliot-Thelene-Sansuc-1987}),
the torus~$T$ and the corresponding toric variety~$X^{\mathrm{toric}}$ are not stably rational.
This contradiction shows that~$\cX_L$ is not stably rational and completes the proof of the theorem.
\end{proof}

\appendix

\section{Constructing morphisms of Hilbert schemes}

In this section we show how one can use technique of derived categories to construct morphisms of Hilbert schemes.
For smooth projective varieties~$X$ and~$Y$ we denote by~$\pi_X$ and~$\pi_Y$ the projections from~$X \times Y$ to the factors, 
and for an object~$\cK \in \Db(X \times Y)$ we denote by
\begin{equation*}
\Phi_\cK \colon \Db(X) \longrightarrow \Db(Y),
\qquad 
\cF \longmapsto \bR\pi_{Y*}(\bL\pi_X^*(\cF) \otimes^\bL \cK)
\end{equation*}
the corresponding Fourier--Mukai functor from the bounded derived category~$\Db(X)$ of coherent sheaves on~$X$ to that of~$Y$.
For an integral valued polynomial~$p \in \QQ[t]$ we denote by~$\Hilb_{p}(X)$ 
the Hilbert scheme of subschemes in~$X$ with Hilbert polynomials~$p$ with respect to 
a given polarization.

\begin{proposition}
\label{prop:hilb-map}
Let~$X$ and~$Y$ be smooth projective varieties.
If~$\cK \in \Db(X \times Y)$ is an object such that for any subscheme~$Z \subset X$ with Hilbert polynomial~$p$ 
the object~$\Phi_\cK(\cO_Z) \in \Db(Y)$ is isomorphic to the structure sheaf of a point~$y(Z) \in Y$ 
then there is a morphism of schemes
\begin{equation*}
\varphi \colon \Hilb_{p}(X) \longrightarrow Y
\end{equation*}
such that~$\varphi([Z]) = y(Z)$.
\end{proposition}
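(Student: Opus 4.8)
The plan is to realize $\varphi$ via the universal family and the relative Fourier--Mukai transform, i.e.\ to show that $\Phi_\cK$ in families sends the universal ideal sheaf on $\Hilb_p(X)\times X$ to something that \emph{is} the structure sheaf of the graph of the set-theoretic map $Z\mapsto y(Z)$, and then to invoke that a closed subscheme of $\Hilb_p(X)\times Y$ which is the graph of a map on points (and is flat, finite of degree one, over $\Hilb_p(X)$) descends to an honest morphism. Write $\mathcal{H}:=\Hilb_p(X)$, let $\mathcal{I}\subset\cO_{\mathcal{H}\times X}$ be the universal ideal sheaf with quotient $\cO_{\mathcal{Z}}$, where $\mathcal{Z}\subset\mathcal{H}\times X$ is the universal subscheme, flat over $\mathcal{H}$. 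Pull $\cK$ back along $\mathcal{H}\times X\times Y\to X\times Y$ and form the relative integral transform
\begin{equation*}
\widetilde\Phi(\cO_{\mathcal{Z}}) := \bR\pi_{\mathcal{H}\times Y,*}\bigl(\bL\pi_{\mathcal{H}\times X}^*(\cO_{\mathcal{Z}})\otimes^{\bL}\pi_{X\times Y}^*\cK\bigr)\in\Db(\mathcal{H}\times Y).
\end{equation*}

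First I would establish that $\widetilde\Phi(\cO_{\mathcal Z})$ is, fibrewise over $\mathcal{H}$, the structure sheaf of a point. This is where base change is needed: for a point $[Z]\in\mathcal H$ with residue field $\kappa$, the derived restriction of $\widetilde\Phi(\cO_{\mathcal Z})$ to $\{[Z]\}\times Y_\kappa$ computes $\Phi_{\cK_\kappa}(\cO_Z)$, which by hypothesis is $\cO_{y(Z)}$, a sheaf concentrated in degree $0$ supported at a single reduced point. By cohomology and base change (in the derived form, e.g.\ the standard spectral sequence relating $\mathcal{H}^i$ of the restriction to the restriction of $\mathcal{H}^i$), the hypercohomology sheaves of $\widetilde\Phi(\cO_{\mathcal Z})$ are concentrated in degree $0$ and the degree-$0$ sheaf $\cF:=\mathcal{H}^0\widetilde\Phi(\cO_{\mathcal Z})$ is flat over $\mathcal{H}$ with one-dimensional (length-one) fibres; hence $\cF\cong\cO_{\Gamma}$ for a closed subscheme $\Gamma\subset\mathcal{H}\times Y$ which is finite flat of degree $1$ — i.e.\ an isomorphism — onto $\mathcal{H}$ via the first projection. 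Composing the inverse of that isomorphism with the second projection $\Gamma\to Y$ yields the desired morphism $\varphi\colon\mathcal{H}\to Y$, and by construction its value on the point $[Z]$ is $y(Z)$.

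The main obstacle, and the step deserving the most care, is the base change argument showing that $\widetilde\Phi(\cO_{\mathcal Z})$ has no higher (or lower) cohomology sheaves and that $\mathcal{H}^0$ is flat with length-one fibres — in other words, upgrading the pointwise hypothesis ``$\Phi_{\cK_\kappa}(\cO_Z)\cong\cO_{y(Z)}$'' to a statement about the whole complex over $\mathcal{H}\times Y$. The clean way is: since the fibres of $\widetilde\Phi(\cO_{\mathcal Z})$ over all points of $\mathcal{H}$ are sheaves (in a single degree) with constant Hilbert polynomial $1$, one uses the local criterion for flatness together with derived base change to conclude that $\widetilde\Phi(\cO_{\mathcal Z})$ is itself a sheaf, flat over $\mathcal{H}$, of relative length $1$; the support is then a subscheme $\Gamma$ which is proper and quasi-finite, hence finite, of degree $1$ over $\mathcal{H}$, and one checks it is actually \emph{isomorphic} (not merely a degree-one cover) by noting the fibres are reduced points, so $\pi_1|_\Gamma$ is finite, flat, and an isomorphism on fibres, hence an isomorphism. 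This is routine once set up correctly; I would phrase it so that the only input genuinely used is the hypothesis on closed points, the flatness of $\mathcal{Z}$ over $\mathcal{H}$, and compatibility of Fourier--Mukai functors with base change.

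Finally I would record that $\varphi$ does not depend on auxiliary choices and that it is a morphism of $\kk$-schemes, which is immediate from the construction since everything (the universal family, $\cK$, the projections) is defined over the base; this is what allows the applications in the body of the paper, where the corollaries of this proposition are used to build Galois-equivariant morphisms of Hilbert schemes.
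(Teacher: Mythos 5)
Your proposal is correct and follows essentially the same route as the paper: form the relative Fourier--Mukai transform of the universal subscheme on $\Hilb_p(X)\times X$, use derived base change to identify its fibres with $\Phi_\cK(\cO_{Z_s})\cong\cO_{y(Z_s)}$, and extract the morphism from the resulting graph; the only difference is that the paper delegates the final step to~\cite[Lemma~4.4(iii)]{K19}, whereas you prove it by hand via flatness and the finite-degree-one support argument. One small imprecision: in general one only gets $\cF\cong\cO_\Gamma\otimes\pi_{\mathcal H}^*L$ for a line bundle $L$ on $\mathcal H$ (as the cited lemma states, ``up to twist''), but this does not affect the construction of $\varphi$, which only uses the support.
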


\begin{proof}
Let~$Z \subset X \times S$ be a family of subschemes in~$X$ flat over $S$ with Hilbert polynomial~$p$.
Let 
\begin{equation*}
\cF := \Phi_{\pi_{XY}^*\cK}(\cO_Z) 
= \bR\pi_{SY*}(\bL\pi_{XS}^*(\cO_Z) \otimes^\bL \pi_{XY}^*\cK)
\in \Db(S \times Y)
\end{equation*}
be the image of the structure sheaf of~$Z$ under the induced Fourier--Mukai functor from~$\Db(X \times S)$ to~$\Db(S \times Y)$,
where~$\pi_{XS}$, $\pi_{SY}$ and~$\pi_{XY}$ are the projections of~$X \times S \times Y$ to the pairwise products of factors. 
By base change and the projection formula, for each point~$s \in S$ we have 
\begin{equation*}
i_s^*\cF \cong \Phi_\cK(\cO_{Z_s}),
\end{equation*}
where $i_s \colon \{s\} \times Y \hookrightarrow S \times Y$ is the natural embedding
and~$Z_s \subset X$ is the fiber of~$Z$ over~$s \in S$.
Thus, we have~$i_s^*\cF \cong \cO_{y(Z_s)}$ by assumption, therefore by~\cite[Lemma~4.4(iii)]{K19}
there is a unique morphism~$\varphi_S \colon S \to Y$ such that~$\cF$ 
is isomorphic up to twist to the structure sheaf of the graph of~$\varphi_S$;
in particular, $\varphi_S(s) = y(Z_s)$ for each~$s \in S$.
Now applying this argument to~$S = \Hilb_p(X)$ and~$Z$ the universal subscheme, we obtain the required morphism~$\varphi$.
\end{proof}

In~\S\ref{subsec:conics} we apply Proposition~\ref{prop:hilb-map} 
to the Hilbert scheme of conics on the threefold~$X \cong \Bl_{\Gamma_1}(Q_1)$,
where~\mbox{$Q_1 \subset \PP^4$} is a smooth quadric and~$\Gamma_1 \subset Q_1$ is a linearly normal smooth rational quartic curve.
Recall that~$\rF_1(X)$ and~$\rF_2(X)$ denote the Hilbert schemes of lines and conics on~$X$, 
and that there is a natural embedding~$\Gamma_1 \subset \rF_1(X)$ 
of a connected component (see Lemma~\ref{lemma:lines-1}).
Recall also that the second component~$\Gamma_2 \subset \rF_1(X)$ corresponds to lines on~$Q_1$ bisecant to~$\Gamma_1$.

\begin{corollary}
\label{cor:f2x-gamma1}
There is a morphism~$\varphi_1 \colon \rF_2(X) \to \Gamma_1$ such that for a smooth conic~$C \subset X$ 
one has~$\varphi_1([C]) = [L]$, where~$L \subset X$ is the unique line corresponding to a point of~$\Gamma_1$ such that~$C \cap L \ne \varnothing$.
Moreover, if~$C = L_1 \cup L_2$ is a reducible conic, so that~$L_1 \cap L_2 \ne \varnothing$ and if~$L'_2$ 
is the other line corresponding to a point of~$\Gamma_1$ such that~$L_1 \cap L'_2 \ne \varnothing$, then~$\varphi_1([C]) = [L'_2]$.
\end{corollary}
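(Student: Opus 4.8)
The plan is to apply Proposition~\ref{prop:hilb-map} with the ambient variety $X \cong \Bl_{\Gamma_1}(Q_1)$ and the target the curve $\Gamma_1$: it suffices to exhibit an object $\cK \in \Db(X \times \Gamma_1)$ such that for every subscheme $C \subset X$ with Hilbert polynomial $2t+1$ with respect to $H = -K_X$ the Fourier--Mukai transform $\Phi_\cK(\cO_C)$ is the structure sheaf of a point; the two clauses of the corollary are then the two cases in the computation of that point. As a preliminary I would classify such $C$: a subscheme with Hilbert polynomial $2t+1$ is a connected reduced curve of $H$-degree $2$ and arithmetic genus $0$, and using $H_1 \cdot C = H_2 \cdot C = 1$, hence $E_1 \cdot C = E_2 \cdot C = 1$, together with the normal bundle $\cN_{L/X_\bkk} \cong \cO_L \oplus \cO_L(-1)$ of~\eqref{eq:normal-lines} to exclude ribbons, one sees that $C$ is either a smooth conic, or a union $C = L_1 \cup L_2$ of two lines meeting transversally at one point with $L_2$ an exceptional fibre of $\pi_1$ (so that $[L_2]$ is a point of $\Gamma_1$) and $L_1$ the strict transform of a line on $Q_1$ bisecant to $\Gamma_1$.

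For the kernel I would exploit that the exceptional divisor $\iota \colon E_1 \hookrightarrow X$ is the universal line parameterized by $\Gamma_1$, with $p \colon E_1 \to \Gamma_1$ the $\PP^1$-bundle structure, and take $\cK$ to be an object supported near $E_1$ --- a complex, not the plain sheaf $\iota_* \cO_{E_1}$ --- designed so that its transform localizes at the point $C \cap E_1$ for a smooth conic while the superfluous contribution along an $E_1$-component of a reducible conic is cancelled. A structure-sheaf kernel supported on $E_1$ cannot suffice: for a reducible conic $C = L_1 \cup L_2$ with $L_2 \subset E_1$ it produces a skyscraper both at $\pi_1(L_2)$ and at the residual point of $\pi_1(L_1) \cap \Gamma_1$, and no line-bundle twist repairs this, because the two summands arising over $\pi_1(L_2)$ (see the next paragraph) cannot be annihilated simultaneously; equivalently, $\Phi$ cannot be factored through $\bR\pi_{1*}$, since $\bR\pi_{1*}\cO_C$ is always the structure sheaf of a line on $Q_1$ and this line does not remember which component of $C$ lies in $E_1$.

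The verification of the hypothesis of Proposition~\ref{prop:hilb-map} then has two cases. For a smooth conic $C$ the intersection $C \cap E_1$ is a single reduced point $z$, the transform $\Phi_\cK(\cO_C)$ is supported there, and since $\bR p_*$ of a twisted skyscraper is the structure sheaf of its image, $\Phi_\cK(\cO_C) \cong \cO_{p(z)}$; moreover $p(z)$ is the point of $\Gamma_1$ through which the line $\pi_1(C)$ passes, that is, the class of the line $L$ meeting $C$. For a reducible conic $C = L_1 \cup L_2$ one computes $\bL\iota^*\cO_C$ locally near the component $L_2 \cong \PP^1$ lying in $E_1$: its only nonzero cohomology sheaves are $\cO_{L_2}$ in degree $0$ and the ideal $\cO_{L_2}(-1)$ of the node in degree $-1$; since $\cN_{L_2/E_1} \cong \cO_{L_2}$ one gets $\Ext^2_{E_1}(\cO_{L_2}, \cO_{L_2}(-1)) = 0$, so this object is formal, and by the design of $\cK$ the pushforward $\bR p_*$ of both resulting summands over $\pi_1(L_2)$ vanishes by elementary cohomology vanishing on $\PP^1$; away from $L_2$ the computation reduces to the smooth case. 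Hence $\Phi_\cK(\cO_C)$ is the structure sheaf of the remaining point $q \in \pi_1(L_1) \cap \Gamma_1$, which is precisely $[L'_2]$.

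The hard part will be the reducible case: constructing $\cK$ so that the excess contribution coming from the exceptional-fibre component $L_2$ of $C$ cancels in the derived pushforward, and confirming that it is the residual intersection $q = [L'_2]$, not $[L_2]$, that survives. Granting the right kernel, this comes down to the formality statement above and to cohomology vanishing on $\PP^1$; everything else --- base change, the projection formula, and the smooth-conic case --- is routine.
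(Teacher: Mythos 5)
Your overall strategy is the paper's: apply Proposition~\ref{prop:hilb-map} with a kernel supported on the exceptional divisor~$E_1\subset X$ viewed as the universal line over~$\Gamma_1$. But the proposal has a genuine gap at its central point: you never construct the kernel~$\cK$, and you justify deferring this by the claim that no line bundle on~$E_1$ can work (``no line-bundle twist repairs this, because the two summands arising over~$\pi_1(L_2)$ cannot be annihilated simultaneously''). That claim is false, and it rests on a computational error. For~$C=L_1\cup L_2$ with~$L_2\subset E_1$ a fiber of~$p\colon E_1\to\Gamma_1$, you assert that~$\cH^0(\bL\iota^*\cO_C)$ restricted to~$L_2$ is~$\cO_{L_2}$ while~$\cH^{-1}(\bL\iota^*\cO_C)\cong\cO_{L_2}(-1)$ is ``the ideal of the node,'' so that the two degrees differ by one and no single twist kills both under~$\bR p_*$. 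But~$\cH^{-1}(\bL\iota^*\cO_C)=\mathscr{T}\!\mathit{or}_1^{\cO_X}(\cO_C,\cO_{E_1})$ is the annihilator ideal~$\cO_{L_2}(-x)\cong\cO_{L_2}(-1)$ \emph{tensored with the conormal bundle}~$\cN^\vee_{E_1/X}\vert_{L_2}$; since~$L_2$ is a fiber of the $\PP^1$-bundle~$E_1\to\Gamma_1$ one has~$\cN_{E_1/X}\vert_{L_2}=\cO_{E_1}(E_1)\vert_{L_2}\cong\cO_{L_2}(-1)$ (compare~\eqref{eq:normal-lines}), hence~$\cH^{-1}\cong\cO_{L_2}(-1)\otimes\cO_{L_2}(1)\cong\cO_{L_2}$. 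Both cohomology sheaves along~$L_2$ therefore have degree~$0$, and a single twist by~$\cO_{E_1}(E_1)$ turns both into~$\cO_{L_2}(-1)$, which is killed by~$\bR p_*$.

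This explicit choice is exactly the paper's kernel:~$\cK=\varepsilon_*\cO_{E_1}(E_1)$, where~$\varepsilon\colon E_1\hookrightarrow X\times\Gamma_1$ is the natural embedding. For a smooth conic the transversality argument you give is fine and agrees with the paper; for a reducible conic the paper uses the two component sequences for~$\cO_C$ to exhibit a triangle
\begin{equation*}
\cO_C\otimes^\bL\cO_{E_1}(E_1)\longrightarrow\cO_{L_2}(-1)\oplus\cO_{x'}\longrightarrow\cO_{L_2}(-1)[2],
\end{equation*}
so that~$\bR\pi_{1*}$ kills the~$\cO_{L_2}(-1)$ terms and leaves~$\cO_{\pi_1(x')}$, i.e.\ the point~$[L'_2]$. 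So the ``hard part'' you postpone is precisely the easy, explicit construction that your obstruction argument incorrectly rules out; as written, the proposal contains no proof of the reducible case.
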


\begin{proof}
Let~$\pi_1 \colon X \to Q_1$ be the blowup morphism and let~$E_1 \subset X$ be its exceptional divisor;
note that~$E_1$ is the universal family of lines on~$X$ over the connected component~$\Gamma_1 \subset \rF_1(X)$ of the Hilbert scheme of lines.
Let~$\varepsilon \colon E_1 \to X \times \Gamma_1$ be the corresponding embedding and consider 
\begin{equation*}
\cK := \varepsilon_*\cO_{E_1}(E_1) \in \Db(X \times \Gamma_1).
\end{equation*}
Let us check that the assumption of Proposition~\ref{prop:hilb-map} is satisfied for~$Y = \Gamma_1$.

If~$C \subset X$ is a smooth conic, then~$C \cdot E_1 = 1$, and~$C \not\subset E_1$, 
therefore~$C \cap E_1 = \{x\}$ is a single point and the intersection is transverse.
Therefore~$\cO_C \otimes^\bL \cO_{E_1}(E_1) \cong \cO_x$, hence~$\Phi_\cK(\cO_C) \cong \cO_{\pi_1(x)}$.

If~$C = L_1 \cup L_2$ is a reducible conic, so that~$L_2 \subset E_1$ and~$L_1 \cap E_1 = \{x,x'\}$ with~$L_1 \cap L_2 = \{x\}$
(if~$L_1$ is tangent to~$E_1$ we take~$x' = x$), 
then using the exact sequences 
\begin{equation*}
0 \longrightarrow \cO_{L_1}(-1) \longrightarrow \cO_C \longrightarrow \cO_{L_2} \longrightarrow 0
\qquad\text{and}\qquad
0 \longrightarrow \cO_{L_2}(-1) \longrightarrow \cO_C \longrightarrow \cO_{L_1} \longrightarrow 0
\end{equation*}
it is easy to check that~$\cO_C \otimes^\bL \cO_{E_1}(E_1)$ fits into a distinguished triangle
\begin{equation*}
\cO_C \otimes^\bL \cO_{E_1}(E_1) \longrightarrow \cO_{L_2}(-1) \oplus \cO_{x'} \longrightarrow \cO_{L_2}(-1)[2]
\end{equation*}
(if~$L_1$ is tangent to~$E_1$ the middle term should be replaced by an extension of~$\cO_{L_2}(-1)$ by~$\cO_{x'}$).
Since the pushforward functor~$\bR\pi_{1*}$ kills the sheaf~$\cO_{L_2}(-1)$, it follows that~$\Phi_\cK(\cO_C) \cong \cO_{\pi_1(x')}$.
 
Now, applying Proposition~\ref{prop:hilb-map} we conclude that there is a morphism~$\varphi_1 \colon \rF_2(X) \to \Gamma_1$ 
such that~$\varphi_1([C]) = \pi_1(x)$ if~$C$ is smooth 
and~$\varphi_1([L_1 \cup L_2]) = \pi_1(x')$, 
with the notation for points~$x$ and~$x'$ introduced above.
\end{proof} 

\def\cprime{$'$}


\end{document}